\theoremstyle{plain}
\newtheorem{theorem}{Theorem}[section]
\newtheorem*{utheorem}{Theorem}
\newtheorem{proposition}[theorem]{Proposition}
\newtheorem{lemma}[theorem]{Lemma} 
\newtheorem{corollary}[theorem]{Corollary}
\newtheorem{conjecture}[theorem]{Conjecture}
\theoremstyle{definition}
\newtheorem{example}[theorem]{Example}
\newtheorem{definition}[theorem]{Definition}
\theoremstyle{definition}
\newtheorem{remark}[theorem]{Remark}
\newtheorem{remarks}[theorem]{Remarks}
\let\originalparagraph\paragraph
\renewcommand{\paragraph}[2][.]{\originalparagraph{#2#1}}
\DeclareMathOperator{\im}{\mathrm im}
\DeclareMathOperator{\coim}{\mathrm coim}
\DeclareMathOperator{\End}{\mathrm End}
\DeclareMathOperator{\ob}{\mathrm Ob}
\DeclareMathOperator{\coker}{\mathrm coker}
\DeclareMathOperator{\diag}{diag}
\DeclareMathOperator{\comp}{\circ}
\DeclareMathOperator{\Reeb}{Reeb}
\DeclareMathOperator{\osum}{\bigoplus}
\DeclareMathOperator{\Lan}{Lan}
\DeclareMathOperator{\Ran}{Ran}
\newcommand{\DL}{\mathchoice
{\begin{tikzpicture}
\draw (1.0ex,0ex)--(0ex,0ex) -- (0ex,1.0ex);
\end{tikzpicture}}
{\begin{tikzpicture}
\draw (1.0ex,0ex)--(0ex,0ex) -- (0ex,1.0ex);
\end{tikzpicture}}
{\begin{tikzpicture}
\draw[thick] (1.0ex,0ex)--(0ex,0ex) -- (0ex,1.0ex);
\end{tikzpicture}}
{\begin{tikzpicture}
\draw[line width=.6pt](.7ex,0ex)--(0ex,0ex) -- (0ex,.7ex);
\end{tikzpicture}}
}
\newcommand{\TL}{\mathchoice
{\begin{tikzpicture}%
\draw[thick] (0ex,0ex)--(0ex,1.0ex) -- (1.0ex,1.0ex);%
\end{tikzpicture}}
{\begin{tikzpicture}%
\draw[thick] (0ex,0ex)--(0ex,1.0ex) -- (1.0ex,1.0ex);%
\end{tikzpicture}}
{\begin{tikzpicture}%
\draw[thick] (0ex,0ex)--(0ex,1.0ex) -- (1.0ex,1.0ex);%
\end{tikzpicture}}
{\begin{tikzpicture}%
\draw[line width=.6pt] (0ex,0ex)--(0ex,0.7ex) -- (0.7ex,0.7ex);%
\end{tikzpicture}}
}
\newcommand{\DR}{%
\mathchoice
{\begin{tikzpicture}%
\draw[thick] (1.0ex,1.0ex)--(1.0ex,0ex) -- (0ex,0ex);%
\end{tikzpicture}}
{\begin{tikzpicture}%
\draw[thick] (1.0ex,1.0ex)--(1.0ex,0ex) -- (0ex,0ex);%
\end{tikzpicture}}
{\begin{tikzpicture}%
\draw[thick] (1.0ex,1.0ex)--(1.0ex,0ex) -- (0ex,0ex);%
\end{tikzpicture}}
{\begin{tikzpicture}%
\draw[line width=.6pt] (.7ex,.7ex)--(.7ex,0ex) -- (0ex,0ex);%
\end{tikzpicture}}
}
\newcommand{\TLE}{%
\mathchoice
{\begin{tikzpicture}%
\draw[thick] (0ex,0ex)--(0ex,1.0ex) -- (1.0ex,1.0ex)--cycle;%
\end{tikzpicture}}
{\begin{tikzpicture}%
\draw[thick] (0ex,0ex)--(0ex,1.0ex) -- (1.0ex,1.0ex)--cycle;%
\end{tikzpicture}}
{\begin{tikzpicture}%
\draw[thick] (0ex,0ex)--(0ex,1.0ex) -- (1.0ex,1.0ex)--cycle;%
\end{tikzpicture}}
{\begin{tikzpicture}%
\draw[line width=.6pt] (0ex,0ex)--(0ex,.7ex) -- (.7ex,.7ex)--cycle;%
\end{tikzpicture}}
}
\renewcommand{\phi}{\varphi}
\newcommand{\ACat}{\mathbb{A}}
\newcommand{\BCat}{\mathbb{B}}
\newcommand{\CCat}{\mathbb{D}}
\newcommand{\B}{\mathcal B}
\newcommand{\C}{\mathcal C}
\newcommand{\D}{\mathcal D}
\newcommand{\F}{\mathcal{F}}
\newcommand{\G}{\mathcal{G}}
\newcommand{\go}{g}
\newcommand{\gt}{h}
\newcommand{\gob}{\bar g}
\newcommand{\h}{\bar f}
\newcommand{\Hbb}{\mathbb{L}}
\newcommand{\R}{\mathbb{R}}
\newcommand{\Q}{\mathbb{T}}
\newcommand{\FS}{\mathcal S^\uparrow}
\newcommand{\W}{\mathcal{W}}
\newcommand{\Z}{\mathbb{Z}}
\newcommand{\N}{\mathbb{N}}
\newcommand{\Fc}{\mathfrak f}
\newcommand{\Lc}{\mathcal L}
\newcommand{\M}{\mathcal M}
\newcommand{\Nc}{\mathcal{N}}
\renewcommand{\P}{\mathbb{P}}
\newcommand{\RCat}{\mathbb{R}}
\newcommand{\ZCat}{\mathbb{Z}}
\newcommand{\PCat}{\mathbb{P}}
\newcommand{\ZZ}{\mathbb{ZZ}}
\newcommand{\ZZCat}{\mathbb{ZZ}}
\newcommand{\CoEx}[0]{E}
\newcommand{\eb}[0]{\mathbf{e}}
\newcommand{\E}[1]{\ifthenelse{\equal{#1}{}}{E}{E(#1)}}
\newcommand{\FI}[1]{\mathcal S(#1)}
\newcommand{\Set}{\mathbf{Set}}
\newcommand{\idf}[1]{\emph{#1}}
\newcommand{\fl}{\mathrm{fl}}
\newcommand{\fn}{g}
\newcommand{\npGen}[1]{{#1{\kern .1ex}}^{\DL}}
\newcommand{\Gen}[1]{{(#1)}^{\DL}}
\newcommand{\GenR}[1]{{(#1)^{\DR}}}
\newcommand{\GenRR}[1]{{(#1)^{\TL}}}
\newcommand{\TopRR}[2]{{(#1)_{#2}^{\TLE}}}
\newcommand{\LSB}{\mathcal L}
\newcommand{\ORD}{\mathbf{Ord}}
\newcommand{\I}{{\mathcal I}}
\newcommand{\J}{{\mathcal J}}
\newcommand{\K}{{\mathcal K}}
\newcommand{\id}{{\rm id}}
\newcommand{\Top}{\mathbf{Top}}
\newcommand{\vect}{\mathbf{vec}}
\newcommand{\Vect}{\mathbf{Vec}}
\newcommand{\Hom}{{\rm Hom}}
\newcommand{\oo}{\mathbf{o}}
\newcommand{\oc}{\mathbf{oc}}
\newcommand{\co}{\mathbf{co}}
\newcommand{\cc}{\mathbf{c}}
\newcommand{\ccc}{\mathbf{c}^{\cff}}
\newcommand{\INTR}{{\mathbb{U}}}
\newcommand{\ex}{\mathrm{thk}}
\newcommand{\bd}{\mathrm{BL}}
\newcommand{\EMB}{\mathrm{emb}}
\newcommand{\iem}{\mathrm{e}}
\newcommand{\pfd}{p.f.d.\@\xspace}
\newcommand{\op}{{\rm op}}
\newcommand{\BL}{\mathrm{BL}}
\newcommand{\blk}{\mathrm{BL}}
\newcommand{\cii}{{(\,)}}
\newcommand{\cif}{{(\,]}}
\newcommand{\cfi}{{[\,)}}
\newcommand{\cff}{{[\,]}}
\newcommand{\RCv}{\RCat^2_{\leq a}}
\newcommand{\REXD}[1]{\overrightarrow{#1}}
\newcommand{\RE}{R_\epsilon}
\newcommand{\RET}{R_{3\epsilon/2}}
\newcommand{\REComp}[2]{\ifthenelse{\equal{#1}{}}{X_{#2}}{X_{#2}(#1)}}
\newcommand{\TopSpace}{T}
\newcommand{\U}{\mathbb{U}}
\renewcommand{\subseteq}{\subset}
\date{}
\title{Algebraic Stability of Zigzag Persistence Modules}
\author{Magnus Bakke Botnan\thanks{TU M\"unchen, Munich, Germany; \texttt{botnan@ma.tum.de}} \and Michael Lesnick\thanks{Princeton University, Princeton, NJ, USA; \texttt{mlesnick@princeton.edu}}}
\begin{document}
\maketitle

\begin{abstract}
The stability theorem for persistent homology is a central result in topological data analysis.  While the original formulation of the result concerns the persistence barcodes of $\R$-valued functions, the result was later cast in a more general algebraic form, in the language of \emph{persistence modules} and \emph{interleavings}.  In this paper, we establish an analogue of this algebraic stability theorem for zigzag persistence modules.  To do so, we functorially extend each zigzag persistence module to a two-dimensional persistence module, and establish an algebraic stability theorem for these extensions.  One part of our argument yields a stability result for free two-dimensional persistence modules. 
 As an application of our main theorem, we strengthen a result of Bauer et al. on the stability of the persistent homology of Reeb graphs.  Our main result also yields an alternative proof of the stability theorem for level set persistent homology of Carlsson et al.
\end{abstract}
 
\section{Introduction}\label{Sec:Persistence_Modules}

\paragraph{Persistence Modules}
Let $\Vect$ denote the category of vector spaces over some fixed field $k$, and let $\vect$ denote the subcategory of finite dimensional vector spaces.  We define a \emph{persistence module} to be a functor $M:\PCat\to \Vect$, for $\PCat$ a poset.  We will often refer to such $M$ as a \emph{$\PCat$-indexed module}. If $M$ takes values in $\vect$, we say $M$ is \emph{pointwise finite dimensional (\pfd)}.  The $\PCat$-indexed persistence modules form a category $\Vect^\PCat$ whose morphisms are the natural transformations.  

Persistence modules are the basic algebraic objects of study in the theory of persistent homology.  
The theory begins with the study of \emph{1-D persistence modules}, i.e. functors $\RCat \to \Vect$ or $\ZCat \to \Vect$, where $\R$ and $\Z$ are taken to have the usual total orders.  The structure theorem for 1-D persistence modules \cite{webb1985decomposition,crawley2012decomposition} tells us that the isomorphism type of a \pfd 1-D persistence module $M$ is completely described by a collection $\B(M)$ of intervals in $\R$, called the \emph{barcode of $M$}; $\B(M)$ specifies the decomposition of $M$ into indecomposable summands.

\paragraph{Persistent Homology}
In topological data analysis, one often studies a data set by associating to the data a persistence module.  To do so, we first associate to our data a \emph{filtration}, i.e., a functor $\F:\RCat\to \Top$ such that the map $\F_a\to \F_b$ is an inclusion whenever $a\leq b$.  For example, if our data is an $\R$-valued function $\gamma:\TopSpace\to \R$, for $\TopSpace$ a topological space, we may take $\F$ to be the \emph{sublevel set filtration $\FS(\gamma)$}, defined by
\[\FS(\gamma)_a=\{y\in \TopSpace\mid \gamma(y)\leq a\},\quad a\in \R.\]
Since $\FS(\gamma)_a\subseteq \FS(\gamma)_b$ whenever $a\leq b$, this indeed gives a filtration.  If our data set is instead a point cloud, we often consider a \emph{Vietoris-Rips} or \emph{\v{C}ech} filtration; see e.g. \cite{carlsson2009topology} for details.

Letting $H_i:\Top\to \Vect$ denote the $i^{\mathrm{th}}$ singular homology functor with coefficients in $k$, we obtain a (typically \pfd) persistence module $H_i \F$ for any $i\geq 0$.  The barcodes $\B(H_i\F)$ serve as concise descriptors of the coarse-scale, global, non-linear geometric structure of the data set.  These descriptors have been applied to many problems in science and engineering, e.g., to natural scene statistics, evolutionary biology, periodicity detection in gene expression data, sensor networks, and clustering \cite{carlsson2008local,chan2013topology,perea2015sw1pers,de2007coverage,chazal2013persistence}.
\paragraph{Stability}
The \emph{stability theorem} for persistent homology guarantees that in several settings, the barcode descriptors of data are stable with respect to perturbations of the data.  The original formulation of the stability theorem \cite{cohen2007stability} concerns the persistent homology of $\R$-valued functions, and is formulated with respect to a standard metric $d_b$ on barcodes called the \emph{bottleneck distance}, which we define in \cref{Sec:Isometry}.  In the generality provided by \cite{chazal2009proximity}, the result is as follows:

\begin{theorem}[Stability of Persistent Homology for Functions \cite{cohen2007stability,chazal2009proximity}]\label{Thm:Stability_For_Functions}
For $\TopSpace$ a topological space, $i\geq 0$, and functions $\gamma,\kappa:\TopSpace\to \R$ such that $H_i\FS(\gamma)$ and $H_i\FS(\kappa)$ are \pfd, we have
\[d_b(\B(H_i\FS(\gamma)),\B(H_i\FS(\kappa)))\leq d_\infty(\gamma,\kappa),\]
where $d_\infty(\gamma,\kappa)=\sup_{x\in T}|\gamma(x)-\kappa(x)|.$
\end{theorem}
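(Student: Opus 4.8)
The plan is to reduce the theorem to the \emph{algebraic stability theorem} for $1$-D persistence modules: if two \pfd modules $\RCat\to\Vect$ are $\delta$-interleaved, then their barcodes lie at bottleneck distance at most $\delta$. The reduction has two steps — first pass from the function bound to an interleaving of sublevel-set filtrations, then push that interleaving through homology.

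Write $\delta=d_\infty(\gamma,\kappa)$, and assume $\delta<\infty$, the inequality being trivial otherwise. First I would observe that the bound $|\gamma(y)-\kappa(y)|\le\delta$ for every $y\in\TopSpace$ forces the sublevel sets of the two functions to interleave: if $\gamma(y)\le a$ then $\kappa(y)\le\gamma(y)+\delta\le a+\delta$, so $\FS(\gamma)_a\subseteq\FS(\kappa)_{a+\delta}$, and symmetrically $\FS(\kappa)_a\subseteq\FS(\gamma)_{a+\delta}$. Since every map in sight is an inclusion of subsets of $\TopSpace$, these commute with the internal inclusions of the two filtrations; hence the families $\FS(\gamma)_a\hookrightarrow\FS(\kappa)_{a+\delta}$ and $\FS(\kappa)_a\hookrightarrow\FS(\gamma)_{a+\delta}$ are natural in $a$, and their two composites are precisely the ``advance by $2\delta$'' inclusions internal to $\FS(\gamma)$ and to $\FS(\kappa)$. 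In other words, $\FS(\gamma)$ and $\FS(\kappa)$ are $\delta$-interleaved as $\RCat$-indexed diagrams of spaces.

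Next I would apply the functor $H_i:\Top\to\Vect$ levelwise. This yields morphisms $H_i\FS(\gamma)_a\to H_i\FS(\kappa)_{a+\delta}$ and $H_i\FS(\kappa)_a\to H_i\FS(\gamma)_{a+\delta}$, natural in $a$, and functoriality preserves the two triangle identities, so $H_i\FS(\gamma)$ and $H_i\FS(\kappa)$ are $\delta$-interleaved persistence modules $\RCat\to\Vect$. Both are \pfd by hypothesis, so by the structure theorem each has a well-defined barcode; applying the algebraic stability theorem to this $\delta$-interleaving then gives $d_b(\B(H_i\FS(\gamma)),\B(H_i\FS(\kappa)))\le\delta=d_\infty(\gamma,\kappa)$.

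The passage from the function bound to an interleaving is elementary bookkeeping; the substantive ingredient, and the main obstacle, is the algebraic stability theorem itself — producing an explicit $\delta$-matching of barcodes from an abstract $\delta$-interleaving, e.g.\ via an induced-matching argument or via an interpolation/box-lemma argument. Since this is the one-parameter version of precisely what the present paper sets out to establish in the zigzag setting, I would invoke it as a known result (from \cite{chazal2009proximity} and its refinements) rather than redevelop it here.
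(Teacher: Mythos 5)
Your proposal is correct and is exactly the derivation the paper intends: Theorem 1.1 is stated as a cited background result, and the surrounding discussion (the ``Algebraic Stability'' paragraph and \cref{Thm:Isometry_1D}) makes clear it is to be obtained precisely as you describe --- the $d_\infty$ bound gives a $\delta$-interleaving of sublevel-set filtrations, functoriality of $H_i$ gives a $\delta$-interleaving of \pfd modules, and the 1-D algebraic stability theorem supplies the $\delta$-matching. This mirrors the paper's own proof of \cref{Thm:LZZ_Stabilty} in the interlevel-set setting, so no further comment is needed.
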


As a corollary of \cref{Thm:Stability_For_Functions}, one obtains a stability theorem for persistent homology of Rips and \v{C}ech filtrations on finite metric spaces; see \cite{chazal2009gromov,chazal2013persistence}.

\paragraph{Algebraic Stability}
A purely algebraic formulation of the stability theorem was introduced in \cite{chazal2009proximity}, generalizing the stability results for $\R$-valued functions and point cloud data.  This \emph{algebraic stability theorem} asserts that an \emph{$\epsilon$-interleaving} (a pair of ``approximately inverse" morphisms) 
between \pfd 1-D  persistence modules $M$, $N$ 
induces an \emph{$\epsilon$-matching} (approximate isomorphism) between the barcodes $\B(M)$, $\B(N)$.  In fact, it was shown in \cite{lesnick2015theory} that the converse of this result also holds: Given an $\epsilon$-matching between $\B(M)$, $\B(N)$ we can easily construct an \emph{$\epsilon$-interleaving} between $M$, $N$.  The algebraic stability theorem and its converse are together known as the \emph{isometry theorem}; see \cref{Thm:Isometry_1D} for the precise statement.  

A slightly weaker formulation of the isometry theorem establishes a relationship between the \emph{interleaving distance} (a pseudometric on persistence modules) and the bottleneck distance: It says that the interleaving distance between $M$ and $N$ is equal to the bottleneck distance between $\B(M)$ and $\B(N)$.

The algebraic stability theorem is perhaps the central theorem in the theory of persistent homology; it provides the core mathematical justification for the use of persistent homology in the study of noisy data.  The theorem is used, in one form or another, in nearly all available results on the approximation, inference, and estimation of persistent homology.

\paragraph{Induced Matching Theorem}
It was shown in \cite{bauer2015induced} that the algebraic stability theorem, ostensibly a result about pairs of morphisms of persistence modules, is in fact an immediate corollary of a general result about single morphisms of persistence modules.  This result, called the \emph{induced matching theorem}, concerns a simple, explicit map $\chi$ sending each morphism $f:M\to N$ of \pfd 1-D persistence modules to a matching $\chi(f):\B(M)\nrightarrow \B(N)$.  The theorem tells us that the quality of this matching is tightly controlled by the lengths of the longest intervals in $\B(\ker f)$ and $\B(\coker f)$; see \cref{teo:IMT}.

\paragraph{Zigzag  Modules}
For posets $\ACat$ and $\BCat$, the product poset $\ACat\times \BCat$ is defined by taking $(a,b)\leq (a',b')$ if and only $a\leq a'$ and $b\leq b'$.  Let $\ACat^{\op}$ denote the opposite poset of $\ACat$.

\emph{Zigzag modules} are natural generalizations of $\ZCat$-indexed modules which have received much attention from the topological data analysis community \cite{carlsson2010zigzag,carlsson2009zigzag,bendich2013homology}.  These are functors $\ZZCat \to \Vect$, where $\ZZCat$ is the sub-poset of $\ZCat^\op\times \ZCat$ given by
\[\ZZCat:=\left\{(i,j)\mid i\in \Z,\ j\in \{i,i-1\}\right\}.\]
A structure theorem for \pfd zigzag modules \cite{botnan2015interval} gives us a definition of barcode for these modules closely analogous to the one for 1-D persistence modules.  

\paragraph{$\INTR$-Indexed Modules}
Let $\INTR$ denote the sub-poset of $\RCat^{\op}\times \RCat$ consisting of objects $(a,b)$ with $a\leq b$.  
$\INTR$-indexed modules arise naturally as refinements of the sublevel set persistent homology modules introduced above:
Given a function $\gamma:\TopSpace\to \R$ with $\TopSpace$ a topological space, we obtain a functor $\FI{\gamma}:\INTR\to \Top$, the \emph{interlevel set filtration of $\gamma$}, by taking $\FI{\gamma}_{(a,b)}=\gamma^{-1}([a,b])$, with $\FI{\gamma}_{(a,b)}\to \FI{\gamma}_{(c,d)}$ the inclusion map whenever $c\leq a\leq b\leq d$.  For $i\geq 0$, $H_i\FI{\gamma}$ is clearly a $\INTR$-indexed module.  It can be shown that if $\gamma$ is continuous or bounded below, then $H_i\FI{\gamma}$ determines $H_i\FS(\gamma)$.

We will be especially interested in the case of functions $\gamma$ of \emph{Morse type}.  These are certain generalizations of Morse functions for which each $H_i\FI{\gamma}$ is completely determined by its restriction to a discrete sub-poset of $\U$; see \cref{Sec:Levelset_Persistence} for the definition.

$\INTR$-indexed modules also arise naturally in a different (but related) way: In \cref{sec:zigzags}, we use Kan extensions to define a fully faithful functor $E: \Vect^\ZZCat\to \Vect^{\INTR}$.  This functor appears implicitly in recent work on interlevel set persistent homology \cite{carlsson2009zigzag,bendich2013homology}.

\paragraph{Block Decomposable Modules}
In general, the algebraic structure of a $\INTR$-indexed module can be very complicated.  As a result, there is no nice definition of a barcode available for such a module in general; see \cite{carlsson2009theory} and \cite[Section 1.4]{lesnick2015interactive}.  However, if $M$ is a $\INTR$-indexed module such that either 
\begin{enumerate}
\item $M\cong H_i(\FI{\gamma})$ for $\gamma:\TopSpace\to \R$ of Morse type, or
\item $M\cong E(V)$ for $V$ a \pfd zigzag module,
\end{enumerate}
then $M$ decomposes into especially simple indecomposable summands, which we call \emph{block modules}; see \cref{Sec:Block_Decomposable_Persistence_Modules,Sec:Applications}.  
We call any $\INTR$-indexed module that decomposes into block modules \emph{block decomposable}.

We may define the barcode $\B(M)$ of a block decomposable module $M$ in much the same way that we do for 1-D and zigzag modules.  
The barcode of a block decomposable module is a collection of simple convex regions in $\R^2$ called \emph{blocks}; see \cref{Sec:Blocks} for the definition and an illustration.  

\paragraph{Level Set Barcodes}
The intersection of any block with the diagonal $y=x$ is either empty or an interval. Thus, for $M$ block decomposable, intersecting each block in $\B(M)$ with the line $y=x$, and identifying this line with $\R$, we obtain a collection $\diag \B(M)$ of intervals in $\R$.  For $\gamma:T\to \R$ of Morse type, we call \[\LSB_i(\gamma):=\diag \B(H_i\FI{\gamma})\] the $i^{\mathrm{th}}$ level set barcode of $\gamma$.  Level set barcodes were introduced in \cite{carlsson2009zigzag}.  $\LSB_i(\gamma)$ tracks how homological features are born and die as one sweeps across the level sets of $\gamma$.

\begin{theorem}[Stability of Level Set Barcodes, \cite{carlsson2009zigzag}]\label{Thm:Levelset_Stabilty_Intro}
For $\TopSpace$ a topological space, $\gamma$, $\kappa:\TopSpace\to \R$ of Morse type and $i\geq 0$,
\[d_b(\LSB_i(\gamma),\LSB_i(\kappa))\leq d_\infty(\gamma,\kappa).\]
\end{theorem}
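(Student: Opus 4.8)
The plan is to derive \cref{Thm:Levelset_Stabilty_Intro} from the algebraic stability theorem for block decomposable $\U$-indexed modules (the paper's main result), using that, for $\gamma$ of Morse type, $\LSB_i(\gamma)$ is by definition the diagonal restriction $\diag\,\B(H_i\FI{\gamma})$ of the block barcode of $H_i\FI{\gamma}$. So it suffices to bound $d_b\bigl(\diag\,\B(H_i\FI{\gamma}),\diag\,\B(H_i\FI{\kappa})\bigr)$.

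\textbf{Step 1: from $\|\gamma-\kappa\|_\infty$ to an interleaving of $\U$-indexed modules.} The case $d_\infty(\gamma,\kappa)=\infty$ is trivial, so set $\delta:=d_\infty(\gamma,\kappa)<\infty$. For every $(a,b)\in\U$, the pointwise bound $|\gamma(x)-\kappa(x)|\le\delta$ gives $\gamma^{-1}([a,b])\subseteq\kappa^{-1}([a-\delta,b+\delta])$ and, symmetrically, $\kappa^{-1}([a,b])\subseteq\gamma^{-1}([a-\delta,b+\delta])$, and these inclusions commute with all the structure maps of the filtrations $\FI{\gamma}$ and $\FI{\kappa}$. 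Since the $\delta$-shift on $\U\subseteq\RCat^{\op}\times\RCat$ carries $(a,b)$ to $(a-\delta,b+\delta)$ (note the reversal in the first coordinate), these inclusions constitute a $\delta$-interleaving of the functors $\FI{\gamma},\FI{\kappa}\colon\U\to\Top$. Applying the functor $H_i$ yields a $\delta$-interleaving of the $\U$-indexed modules $H_i\FI{\gamma}$ and $H_i\FI{\kappa}$.

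\textbf{Step 2: algebraic stability, then restriction to the diagonal.} Since $\gamma$ and $\kappa$ are of Morse type, $H_i\FI{\gamma}$ and $H_i\FI{\kappa}$ are block decomposable, so the algebraic stability theorem applies to the interleaving from Step 1 and produces a $\delta$-matching between the block barcodes $\B(H_i\FI{\gamma})$ and $\B(H_i\FI{\kappa})$. Intersecting every block of these two barcodes with the line $y=x$ sends this to a partial matching between $\diag\,\B(H_i\FI{\gamma})=\LSB_i(\gamma)$ and $\diag\,\B(H_i\FI{\kappa})=\LSB_i(\kappa)$; one checks that it is again a $\delta$-matching, i.e. that passing to diagonal restrictions does not increase the bottleneck distance. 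This gives $d_b(\LSB_i(\gamma),\LSB_i(\kappa))\le\delta=d_\infty(\gamma,\kappa)$. (Alternatively, using that a Morse-type $H_i\FI{\gamma}$ is isomorphic to $E$ of the level set zigzag module $Z_\gamma$ of $\gamma$ and that $\LSB_i(\gamma)=\B(Z_\gamma)$, one can phrase Step 2 as: the interleaving of Step 1 is a $\delta$-interleaving of the \pfd zigzag modules $Z_\gamma$ and $Z_\kappa$, to which the algebraic stability theorem for zigzag modules directly applies.)

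\textbf{Main obstacle.} Step 1 is essentially formal once the $\delta$-shift on $\U$ is set up, the only care being the bookkeeping of the opposite order on the first factor of $\RCat^{\op}\times\RCat$. The weight of the argument is carried by the algebraic stability theorem invoked in Step 2 (itself the paper's main result) and, on top of that, by two points specific to this corollary: first, the structural fact that a Morse-type interlevel set homology module is block decomposable — this is exactly where the definition of ``Morse type'' is used, since such $H_i\FI{\gamma}$ is determined by its restriction to a discrete sub-poset of $\U$; and second, the verification that restriction to the diagonal $y=x$ is compatible with the bottleneck distances on the two sides, which requires a case analysis over the four possible block shapes together with the precise definition of the bottleneck distance on block barcodes. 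I expect this last compatibility check to be the main technical obstacle.
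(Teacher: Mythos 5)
Your Step 1 matches the paper exactly. The gap is in Step 2, and it is a genuine one that you partially flagged as "the main technical obstacle" without resolving.

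The claim that ``passing to diagonal restrictions does not increase the bottleneck distance'' is false, and this is precisely the content of \cref{prop:olddb}\,(i): a $\delta$-matching of block barcodes induces only a $2\delta$-matching of diagonal barcodes. The mismatch is concentrated on type-$\oo$ blocks. By \cref{lem:Erosion_And_Blocks}\,(i), a block $(a,b)_\blk$ is $2\delta$-trivial iff $b-a\le 4\delta$, while its diagonal restriction, the open interval $(a,b)\subset\R$, is $2\delta$-trivial iff $b-a\le 2\delta$. A $\delta$-matching of block barcodes is permitted to leave unmatched type-$\oo$ blocks with $2\delta<b-a\le 4\delta$; their diagonals are intervals of length between $2\delta$ and $4\delta$, which a $\delta$-matching on $\R$ cannot leave unmatched. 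So restriction to the diagonal costs a factor of $2$, and your proof as written establishes at best $d_b(\LSB_i(\gamma),\LSB_i(\kappa))\le 2\,d_\infty(\gamma,\kappa)$ even if you assume the tight (constant~1) block stability theorem from \cite{bjerkevik2016stability} — and only $5\,d_\infty(\gamma,\kappa)$ from the $\tfrac{5}{2}\epsilon$-matching that \cref{teo:IMTinterleaving} in this paper actually provides. (The parenthetical alternative via zigzag modules has the same problem: $\LSB_i(\gamma)$ is $\diag\B_i(\gamma)$, not the zigzag barcode itself, so the diagonal-restriction loss still has to be paid.)

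What is missing is the Mayer--Vietoris degree shift. The paper's proof of \cref{Thm:LZZ_Stabilty} invokes \cref{Thm:Morse_Block_Decomposition}\,(ii), which gives a bijection between type-$\oo$ blocks $(a,b)_\blk\in\B_i(\gamma)$ with $a<b$ and type-$\cc$ blocks $[b,a]_\blk\in\B_{i+1}(\gamma)$. The induced matching on type-$\cc$ blocks in degree $i+1$ provided by \cref{teo:IMTinterleaving} is bijective and controls both endpoints within $\epsilon$ (a tight bound), so transferring it back to type-$\oo$ blocks in degree $i$ along this bijection gives a matching that satisfies the strong endpoint-control hypothesis of \cref{prop:olddb}\,(ii) — namely that every $(a,b)_\blk$ with $b-a>2\epsilon$ (not merely $>4\epsilon$) is matched to some $(a',b')_\blk$ with $|a-a'|\le\epsilon$ and $|b-b'|\le\epsilon$. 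Only then does \cref{prop:olddb}\,(ii) give an $\epsilon$-matching on the diagonals. Without this degree-shift argument, there is no route in the paper from the block stability theorem to the stated constant~$1$.
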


\subsection{Our Results: Algebraic Stability for Zigzag and Block Decomposable Modules}\label{Sec:Block_Decomposition_Theorem}
$\epsilon$-Interleavings and the interleaving distance $d_I$ are readily defined on $\INTR$-indexed persistence modules.  Moreover, we will see in \cref{Sec:Isometry} that we can define $\epsilon$-matchings and a bottleneck distance $d_b$ for the barcodes of block decomposable modules in much the same way we do for 1-D persistence modules.  Given this, it is natural to wonder whether an algebraic stability result holds for block decomposable modules.  Our \cref{lem:converseAST,teo:IMTinterleaving} give the following such result:
\begin{utheorem}\mbox{}
\begin{enumerate}[(i)]
\item If there exists an $\epsilon$-interleaving between \pfd block decomposable modules $M$ and $N$, then there exists a $\frac{5\epsilon}{2}$-matching between $\B(M)$ and $\B(N)$.  
\item Conversely, if there exists an $\epsilon$-matching between $\B(M)$ and $\B(N)$, then there exists an $\epsilon$-interleaving between $M$ and $N$.
\end{enumerate}
In particular, \[d_I(M,N)\leq d_b(\B(M),\B(N))\leq \frac{5}{2}d_I(M,N).\]
\end{utheorem}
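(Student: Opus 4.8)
The plan is to prove the two parts separately and then read off the displayed inequality. Part (ii) is elementary, resting on the block decomposition and the stability of interleavings under direct sums; part (i) is an induced–matching argument in the spirit of \cite{bauer2015induced}, carried out for $\U$-indexed modules.

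For part (ii), write $M\cong\bigoplus_{B\in\B(M)}k_B$ and $N\cong\bigoplus_{C\in\B(N)}k_C$, where $k_B$ denotes the block module on a block $B$. Since a direct sum of $\epsilon$-interleavings is an $\epsilon$-interleaving, it suffices to show: (a) for each block $B$ paired with a block $C$ by the given $\epsilon$-matching, $k_B$ and $k_C$ are $\epsilon$-interleaved; and (b) for each unmatched block $B$ (of $M$ or of $N$), $k_B$ is $\epsilon$-interleaved with the zero module. Statement (b) amounts to the internal structure map of $k_B$ across a shift of $2\epsilon$ being zero, which is precisely the condition under which the definition of $\epsilon$-matching in \cref{Sec:Isometry} allows $B$ to be left unmatched. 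Statement (a) is a finite case analysis over the block shapes of \cref{Sec:Blocks}: in each case the canonical shift maps $k_B\to k_C$ and $k_C\to k_B$ across a shift of $\epsilon$ are defined exactly because $B$ and $C$ are $\epsilon$-close, and one checks by inspection that they form an $\epsilon$-interleaving.

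For part (i), let $(\phi,\psi)$ be an $\epsilon$-interleaving of $M$ and $N$, with $\phi\colon M\to N'$ and $\psi\colon N\to M'$ where $(-)'$ denotes the $\epsilon$-shift endofunctor on $\Vect^{\U}$ and $\psi'\phi$, $\phi'\psi$ are the internal $2\epsilon$-shift maps of $M$ and $N$. Since $N'$ is again block decomposable, one can apply an induced-matching construction to the single morphism $\phi$. This has three ingredients. First, $\ker\phi$ and $\coker\phi$ are controlled: $\ker\phi$ is a submodule of the kernel of the internal $2\epsilon$-shift map of $M$, and $\coker\phi$ a quotient of the cokernel of the internal $2\epsilon$-shift map of $N$, and one shows these are block decomposable with every block ``thin'' -- lying within $2\epsilon$ of the boundary of a block of $M$ or $N$. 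Second, as in \cite{bauer2015induced}, $\phi$ induces a canonical matching $\chi(\phi)\colon\B(M)\nrightarrow\B(N')$ which pairs each block of $\B(M)$ not accounted for by $\B(\ker\phi)$ with a block of $\B(N')$ not accounted for by $\B(\coker\phi)$ and leaves the rest unmatched. Third, a geometric estimate bounds the displacement of the matched blocks, and the size of the unmatched ones, in terms of the thinness bounds. Composing $\chi(\phi)$ with the canonical identification $\B(N')\cong\B(N)$ and carrying out this estimate yields a $\tfrac{5\epsilon}{2}$-matching between $\B(M)$ and $\B(N)$.

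The main obstacle is the final geometric estimate, together with the verification that $\chi(\phi)$ respects block types. In the one-dimensional setting the analogous estimate produces the constant $2$; in $\U$, the shift $(a,b)\mapsto(a-\epsilon,b+\epsilon)$ interacts with the four block types unevenly, and a worst-case analysis over these types forces the larger constant $\tfrac{5}{2}$ (so that matched blocks are genuinely $\tfrac{5\epsilon}{2}$-close in the sense of \cref{Sec:Isometry}). Establishing that the relevant sub- and quotient-modules are block decomposable with thin blocks also requires care, since general $\U$-indexed modules are far from block decomposable. Given both parts, the displayed inequality is formal: $d_I(M,N)\le d_b(\B(M),\B(N))$ because by (ii) an $\epsilon$-matching yields an $\epsilon$-interleaving, and $d_b(\B(M),\B(N))\le\tfrac{5}{2}\,d_I(M,N)$ because by (i) an $\epsilon$-interleaving yields a $\tfrac{5\epsilon}{2}$-matching -- in each case passing to the infimum over $\epsilon$.
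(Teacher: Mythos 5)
Part (ii) of your proposal and the derivation of the displayed inequalities are correct and coincide with the paper's \cref{lem:converseAST}: the unmatched blocks are $2\epsilon$-trivial by definition of an $\epsilon$-matching, matched blocks give $\epsilon$-interleaved interval modules, and interleavings sum. The problem is part (i), specifically your ``second ingredient.''

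The assertion that $\phi$ induces a canonical matching $\chi(\phi)\colon\B(M)\nrightarrow\B(N(\epsilon))$ ``as in \cite{bauer2015induced}'' is exactly what fails outside the 1-D setting, and working around this failure is the entire content of \cref{Sec:Decomposition_Top,Sec:Free,Sec:Block_Stability_Theorem}. The 1-D construction factors $f$ as $M\twoheadrightarrow \im f\hookrightarrow N$ and uses two facts: $\im f$ is interval decomposable (automatic in 1-D by the structure theorem, but false for $\U$-indexed modules), and a monomorphism or epimorphism of interval decomposable 1-D modules induces a canonical matching by ordering bars by endpoints (no analogue exists here). Likewise, your first ingredient claims $\ker\phi$ and $\coker\phi$ are block decomposable with thin blocks; sub- and quotient modules of block decomposable modules need not be block decomposable, and the paper only uses (and only needs) that they are $2\epsilon$-trivial (\cref{Rem:Interleaving_And_Small_(Co)Kernel}). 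The paper's actual route is quite different: it first splits the interleaving into the four block types $\oo,\co,\oc,\cc$ via a $\Hom$-vanishing/upper-triangularity argument (\cref{Interleavings_On_Four_Types}), reduces types $\co,\oc$ to 1-D algebraic stability, and for types $\cc$ and $\oo$ applies functors converting the modules into free, respectively $\RE$-free, modules; the matching is then built from the interpolant $L^\epsilon(f)$ of \cref{Sec:Decomposition_Top}, which is proved to be free (\cref{prop:L_i_is_free}), respectively $R_{3\epsilon/2}$-free (\cref{prop:openrestfree}), so that 1-D induced matchings can be applied along horizontal and vertical lines. Finally, your explanation of the constant is not right: $\frac{5}{2}$ does not arise from a worst case over the four types but solely from type $\oo$, where the interpolant is only $R_{3\epsilon/2}$-free rather than $\RE$-free (see \cref{ex:tight}); the other three types yield tight constants, and Bjerkevik's later argument shows the $\oo$ constant can in fact be improved to $1$.
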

The proof of (ii) is trivial.  We refer to (i) as the \emph{block stability theorem}.  The block stability theorem was conjectured (independently) by Ulrich Bauer and Dmitriy Morozov, who were motivated by an application to the stability of Reeb graphs described below.  Discussions with Bauer and Morozov inspired this work.

We show in \cref{sec:zigzags} that by way of the functor $E:\Vect^\ZZCat\to \Vect^{\INTR}$, our forward and converse algebraic stability results for block decomposable modules specialize to corresponding algebraic stability results for zigzag modules.  The problem of establishing an  algebraic stability theorem for zigzag modules is well known amongst researchers working on the theoretical foundations of topological data analysis, and has been mentioned in print in several places; see \cite{lesnick2015theory,oudot2014zigzag,oudot2015persistence}, and also the mention of the more general problem of ``hard stability" in \cite{bubenik2014metrics}.

We obtain the block stability theorem as a corollary of induced matching results for block decomposable modules analogous to those known to hold in 1-D.  As part of the proof, we establish an induced matching theorem for free $2$-D persistence modules; this yields an isometry theorem for such modules as a corollary.
 
The block stability theorem yields an alternative proof of \cref{Thm:Levelset_Stabilty_Intro}, the stability result  for level set persistent homology.  In contrast to the earlier proof, our proof does not require us to consider extended persistence or relative homology. 

\paragraph{Algebraic Stability of Constructible Sheaves over $\R$}
Interleavings and barcodes can be defined for \pfd (co)sheaves of vector spaces over $\R$ that are constructible with respects to a locally finite partition of $\R$, much as we define them for block decomposable persistence modules; see \cite{curry} and \cite{curry2016classification}.  As a corollary, the block stability theorem yields a similar algebraic stability theorem for such (co)sheaves.  However, we will not explicitly consider (co)sheaves in this paper.

%
%
%
%
%
%
%

\subsection{Stability of the Persistent Homology of Reeb Graphs}\label{sec:introreeb}
We briefly describe the application of the block stability theorem to Reeb graphs; details are given in \cref{Sec:Reeb_Main}.  

We define a Reeb graph to be a continuous function $\gamma:G\to \R$ of Morse type, where $G$ is a topological graph and the level sets of $\gamma$ are discrete.  A well known construction associates a Reeb graph, $\Reeb(\kappa)$, to $\R$-valued function $\kappa$ of Morse type.  These invariants of $\R$-valued functions are readily computed and easy to visualize.  As such, they are popular objects of study in computational geometry and topology, and have found many applications in data visualization and exploratory data analysis.  In particular, the topological data analysis tool Mapper, commercialized by Ayasdi, constructs certain discrete approximations to Reeb graphs from point cloud data \cite{singh2007topological}. 

If we want to study the stability of Reeb graphs and Mapper in the presence of noise, we need a good metric on Reeb graphs.  In the last few years, several works have introduced such metrics and have studied their stability properties \cite{bauer2014measuring, di2014edit, de2015categorified,bauer2014strong}.  In particular, \cite{de2015categorified} presents an appealing definition of the interleaving distance $d_I$ on Reeb graphs.  

The $0^{\mathrm{th}}$ level set barcode $\LSB_0(\gamma)$ of a Reeb graph $\gamma$ encodes all non-trivial persistent homology information in the Reeb graph \cite{bauer2014measuring}.  A basic question about $d_I$, then, is whether Reeb graphs which are close with respect to $d_I$ have close $0^{\mathrm{th}}$ level set barcodes. 
Building on a result of \cite{bauer2014measuring}, Bauer, Munch, and Wang recently provided an affirmative answer to this question \cite{bauer2014strong}.  A simple formulation of their result says that for Reeb graphs $\gamma$ and $\kappa$,
\[d_b(\LSB_0(\gamma),\LSB_0(\kappa))\leq 9\, d_I(\gamma,\kappa).\]
A somewhat stronger formulation of the result can be given using the language of extended persistence; see \cite{bauer2014strong}.

As an easy corollary of the block stability theorem, our \cref{Thm:Reeb_Corollary} gives an improvement of the result of \cite{bauer2014strong}:
\begin{equation}
d_b(\LSB_0(\gamma),\LSB_0(\kappa))\leq 5\, d_I(\gamma,\kappa).
\end{equation}

\subsection{Bjerkevik's Related Work} \label{sec:bjerkevik}The version of the block stability theorem we establish here is
not tight. To prove the result, we show that it suffices to establish the result for each of four
subtypes of block decomposable modules. Our
algebraic stability results for three of the four subtypes are tight, but our result for the remaining
subtype, denoted type $\oo$, turns out to be weaker than the optimal one by factor of 5/2.  

Following the release of the first version of this paper, H\aa vard Bakke Bjerkevik has obtained a tight algebraic stability result for modules of type $\oo$, via an elegant new argument  \cite{bjerkevik2016stability}.  Together with our arguments in \cref{Sec:Block_Stability_Theorem}, this gives a tight form of the block stability theorem.  As a corollary, our stability results for zigzag modules strengthen correspondingly to an isometry theorem for zigzag modules, and the constant in our stability result for the levelset persistent homology of Reeb graphs improves is strengthened from 5 to 2, which is tight.  (On the other hand, the problem of giving a tight \emph{single-morphism} algebraic stability result remains open; see \cref{Sec:Discussion}.)
  Notably, the approach of \cite{bjerkevik2016stability} also adapts readily to give algebraic stability results for some other types of modules to which our approach does not readily extend, such as for \emph{rectangle-decomposable} persistence modules; see \cref{Sec:Discussion}. 

The main advantage of the approach to block stability taken in our paper, relative to that of \cite{bjerkevik2016stability}, is that by extending the induced matching approach to 1-D algebraic stability, our approach provides explicit matchings of barcodes.  In 1-D, the induced matching approach  is very effective, and it is natural to study how the simple, explict constructions of that approach extend to block-decomposable modules; our work makes clear both what can be done in this direction and where one encounters difficulties.  
%
%
We imagine that there could be a way to strengthen our arguments 
to recover the optimal constants for the block stability theorem obtained in \cite{bjerkevik2016stability}, via explicit matchings.  However, this would require further technical advances; see the end of \cref{Sec:Discussion}.
\subsection{Outline}
\cref{Sec:Preliminaries} of this paper reviews algebraic aspects of persistent homology, introducing generalized definitions of barcodes and the bottleneck distance along the way.  In \cref{Sec:Block_Decomposable_Persistence_Modules}, we introduce block decomposable modules and their barcodes, and state the block stability theorem.  \cref{Sec:Applications} presents our applications of the block stability theorem, including our treatment of algebraic stability for zigzag modules.  

\cref{Sec:Decomposition_Top,Sec:Free,Sec:Block_Stability_Theorem} are devoted to the proof of the block stability theorem.  \cref{Sec:Decomposition_Top} introduces a way of decomposing a monomorphism of 2-D persistence modules.  Using this decomposition, \cref{Sec:Free} proves the induced matching theorem for free 2-D persistence modules, as well as  a similar induced matching result of a more technical nature for a class of 2-D persistence modules we call $R_\epsilon$-free.  \cref{Sec:Block_Stability_Theorem} applies the results of \cref{Sec:Free} to prove the block stability theorem.  

\cref{Sec:Almost_Block_Stability} gives an easy extension of the block stability theorem to a slightly more general class of modules, and speculates on an application of this to the stability of level set persistence 
for non-Morse type functions.  We conclude in \cref{Sec:Discussion} with a brief exploration of the problem of further generalizing the results of this paper.

\paragraph{Acknowledgements}
\normalsize{This work would not have been possible if it were not for conversations with Ulrich Bauer, Justin Curry, Vin de Silva, Dmitriy Morozov, Sara Kali\v snik, Amit Patel, and Bob MacPherson that shaped our understanding of zigzag persistence.  We especially thank Ulrich Bauer and Dmitriy Morozov for (independently) introducing us to the main conjecture which underlies this work and explaining the application to Reeb graphs, and Justin Curry for many enlightening discussions in the early stages of this project.  We also thank H\aa vard Bakke Bjerkevik for valuable discussions about generalized algebraic stability, and Peter Landweber for suggesting several corrections to the paper. 
MBB wishes to thank Johan Steen for invaluable help with category theory. MBB has been partially supported by the DFG Collaborative Research Center SFB/TR 109 “Discretization in Geometry and Dynamics”. The authors began collaboration on this project at the Institute for Mathematics and its Applications, and continued the work while ML was a member of Raul Rabadan's lab at Columbia University.  We thank everyone at the IMA and Columbia for their support and hospitality. }

\section{Preliminaries}\label{Sec:Preliminaries}
For $\P$ a poset and $\C$ an arbitrary category, $M:\P\to \C$ a functor, and $a,b\in \P$, let $M_a= M(a)$, and let $\phi_M(a,b) : M_a \to M_b$ denote the morphism $M(a\leq b)$. 

\subsection{Barcodes of Interval Decomposable Persistence Modules}
\label{Sec:Barcodes}
An \emph{interval} of $\P$ is a subset $\J\subseteq \P$ such that 
\begin{enumerate}
\item $\J$ is non-empty.
\item If $a,c\in \J$ and $a\leq b\leq c$, then $b\in \J$.
\item {}[connectivity] For any $a,c\in \J$, there is a sequence $a=b_0,b_1,\ldots, b_l=c$ of elements of $\J$ with $b_i$ and $b_{i+1}$ comparable for $0\leq i\leq l-1$.   
\end{enumerate}
We refer to a multiset of intervals in $\P$ as a \emph{barcode (over $\P$)}.  

\begin{definition}
For $\J$ an interval in $\P$, the interval module $I^\J$ is the $\PCat$-indexed module such that
\begin{align*}
I^\J_a&=
\begin{cases}
k &{\textup{if }} a\in \J, \\
0 &{\textup{ otherwise}.}
\end{cases}
& \varphi_{I^\J}(a,b)=
\begin{cases}
\id_k &{\textup{if }} a\leq b\in I,\\
0 &{\textup{ otherwise}.}
\end{cases}
\end{align*}
\end{definition}
We say a persistence module $M$ is \emph{decomposable} if it can be written as $M\cong V\oplus W$ for non-trivial persistence modules $V$ and $W$; otherwise, we say that $M$ is \emph{indecomposable}. 

\begin{proposition}\label{prop:intervals_are_indec}
$I^\J$ is indecomposable. 
\end{proposition}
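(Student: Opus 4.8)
The plan is to show that the endomorphism ring $\End(I^\J)$ has no nontrivial idempotents, which is equivalent to indecomposability. So I would let $f : I^\J \to I^\J$ be an idempotent natural transformation, i.e. $f \comp f = f$, and argue that $f = 0$ or $f = \id$.

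First I would analyze what a morphism $f : I^\J \to I^\J$ looks like. For each $a \in \J$ we have a linear map $f_a : k \to k$, which is multiplication by a scalar $\lambda_a \in k$; for $a \notin \J$, $f_a$ is forced to be $0$. The naturality squares say that for $a \leq b$ with $a, b \in \J$, we have $\lambda_b \comp \id_k = \id_k \comp \lambda_a$, i.e. $\lambda_a = \lambda_b$ (using that $\varphi_{I^\J}(a,b) = \id_k$ on $\J$). Here is where connectivity of the interval $\J$ enters: given any $a, c \in \J$, condition (3) in the definition of an interval provides a zigzag $a = b_0, b_1, \dots, b_l = c$ of comparable elements in $\J$, and applying the naturality relation along each step gives $\lambda_a = \lambda_{b_0} = \lambda_{b_1} = \dots = \lambda_{b_l} = \lambda_c$. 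Hence there is a single scalar $\lambda \in k$ with $f_a = \lambda \cdot \id_k$ for all $a \in \J$; in other words $\End(I^\J) \cong k$ as a ring.

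Then the idempotent condition $f \comp f = f$ reads $\lambda^2 = \lambda$ in the field $k$, so $\lambda(\lambda - 1) = 0$ and therefore $\lambda = 0$ or $\lambda = 1$, i.e. $f = 0$ or $f = \id_{I^\J}$. Since $I^\J$ is nonzero (as $\J$ is nonempty, by condition (1)), it is not the zero module, so it is indecomposable: if $I^\J \cong V \oplus W$ then the projection onto $V$ along $W$ is an idempotent endomorphism, which must be $0$ or $\id$, forcing $W = 0$ or $V = 0$.

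The only real subtlety — the "main obstacle" such as it is — is making sure the scalar is globally constant on all of $\J$ rather than merely locally constant along each comparability relation; this is exactly what the connectivity axiom (3) for intervals is designed to guarantee, and without it the statement would fail (a disjoint union of two comparable-components would give $I^\J$ a nontrivial decomposition). Everything else is a routine unwinding of definitions.
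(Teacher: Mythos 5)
Your proof is correct and takes essentially the same approach as the paper: both arguments reduce to showing $\End(I^\J) \cong k$ via the local-scalar argument and the connectivity axiom. The only cosmetic difference is the last step — you conclude via the absence of nontrivial idempotents in a field, while the paper notes that if $I^\J \cong M \oplus N$ then $\End(M) \oplus \End(N)$ embeds in $\End(I^\J) \cong k$, forcing one summand's endomorphism ring (and hence the summand) to vanish. Both are immediate consequences of $\End(I^\J) \cong k$.
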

\begin{proof}
For $M$ a persistence module, let $\End(M)$ denote the $k$-vector space of endomorphisms of $M$.  An endomorphism of $I^\J$ acts locally by multiplication, so it follows by commutativity and connectivity that  $\End(I^\J)\cong k$. Assume that $I^\J \cong M\oplus N$ for persistence modules $M$ and $N$.  Then $\End(M)\oplus\End(N)$ is a subspace of $\End(M\oplus N)\cong \End(I^\J)\cong k$. The only subspaces of $k$ are 0 and $k$, so either $\End(M) = 0$ or $\End(N) = 0$, implying that either $M$ or $N$ is trivial. 
\end{proof}

A $\PCat$-indexed module $M$ is \emph{interval decomposable} if there exists a (possibly infinite) multiset $\B(M)$ of intervals in $\P$ such that 
\[ M\cong \bigoplus_{\J\in \B(M)} I^{\J}.\]
Since the endomorphism rings of interval persistence modules are local (in fact, isomorphic to $k$), it follows from the Azumaya--Krull--Remak--Schmidt theorem \cite{azumaya1950corrections} that the multiset $\B(M)$ is uniquely defined.  We call $\B(M)$ the \emph{barcode} of $M$.

\begin{theorem}[Structure of 1-D and Zigzag Persistence Modules \cite{botnan2015interval,crawley2012decomposition}]\label{Structure_Theorem}
Suppose $M$ is a \pfd $\PCat$-indexed module, for $\PCat\in \{\RCat,\ZCat,\ZZCat\}$. Then $M$ is interval decomposable.
\end{theorem}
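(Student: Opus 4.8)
Only the \emph{existence} of an interval decomposition needs proof: once $M\cong \bigoplus_{\J\in\B(M)}I^\J$ is known, uniqueness of the multiset $\B(M)$ is already supplied by \cref{prop:intervals_are_indec} together with the Azumaya--Krull--Remak--Schmidt theorem, exactly as recorded above. I would dispose of the totally ordered cases $\PCat\in\{\RCat,\ZCat\}$ first and then reduce the zigzag case $\PCat=\ZZCat$ to them. For $\PCat=\ZCat$ one can argue by hand via finite truncations: the restriction of $M$ to $\{-n,\dots,n\}\subseteq\ZCat$ is a finite-dimensional representation of an $A_{2n+1}$ quiver with all arrows oriented the same way, hence (Gabriel's theorem, where every indecomposable of type $A$ is thin) a direct sum of interval modules. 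The work is then to promote these truncated decompositions to a decomposition of $M$: one chooses them compatibly as $n\to\infty$, using that each $M_i$ is finite dimensional, so only finitely many intervals can be ``active'' at $i$ and successive refinements stabilize, and assembles from this data bases of the $M_i$ adapted to intervals of $\ZCat$. Alternatively, since $\ZCat$ is totally ordered, one may just invoke the $\RCat$ argument below.

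\textbf{The case $\PCat=\RCat$.} This is Crawley-Boevey's structure theorem, and I would follow its proof. The idea is to examine, at each $a\in\RCat$ and at each Dedekind cut of $\RCat$, the interplay of the natural maps $\varinjlim_{b<a}M_b\to M_a\to\varprojlim_{b>a}M_b$ in order to read off which interval endpoints occur at $a$; one then constructs, inductively and compatibly across all of $\RCat$, direct-sum complements realizing these intervals --- in effect a ``functorial filtration'' of $M$ by interval modules. Here \pfd-ness is indispensable: it makes the relevant subquotients finite dimensional and forces the pointwise data to stabilize, which is precisely what lets the global assembly succeed.

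\textbf{The case $\PCat=\ZZCat$.} A $\ZZCat$-indexed module is nothing but a representation of the bi-infinite zigzag quiver $\cdots\to P_{i-1}\leftarrow Q_i\to P_i\leftarrow Q_{i+1}\to\cdots$, where $P_i=M_{(i,i)}$ and $Q_i=M_{(i,i-1)}$. Following Botnan, I would reduce to the totally ordered case, either by transporting $M$ along a faithful, decomposition-reflecting construction to an $\RCat$- or $\ZCat$-indexed module (similar in spirit to the Kan-extension functor $E$ of \cref{sec:zigzags}, though to avoid circularity this must be set up before $E$ is analyzed), or by adapting the functorial-filtration argument of the $\RCat$ case directly to the alternating orientation; in either case one verifies that the indecomposable summands produced are supported on intervals of $\ZZCat$ with all internal maps isomorphisms, i.e.\ are interval modules $I^\J$. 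A slicker unified alternative is to first invoke the general fact that a \pfd module over any small category decomposes into modules with local endomorphism rings, and then, separately for $\RCat$, $\ZCat$, and $\ZZCat$, identify each indecomposable-with-local-endomorphism-ring summand as an interval module, using that indecomposability forces the support to be connected in the sense of the interval axioms.

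\textbf{Main obstacle.} The hard point is the existence of the decomposition in the $\RCat$ case (and hence in the $\ZZCat$ case): over an infinite totally ordered index, \pfd-ness at each point does not by itself place us in a Krull--Remak--Schmidt situation, so one cannot split off indecomposables one at a time. The genuine content is the construction --- due to Crawley-Boevey --- that stitches the local birth/death data into globally coherent interval summands. By comparison, the gluing step in the $\ZCat$ argument and the reduction of the $\ZZCat$ case to a totally ordered one are comparatively routine once the $\RCat$ result is in hand.
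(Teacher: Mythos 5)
The paper does not prove this theorem; it is imported wholesale from the cited references (Webb/Crawley-Boevey for $\RCat$ and $\ZCat$, and \cite{botnan2015interval} for $\ZZCat$), so there is no in-paper argument to compare against. Your outline is a faithful sketch of how those references actually proceed: Gabriel's theorem on finite truncations plus a stabilization/gluing argument for $\ZCat$, Crawley-Boevey's functorial-filtration construction via the maps $\varinjlim_{b<a}M_b\to M_a\to\varprojlim_{b>a}M_b$ for $\RCat$, and an extension of these ideas to the alternating orientation for $\ZZCat$. You also correctly identify where the genuine content lies (the global assembly over an uncountable totally ordered index, where pointwise finite-dimensionality does not by itself give Krull--Remak--Schmidt). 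Two small caveats: the proof in \cite{botnan2015interval} for the zigzag case is a direct limit argument over finite zigzags (using Gabriel's theorem for $A_n$ quivers of arbitrary orientation) rather than a reduction to a totally ordered poset, which your hedged phrasing covers; and your ``slicker unified alternative'' (decompose any \pfd module over a small category into summands with local endomorphism rings, then identify them as intervals) rests on a general decomposition theorem that was only established after this paper and whose second step is itself the nontrivial part for $\RCat$ and $\ZZCat$, so it should not be presented as a shortcut. As a proof sketch deferring to the cited constructions, your proposal is accurate and appropriately scoped.
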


\begin{remark}
For $\ZZCat$-indexed modules, this structure theorem has typically appeared in the TDA literature under an additional finiteness assumption---see \cite{carlsson2010zigzag}, for example.  However, a proof of the general result as stated above can be found in \cite{botnan2015interval}.  
\end{remark}

\subsection{Multidimensional Persistence Modules and Interleavings}\label{Sec:Multi_D_And_Interleavings_Intro}
\paragraph{Multidimensional Persistence Modules}
For $n\geq 1$, let $\RCat^n$ denote the poset obtained by taking the product of $\RCat$ with itself $n$ times.  $\RCat^n$-indexed modules are known in the TDA literature as \emph{$n$-dimensional persistence modules}.  They arise naturally in the study of data with noise or non-uniformities in density; see e.g. \cite{carlsson2009theory,chazal2011geometric,lesnick2015interactive}.

\begin{remark}
The analogue of \cref{Structure_Theorem} does not hold for $\PCat=\RCat^n$ when $n\geq 2$.  Indeed, it is a basic lesson from the representation theory of quivers that an arbitrary $\PCat$-indexed module $M$ is interval decomposable only for very special choices of $\PCat$.
\end{remark}

\paragraph{Interleavings of $\RCat^n$-indexed Functors}
For $\C$ an arbitrary category and $u\in \R^n$, define the \emph{$u$-shift functor} $(-)(u): \C^{\RCat^n} \to \C^{\RCat^n}$ on objects by $M(u)_a = M_{u+a}$, together with the obvious internal morphisms, and on morphisms $f: M\to N$ by $f(u)_a = f(u+a): M(u)_a \to N(u)_a$. For $u\in [0,\infty)^n$, let $\phi_M^u: M \to M(u)$ be the morphism whose restriction to each $M_a$ is the linear map $\phi_M(a, a+u)$. For $\epsilon\in [0,\infty)$ we will abuse notation slightly by letting $(-)(\epsilon)$ denote the $\epsilon(1, \ldots, 1)$-shift functor, and letting $\phi_M^\epsilon$ denote $\phi^{\epsilon(1, \ldots, 1)}_M$.

\begin{definition}
Given $\epsilon\in [0,\infty)$, we say functors $M,N:\RCat^n\to \C$ are $\epsilon$-interleaved if there exist morphisms $f: M\to N(\epsilon)$ and $g: N\to M(\epsilon)$ such that 
\[g(\epsilon)\circ f = \phi_M^{2\epsilon},\qquad f(\epsilon)\circ g = \phi_N^{2\epsilon}.\]
\end{definition}
We call $f$ and $g$ \emph{$\epsilon$-interleaving morphisms}. 
The \emph{interleaving distance} \[d_I: \ob(\C^{\RCat^n})\times\ob(\C^{\RCat^n})\to [0,\infty]\] is given by 
\[ d_I(M,N) = \inf \{\epsilon \geq 0 \mid \text{$M$ and $N$ are $\epsilon$-interleaved}\}.\]
$d_I$ is an \emph{extended pseudometric}; that is, $d_I$ is symmetric, $d_I$ satisfies the triangle inequality, and $d_I(M,M)=0$ for all $\RCat^n$-indexed modules $M$.  

\paragraph{Interleavings and $\epsilon$-trivial (co-)kernels}
For $u\in [0,\infty)^n$, we say an $n$-D persistence module $M$ is \emph{$u$-trivial} if $\phi_M^u=0$.  For $\epsilon\in [0,\infty)$, we say $M$ is $\epsilon$-trivial if $M$ is $(\epsilon,\epsilon,\ldots,\epsilon)$-trivial.  Note that $M$ is $2\epsilon$-trivial if and only if $M$ is $\epsilon$-interleaved with $0$.

\begin{remark}\label{Rem:Interleaving_And_Small_(Co)Kernel}
It is an easy exercise to show that if $f:M\to N(\epsilon)$ is an $\epsilon$-interleaving morphism, then $\ker f$ and $\coker f$ are each $2\epsilon$-trivial.  For $n=1$, the converse is also true; for $n>1$, only a weaker converse holds: if $f:M\to N(\epsilon)$ has $2\epsilon$-trivial (co)kernel, then $f$ is a $2\epsilon$-interleaving morphism, but it may not be the case that $M$ and $N$ are $\epsilon'$-interleaved for any $\epsilon'<2\epsilon$; see \cite{bauer2015induced} for details.  
\end{remark}

\paragraph{Duals of Persistence Modules}
Dualizing each vector space and each linear map in a $\PCat$-indexed module $M$ yields an $\PCat^{\op}$-indexed module $M^*$.  
As in the case of finite dimensional vector spaces, when $M$ is \pfd, $M^{**}$ is canonically isomorphic to $M$.  Moreover, given a map $f:M\to N$ of $\PCat$-indexed modules, we have a dual map $f^*:N^*\to M^*$.  This gives a functor \[(-)^*:\Vect^{\PCat}\to \Vect^{\PCat^{\op}}.\]
We omit the proof of the following:
\begin{proposition}\mbox{}\label{Prop:Kernel_Dualization}
\begin{enumerate}[(i)]
\item If $f:M\to N$ is a morphism of $\RCat^{n}$-indexed modules with $\epsilon$-trivial kernel, then $f^{*}$ has $\epsilon$-trivial cokernel.
\item Dually, if $f$ has $\epsilon$-trivial cokernel, then $f^{*}$ has $\epsilon$-trivial kernel.
\end{enumerate}
\end{proposition}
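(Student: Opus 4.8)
The plan is to reduce both parts to two elementary facts: that pointwise dualization $(-)^* = \Hom(-,k)$ is an exact functor on $\Vect$, and that the dual of an $\epsilon$-trivial module is again $\epsilon$-trivial. Since every vector space is injective over $k$ (equivalently, every short exact sequence of vector spaces splits), $\Hom(-,k)$ is exact, so for any linear map $g\colon V\to W$ one has natural isomorphisms $\ker(g^*)\cong(\coker g)^*$ and $\coker(g^*)\cong(\ker g)^*$; the underlying identities are $\ker(g^*)=(\im g)^\perp$ and $\im(g^*)=(\ker g)^\perp$, the second being the place where a splitting is used. Applying this at each index, and using that kernels, cokernels, and images of morphisms of $\RCat^n$-indexed modules are formed pointwise, the canonical epi-mono factorization $M\twoheadrightarrow\im f\hookrightarrow N$ dualizes to $N^*\twoheadrightarrow(\im f)^*\hookrightarrow M^*$ with composite $f^*$. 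Dualizing the canonical short exact sequences $0\to\ker f\to M\to\im f\to 0$ and $0\to\im f\to N\to\coker f\to 0$ then yields isomorphisms of persistence modules
\[ \ker(f^*)\;\cong\;(\coker f)^*, \qquad \coker(f^*)\;\cong\;(\ker f)^*. \]
One should check that these are isomorphisms of functors rather than merely pointwise isomorphisms, but this is immediate from naturality of all the maps involved.

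Next I would establish the auxiliary claim that if an $\RCat^n$-indexed module $P$ is $\epsilon$-trivial, then so is $P^*$, regarded as a module over $(\RCat^n)^{\op}$ via the order-reversing identification of $(\RCat^n)^{\op}$ with $\RCat^n$ given by negation. Under this identification every component of $\phi_{P^*}^\epsilon$ is the dual of a component of $\phi_P^\epsilon$, hence is zero, so $\phi_{P^*}^\epsilon=0$. Since $\epsilon$-triviality is an invariant of the isomorphism class of a functor, combining the auxiliary claim with the two displayed isomorphisms settles both parts at once: if $\ker f$ is $\epsilon$-trivial then $\coker(f^*)\cong(\ker f)^*$ is $\epsilon$-trivial, which is (i); and if $\coker f$ is $\epsilon$-trivial then $\ker(f^*)\cong(\coker f)^*$ is $\epsilon$-trivial, which is (ii).

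I expect the only real work to be the bookkeeping in the auxiliary claim --- keeping the opposite-poset and $\epsilon$-shift conventions consistent --- together with the naturality check in the first step; both are routine. As an alternative route that avoids the homological language, one can argue entirely pointwise. For (i), $\epsilon$-triviality of $\ker f$ says exactly that $\ker(f_a)\subseteq\ker(\phi_M(a,a+\epsilon(1,\dots,1)))$ for every $a$, so $\phi_M(a,a+\epsilon(1,\dots,1))$ factors through the surjection $M_a\twoheadrightarrow\im f_a$; dualizing this factorization and using $\im((f_a)^*)=(\ker f_a)^\perp$ gives $\im(\phi_M(a,a+\epsilon(1,\dots,1))^*)\subseteq\im((f_a)^*)$, which is precisely the vanishing of the relevant structure map of $\coker(f^*)$. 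Part (ii) is symmetric, using instead $\ker((f_a)^*)=(\im f_a)^\perp$.
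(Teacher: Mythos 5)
Your argument is correct, and in fact the paper explicitly omits any proof of this proposition, so there is nothing to compare it against; what you give is the standard argument the authors presumably had in mind. Both routes you sketch are sound: the identifications $\ker(f^*)\cong(\coker f)^*$ and $\coker(f^*)\cong(\ker f)^*$ follow from exactness of $\Hom(-,k)$ applied pointwise, the dual of an $\epsilon$-trivial module is $\epsilon$-trivial under the negation identification of $(\RCat^n)^{\op}$ with $\RCat^n$, and your pointwise alternative correctly isolates where the splitting of subspaces over a field is actually needed (only in the equality $\im(f_a^*)=(\ker f_a)^\perp$ used for part (i)).
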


\subsection{The Isometry Theorem}
\label{Sec:Isometry}

\paragraph{Matchings}
A \emph{matching} $\sigma$ between multisets $S$ and $T$ (written as $\sigma: S \nrightarrow T$) is a bijection $\sigma: S\supseteq S^\prime \to T^\prime\subseteq T$. Formally, we regard $\sigma$ as a relation $\sigma\subseteq S\times T$ where $(s,t)\in \sigma$ if and only if $s\in S^\prime$ and $\sigma(s) = t$. We call $S^\prime$ and $T^\prime$ the \emph{coimage} and \emph{image} of $\sigma$, respectively, and denote them by $\coim\sigma$ and $\im\sigma$.  If $w\in \coim \sigma\cup \im \sigma$, we say that \emph{$\sigma$ matches $w$.}  We say that $\sigma$ is \emph{bijective} if $S'=S$ and $T'=T$.   

For two matchings $\sigma: S\nrightarrow R$ and $\tau: R\nrightarrow T$ we define the \emph{composite matching} $\tau\circ \sigma: S\nrightarrow T$ by taking $(s,t)\in \tau \circ\sigma$ if and only if $(r,t)\in \tau$ and $(s,r)\in \sigma$ for some $r\in R$. 

\paragraph{Generalized $\epsilon$-Matchings and Bottleneck Distance}
We now introduce a generalization of the bottleneck distance to barcodes over $\R^n$.  

We say intervals $\J,\K\subseteq \R^n$ are $\epsilon$-interleaved if $I^\J$ and $I^\K$ are $\epsilon$-interleaved.  Similarly, we say $\J$ is $\epsilon$-trivial if $I^\J$ is $\epsilon$-trivial, i.e., if for each $a\in \J$, $a+\epsilon(1, \ldots, 1)\not \in \J$.   For $\C$ a barcode over $\R^n$ and $\epsilon\geq 0$, define $\C_{\epsilon}\subseteq \C$ to be the multiset of intervals $\J$ in $\C$ such that are not $\epsilon$-trivial.  

Define an \emph{$\epsilon$-matching} between barcodes $\mathcal C$ and $\mathcal D$ to be a matching 
$\sigma:\C\nrightarrow \D$ satisfying the following properties:
\begin{enumerate}
\item $\C_{2\epsilon} \subseteq \coim \sigma$ and $\D_{2\epsilon}\subseteq \im \sigma$.
\item If $\sigma(\J)=\K$, then $\J$ and $\K$ are $\epsilon$-interleaved.
\end{enumerate}
For barcodes $\C$ and $\D$, we define the bottleneck distance $d_b$ by
\[d_b(\mathcal C,\mathcal D)=\inf\, \{\epsilon\in [0,\infty) \mid \exists\textup{ an }\epsilon\textup{-matching between }\mathcal C\textup{ and }\mathcal D\}.\]
It is not hard to check that $d_b$ is an extended pseudometric.  In particular, it satisfies the triangle inequality.

\paragraph{$\epsilon$-Matchings of Barcodes Over $\R$}
For $\J\subset \R$ an interval and $\epsilon\geq 0$, let the interval $\ex^\epsilon(\J)$ be given by \[\ex^\epsilon(\J)=\{a\in \R\mid \exists\, b\in \J\textup{ with }|a-b|\leq \epsilon\}.\]
It is easy to check that intervals $\J,\K\subseteq \R$ are $\epsilon$-interleaved if and only if either $\J\subseteq \ex^\epsilon(\K)$  and $\K\subseteq \ex^\epsilon(\J)$, or $\J$ and $\K$ are both 2$\epsilon$-trivial.  Moreover, $\J$ is $2\epsilon$-trivial if and only if for some $a\in \R$, $\J$ is strictly contained in the interval $[a,a+2\epsilon]$. This gives us a concrete description of $\epsilon$-matchings of barcodes over $\R$.

\begin{remark}
In the 1-D setting, our definition of $\epsilon$-matching is slightly different from the one given in \cite{bauer2015induced}, because it allows us to match $2\epsilon$-trivial intervals that are far away from each other.  However, this difference turns out to be of no importance; in particular, it is easy to see that the two definitions of $\epsilon$-matching yield equivalent definitions of $d_b$.  
\end{remark}

\paragraph{The Isometry Theorem}
In its strong formulation for \pfd persistence modules \cite{bauer2015induced}, the isometry theorem says the following:
\begin{theorem}[Isometry, \cite{chazal2009proximity,lesnick2015theory,chazal2012structure, bauer2015induced}]\label{Thm:Isometry_1D}
P.f.d. $\RCat$-indexed persistence modules $M$ and $N$ are $\epsilon$-interleaved if and only if there exists an \emph{$\epsilon$}-matching between $\B(M)$ and $\B(N)$.  In particular, \[d_I(M,N)=d_b(\B(M),\B(N)).\]
\end{theorem}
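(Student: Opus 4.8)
The plan is to prove the two directions of the biconditional separately; the metric equality $d_I(M,N)=d_b(\B(M),\B(N))$ then follows formally, since each side is the infimum of the $\epsilon$ for which the corresponding structure ($\epsilon$-interleaving, resp.\ $\epsilon$-matching) exists. The converse direction is routine: given an $\epsilon$-matching $\sigma:\B(M)\nrightarrow\B(N)$, fix decompositions $M\cong\bigoplus_{\J\in\B(M)}I^\J$ and $N\cong\bigoplus_{\K\in\B(N)}I^\K$ via \cref{Structure_Theorem}. For each matched pair $\sigma(\J)=\K$ the intervals are $\epsilon$-interleaved, so by definition $I^\J$ and $I^\K$ are $\epsilon$-interleaved; fix interleaving morphisms between them. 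Each unmatched interval is $2\epsilon$-trivial, so its interval module is $\epsilon$-interleaved with $0$ via the zero morphisms. Taking the direct sum of all these morphisms gives $f:M\to N(\epsilon)$ and $g:N\to M(\epsilon)$, and the identities $g(\epsilon)\circ f=\phi_M^{2\epsilon}$, $f(\epsilon)\circ g=\phi_N^{2\epsilon}$ hold because they hold on each summand.

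The forward direction is the algebraic stability theorem, and I would prove it by the induced-matching method. Fix $\epsilon$-interleaving morphisms $f:M\to N(\epsilon)$ and $g:N\to M(\epsilon)$; by \cref{Rem:Interleaving_And_Small_(Co)Kernel}, both $\ker f$ and $\coker f$ are $2\epsilon$-trivial. Factor $f$ through its image as $M\twoheadrightarrow\im f\hookrightarrow N(\epsilon)$. The crux is a pair of structural lemmas about \pfd $\R$-indexed modules: (i) a monomorphism $A\hookrightarrow B$ induces a canonical injective matching $\B(A)\hookrightarrow\B(B)$ under which matched intervals share their right endpoint, the $B$-interval has the no-larger left endpoint, and --- whenever $\coker(A\hookrightarrow B)$ is $\delta$-trivial --- the two left endpoints differ by at most $\delta$ while every unmatched interval of $B$ is $\delta$-trivial; (ii) dually, an epimorphism $B\twoheadrightarrow C$ induces a canonical injective matching $\B(C)\hookrightarrow\B(B)$ under which matched intervals share their left endpoint, the $B$-interval has the no-smaller right endpoint, and --- whenever $\ker(B\twoheadrightarrow C)$ is $\delta$-trivial --- right endpoints differ by at most $\delta$ and unmatched intervals of $B$ are $\delta$-trivial. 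Applying (ii) to $M\twoheadrightarrow\im f$ and (i) to $\im f\hookrightarrow N(\epsilon)$ and composing through $\B(\im f)$ yields a matching $\chi(f):\B(M)\nrightarrow\B(N(\epsilon))$ whose unmatched intervals, on both sides, are $2\epsilon$-trivial, and in which a matched interval $[b,d)$ of $M$ is sent to an interval $[b',d')$ of $N(\epsilon)$ with $b'\le b\le b'+2\epsilon$ and $d'\le d\le d'+2\epsilon$.

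Finally I would transport $\chi(f)$ along the canonical shift bijection $\B(N(\epsilon))\leftrightarrow\B(N)$, which moves each interval by $\epsilon$, to obtain $\sigma:\B(M)\nrightarrow\B(N)$. A matched pair of $\sigma$ now consists of $[b,d)$ in $\B(M)$ and $[b'+\epsilon,d'+\epsilon)$ in $\B(N)$; the one-sided bounds above become $|b-(b'+\epsilon)|\le\epsilon$ and $|d-(d'+\epsilon)|\le\epsilon$, and by the explicit description of $\epsilon$-interleavings of $\R$-intervals recalled in \cref{Sec:Isometry} this says exactly that the two intervals are $\epsilon$-interleaved, giving property (2) of an $\epsilon$-matching. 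The unmatched intervals remain $2\epsilon$-trivial after shifting, so none of them lies in $\B(M)_{2\epsilon}$ or $\B(N)_{2\epsilon}$; this is property (1). Hence $\sigma$ is an $\epsilon$-matching, which completes the forward direction.

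The main obstacle is establishing lemmas (i) and (ii): these are the real mathematical content, and proving them requires a careful analysis of how interval decompositions behave under monomorphisms and epimorphisms --- in effect, setting up the single-morphism induced matching machinery over $\R$. A secondary technical nuisance, absent in the $\ZCat$- or $\ZZCat$-indexed settings, is keeping track of the four interval types (open or closed at each end) so that the endpoint statements hold uniformly; a convenient workaround is to restrict everything first to a locally finite subposet of $\R$ containing all the endpoints under consideration. (An alternative to the entire forward argument is the interpolation construction of \cite{chazal2009proximity}, producing a family of modules interpolating between $M$ and $N$ and tracking the continuity of the barcode; the induced-matching route above is, however, closer in spirit to the methods used elsewhere in this paper, and is the one we will extend to block decomposable modules.)
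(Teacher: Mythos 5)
Your proposal is correct and follows essentially the same route the paper takes (and attributes to \cite{bauer2015induced}): the converse direction is the trivial summand-wise construction of \cref{lem:converseAST}, and the forward direction is the induced-matching argument---epi-mono factorization of an interleaving morphism, canonical matchings for monomorphisms and epimorphisms with $2\epsilon$-trivial (co)kernels, composition through $\B(\im f)$, and the shift by $\epsilon$---that underlies \cref{teo:IMT}. Your lemmas (i) and (ii) are precisely the single-morphism results of \cite{bauer2015induced}, so the only content you defer is exactly what that reference supplies.
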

See also \cite{chazal2012structure} or \cite{bauer2015induced} for a version of the isometry theorem which applies to a more general class of 1-D persistence modules called q-tame.

\paragraph{The Induced Matching Theorem}
As noted in the introduction, the induced matching theorem \cite{bauer2015induced} concerns a simple map $\chi$ sending each morphism $f:M\to N$ of \pfd $\RCat$-indexed modules to a matching $\chi(f):\B(M)\nrightarrow \B(N)$.  
We will not need the full strength of the induced matching theorem, and so to minimize the amount of notation we introduce, we present a slightly weaker version of the result.  

For $a\leq b\in \R$, let $\langle a,b\rangle\subset \R$ denote an interval in $\R$ with left endpoint $a$ and right endpoint $b$.  Thus $\langle a,a\rangle=[a,a]$, and for $a<b$, $\langle a,b\rangle$ denotes one of the intervals $(a,b)$, $[a,b]$, $[a,b)$, $(a,b]$. 

\begin{theorem}[Induced Matchings]
\label{teo:IMT}
For $f:M\to N$ a morphism of \pfd $\RCat$-indexed modules with $\epsilon$-trivial kernel and $\delta$-trivial cokernel,
\begin{enumerate}[(i)]
\item $\B(M)_\epsilon \subset \coim \chi(f)$,
\item $\B(N)_\delta\subset \im \chi(f)$,
\item If $\chi(f)$ matches $\langle a,b\rangle \in \B(M)$ to $\langle a',b'\rangle \in \B(N)$ then 
\[a'\leq a\leq a'+\delta, \qquad   b-\epsilon \leq b'\leq b,\qquad a\leq b'.\]
\end{enumerate}
\end{theorem}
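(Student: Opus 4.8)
The plan is to follow the induced-matching strategy of \cite{bauer2015induced}: define $\chi(f)$ via the canonical epi--mono factorization $M \xrightarrow{p} \im f \xrightarrow{\iota} N$, where $\chi$ is built separately for epimorphisms and for monomorphisms and then $\chi(f):=\chi(\iota)\circ\chi(p)$. Since $\ker p=\ker f$ is $\epsilon$-trivial and $\coker\iota=\coker f$ is $\delta$-trivial, the theorem reduces to analysing these two building blocks and then composing the resulting endpoint bounds and coverage statements.

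The core is the following structural claim together with its dual. For a monomorphism $\iota\colon P\hookrightarrow N$ with $\delta$-trivial cokernel: fix a right endpoint $b$ (a value together with an open/closed type) and list the intervals of $\B(P)$ with that right endpoint in nondecreasing order of left endpoint as $\langle a_1,b\rangle,\dots,\langle a_p,b\rangle$, and similarly $\langle a_1',b\rangle,\dots,\langle a_q',b\rangle$ for $N$; then $p\le q$, one has $a_i'\le a_i\le a_i'+\delta$ for $i\le p$, and every interval of $\B(N)$ that is not $\delta$-trivial appears among the first $p$ entries of its list. Setting $\chi(\iota)$ to match $\langle a_i,b\rangle$ to $\langle a_i',b\rangle$ for all $b$ and all $i\le p$, this matches every interval of $\B(P)$ (so $\coim\chi(\iota)=\B(P)$), has $\im\chi(\iota)\supseteq\B(N)_\delta$, preserves right endpoints, and moves left endpoints leftward by at most $\delta$. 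The epimorphism case is obtained by applying the dualization functor of \cref{Prop:Kernel_Dualization} together with the order-reversing identification $\R\cong\R^{\op}$ (which interchanges monomorphisms and epimorphisms, kernels and cokernels, and reflects barcodes across the diagonal), so $\chi(p)\colon\B(M)\nrightarrow\B(\im f)$ matches every interval of $\B(\im f)$ (so $\im\chi(p)=\B(\im f)$), has $\coim\chi(p)\supseteq\B(M)_\epsilon$, preserves left endpoints, and moves right endpoints leftward by at most $\epsilon$. I would prove the monomorphism claim by rank-function bookkeeping: $\ker\iota=0$ gives $\operatorname{rank}\phi_P(s,t)=\operatorname{rank}(\phi_N(s,t)\circ\iota_s)\le\operatorname{rank}\phi_N(s,t)$ for all $s\le t$, while $\delta$-triviality of $\coker\iota$ forces $\im\phi_N(s,t)\subseteq\iota(P_t)$ once $t-s\ge\delta$; feeding these inequalities (and their one-sided limits, which are what record endpoint types) into the inclusion--exclusion formula for the multiplicity of a bar with a given right endpoint yields the claim.

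With the two blocks in hand, the conclusions follow by composition. Parts (i) and (ii): $\chi(p)$ matches all of $\B(\im f)=\coim\chi(\iota)$ and $\chi(\iota)$ matches all of $\B(\im f)=\im\chi(p)$, so any bar in $\B(M)_\epsilon\subseteq\coim\chi(p)$ has $\chi(p)$-image in $\coim\chi(\iota)$ and hence lies in $\coim\chi(f)$, and any bar in $\B(N)_\delta\subseteq\im\chi(\iota)$ has its $\chi(\iota)$-preimage in $\im\chi(p)$ and hence lies in $\im\chi(f)$. Part (iii): the epi step sends $\langle a,b\rangle$ to some $\langle a,b''\rangle\in\B(\im f)$ with $b-\epsilon\le b''\le b$, and the mono step sends $\langle a,b''\rangle$ to some $\langle a',b''\rangle\in\B(N)$ with $a'\le a\le a'+\delta$; so $b'=b''$ yields the bounds on $b'$, and $a\le b''=b'$ because $\langle a,b''\rangle$ is a nonempty interval. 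The main obstacle is the structural claim for monomorphisms, and inside it the handling of half-open endpoints: the slogan ``a monomorphism preserves right endpoints'' is only true once endpoint types are pinned down, which is precisely why one must pass to one-sided limits of the rank functions; separately, obtaining the tight constant $\delta$ in the coverage statement $\B(N)_\delta\subseteq\im\chi(\iota)$ (rather than a weaker bound) requires reading the lengths of the unmatched $N$-bars directly off $\coker\iota$ instead of through a crude interleaving estimate. The remaining steps are routine diagram-chasing.
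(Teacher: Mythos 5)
The paper itself does not prove \cref{teo:IMT}; it is presented as a weakened restatement of the main theorem of \cite{bauer2015induced}, and that proof is cited, not reproduced. Your overall strategy---factor $f$ as $M\twoheadrightarrow\im f\hookrightarrow N$, construct the mono-side matching by sorting bars with a common decorated right endpoint by left endpoint, obtain the epi side by dualizing via $\R\cong\R^{\op}$ and \cref{Prop:Kernel_Dualization}, and compose---is precisely the strategy of \cite{bauer2015induced}, and your composition step for parts (i)--(iii) is correct.

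However, the step in which you justify the structural claim for monomorphisms by ``rank-function bookkeeping'' has a genuine gap. The inequality $\operatorname{rank}\phi_P(s,t)\le\operatorname{rank}\phi_N(s,t)$ is strictly weaker than what the inclusion--exclusion argument needs. The quantity you must control is, for each decorated $a\le b$, the number $\beta_M(a,b)$ of bars of $\B(M)$ that contain $a$ and have decorated right endpoint $b$; you need $\beta_P(a,b)\le\beta_N(a,b)$. But $\beta_M(a,b)$ is a \emph{difference} of (one-sided limits of) ranks, and term-wise inequalities in the same direction on each term do not control the sign of the difference of differences. Concretely, $P=I^{[0,1]}$ and $N=I^{[0,2]}$ satisfy $\operatorname{rank}\phi_P(s,t)\le\operatorname{rank}\phi_N(s,t)$ for all $s\le t$, yet $\beta_P(0,1)=1>0=\beta_N(0,1)$. (No monomorphism $P\hookrightarrow N$ exists, of course, but that is extra information the rank inequality alone does not encode.) So ``feeding the rank inequalities into the inclusion--exclusion formula'' does not, as written, yield the claim.

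The repair is to use that $\iota$ exhibits $P$ as a genuine submodule of $N$, which yields containments of \emph{subspaces}, not just of their dimensions: under $\iota_b$, the subspace of $P_b$ whose dimension computes $\beta_P(a,b)$---the intersection of $\im\phi_P(a,b)$ with the appropriate kernel of the transition maps just past $b$---lands inside the corresponding subspace of $N_b$, whence $\beta_P(a,b)\le\beta_N(a,b)$. The reverse bound $\beta_N(a,b)\le\beta_P(a+\delta,b)$ for $b-a\ge\delta$ follows similarly from the image containment $\im\phi_N(s,t)\subseteq\iota_t(P_t)$ for $t-s\ge\delta$, which you correctly extract from $\delta$-triviality of the cokernel, together with the same kernel containment. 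Once these two families of $\beta$-inequalities are in hand, your sorting construction, the coverage statements, and the endpoint bounds all go through as you describe; the gap is local and repairable, but the sketch does not close it.
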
 

\paragraph{Converse Algebraic Stability}
One direction of \cref{Thm:Isometry_1D} generalizes immediately to interval decomposable $\R^n$-indexed modules; given the way we have defined $\epsilon$-matchings, the proof is essentially trivial.
\begin{proposition}[Converse Algebraic Stability]\label{lem:converseAST}
For interval decomposable $\RCat^n$-indexed modules $M$ and $N$, if there exists an $\epsilon$-matching between $\B(M)$ and $\B(N)$, then $M$ and $N$ are $\epsilon$-interleaved.  In particular,  
\[ d_I(M,N) \leq d_b(\B(M), \B(N)).\]
\end{proposition}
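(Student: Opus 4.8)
The plan is to read off the interleaving morphisms directly from the interval decompositions $M\cong\bigoplus_{\J\in\B(M)}I^{\J}$ and $N\cong\bigoplus_{\K\in\B(N)}I^{\K}$ together with a fixed $\epsilon$-matching $\sigma:\B(M)\nrightarrow\B(N)$. Property (1) in the definition of an $\epsilon$-matching says precisely that every interval of $\B(M)$ not lying in $\coim\sigma$ is $2\epsilon$-trivial, and likewise every interval of $\B(N)$ not in $\im\sigma$; property (2) says that each matched pair $(\J,\sigma(\J))$ consists of $\epsilon$-interleaved intervals, i.e.\ $I^{\J}$ and $I^{\sigma(\J)}$ are $\epsilon$-interleaved. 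So I would first split $M\cong A\oplus B$ and $N\cong A'\oplus B'$, where $A=\bigoplus_{\J\in\coim\sigma}I^{\J}$, $A'=\bigoplus_{\K\in\im\sigma}I^{\K}$, and $B$, $B'$ are the direct sums of the remaining interval modules, each summand of which is $2\epsilon$-trivial.

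Next I would record two elementary facts. First, the shift functor $(-)(\epsilon)$ and the canonical morphisms $\phi^{2\epsilon}_{(-)}$ commute with arbitrary direct sums; hence, if $\{(f_i,g_i)\}_{i\in I}$ are $\epsilon$-interleaving morphisms for pairs $(M_i,N_i)$, then $(\bigoplus_i f_i,\bigoplus_i g_i)$ is an $\epsilon$-interleaving of $\bigoplus_i M_i$ and $\bigoplus_i N_i$, since $\bigl(\bigoplus_i g_i\bigr)(\epsilon)\circ\bigoplus_i f_i=\bigoplus_i\bigl(g_i(\epsilon)\circ f_i\bigr)=\bigoplus_i\phi^{2\epsilon}_{M_i}=\phi^{2\epsilon}_{\bigoplus_i M_i}$, and symmetrically. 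Second, a $2\epsilon$-trivial module $P$ is $\epsilon$-interleaved with $0$ via the zero morphisms $P\to 0$ and $0\to P(\epsilon)$, whose composites are $0=\phi^{2\epsilon}_P$ and $0=\phi^{2\epsilon}_0$; this is exactly the observation already noted in the text, and it also shows $B$ and $B'$ are $2\epsilon$-trivial, being direct sums of $2\epsilon$-trivial modules.

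Applying the first fact to the family $\{(I^{\J},I^{\sigma(\J)})\}_{\J\in\coim\sigma}$, using that $\sigma$ restricts to a bijection $\coim\sigma\to\im\sigma$, gives $\epsilon$-interleaving morphisms $f_A:A\to A'(\epsilon)$ and $g_A:A'\to A(\epsilon)$. I would then assemble the interleaving of $M\cong A\oplus B$ with $N\cong A'\oplus B'$ directly by setting $f:=f_A\oplus 0$ and $g:=g_A\oplus 0$, where the zero blocks are the maps $B\to B'(\epsilon)$ and $B'\to B(\epsilon)$. Then $g(\epsilon)\circ f$ equals $\phi^{2\epsilon}_A$ on $A$ and equals $0=\phi^{2\epsilon}_B$ on $B$ (the latter because $B$ is $2\epsilon$-trivial), so $g(\epsilon)\circ f=\phi^{2\epsilon}_M$, and symmetrically $f(\epsilon)\circ g=\phi^{2\epsilon}_N$. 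The ``in particular'' inequality $d_I(M,N)\leq d_b(\B(M),\B(N))$ is then immediate from the definitions as infima.

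There is no real obstacle here; the text itself flags the proof as essentially trivial. The one point that needs care is avoiding a spurious factor of $2$: one must combine the interleaving of the matched parts with the zero interleavings of the $2\epsilon$-trivial parts into a \emph{single} direct-sum interleaving of $M$ and $N$, rather than chaining $M\sim_\epsilon A\sim_\epsilon A'\sim_\epsilon N$ through the (non-$\epsilon$-preserving) composition of interleavings. The only other bookkeeping is to confirm the direct-sum constructions are legitimate for possibly infinite barcodes, which reduces to the compatibility of $(-)(\epsilon)$ and $\phi^{2\epsilon}_{(-)}$ with direct sums noted above.
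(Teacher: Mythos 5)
Your proof is correct and is exactly the argument the paper has in mind when it omits the proof as ``essentially trivial'': match summands pairwise, interleave the unmatched $2\epsilon$-trivial summands with $0$, and assemble everything into a single block-diagonal interleaving. Your explicit attention to avoiding the factor of $2$ from composing interleavings, and to the compatibility of shifts and $\phi^{2\epsilon}$ with infinite direct sums, is exactly the right bookkeeping.
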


\subsection{$\INTR$-Indexed Modules as 2-D Persistence Modules}\label{Sec:Cld_To_2D}
Recalling the definition of $\INTR$ from \cref{Sec:Persistence_Modules}, we define a functor $\EMB:\Vect^\INTR\to \Vect^{\RCat^{\op}\times \RCat}$, given on objects $M$ by taking $\EMB(M)$ to be trivial outside of $\U$; explicitly, we define $\EMB(M)$ by
\begin{align*}
\EMB(M)_{(a,b)}&=\begin{cases}
M_{(a,b)} &\textup{ if }a\leq b,\\
0 & \textup{otherwise, }
\end{cases}\\
\varphi_{\EMB(M)}((a,b),(c,d))&=\begin{cases}
\varphi_M((a,b),(c,d))&\textup{ if }c\leq a\leq b\leq d,\\
0 & \textup{otherwise,}\\
\end{cases}
\end{align*}
with the action of $\EMB$ on morphisms defined in the obvious way.  
Clearly, $\EMB$ is fully faithful, so by way of this functor, we may regard $\Vect^\INTR$ as full subcategory of $\Vect^{\RCat^{\op}\times \RCat}$. 

\begin{remark}[$\RCat^{\op}\times \RCat$-Indexed and $\RCat\times \RCat^{\op}$-Indexed Modules]\label{Rmk:RopVsR2}
The isomorphism $\RCat\to\RCat^{\op}$ sending each $a\in \R$ to $-a$ induces an isomorphism $\RCat^{\op}\times \RCat\to\RCat^2$.  This in turn induces an isomorphism $\Vect^{\RCat^2}\to \Vect^{\RCat^{\op}\times \RCat}.$  By way of these isomorphisms, all the definitions introduced in \cref{Sec:Isometry} in the $\RCat^n$-indexed case, e.g. of $\epsilon$-interleavings and $\epsilon$-matchings, carry over to the $\RCat^{\op}\times \RCat$-indexed setting. Similarly, they carry over to the $\RCat\times \RCat^\op$-indexed setting.  
\end{remark}

\subsection{Kan Extensions}
In several places in this paper, we introduce functors $\Vect^\ACat\to \Vect^\BCat$ for distinct posets $\ACat$ and $\BCat$, as we have in \cref{Sec:Cld_To_2D} above.  For this, it will be convenient to adopt the language of Kan extensions.  We now briefly review Kan extensions in the specific setting of interest to us, giving concrete formulae in terms of limits and colimits.  See \cite{mac1998categories} for the standard, fully general definition of Kan extensions.

Given a functor of posets $F: \ACat\to \BCat$, and $b\in \BCat$, let \[\ACat[F\leq b]:=\{a\in \ACat\mid F(a)\leq b\}.\]  Define $\ACat[F\geq b]\subseteq \ACat$ analogously.

Given a persistence module $M:\ACat\to \Vect$, one defines a persistence module $\Lan_F(M):\BCat\to \Vect$, called the \emph{left Kan extension of $M$ along $F$}, by taking 
\[\Lan_F(M)(b)=\varinjlim M|_{\ACat[F\leq b]},\] 
with the internal maps $\Lan_F(b)\to \Lan_F(b')$ given by universality of colimits for all $b\leq b'$.  Given $M,N:\ACat\to \Vect$ and a natural transformation $f:M\to N$, universality of colimits also yields an induced morphism $\Lan_F(f):\Lan_F(M)\to \Lan_F(N).$  We thus obtain a functor $\Lan_F(-):\Vect^\ACat\to \Vect^\BCat$.

\begin{example}
For $\EMB$ the functor defined in \cref{Sec:Cld_To_2D}, letting $\iem:\U\hookrightarrow \RCat^{\op}\times \RCat$ denote the inclusion, we have $\EMB=\Lan_\iem(-)$.
\end{example}

Dually, one also defines a persistence module $\Ran_F(M):\ACat\to \Vect$, the 
\emph{right Kan extension of $M$ along $F$} by taking 
\[\Ran_F(M)(b)=\varprojlim M|_{\ACat[F\geq b]},\] 
with the internal maps given by universality of limits.  As with left Kan extensions, this definition is functorial, so that we obtain a functor $\Ran_F(-):\Vect^\ACat\to \Vect^\BCat$.

\begin{proposition}\label{Prop:Preservation_of_Coproducts}
\mbox{}
\begin{enumerate}[(i)]
\item $\Lan_F(-)$ preserves direct sums, i.e., for any indexing set $\mathcal{A}$ and persistence modules $\{M_i:\ACat\to \Vect\}_{i\in \mathcal{A}}$, we have 
\[\Lan_F\left(\bigoplus_i M_i\right)\cong \bigoplus_i \Lan_F(M_i).\]
\item Dually, $\Ran_F(-)$ preserves direct products, i.e., for any persistence modules $\{M_i:\ACat\to \Vect\}_{i\in \mathcal{A}}$, we have 
\[\Ran_F\left(\prod_i M_i\right)\cong \prod_i \Ran_F(M_i).\]
\end{enumerate}
\end{proposition}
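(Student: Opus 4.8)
The plan is to prove (i), since (ii) is formally dual—one replaces colimits by limits, direct sums by direct products, and $\Lan$ by $\Ran$ throughout, and the argument goes through verbatim. So I focus on showing that $\Lan_F$ commutes with direct sums.

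First I would unwind the definition at a fixed object $b \in \BCat$. By definition,
\[
\Lan_F\!\left(\bigoplus_{i} M_i\right)(b) = \varinjlim \left(\bigoplus_i M_i\right)\Big|_{\ACat[F\leq b]}
= \varinjlim \left( \bigoplus_i \, M_i|_{\ACat[F\leq b]} \right),
\]
where the second equality just says that restriction of a functor along the inclusion $\ACat[F\leq b] \hookrightarrow \ACat$ is computed pointwise and hence commutes with the pointwise direct sum. On the other side,
\[
\left(\bigoplus_i \Lan_F(M_i)\right)(b) = \bigoplus_i \Lan_F(M_i)(b) = \bigoplus_i \varinjlim M_i|_{\ACat[F\leq b]}.
\]
So the whole proposition reduces to the single categorical fact that in $\Vect$, a colimit over a fixed (small) indexing poset of a direct sum of diagrams is the direct sum of the colimits—i.e., colimits commute with coproducts. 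The key step is then to invoke (or quickly reprove) this fact: colimits commute with colimits, and a direct sum is itself a colimit (an indexed coproduct), so $\varinjlim_{\ACat[F\leq b]} \bigoplus_i (-) \cong \bigoplus_i \varinjlim_{\ACat[F\leq b]} (-)$ canonically. Concretely in $\Vect$ one can even exhibit the isomorphism by hand: a colimit of vector spaces is a quotient of the direct sum of the terms by the span of differences $x - \varphi(x)$, and direct sums commute with quotients and with such subspaces, so the two constructions visibly agree.

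Next I would check that this pointwise isomorphism is natural in $b$, i.e., compatible with the internal maps $\varphi(b,b')$ on both sides. This is where one must be slightly careful but where there is no real difficulty: both the internal maps of $\Lan_F(\bigoplus_i M_i)$ and those of $\bigoplus_i \Lan_F(M_i)$ are, by construction, induced by the universal property of colimits from the inclusion $\ACat[F\leq b] \subseteq \ACat[F\leq b']$. Since the pointwise isomorphism was itself constructed from universal properties (or from the explicit quotient description, which is manifestly functorial), it intertwines the two families of internal maps. So the pointwise isomorphisms assemble into an isomorphism of $\BCat$-indexed modules.

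The main obstacle—if there is one—is purely one of bookkeeping: making the ``colimits commute with coproducts'' step rigorous and checking naturality in $b$ without drowning in notation. There is no genuine mathematical content beyond the fact that $\Vect$ is cocomplete and that coproducts are a special case of colimits; the subtlety, if any, lies in confirming that the canonical comparison map is the one induced by the universal properties on both sides (rather than merely some isomorphism), which matters because the internal maps are defined via those same universal properties. Given that the paper elsewhere treats such Kan-extension manipulations as routine, I would state the result, give the short ``colimits commute with colimits'' argument, and note that naturality in $b$ follows since every map in sight is induced by a universal property. The dual statement (ii) then follows by the duality $\Vect^{\ACat} \to (\Vect^{\op})^{\ACat}$, or simply by repeating the argument with all arrows reversed.
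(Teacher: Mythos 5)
Your argument is correct, but it takes a different route from the paper. The paper's proof is a two-line appeal to abstract nonsense: $\Lan_F(-)$ is left adjoint to the restriction functor $\Vect^\BCat\to \Vect^\ACat$ along $F$, and left adjoints preserve coproducts (Mac Lane, Theorem V.5.1); the dual statement for $\Ran_F(-)$ and products follows since right adjoints preserve limits. You instead unwind the definition pointwise and verify directly that the colimit over $\ACat[F\leq b]$ commutes with the pointwise direct sum, then check naturality in $b$. Both are valid. The adjunction argument is shorter and hides all the bookkeeping (in particular, naturality in $b$ is automatic, since the isomorphism $\Lan_F(\bigoplus_i M_i)\cong \bigoplus_i \Lan_F(M_i)$ is an isomorphism of objects of $\Vect^\BCat$ produced by the universal property of the coproduct in that functor category). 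Your explicit approach buys a concrete description of the comparison map, which could be useful if one later needed to trace specific elements through it, at the cost of the naturality check you rightly flag as the only delicate point; your justification of that point (all maps in sight are induced by universal properties, and the comparison map is the canonical one) is adequate. One minor remark: when you say the dual argument for (ii) goes through ``verbatim,'' note that the relevant fact becomes ``limits commute with products,'' which holds because products are themselves limits --- this is cleaner than the coproduct case, where one must remember that direct sums in $\Vect$ are coproducts, hence colimits.
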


\begin{proof}
This follows directly from standard category theory results: $\Lan_F(-)$ is left adjoint to the restriction $\Vect^\BCat\to \Vect^\ACat$ along $F$; see for example \cite[(1.1)]{riehl2014categorical}.  Since $\Lan_F(-)$ is a left adjoint it    preserves coproducts \cite[Theorem V.5.1]{mac1998categories}.  This establishes (i); the dual argument establishes (ii).
\end{proof}

\begin{remark}\label{Rem:Preservation_Of_Products_pfd}
Given an indexing set $\mathcal A$ and persistence modules $\{M_i:\ACat\to \vect\}_{i\in \mathcal{A}}$, if $\oplus_i M_i$ is p.f.d., then
\[\bigoplus_i M_i=\prod_i M_i.\]
It follows that if in \cref{Prop:Preservation_of_Coproducts}\,(ii), both $\oplus_i M_i$ and $\oplus_i \Ran_F(M_i)$ are p.f.d., then 
\[\Ran_F\left(\bigoplus_i M_i\right)\cong \bigoplus_i \Ran_F(M_i).\]
\end{remark}

\section{Block Decomposable Modules}\label{Sec:Block_Decomposable_Persistence_Modules}
In general, a $\INTR$-indexed module does not decompose into a direct sum of interval modules. However, as noted in the introduction, we shall restrict our attention to $\INTR$-indexed modules called \emph{block decomposables} which admit a particularly simple decomposition. 

\paragraph{Blocks}
\label{Sec:Blocks}
For any interval $\J$ in $\R$, we define an interval $\J_\blk$ in $\U$ as follows:
\begin{align*}
(a,b)_\blk &:= \{ (x,y)\in \U \mid a<x,\, y<b\}&\textup{ for }a<b\in \R\cup\{-\infty,\infty\},\\
[a,b)_\blk &:= \{ (x,y)\in \U \mid a\leq y< b\}&\textup{ for } a<b\in \R\cup \{\infty\},\\  
(a,b]_\blk &:= \{ (x,y)\in \U \mid a< x\leq b\}&\textup{ for } a<b\in \R\cup \{-\infty\},\\
[a,b]_\blk &:= \{(x,y)\in \U\mid x\leq b,\, y\geq a\}&\textup{ for } a\leq  b\in \R.
\end{align*}
In addition, for $a<b\in \R$, we define an interval
\[ [b,a]_\blk := \{(x,y)\in \U\mid x\leq a<b\leq y\}. \]
We call an interval in $\U$ having one of the five forms above a \emph{block}, and we let $\BL$ denote the set of all blocks.  
Each of the five types of blocks is depicted in \cref{fig:intervals}.

For $a,b\in \R\cup\{\pm \infty\}$, let $\langle a,b\rangle_\blk$ denote a block of the form $(a,b)_\blk$, 
$[a,b)_\blk$, $(a,b]_\blk$, or $[a,b]_\blk$.  For example, for $a\in \R$, $\langle a,\infty\rangle_\blk$ denotes a block in $\{[a,\infty)_\blk,  (a,\infty)_\blk\}$, and for $a<b \in \R$, $\langle b,a\rangle_\blk=[b,a]_\blk$.
 
\paragraph{Block Barcodes}
We call a multiset of blocks a \emph{block barcode}.  Note that in view of \cref{Rmk:RopVsR2}, $\epsilon$-matchings and the bottleneck distance $d_b$ between block barcodes are well defined.

\paragraph{Partitions of Block Barcodes}
It will be convenient to partition $\BL$ into four subsets, as follows:
\begin{align*}
\BL^{\oo}&:=\{(a,b)_\blk\mid a<b\in \R\},\\
\BL^\co&:=\{[a,b)_\blk\mid a<b\in \R\}\cup \{(-\infty,b)_\blk\mid b\in \R\},\\
\BL^\oc&:=\{(a,b]_\blk\mid a<b\in \R\}\cup \{(a,\infty)_\blk\mid a\in \R\},\\
\BL^\cc&:=\{[a,b]_\blk\mid a,b\in \R\}\cup \{[a,\infty)_\blk\mid a\in \R\}\cup\{(-\infty,b]_\blk\mid a\in \R\}\cup\{(-\infty,\infty)_\BL\}.
\end{align*}
If $\star\in \{\oo,\co,\oc,\cc\}$ and $\J\in\BL^{\star}$, we say $\J$ is \emph{is of type} $\star$.  For example, $[0,1]_\blk$ and $(-\infty,\infty)_\blk$ are both of type $\cc$.

For $\B$ a block barcode and $\star\in \{\oo,\co,\oc,\cc\}$, we let $\B^\star$ denote the multi-subset of blocks in $\B$ of type $\BL^\star$.

\paragraph{$\epsilon$-Matchings of Block Barcodes}
The following result, whose straightforward proof we omit, yields a concrete description of an $\epsilon$-matching of block barcodes:

\begin{lemma}\label{lem:Erosion_And_Blocks}\label{Prop:Block_Barcodes}
\mbox{}
\begin{enumerate}[(i)]
\item ${\langle a,b\rangle_\blk}$ is $2\epsilon$-trivial if and only if one of the following is true:
\begin{itemize}
\item $\langle a,b\rangle_\blk$ is of type $\co$ or $\oc$, and $b-a\leq 2\epsilon$,
\item $\langle a,b\rangle_\blk$ is of type $\oo$ and $b-a\leq 4\epsilon$.
\end{itemize}
\item Blocks $\langle a,b\rangle_\blk$ and $\langle a',b'\rangle_\blk$ are $\epsilon$-interleaved if and only if either
 $\langle a,b\rangle_\blk$ and $\langle a',b'\rangle_\blk$ are of the same type and
\[|a-a'|\leq \epsilon,\qquad |b-b'|\leq \epsilon,\]
or both ${\langle a,b\rangle_\blk}$ and ${\langle a',b'\rangle_\blk}$ are $2\epsilon$-trivial.  
\end{enumerate}
\end{lemma}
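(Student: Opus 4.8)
The plan is to reduce everything to the one-dimensional computations already implicit in the excerpt, using the fact that a block $\langle a,b\rangle_\blk$ is, up to the isomorphism $\RCat^{\op}\times\RCat\cong\RCat^2$ of \cref{Rmk:RopVsR2}, a ``rectangle'' interval module that is a product of two one-dimensional intervals: roughly speaking, a block of type $\co$ or $\oc$ restricts the diagonal coordinate to a finite-length interval, while a block of type $\oo$ restricts \emph{both} coordinates, effectively doubling the relevant length. First I would fix, for each of the five block shapes, an explicit description of $I^{\langle a,b\rangle_\blk}$ as a $\U$-indexed (equivalently $\RCat^2$-indexed) module, read off the shift map $\phi^u$ for $u=\epsilon(1,1)$, and identify exactly when it vanishes. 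For part (i): a point $(x,y)\in\U$ lies in $\langle a,b\rangle_\blk$ while $(x,y)+\epsilon(1,1)\notin\langle a,b\rangle_\blk$ precisely along the ``short direction'' of the block, so $\langle a,b\rangle_\blk$ being $2\epsilon$-trivial (i.e. $I^{\langle a,b\rangle_\blk}$ being $2\epsilon$-trivial, i.e. $\phi^{2\epsilon}=0$) translates into: for every $(x,y)$ in the block, $(x+2\epsilon,y+2\epsilon)$ leaves it. For type $\cc$ blocks (including the unbounded ones) this never happens — there is always room to move up-and-right — so they are never $2\epsilon$-trivial; for type $\co$ (say $[a,b)_\blk=\{(x,y):a\le y<b\}$) the $y$-coordinate is the constrained one and the condition becomes $y+2\epsilon\ge b$ for all admissible $y$, i.e. $b-a\le 2\epsilon$, and symmetrically for type $\oc$; for type $\oo$ ($(a,b)_\blk=\{(x,y):a<x,\ y<b\}$, with $x\le y$) \emph{both} constraints bite, and one checks that the sharpest obstruction forces $b-a\le 4\epsilon$ (the factor $2$ relative to the $\co/\oc$ case coming from the two-sided squeeze along the anti-diagonal). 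I would present each of these four cases as a two-line verification.

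For part (ii), the plan is to invoke \cref{lem:converseAST} and \cref{Rem:Interleaving_And_Small_(Co)Kernel} (or rather a direct elementary computation) in both directions. The ``if'' direction splits: if both blocks are $2\epsilon$-trivial then $I^{\langle a,b\rangle_\blk}$ and $I^{\langle a',b'\rangle_\blk}$ are each $\epsilon$-interleaved with $0$, hence (by the triangle inequality for $d_I$, recorded just after the definition of $d_I$) $\epsilon$-interleaved with each other; if instead the blocks have the same type and $|a-a'|\le\epsilon$, $|b-b'|\le\epsilon$, then I would write down explicit $\epsilon$-interleaving morphisms $f:I^{\langle a,b\rangle_\blk}\to I^{\langle a',b'\rangle_\blk}(\epsilon)$ and $g$ in the other direction, each acting by $\id_k$ on the overlap of the (shifted) supports and by $0$ elsewhere — the point being that $\langle a,b\rangle_\blk(\epsilon)$ (the $\epsilon(1,1)$-shift) always contains $\langle a',b'\rangle_\blk$ when the endpoints are within $\epsilon$ \emph{and the types match}, because the $\epsilon$-shift turns an open face into an effectively closed one, so the endpoint-type bookkeeping works out; commutativity with $\phi^{2\epsilon}$ is then immediate since everything is a $0$-or-$\id$ map between $1$-dimensional spaces. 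For the ``only if'' direction, suppose $I^{\langle a,b\rangle_\blk}$ and $I^{\langle a',b'\rangle_\blk}$ are $\epsilon$-interleaved but not both $2\epsilon$-trivial; I would show the types must agree and the endpoints must be within $\epsilon$ by restricting the interleaving to suitable one-parameter lines in $\U$ — restricting to a generic line parallel to the $x$-axis, and one parallel to the $y$-axis, gives $\epsilon$-interleavings of ordinary $1$-D interval modules, and applying the $1$-D description of $\epsilon$-matchings recalled in the paragraph ``$\epsilon$-Matchings of Barcodes Over $\R$'' pins down each endpoint and (via which faces are open/closed) the type. One has to be slightly careful choosing lines that actually meet both blocks, which is where the ``not both $2\epsilon$-trivial'' hypothesis is used.

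The main obstacle I anticipate is the endpoint/boundary-type bookkeeping in the ``if'' direction of (ii): verifying, for each of the finitely many type-compatible configurations, that the naive $0$-or-$\id$ maps genuinely define natural transformations into the $\epsilon$-shifted module — i.e. that the support inclusion $\langle a',b'\rangle_\blk\subseteq\langle a,b\rangle_\blk(\epsilon)$ (and its mirror) really does hold given only $|a-a'|\le\epsilon$, $|b-b'|\le\epsilon$ and equal type. This is where the precise open-vs-closed conventions in the definitions of the five block shapes matter, and where an off-by-an-endpoint error would break naturality. I expect this to be genuinely routine but not entirely mechanical, which is presumably why the paper states the lemma ``whose straightforward proof we omit''. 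Everything else — part (i), the ``both-trivial'' subcase of (ii), and the ``only if'' direction via restriction to lines — should follow cleanly from the $1$-D facts already assembled in \cref{Sec:Isometry}.
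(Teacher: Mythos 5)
The paper omits its own proof of this lemma, so this is a pure correctness review of your plan. Your treatment of part (i) is fine: for $\co$ and $\oc$ only one coordinate obstructs the diagonal shift and the threshold is $2\epsilon$, while for $\oo$ both coordinates squeeze an anti-diagonal segment and the threshold doubles to $4\epsilon$, with $\cc$ blocks never trivial. The both-$2\epsilon$-trivial subcase of (ii) via the triangle inequality is also fine.

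There is, however, a concrete false step in the ``if'' direction of (ii). You claim that, given equal type and $|a-a'|\le\epsilon$, $|b-b'|\le\epsilon$, the support inclusion $\langle a',b'\rangle_\blk\subseteq\langle a,b\rangle_\blk(\epsilon)$ (and its mirror) holds, and that this is what makes the $0$-or-$\id$ maps natural and the interleaving identity immediate. This inclusion holds only for type $\cc$. For the other three types it fails generically in both directions. For instance for $\co$, with $\J=[a,b)_\blk$ and $\K=[a',b')_\blk$, one computes $\K(\epsilon)=[a'-\epsilon,b'-\epsilon)_\blk$; the hypotheses give $a'-\epsilon\le a$ but only $b'-\epsilon\le b$, so the upper edge of $\K(\epsilon)$ generally lies strictly below that of $\J$ and $\J\not\subseteq\K(\epsilon)$. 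For $\oo$, $\J(\epsilon)=(a+\epsilon,b-\epsilon)_\blk$ actually shrinks, so neither inclusion ever holds with $\epsilon>0$. What is actually true is a one-sided statement: under the endpoint bounds, the overlap $\J\cap\K(\epsilon)$ is a down-set of $\J$ and an up-set of $\K(\epsilon)$, which is exactly what makes the ``$\id$ on the overlap, $0$ elsewhere'' map a natural transformation. And the identity $g(\epsilon)\circ f=\phi^{2\epsilon}_{I^\J}$ is then not automatic: on $\J\setminus\K(\epsilon)$ the left-hand side is $0$, so you must separately check (using the same endpoint bounds) that this region lies where $\phi^{2\epsilon}_{I^\J}$ already vanishes. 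The morphisms you wrote down are the right ones, but the justification as stated would not compile into a proof; repair it by replacing the support inclusion with the two one-sided closure properties of $\J\cap\K(\epsilon)$ and by adding the explicit verification that the composite agrees with $\phi^{2\epsilon}$ on all of $\J$. For the ``only if'' direction, restricting to lines is workable, but note that the $\epsilon$-shift sends $(x,y)$ to $(x-\epsilon,y+\epsilon)$ and does not preserve horizontal or vertical lines, so you must restrict $I^\J$ and $I^\K$ to lines offset from each other by $\epsilon$ rather than to a common line.
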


\paragraph{Diagonals of Block Barcodes}
Let $D:\R\to \R^2$ denote the diagonal map, i.e., $D(t)=(t,t)$, and for any block $\J\subseteq \INTR$, let $\diag \J=D(\R)\cap \J$.  Note that for any interval $\I\subset \R$, \[\diag \I_\blk=D(\I).\]  In this sense, $\I_\blk$ is labelled by its intersection with the diagonal.

For $\B$ a block barcode, we define $\diag \B$, the diagonal of $\B$, to be the barcode over $\R$ given by \[\diag \B = \{\diag \J \mid \J\in \B,\ \diag \J \ne \emptyset \}.\]

\begin{proposition}\label{prop:olddb}
For block barcodes $\B$ and $\C$, 
\begin{enumerate}[(i)]
\item An $\epsilon$-matching $\sigma:\B\nrightarrow\C$ induces a $2\epsilon$-matching $\diag \sigma:\diag \B\nrightarrow \diag \C$.  In particular,
\[d_b(\diag \B, \diag \C) \leq 2 d_b(\B, \C).\] 
\item If additionally, $\sigma$ matches each interval $(a,b)_\blk$ in $\B^{\oo}_\epsilon\cup  \C^{\oo}_\epsilon$ to an interval $(a',b')_\blk$ with \[|a-a'|\leq \epsilon\quad\textup{and}\quad|b-b'|\leq \epsilon,\] then $\diag \sigma$ is an $\epsilon$-matching.
\end{enumerate}
\end{proposition}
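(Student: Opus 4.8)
The plan is to realize $\diag\sigma$ as the pushforward of $\sigma$ along the operation sending a block to its diagonal interval, restricted to blocks with nonempty diagonal, and then to check the axioms of a $2\epsilon$-matching (and, under the extra hypothesis, of an $\epsilon$-matching) by reducing everything to the explicit descriptions in \cref{lem:Erosion_And_Blocks} and the explicit description of $\epsilon$-matchings over $\R$ from \cref{Sec:Isometry}. First I would record that, under the identification of the diagonal with $\R$, one has $\diag(a,b)_\blk=(a,b)$, $\diag[a,b)_\blk=[a,b)$, $\diag(a,b]_\blk=(a,b]$, and $\diag[a,b]_\blk=[a,b]$ (allowing $\pm\infty$ endpoints), while $\diag[b,a]_\blk=\emptyset$; so a block has empty diagonal exactly when it has the ``reversed'' form $[b,a]_\blk$. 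A one-line check shows such a block is never $2\epsilon$-trivial, since $(x,y)\in[b,a]_\blk$ forces $(x-2\epsilon,y+2\epsilon)\in[b,a]_\blk$. Hence, if $\sigma$ matches $\J$ to $\K$, then \cref{lem:Erosion_And_Blocks}(ii) gives either that $\J,\K$ are of the same type—so their diagonals are simultaneously empty or simultaneously nonempty—or that both are $2\epsilon$-trivial, in which case neither has reversed form and both diagonals are nonempty. Either way $\diag\J=\emptyset\iff\diag\K=\emptyset$, which is exactly what is needed for $\diag\sigma:=\{(\diag\J,\diag\K):(\J,\K)\in\sigma,\ \diag\J\neq\emptyset\}$ to be a well-defined matching $\diag\B\nrightarrow\diag\C$.

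The one further ingredient is a bookkeeping consequence of \cref{lem:Erosion_And_Blocks}(i): a $2\epsilon$-trivial block is either of type $\co$ or $\oc$ with finite endpoints and $b-a\le2\epsilon$—so $\diag\J$ is a bounded interval of length $\le2\epsilon$, hence $2\epsilon$-trivial over $\R$—or of type $\oo$ (hence with finite endpoints) and $b-a\le4\epsilon$—so $\diag\J$ is a bounded open interval of length $\le4\epsilon$, hence $4\epsilon$-trivial over $\R$. Granting this, part (i) is routine. For the coimage axiom: if $\diag\J\in(\diag\B)_{4\epsilon}$ then $\diag\J$ is not $4\epsilon$-trivial over $\R$, so by the bookkeeping $\J$ is not $2\epsilon$-trivial, so $\J\in\B_{2\epsilon}\subseteq\coim\sigma$ and $\diag\J\in\coim\diag\sigma$; the image axiom is dual. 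For the interleaving axiom: if $\sigma$ matches $\J$ to $\K$ with nonempty diagonals, then \cref{lem:Erosion_And_Blocks}(ii) gives either $\J,\K$ of the same type with endpoints within $\epsilon$—whence $\diag\J,\diag\K$ are $\R$-intervals of the same type with endpoints within $\epsilon$, hence $\epsilon$-interleaved, \emph{a fortiori} $2\epsilon$-interleaved—or $\J,\K$ both $2\epsilon$-trivial—whence by the bookkeeping $\diag\J,\diag\K$ are both $4\epsilon$-trivial over $\R$, hence $2\epsilon$-interleaved. This proves (i), and taking the infimum over $\epsilon$ yields $d_b(\diag\B,\diag\C)\le2\,d_b(\B,\C)$.

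For part (ii) I would isolate where the factor $2$ was lost: only in the type-$\oo$ sub-case, where a block with $b-a$ up to $4\epsilon$ is still $2\epsilon$-trivial although its diagonal, of length up to $4\epsilon$, need not be $2\epsilon$-trivial over $\R$. Precisely, the only blocks $\J$ with $\diag\J$ nonempty and not $2\epsilon$-trivial over $\R$ while $\J$ itself is $2\epsilon$-trivial are the type-$\oo$ blocks $(a,b)_\blk$ with $2\epsilon<b-a\le4\epsilon$; and since $(a,b)_\blk$ fails to be $\epsilon$-trivial exactly when $b-a>2\epsilon$, these all lie in $\B^{\oo}_\epsilon$. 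The extra hypothesis—that $\sigma$ matches every member of $\B^{\oo}_\epsilon\cup\C^{\oo}_\epsilon$ to a same-type block with endpoints within $\epsilon$—removes this obstruction symmetrically: in the coimage/image argument, if $\diag\J\in(\diag\B)_{2\epsilon}$ and $\J$ is $2\epsilon$-trivial then the bookkeeping forces $\J\in\B^{\oo}_\epsilon$, so $\J$ is matched by hypothesis, so $\diag\J\in\coim\diag\sigma$; and in the interleaving argument, a matched pair meeting $\B^{\oo}_\epsilon\cup\C^{\oo}_\epsilon$ has endpoints within $\epsilon$ by hypothesis, while for a matched pair disjoint from it the ``both $2\epsilon$-trivial'' alternative of \cref{lem:Erosion_And_Blocks}(ii) now forces $b-a\le2\epsilon$ for both blocks, so both diagonals are $2\epsilon$-trivial over $\R$ and hence $\epsilon$-interleaved. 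Together with the unchanged same-type case, this shows $\diag\sigma$ is an $\epsilon$-matching.

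I expect the main obstacle to be the precise bookkeeping of which blocks are $2\epsilon$-trivial—in particular keeping straight the threshold $b-a\le2\epsilon$ for types $\co,\oc$ versus $b-a\le4\epsilon$ for type $\oo$—since this single discrepancy accounts both for the factor $2$ in (i) and for the need for the extra hypothesis in (ii). A secondary point requiring care is the implication ``$\R$-intervals of the same type with endpoints within $\epsilon$ are $\epsilon$-interleaved,'' where matching the openness of the endpoints is genuinely used: for $\epsilon=0$ two intervals of different type are never $0$-interleaved, which is exactly why the ``same type'' clause appears in \cref{lem:Erosion_And_Blocks}(ii) and must be carried through the reduction.
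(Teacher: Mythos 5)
Your overall approach is the one the paper leaves implicit---apply \cref{lem:Erosion_And_Blocks} and unwind the definition of an $\epsilon$-matching---but there is a gap in your argument that $\diag\sigma$ is well-defined. You claim that if $\sigma$ matches $\J$ to $\K$ and the two blocks are of the same type, then $\diag\J$ and $\diag\K$ are simultaneously empty or nonempty. This fails for type $\cc$: the corner blocks $[a,b]_\blk$ (with $a\leq b$) have nonempty diagonal, while the reversed blocks $[b,a]_\blk$ (with $a<b$) have empty diagonal, yet both lie in $\BL^\cc$, and \cref{lem:Erosion_And_Blocks}\,(ii) permits $\sigma$ to pair one with the other whenever the labels are within $\epsilon$. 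Concretely, $[0,1]_\blk$ and $[1,0]_\blk$ are $1$-interleaved, so a $1$-matching may pair them; then your relation $\diag\sigma:=\{(\diag\J,\diag\K):(\J,\K)\in\sigma,\ \diag\J\neq\emptyset\}$ would contain $([0,1],\emptyset)$, and $\emptyset\notin\diag\C$, so $\diag\sigma$ as you define it is not a matching into $\diag\C$.

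The fix is small and stays within your strategy. Define $\diag\sigma$ to also require $\diag\K\neq\emptyset$, and then observe that whenever a corner block $[a,b]_\blk$ ($a\leq b$, necessarily with finite endpoints) is paired by $\sigma$ with a reversed block $\langle a',b'\rangle_\blk$ ($a'>b'$) so that $|a-a'|\leq\epsilon$ and $|b-b'|\leq\epsilon$, one has $a\geq a'-\epsilon>b'-\epsilon\geq b-2\epsilon$, hence $b-a<2\epsilon$; thus $\diag[a,b]_\blk=[a,b]$ is $2\epsilon$-trivial over $\R$ (a fortiori $4\epsilon$-trivial) and in particular does not lie in $(\diag\B)_{2\epsilon}$ or $(\diag\B)_{4\epsilon}$. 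So dropping such a block from the coimage of $\diag\sigma$ does no harm in either part (i) or part (ii), and the rest of your bookkeeping, which is otherwise correct and is the same check the paper leaves to the reader, carries through.
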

\begin{proof}
This is immediate from \cref{Prop:Block_Barcodes} and the definition of an $\epsilon$-matching.
\end{proof}

\definecolor{aqaqaq}{rgb}{0.6274509803921569,0.6274509803921569,0.6274509803921569}
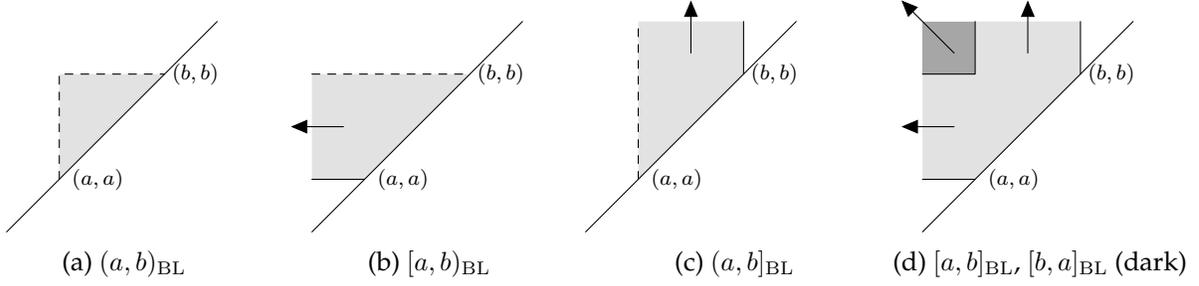
\begin{figure}
\centering
\begin{subfigure}{0.24\textwidth}

\begin{tikzpicture}[line cap=round,line join=round,>=triangle 45,x=.7cm,y=0.7cm, scale=0.2]
\clip(-13.,-10.) rectangle (13.,13.);
\fill[color=aqaqaq,fill=aqaqaq,fill opacity=0.30] (-5.,-5.) -- (-5,5) -- (5,5) -- cycle;
\draw (-10,-10) -- (10,10);
\draw[dashed] (-5.,-5.) -- (-5,5) -- (5,5);

\begin{scriptsize}
\draw[color=black] (-4.5, -5) node[right] {$(a,a)$};
\draw[color=black] (5,5) node[right] {$(b,b)$};
\end{scriptsize}
\end{tikzpicture}
\subcaption{$(a,b)_\blk$}
\end{subfigure}
\begin{subfigure}{0.24\textwidth}
\begin{tikzpicture}[line cap=round,line join=round,>=triangle 45,x=.7cm,y=0.7cm, scale=0.2]
\clip(-13.,-10.) rectangle (10.2,13.);
\fill[color=aqaqaq,fill=aqaqaq,fill opacity=0.30] (-5.,-5.) -- (-10,-5) -- (-10,5) -- (5,5) -- cycle;
\draw (-10,-10) -- (10,10);
\draw (-5,-5) -- (-10,-5);
\draw[->] (-7,0) -- (-12, 0); 
\draw[dashed] (-10, 5) -- (5,5); 
\begin{scriptsize}
\draw[color=black] (-4.5, -5) node[right] {$(a,a)$};
\draw[color=black] (5,5) node[right] {$(b,b)$};
\end{scriptsize}
\end{tikzpicture}
\subcaption{$[a,b)_\blk$}
\end{subfigure}
\begin{subfigure}{0.24\textwidth}
\begin{tikzpicture}[line cap=round,line join=round,>=triangle 45,x=.7cm,y=0.7cm, scale=0.2]
\clip(-10.,-10.) rectangle (13.,13.);
\fill[color=aqaqaq,fill=aqaqaq,fill opacity=0.30] (-5.,-5.) -- (-5,10) -- (5,10) -- (5,5) -- cycle;
\draw (-10,-10) -- (10,10);
\draw (5, 10) -- (5,5);
\draw[->] (0,7) -- (0,12);
\draw[dashed] (-5, -5) -- (-5, 10);
\begin{scriptsize}
\draw[color=black] (-4.5, -5) node[right] {$(a,a)$};
\draw[color=black] (5,5) node[right] {$(b,b)$};
\end{scriptsize}
\end{tikzpicture}
\subcaption{$(a,b]_\blk$}
\end{subfigure}
\begin{subfigure}{0.24\textwidth}
\begin{tikzpicture}[line cap=round,line join=round,>=triangle 45,x=.7cm,y=0.7cm, scale=0.2]
\clip(-13.,-10.) rectangle (13.,13.);
\fill[color=aqaqaq,fill=aqaqaq,fill opacity=0.30] (-5.,-5.) -- (-10,-5) -- (-10, 10) -- (5,10) --  (5,5) -- cycle;
\draw (-10,-10) -- (10,10);
\draw (-5, -5) -- (-10,-5);
\draw (5,10) -- (5,5);

\fill[color=aqaqaq,fill=aqaqaq,fill opacity=0.90] (-5.,5.) -- (-10,5) -- (-10, 10) -- (-5,10) -- cycle;
\draw[->] (-7,0) -- (-12, 0);
\draw[->] (0,7) -- (0,12);
\draw[->] (-7,7) -- (-12,12);
\draw (-10,5) -- (-5, 5) -- (-5,10);

\begin{scriptsize}
\draw[color=black] (-4.5, -5) node[right] {$(a,a)$};
\draw[color=black] (5,5) node[right] {$(b,b)$};
\end{scriptsize}
\end{tikzpicture}
\subcaption{$[a,b]_\blk$, $[b,a]_\blk$ (dark)}
\end{subfigure}

\caption{The five different types of blocks.}
\label{fig:intervals}
\end{figure} 

\paragraph{Block Decomposable Modules}
It follows from \cref{prop:intervals_are_indec} that for any block $\J$, the $\INTR$-indexed interval module $I^\J$ is indecomposable; we call $I^{\J}$ a \emph{block module}.  We say a $\INTR$-indexed module is \emph{block decomposable} if it decomposes into a direct sum of block modules.  We say an $\RCat^{\op}\times \RCat$-indexed module $M$ is block decomposable if $M=\EMB(N)$ for $N$ block decomposable.

With these definitions, we may work interchangeably with block decomposable modules over $\INTR$ and their embeddings under $\EMB$.  We will work primarily in the $\RCat^\op\times \RCat$-indexed setting.  

\paragraph{Block Stability}
\label{Sec:BlockStabilityTheorem}
We now state the main result of this paper, which establishes a relationship between the interleaving distance and bottleneck distance on block decomposable modules:

\begin{theorem}[Block Stability Theorem]\label{teo:IMTinterleaving}
Let $M$ and $N$ be $\epsilon$-interleaved \pfd block decomposable modules. Then there exists a matching $\chi: \B(M)\nrightarrow \B(N)$ that matches each block in
\[\B(M)^\cc  \cup \B(M)^\oo_{5\epsilon} \cup \B(M)^\co_{2\epsilon}  \cup \B(M)^\oc_{2\epsilon}\quad \text{ and}\quad \B(N)^\cc  \cup \B(N)^\oo_{5\epsilon} \cup \B(N)^\co_{2\epsilon}  \cup \B(N)^\oc_{2\epsilon},\]
such that if $\chi(\I)=\J$, then $\I$ and $\J$ are $\epsilon$-interleaved and of the same type.  
In particular, $\chi$ is a $\frac{5}{2}\epsilon$-matching. 
\end{theorem}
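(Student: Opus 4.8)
The plan is to reduce the Block Stability Theorem to a single-morphism statement, by exploiting the structure of an $\epsilon$-interleaving pair $f:M\to N(\epsilon)$, $g:N\to M(\epsilon)$. First I would decompose $M$ and $N$ according to the four block types; because each block type is preserved by interleavings (\cref{Prop:Block_Barcodes}(ii)), an interleaving of $M$ with $N$ should restrict, up to $2\epsilon$-triviality, to an interleaving of $M^\star$ with $N^\star$ for each $\star\in\{\oo,\co,\oc,\cc\}$. So it suffices to construct, for each type $\star$, a matching $\chi^\star:\B(M)^\star\nrightarrow\B(N)^\star$ that matches all the non-$2\epsilon$-trivial blocks (with the stated $5\epsilon$ versus $2\epsilon$ thresholds) by $\epsilon$-interleaved pairs of the same type. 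Taking the disjoint union of the $\chi^\star$ gives $\chi$; the ``in particular'' clause then follows from \cref{Prop:Block_Barcodes}(i), since a block that is not $2\epsilon$-trivial is, for types $\co,\oc,\cc$, necessarily matched, and for type $\oo$ the $5\epsilon$-threshold is exactly what forces the final matching to be a $\tfrac52\epsilon$-matching rather than an $\epsilon$-matching.

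For each type, the strategy is to pass to a 2-D persistence module setting where the induced-matching machinery applies. The key observation is that $I^\J$ for $\J$ of type $\cc$ is injective (as an $\U$-indexed, or $\R^{\op}\times\R$-indexed, module), so type-$\cc$ summands split off canonically and the interleaving morphisms can be arranged to respect this splitting; these blocks get matched for free regardless of length, which is why $\B(M)^\cc$ appears with no subscript. For types $\co$ and $\oc$, which are dual to one another under $(-)^*$ (using \cref{Prop:Kernel_Dualization} and \cref{Rmk:RopVsR2}), it suffices to handle one of them. A block of type $\co$, say $[a,b)_\blk$, is ``free'' in the sense that $I^{[a,b)_\blk}$ is (isomorphic to) a shifted copy of a free/projective 2-D module supported on a quadrant-like region truncated at height $b$; a direct sum of such is what the excerpt calls $R_\epsilon$-free in \cref{Sec:Free}. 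One then applies the induced matching theorem for free 2-D persistence modules (promised as part of the outline, proved in \cref{Sec:Free}) to the morphism $f$ restricted to the type-$\co$ parts: its kernel and cokernel are $2\epsilon$-trivial by \cref{Rem:Interleaving_And_Small_(Co)Kernel}, and the induced matching theorem gives a matching of the corresponding barcodes matching all blocks that are not $2\epsilon$-trivial, with endpoints controlled to within $\epsilon$ — hence an $\epsilon$-interleaving of matched pairs by \cref{Prop:Block_Barcodes}(ii). The type-$\oo$ case is the one that must be handled by a genuinely two-sided argument: a block $(a,b)_\blk$ is supported on an open triangle and $I^{(a,b)_\blk}$ is neither injective nor free, so one cannot simply invoke a single-morphism induced matching. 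Here I would truncate or thicken: intersect with the band $\{(x,y): y-x\le \text{const}\}$, or realize $(a,b)_\blk$ as a subquotient of type-$\co$ and type-$\oc$ blocks, run the free induced-matching argument for the $f$ and $g$ parts separately, and then reconcile the two partial matchings into one. The price paid in reconciling them is the loss of a factor, giving the $5\epsilon$-threshold in place of $\epsilon$; the interleaving of a matched type-$\oo$ pair is still an honest $\epsilon$-interleaving, but a type-$\oo$ block of length between $4\epsilon$ and $5\epsilon$ may fail to be matched, which is exactly why the final matching is only a $\tfrac52\epsilon$-matching.

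The main obstacle, as just indicated, is the type-$\oo$ case. For types $\cc$, $\co$, $\oc$ the argument is essentially a transcription of the 1-D induced matching theorem (\cref{teo:IMT}) into the 2-D free setting, and the constants come out tight; the real work of \cref{Sec:Decomposition_Top,Sec:Free} is to set up a workable notion of kernel/cokernel barcode and a $\chi$-map for free and $R_\epsilon$-free 2-D modules so that (i)--(iii) of the 1-D theorem have analogues. For type $\oo$ there is no single morphism whose kernel and cokernel barcodes control everything — one genuinely needs both $f$ and $g$ — and the combinatorial bookkeeping of composing the two induced matchings, while keeping track of which blocks are forced to be matched and with what endpoint error, is where the $5\epsilon$ (rather than the optimal $2\epsilon$) enters. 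I would isolate this as a separate lemma: for $\epsilon$-interleaved \pfd modules whose summands are all of type $\oo$, there is a matching of their barcodes matching every block of length $>5\epsilon$ by an $\epsilon$-interleaved type-$\oo$ pair. Granting that lemma and the type-$\co$ (equivalently type-$\oc$, by duality) and type-$\cc$ statements, the theorem assembles exactly as described, and the $\tfrac52\epsilon$-matching conclusion is an immediate consequence of \cref{Prop:Block_Barcodes}(i).
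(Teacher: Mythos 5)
Your top-level architecture --- split the interleaving by block type, prove a single-morphism induced-matching result for each type, and reassemble --- is indeed the paper's. But the substance of each case is where the theorem lives, and there the proposal has concrete problems. First, the splitting step is not justified by \cref{Prop:Block_Barcodes}\,(ii), which is a statement about pairs of blocks, not about morphisms of modules; what one actually needs is that $\Hom(M^\star,N^\dag)=0$ for a suitable list of pairs $(\star,\dag)$ (\cref{lem:homI}), which makes the matrix of an interleaving morphism block-triangular and forces the diagonal components $(f^{\star,\star},g^{\star,\star})$ to be an honest $\epsilon$-interleaving pair (\cref{Interleavings_On_Four_Types}). Second, the machinery is mis-assigned to the types. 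A type-$\co$ module is not free or $\RE$-free in any useful sense: $I^{[a,b)_\blk}$ is supported on a horizontal strip, all of whose leftward structure maps are isomorphisms, so type-$\co$ (and dually type-$\oc$) modules are equivalent to $1$-D persistence modules and that case follows directly from \cref{teo:IMT}; no 2-D induced matching theorem is needed or applicable there. The free 2-D machinery (\cref{teo:freeIMT}) is what handles the bounded type-$\cc$ blocks, after a right Kan extension turns $I^{[a,b]_\blk}$ into a free module with a single generator at $(b,a)$; your alternative claim that type-$\cc$ summands ``get matched for free'' by injectivity does not produce the required conclusion, namely a bijective matching with both endpoints controlled to within $\epsilon$, which is genuine work.

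Third, and most seriously, the type-$\oo$ case. You assert that no single-morphism argument can work and propose running the argument separately for $f$ and $g$ and then ``reconciling'' the two partial matchings, attributing the $5\epsilon$ threshold to that reconciliation. This is both unexecuted (no mechanism for the reconciliation is given, and combining two matchings that disagree does not obviously yield a matching with the stated coimage and image guarantees) and contrary to fact: the $\oo$ case does admit a single-morphism treatment, which is \cref{Thm:BLK_IMT}\,(iv). One sends a type-$\oo$ module to an $\RE$-free module via a left Kan extension followed by dualization and truncation, and applies an induced matching theorem for monomorphisms of $\RE$-free modules with $\epsilon$-trivial cokernel (\cref{cor:IMTRFree}). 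The $5/2$ constant has a precise source there: the interpolant $L^\epsilon(f)$ of such a monomorphism is only $R_{3\epsilon/2}$-free after further truncation (\cref{prop:openrestfree}, tight by \cref{ex:tight}), and tracking this loss through the two directional matchings yields exactly the $\B(M)^{\oo}_{5\epsilon}$ threshold in the statement. Without a worked-out substitute for this step, the proposal does not establish the theorem.
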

We give the proof of \cref{teo:IMTinterleaving} in \cref{Sec:Decomposition_Top,Sec:Free,Sec:Block_Stability_Theorem}.

\section{Applications of the Block Stability Theorem}\label{Sec:Applications}
Before turning to the proof of the block stability theorem, we consider three applications.  First, we explain how the block stability theorem induces an algebraic stability theorem for zigzag modules.  Next, we show how the stability result for level set zigzag persistence of \cite{carlsson2009zigzag} follows from the block stability theorem.  Last, we explain the application to the stability of Reeb graphs. 

\subsection{Algebraic Stability of Zigzag Persistence Modules}\label{sec:zigzags}
In this section, we define the fully faithful functor $\E{}$ sending each zigzag module to a block decomposable module, first mentioned in \cref{Sec:Persistence_Modules}.  We use $E$ to define interleaving and bottleneck distances for zigzag modules and their barcodes.  With these definitions, the block stability theorem and its converse extend trivially to zigzag modules.

Our functor $E$ is closely analogous to the functor sending a cellular cosheaf over $\R$ to a constructible cosheaf over $\R$; see \cite{curry} and the references therein.

\paragraph{Block Extensions of Zigzags}
Let $\iota:\ZZCat\hookrightarrow \RCat^{\op}\times \RCat$ denote the inclusion, and let
\[(-)|_{\INTR}:\Vect^{\RCat^{\op}\times \RCat}\to \Vect^{\INTR}\] denote the restriction.  We define the \emph{block extension functor} $\E{}:\Vect^{\ZZCat}\to \Vect^{\INTR}$  by
\[E:=(-)|_{\INTR}\comp\Lan_\iota(-).\]
\cref{fig:vint} illustrates the action of $E$ on objects.

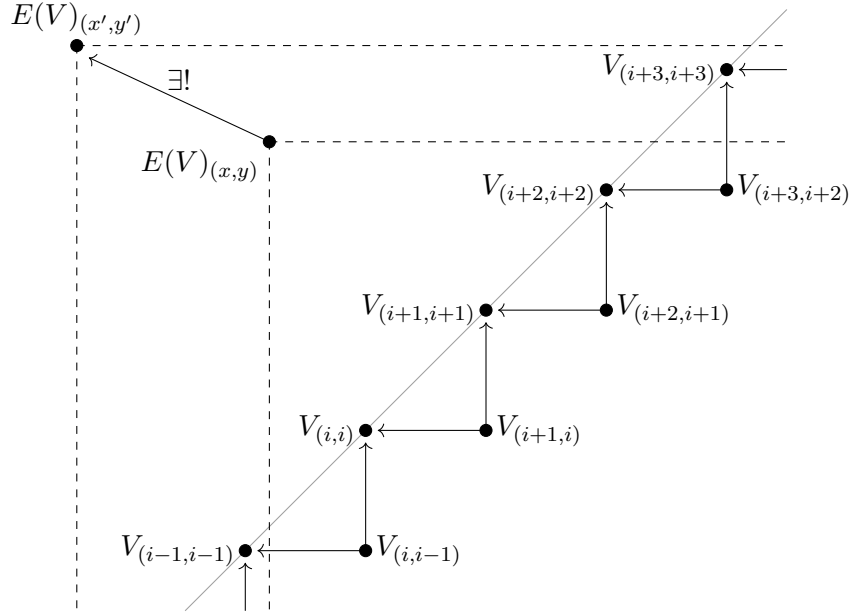
\begin{figure}
\centering
\begin{tikzpicture}[scale=1.6]
\draw[->] (1,0.5) -- (1,1-0.1);
\draw[->] (2,1) -- (2,2-0.1);
\draw[->] (3,2) -- (3,3-0.1);
\draw[->] (4,3) -- (4,4-0.1);
\draw[->] (5,4) -- (5,5-0.1);
\draw[->] (2,1) -- (1+0.1,1);
\draw[->] (3,2) -- (2+0.1,2);
\draw[->] (4,3) -- (3+0.1,3);
\draw[->] (5,4) -- (4+0.1,4);
\draw[color=aqaqaq,fill opacity=0.30] (0.5,0.5) -- (5.5,5.5);
\draw[->] (5.5, 5) -- (5+0.1,5);

\node[right] at (2,1) {$V_{({i}, {i-1})}$};
\node[right] at (3,2) {$V_{({i+1}, {i})}$};
\node[right] at (4,3) {$V_{({i+2}, {i+1})}$};
\node[right] at (5,4) {$V_{({i+3}, {i+2})}$};

\draw [fill] (2,1) circle [radius=0.05];
\draw [fill] (3,2) circle [radius=0.05];
\draw [fill] (4,3) circle [radius=0.05];
\draw [fill] (5,4) circle [radius=0.05];

\draw [fill] (5,5) circle [radius=0.05];
\draw [fill] (1,1) circle [radius=0.05];
\draw [fill] (2,2) circle [radius=0.05];
\draw [fill] (3,3) circle [radius=0.05];
\draw [fill] (4,4) circle [radius=0.05];

\node[left] at (1,1) {$V_{({i-1}, {i-1})}$};
\node[left] at (2,2) {$V_{({i}, {i})}$};
\node[left] at (3,3) {$V_{({i+1}, {i+1})}$};
\node[left] at (4,4) {$V_{({i+2}, {i+2})}$};
\node[left] at (5,5) {$V_{({i+3}, {i+3})}$};

\draw[->]  (1.2, 4.4) -- (-0.3, 5.1) node[above,midway] {$\exists !$};

\node[left] at (1.2, 4.2) {$E(V)_{(x,y)}$};
\node[above] at (-0.4, 5.2) {$E(V)_{(x',y')}$};
\draw [fill] (1.2, 4.4) circle [radius=0.05];
\draw [fill] (-0.4, 5.2) circle [radius=0.05];

\draw[dashed] (1.2, 4.4) -- (1.2, 0.5); 
\draw[dashed] (1.2, 4.4) -- (5.5, 4.4);
\draw[dashed] (-0.4, 5.2) -- (-0.4, 0.5); 
\draw[dashed] (-0.4, 5.2) -- (5.5, 5.2);
\end{tikzpicture}
\caption{The vector space $\protect\E{V}_{(x,y)}$ is the colimit of the restriction of $V$ to indices contained in the box with upper left corner $(x,y)$.} 
\label{fig:vint}
\end{figure}

\paragraph{Intervals in the Zigzag Category}
We partition the intervals of $\ZZ$ into four types; letting $<$ denote the partial order on $\Z^2$ (not on $\Z^{\op}\times \Z$), these are given as follows:
\begin{align*}
(b,d)_\ZZ &:= \{ (i,j)\in \ZZ \mid (b,b) < (i,j) < (d,d) \}&\textup{ for }b<d\in \Z\cup\{-\infty,\infty\},\\
[b,d)_\ZZ &:= \{ (i,j)\in \ZZ \mid (b,b) \leq (i,j) < (d,d) \}&\textup{ for } b<d\in \Z\cup \{\infty\},\\  
(b,d]_\ZZ &:= \{ (i,j)\in \ZZ \mid (b,b)< (i,j)\leq (d,d) \}&\textup{ for } b<d\in \Z\cup \{-\infty\},\\
[b,d]_\ZZ &:= \{ (i,j)\in \ZZ\mid (b,b) \leq (i,j) \leq (d,d) \}&\textup{ for } b\leq d\in \Z.
\end{align*}
We shall let $\langle b,d\rangle_\ZZ$ denote any of the intervals above.

\paragraph{Properties of the Block Extension Functor}
The following lemma is illustrated by \cref{fig:extending_zigzags}.  The proof is left to the reader. 
\begin{lemma}\label{lem:extendzz}
The block extension functor sends interval modules to block interval modules.  Specifically, for any zigzag interval $\langle b,d\rangle_\ZZ$, 
\[\E{I^{\langle b,d\rangle_\ZZ}} \cong I^{\langle b,d\rangle_\blk}.\]
\end{lemma}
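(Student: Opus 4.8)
Write $\J := \langle b,d\rangle_\ZZ$. Unwinding the definitions of $E$, of $\Lan_\iota$, and of $(-)|_{\INTR}$, for $(x,y)\in\INTR$ (where the final restriction to $\INTR$ plays no role) we have
\[
E(I^\J)_{(x,y)}\;=\;\varinjlim\,(I^\J)\big|_{Q(x,y)},\qquad
Q(x,y):=\ZZ[\iota\leq(x,y)]=\{(i,j)\in\ZZ\mid i\geq x,\ j\leq y\},
\]
a down-set of $\ZZ$ (non-empty, since $(x,y)\in\INTR$ forces $x\leq y$); the internal map $E(I^\J)_{(x,y)}\to E(I^\J)_{(x',y')}$ for $(x,y)\leq(x',y')$ is the one induced by the inclusion $Q(x,y)\subseteq Q(x',y')$. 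So the statement reduces to computing these colimits.

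The computational heart is an elementary fact about colimits of interval modules: if $P$ is a poset, $\mathcal I\subseteq P$ is convex, and $Q\subseteq P$ is a subposet for which $\mathcal I\cap Q$ is connected, then $\varinjlim (I^{\mathcal I})|_Q$ is $k$ when $\mathcal I\cap Q$ is non-empty and \emph{up-closed in $Q$} (i.e.\ $q'\in\mathcal I$ whenever $q\leq q'$ with $q\in\mathcal I\cap Q$ and $q'\in Q$), and is $0$ otherwise. This is immediate from the presentation of the colimit as $\bigl(\bigoplus_{q\in\mathcal I\cap Q}k\bigr)$ modulo $e_q=e_{q'}$ (for comparable $q,q'$ in $\mathcal I\cap Q$) and $e_q=0$ (whenever some $q'\in Q\setminus\mathcal I$ lies above $q$): connectedness identifies all the generators, and they all vanish as soon as one is killed. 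To apply it here I would note that $\J$ is a ``connected run'' of the zigzag path $\cdots-(m,m-1)-(m,m)-(m+1,m)-\cdots$, as is $Q(x,y)$ (the conditions $i\geq x$ and $j\leq y$ each chop off one end of the path), so $\J\cap Q(x,y)$ is again such a run — in particular connected. Hence $E(I^\J)$ takes only the values $k$ and $0$, and for $(x,y)\leq(x',y')$ both in the ``$k$-locus'' a common representative $q\in\J\cap Q(x,y)\subseteq\J\cap Q(x',y')$ (mapping to $1\in k$ in both colimits, by up-closedness) shows the internal map is the identity. Therefore $E(I^\J)=I^{\mathcal S(\J)}$, where
\[
\mathcal S(\J):=\{(x,y)\in\INTR\mid \J\cap Q(x,y)\ \text{is non-empty and up-closed in}\ Q(x,y)\}.
\]

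It remains to verify $\mathcal S(\langle b,d\rangle_\ZZ)=\langle b,d\rangle_\blk$. I would do this by a direct case analysis over the four bracket shapes $(b,d)$, $[b,d)$, $(b,d]$, $[b,d]$, with sub-cases for $b$ and/or $d$ infinite, using the explicit descriptions of $\langle b,d\rangle_\ZZ$ (which says exactly which valleys $(m,m-1)$ and peaks $(m,m)$ it contains) and of $\langle b,d\rangle_\blk$ (the inequalities in the definition of blocks). Non-emptiness of $\J\cap Q(x,y)$ unpacks to one pair of inequalities on $(x,y)$ and up-closedness to another; the decisive point is that membership of the endpoint peaks $(b,b)$ and $(d,d)$ in $\J$ — which is exactly what the bracket shape encodes — governs whether the ``boundary'' points of the candidate block satisfy up-closedness, and this is what makes $E$ preserve the bracket shape.

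\emph{Main obstacle.} This last case analysis — matching the combinatorics of zigzag intervals against the inequalities cutting out blocks, while tracking open/closed endpoints and the points at infinity — is where essentially all the work lies; the steps before it are formal.
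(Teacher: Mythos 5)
The paper leaves this proof to the reader, so there is no author proof to compare against; your plan is the natural route, via the pointwise colimit formula for $\Lan_\iota$, and all the conceptual steps are correct: the identification $Q(x,y)=\{(i,j)\in\ZZ\mid i\ge x,\ j\le y\}$ as a non-empty down-set, the elementary colimit-of-interval-module lemma (colimit is $k$ exactly when $\mathcal I\cap Q$ is non-empty and up-closed in $Q$, else $0$), the connectedness of $\J\cap Q(x,y)$ as an intersection of two contiguous runs of the linear Hasse path of $\ZZ$, and the identity internal maps via a common representative. The deferred case analysis does close out, and you have correctly isolated the decisive point. For completeness, the facts you need beyond "which of $(b,b),(d,d)$ lie in $\J$" are: (a) $\lceil x\rceil-1<x\le y<\lfloor y\rfloor+1$ always, so the peaks flanking the two end-valleys $(\lceil x\rceil,\lceil x\rceil-1)$ and $(\lfloor y\rfloor+1,\lfloor y\rfloor)$ of $Q(x,y)$ are never themselves in $Q(x,y)$ — hence whenever $S:=\J\cap Q(x,y)$ terminates at an end-valley \emph{of $Q(x,y)$}, up-closedness is automatic on that side; (b) $x\le y\Rightarrow\lceil x\rceil\le\lfloor y\rfloor+1$, so $Q(x,y)$ has a valley and is non-empty; (c) the only possible obstruction is therefore when $S$ terminates at an end-valley \emph{of $\J$}, namely $(b+1,b)$ or $(d,d-1)$, and the adjacent peak $(b,b)$ resp.\ $(d,d)$ lies in $Q(x,y)$ but not in $\J$ — i.e.\ the bracket is open there. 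Chasing these inequalities through the four finite shapes and the infinite sub-cases reproduces exactly the defining inequalities of $\langle b,d\rangle_\blk$, including the asymmetry whereby $[b,d)_\blk$ is cut out by a $y$-condition only and $(b,d]_\blk$ by an $x$-condition only. So the plan is sound and would compile into a complete proof.
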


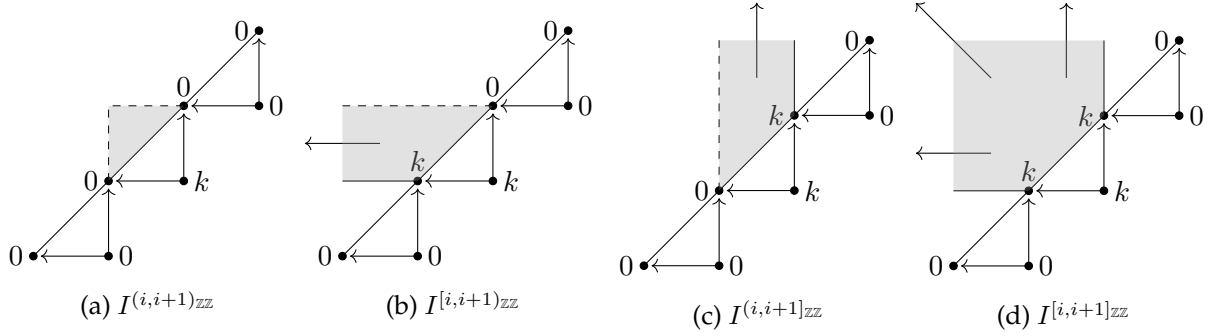
\begin{figure}
\centering
\begin{subfigure}{0.24\textwidth}
\begin{tikzpicture}
\draw[->] (2,1) -- (2,2-0.1);
\draw[->] (3,2) -- (3,3-0.1);
\draw[->] (2,1) -- (1.1,1);
\draw[->] (3,2) -- (2+0.1,2);
\draw[->] (4,3) -- (3+0.1,3);
\draw[->] (4,3) -- (4, 3.9);
\draw (1,1) -- (4,4);

\draw [fill] (2,1) circle [radius=0.05];
\draw [fill] (3,2) circle [radius=0.05];
\draw [fill] (4,3) circle [radius=0.05];
\draw [fill] (1,1) circle [radius=0.05];
\draw [fill] (2,2) circle [radius=0.05];
\draw [fill] (3,3) circle [radius=0.05];
\draw [fill] (4,4) circle [radius=0.05];

\node[right] at (2,1) {$0$};
\node[right] at (3,2) {$k$};
\node[right] at (4,3) {$0$};
\node[left] at (1,1) {$0$};
\node[left] at (2,2) {$0$};
\node[above] at (3,3) {$0$};
\node[left] at (4,4) {$0$};

\draw[dashed] (2,2) -- (2,3) -- (3,3);

\fill[color=aqaqaq,fill=aqaqaq,fill opacity=0.30] (2,2) -- (2, 3) -- (3,3) -- cycle;
\end{tikzpicture}
\caption{$I^{(i, {i+1})_\ZZ}$}
\end{subfigure}
\begin{subfigure}{0.24\textwidth}
\begin{tikzpicture}
\draw[->] (2,1) -- (2,2-0.1);
\draw[->] (3,2) -- (3,3-0.1);
\draw[->] (2,1) -- (1.1,1);
\draw[->] (3,2) -- (2+0.1,2);
\draw[->] (4,3) -- (3+0.1,3);
\draw[->] (4,3) -- (4, 3.9);
\draw (1,1) -- (4,4);

\draw [fill] (2,1) circle [radius=0.05];
\draw [fill] (3,2) circle [radius=0.05];
\draw [fill] (4,3) circle [radius=0.05];
\draw [fill] (1,1) circle [radius=0.05];
\draw [fill] (2,2) circle [radius=0.05];
\draw [fill] (3,3) circle [radius=0.05];
\draw [fill] (4,4) circle [radius=0.05];

\node[right] at (2,1) {$0$};
\node[right] at (3,2) {$k$};
\node[right] at (4,3) {$0$};
\node[left] at (1,1) {$0$};
\node[above] at (2,2) {$k$};
\node[above] at (3,3) {$0$};
\node[left] at (4,4) {$0$};

\draw[dashed] (3,3) -- (1,3);
\draw[] (2,2) -- (1,2);

\draw[->] (1.5, 2.5) -- (0.5, 2.5);

\fill[color=aqaqaq,fill=aqaqaq,fill opacity=0.30] (2,2) -- (1, 2) -- (1,3) -- (3,3) --  cycle;
\end{tikzpicture}
\caption{$I^{[i, {i+1})_\ZZ}$}
\end{subfigure}
\begin{subfigure}{0.24\textwidth}
\begin{tikzpicture}
\draw[->] (2,1) -- (2,2-0.1);
\draw[->] (3,2) -- (3,3-0.1);
\draw[->] (2,1) -- (1.1,1);
\draw[->] (3,2) -- (2+0.1,2);
\draw[->] (4,3) -- (3+0.1,3);
\draw[->] (4,3) -- (4, 3.9);
\draw (1,1) -- (4,4);

\draw [fill] (2,1) circle [radius=0.05];
\draw [fill] (3,2) circle [radius=0.05];
\draw [fill] (4,3) circle [radius=0.05];
\draw [fill] (1,1) circle [radius=0.05];
\draw [fill] (2,2) circle [radius=0.05];
\draw [fill] (3,3) circle [radius=0.05];
\draw [fill] (4,4) circle [radius=0.05];

\node[right] at (2,1) {$0$};
\node[right] at (3,2) {$k$};
\node[right] at (4,3) {$0$};
\node[left] at (1,1) {$0$};
\node[left] at (2,2) {$0$};
\node[left] at (3,3) {$k$};
\node[left] at (4,4) {$0$};

\draw[dashed] (2,2) -- (2,4);
\draw[] (3,3) -- (3,4);

\draw[->] (2.5, 3.5) -- (2.5, 4.5);

\fill[color=aqaqaq,fill=aqaqaq,fill opacity=0.30] (2,2) -- (2, 4) -- (3,4) -- (3,3) --  cycle;
\end{tikzpicture}
\caption{$I^{(i, {i+1}]_\ZZ}$}
\end{subfigure}
\begin{subfigure}{0.24\textwidth}
\begin{tikzpicture}
\draw[->] (2,1) -- (2,2-0.1);
\draw[->] (3,2) -- (3,3-0.1);
\draw[->] (2,1) -- (1.1,1);
\draw[->] (3,2) -- (2+0.1,2);
\draw[->] (4,3) -- (3+0.1,3);
\draw[->] (4,3) -- (4, 3.9);
\draw (1,1) -- (4,4);

\draw [fill] (2,1) circle [radius=0.05];
\draw [fill] (3,2) circle [radius=0.05];
\draw [fill] (4,3) circle [radius=0.05];
\draw [fill] (1,1) circle [radius=0.05];
\draw [fill] (2,2) circle [radius=0.05];
\draw [fill] (3,3) circle [radius=0.05];
\draw [fill] (4,4) circle [radius=0.05];

\node[right] at (2,1) {$0$};
\node[right] at (3,2) {$k$};
\node[right] at (4,3) {$0$};
\node[left] at (1,1) {$0$};
\node[above] at (2,2) {$k$};
\node[left] at (3,3) {$k$};
\node[left] at (4,4) {$0$};

\draw (2,2) -- (1,2);
\draw (3,3) -- (3,4);

\draw[->] (1.5, 2.5) -- (0.5, 2.5);
\draw[->] (2.5, 3.5) -- (2.5, 4.5);
\draw[->] (1.5, 3.5) -- (0.5, 4.5);

\fill[color=aqaqaq,fill=aqaqaq,fill opacity=0.30] (2,2) -- (1,2) -- (1, 4) -- (3,4) -- (3,3) --  cycle;
\end{tikzpicture}
\caption{$I^{[i, {i+1}]_\ZZ}$}
\end{subfigure}
\caption{Extension to block interval modules of the four different types of zigzag interval modules. Compare with \cref{fig:intervals}. }
\label{fig:extending_zigzags}
\end{figure}

\begin{proposition}\label{prop:zigzagext}
For any \pfd zigzag module $V$, $\E{V}$ is block decomposable, and we have a bijective matching $\B(V) \leftrightarrow \B(\E{V})$ which matches each zigzag interval $\langle b,d\rangle_\ZZ$ to the block interval $\langle b,d\rangle_\BL$.
\end{proposition}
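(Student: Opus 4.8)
The plan is to reduce the statement to the interval case via the structure theorem for zigzag modules and then exploit the fact that the block extension functor commutes with direct sums. First, since $V$ is a \pfd $\ZZCat$-indexed module, \cref{Structure_Theorem} gives an interval decomposition
\[V \cong \bigoplus_{\langle b,d\rangle_\ZZ \in \B(V)} I^{\langle b,d\rangle_\ZZ},\]
where the sum ranges over the (possibly infinite) multiset $\B(V)$.

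Next I would check that $E$ preserves arbitrary direct sums. Recall $E = (-)|_\INTR \circ \Lan_\iota(-)$. The left Kan extension $\Lan_\iota(-)$ preserves direct sums by \cref{Prop:Preservation_of_Coproducts}\,(i), being a left adjoint, and the restriction functor $(-)|_\INTR$ preserves direct sums since direct sums of persistence modules are computed pointwise. Hence
\[E(V) \cong \bigoplus_{\langle b,d\rangle_\ZZ \in \B(V)} E\!\left(I^{\langle b,d\rangle_\ZZ}\right),\]
and applying \cref{lem:extendzz} to each summand yields $E(I^{\langle b,d\rangle_\ZZ}) \cong I^{\langle b,d\rangle_\blk}$, so that $E(V) \cong \bigoplus_{\langle b,d\rangle_\ZZ \in \B(V)} I^{\langle b,d\rangle_\blk}$. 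In particular $E(V)$ is a direct sum of block modules, i.e., block decomposable.

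For the barcode statement, I would invoke that each block module $I^{\langle b,d\rangle_\blk}$ is indecomposable with endomorphism ring isomorphic to $k$ (\cref{prop:intervals_are_indec} and its proof), hence local; the Azumaya--Krull--Remak--Schmidt theorem then shows that the multiset of summands appearing in the displayed decomposition of $E(V)$ is, up to isomorphism, the uniquely determined barcode $\B(E(V))$. Thus $\B(E(V)) = \{\langle b,d\rangle_\blk \mid \langle b,d\rangle_\ZZ \in \B(V)\}$ as multisets, and the assignment $\langle b,d\rangle_\ZZ \mapsto \langle b,d\rangle_\blk$ is the desired bijective matching $\B(V) \leftrightarrow \B(E(V))$.

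The argument is essentially routine once \cref{lem:extendzz} is granted; the only point requiring genuine care is the handling of the possibly infinite direct sum, which is why it is worth emphasizing that $E$, being assembled from a left adjoint and a pointwise restriction, commutes with \emph{arbitrary} coproducts rather than merely finite ones. Note that no \pfd hypothesis on $E(V)$ is needed for its barcode to be well defined, since the locality of the endomorphism rings of the block summands already suffices.
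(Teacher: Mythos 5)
Your proof is correct and follows essentially the same route as the paper's: decompose $V$ via the structure theorem, observe that $E$ preserves direct sums because $\Lan_\iota(-)$ is a left adjoint and restriction is pointwise, and apply \cref{lem:extendzz} summand-by-summand. The concluding appeal to Azumaya--Krull--Remak--Schmidt for uniqueness of the barcode is a reasonable elaboration, though the paper treats this as already settled by the general discussion in \cref{Sec:Barcodes} and so does not repeat it here.
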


\begin{proof}
By \cref{Prop:Preservation_of_Coproducts}, $\Lan_\iota(-)$ preserves direct sums.  Clearly $(-)|_{\INTR}$ preserves direct sums as well, so $E=(-)|_{\INTR}\comp \Lan_\iota(-)$ also preserves direct sums.  The result now follows from \cref{Structure_Theorem} and \cref{lem:extendzz}.
\end{proof}

The following result, not used elsewhere in the paper, describes an additional sense in which $E$ preserves the structure of $\Vect^{\ZZCat}$:

\begin{proposition}\mbox{}
$\E{}:\Vect^{\ZZCat}\to \Vect^{\INTR}$ is fully faithful.
\end{proposition}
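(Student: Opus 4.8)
The plan is to prove faithfulness and fullness of $\E{}$ directly from the colimit formula for $\Lan_\iota$, after recording the following key observation: the module $\E{V}$, restricted to the diagonal $\{(c,c)\mid c\in\RCat\}\subseteq\INTR$, already records all of $V$. Indeed $\ZZCat[\iota\leq(i,i)]=\{(i,i),(i,i-1),(i+1,i)\}$ has top element $(i,i)$, so the defining colimit gives a natural isomorphism $\E{V}_{(i,i)}\cong V_{(i,i)}$; and $\ZZCat[\iota\leq(c,c)]=\{(i,i-1)\}$ whenever $i-1<c<i$, so $\E{V}_{(c,c)}\cong V_{(i,i-1)}$, with $\E{V}$ constant and with identity structure maps along the open segment $i-1<c<i$. (Alternatively, one can note that $\E{}=(-)|_{\INTR}\comp\Lan_\iota(-)$ is a composite of left adjoints and hence has right adjoint $(-)|_{\ZZCat}\comp\Ran_\iem(-)$, so that $\E{}$ is fully faithful iff the unit of this adjunction is an isomorphism; but checking that reduces to the same colimit/limit computations, so I would take the direct route.)

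Faithfulness is then immediate: $\E{}$ is $k$-linear, so it suffices to check that $\E{f}=0$ implies $f=0$, and evaluating $\E{f}$ at the points $(i,i)$ recovers the components $f_{(i,i)}$ while evaluating at the points $(c,c)$ with $i-1<c<i$ recovers $f_{(i,i-1)}$.

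For fullness, given $g\colon\E{V}\to\E{W}$ I would set $f_{(i,i)}:=g_{(i,i)}$ and $f_{(i,i-1)}:=g_{(c,c)}$ for any $i-1<c<i$; this last is independent of $c$ by the constancy of $\E{V}$ along the segment. To see that $f$ is a morphism of zigzag modules one must check compatibility with the structure maps $V_{(i,i-1)}\to V_{(i,i)}$ and $V_{(i,i-1)}\to V_{(i-1,i-1)}$ of $V$. For the first: $\ZZCat[\iota\leq(c,i)]=\{(i,i-1),(i,i),(i+1,i)\}$ for $i-1<c<i$, so $\E{V}_{(c,i)}\cong V_{(i,i)}$, with the $\E{V}$-structure map $\E{V}_{(c,c)}\to\E{V}_{(c,i)}$ identified with $V_{(i,i-1)}\to V_{(i,i)}$ and $\E{V}_{(i,i)}\to\E{V}_{(c,i)}$ an isomorphism; applying naturality of $g$ to these two $\E{V}$-structure maps yields the required commuting square. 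The second structure map is handled symmetrically via the point $(i-1,c)$.

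It remains to verify $\E{f}=g$, and this is the step I expect to be the crux. Put $h:=\E{f}-g$. By construction $h_{(c,c)}=0$ at every diagonal point. For arbitrary $(x,y)\in\INTR$, $\E{V}_{(x,y)}=\varinjlim V|_{\ZZCat[\iota\leq(x,y)]}$ is spanned by the images of the colimit cocone maps $V_z\to\E{V}_{(x,y)}$ for $z\in\ZZCat[\iota\leq(x,y)]$, and each such map factors through a diagonal point of $\INTR$ still below $(x,y)$: through $\E{V}_{(i,i)}$ when $z=(i,i)$, and through $\E{V}_{(c,c)}$ when $z=(i,i-1)$, where $c$ is any element of $[i-1,i]\cap[x,y]$ --- this set is nonempty because $(i,i-1)\leq(x,y)$ together with $x\leq y$ forces $\max(x,i-1)\leq\min(y,i)$. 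Since $h$ vanishes at every diagonal point and is a morphism, $h_{(x,y)}$ annihilates every cocone map, so $h_{(x,y)}=0$. Hence $\E{f}=g$, and $\E{}$ is full, hence fully faithful. The main obstacle is precisely this last step: recovering a morphism out of $\E{V}$ from its diagonal behaviour, which hinges on the explicit description of the posets $\ZZCat[\iota\leq(x,y)]$ and the combinatorial fact that every zigzag index below $(x,y)$ routes through a diagonal index still below $(x,y)$.
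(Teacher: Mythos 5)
Your proof is correct, and it takes a genuinely different route from the paper's. The paper argues abstractly: $\Lan_\iota(-)$ is left adjoint to restriction along $\iota$, the unit $\id\to(-)|_\iota\comp\Lan_\iota(-)$ is an isomorphism, hence $\Lan_\iota(-)$ is fully faithful, and it is then asserted to be "easy to check" that this survives post-composition with $(-)|_{\INTR}$. You instead verify everything by hand from the colimit formula: you compute $\E{V}$ at diagonal points, observe that these values recover $V$ on the nose, and then show that a morphism out of $\E{V}$ is determined by its restriction to the diagonal because every cocone map $V_z\to\E{V}_{(x,y)}$ factors through a diagonal point below $(x,y)$. Your computations of the posets $\ZZCat[\iota\leq(c,c)]$, $\ZZCat[\iota\leq(c,i)]$, and the nonemptiness of $[i-1,i]\cap[x,y]$ all check out, and the final "spanned by cocone images" argument is exactly what is needed in $\Vect$. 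What your approach buys is self-containedness and an explicit inverse to $\E{}$ on hom-sets; in fact the last, unproved step of the paper's argument (that fullness survives restriction to $\INTR$) secretly requires the same kind of below-the-diagonal bookkeeping you carry out. Two cosmetic remarks: distinct diagonal points of $\INTR$ are incomparable, so there are no "structure maps along the open segment"; the independence of $g_{(c,c)}$ from $c$ (which you could also sidestep by fixing $c=i-\tfrac12$) should instead be checked by naturality at the common upper bound $(c,c')$, where $\E{V}_{(c,c')}\cong V_{(i,i-1)}$ by the same colimit computation.
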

\begin{proof}
$\Lan_\iota(-)$ is left adjoint to the restriction functor $(-)|_{\iota}: \Vect^{\RCat^\op\times\RCat}\to \Vect^{\ZZCat}$ \cite[(1.1)]{riehl2014categorical}.  
The reader may easily verify that $(-)|_{\iota}\comp \Lan_\iota(-) \cong \id_{\vect^{\ZZCat}}$; this also follows from \cite[Corollary X.3.3]{mac1998categories}. Hence, $\Lan_\iota(-)$ is fully faithful \cite{johnstone1989local}.
It is easy to check that this property is preserved by post-composition with $(-)|_\INTR$.
\end{proof}

\paragraph{Algebraic Stability of Zigzag Modules}
\begin{definition}
We define the interleaving and bottleneck distances on \pfd zigzag persistence modules and their barcodes by \[d_I(V,W) := d_I(E(V),E(W)),\qquad d_b^\ZZCat(\B(V), \B(W)) := d_b(\B(E(V)),\B(E(W)).\] 
\end{definition} 
Given these definitions, we get forward and converse algebraic stability results for zigzags immediately from \cref{teo:IMTinterleaving} and \cref{lem:converseAST}.

\begin{remark}
The interleaving distance on zigzag modules defined in this section is in fact an extension of the usual interleaving distance on $\ZCat$-indexed modules: We have an obvious fully faithful functor $D:\Vect^\ZCat\to \Vect^\ZZCat$ which sends a $\ZCat$-indexed module to a zigzag module by taking all leftwards arrows to be isomorphisms; that is, for $V$ a zigzag module, we take \begin{align*}
D(V)_{(i,i)}=D(V)_{(i+1,i)}&=V_i,\\
\varphi_{D(V)}((i+1,i),(i,i))&=\id_{V_i},\\
\varphi_{D(V)}((i,i-1),(i,i))&=\varphi_V(i-1,i).
\end{align*}
The ordinary interleaving distance can be defined on $\ZCat$-indexed modules just as for $\RCat$-indexed modules, and it can be checked that $D$ preserves interleaving distances.
\end{remark}

\subsection{Stability of (Inter)level Set Persistence}\label{Sec:Levelset_Persistence}
We next explain how the stability of level set and interlevel set zigzag persistence, as established in \cite{carlsson2009zigzag,bendich2013homology}, follows from the block stability theorem.  To begin, we introduce the necessary definitions, following \cite{carlsson2009zigzag}.  

\paragraph{Interlevel Set Persistent Homology}
For $\TopSpace$ a topological space, we say a continuous function $\gamma: \TopSpace\to \mathbb{R}$ is of \idf{Morse type} if
\begin{enumerate}
\item There exists a strictly increasing function $\G: \Z\to \R$ such that $\lim_{z\to \pm\infty} \G_z =\pm \infty$, and such that for each open interval $I=(\G_z,\G_{z+1})$, $\gamma^{-1}(I)$ is homeomorphic to a product $I\times Y$ with $\gamma$ the projection down on $I$. Note that $Y$ may be different for different choices of $I$.
\item
Each homeomorphism $h: I\times Y\to \gamma^{-1}(I)$ extends to a continuous function 
\[\bar{h}: \bar{I}\times Y \to \gamma^{-1}(\bar{I})=\FI{\gamma}_{\bar I},\] 
where $\bar{I}$ denotes the closure of $I$.
\item $\dim H_i(\gamma^{-1}(t))<\infty$ for all $t\in \R$ and $i\geq 0$.  
\end{enumerate}
\begin{example}\label{Ex:Immersed_Curve}
Let $\TopSpace$ be the immersed curve in $\R^2$ depicted in \cref{ex:foursummands}, and let $\gamma:\TopSpace\to \R$ denote the projection onto the $x$-axis.  Then $\gamma$ is of Morse type; we may take the function $\G:\Z\to \R$ to be the usual inclusion.
\end{example}

\begin{figure}
\centering
\begin{tikzpicture}[line cap=round,line join=round,>=triangle 45,x=1.50cm,y=1.0cm]
\draw[ultra thick,color=black] (-2.3,0.) -- (2.3,0.);
\foreach \x in {1,2,3,4,5}
\draw[dashed, opacity=0.6] (\x-3, 3) node[above] {$ $} -- (\x-3,-0.05);
\foreach \x in {-2,-1,0,1,2}
\draw[shift={(\x,0)},color=black] (0pt,-2pt) node[below] {\footnotesize $\x$};
\draw [shift={(0.,1.)}] plot[domain=-3.14159265359:0.,variable=\t]({1.*1.*cos(\t r)+0.*1.*sin(\t r)},{0.*1.*cos(\t r)+1.*1.*sin(\t r)+.1});
\draw [shift={(0.5,1.)}] plot[domain=0.:3.1415926535,variable=\t]({1.*1.5*cos(\t r)+0.*1.5*sin(\t r)},{0.*1.5*cos(\t r)+1.*1.5*sin(\t r)+.1});
\draw [shift={(-0.5,1.)}] plot[domain=0.:3.1415926535,variable=\t]({1.*1.5*cos(\t r)+0.*1.5*sin(\t r)},{0.*1.5*cos(\t r)+1.*1.5*sin(\t r)+.1});
\end{tikzpicture}
\caption{The immersed curve $T$ of \cref{Ex:Immersed_Curve,,Ex:Immersed_Curve2}.}
\label{ex:foursummands}
\end{figure}

\paragraph{Structure of Interlevel Set Persistent Homology}
Recall the definition of the interlevel set filtration $\FI{\gamma}$ from \cref{Sec:Persistence_Modules}.
\begin{theorem}[\cite{carlsson2009zigzag,bendich2013homology}]\label{Thm:Morse_Block_Decomposition}
For $\gamma:\TopSpace\to \R$ of Morse type and $i\geq 0$,
\begin{enumerate}[(i)]
\item $H_i\FI{\gamma}$ is block decomposable, so that $\B_i(\gamma):=\B(H_i\FI{\gamma})$ is well defined.
\item There is a one-to-one correspondence between blocks $[b,a]_\BL\in \B_{i+1}(\gamma)$ with $a<b$ and blocks $(a,b)_\BL\in \B_i(\gamma)$.
\end{enumerate}
\end{theorem}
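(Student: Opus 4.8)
The plan is to reduce \cref{Thm:Morse_Block_Decomposition} to a finite-type statement using the Morse-type hypothesis, and then to extract both the decomposition and the degree-shift correspondence from the Mayer--Vietoris relations tying $H_\bullet\FI\gamma$ to the levelset zigzag of $\gamma$; this is, in essence, the \emph{Mayer--Vietoris pyramid} of \cite{carlsson2009zigzag} (cf.\ \cite{bendich2013homology}), reorganized so that its output is an interval decomposition over $\U$.

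\emph{Step 1: constructibility reduction.} Let $\G\colon\Z\to\R$ be the sequence of critical values from the definition of Morse type. Conditions (1) and (2) --- the product structure $\gamma^{-1}(I)\cong I\times Y$ over each chamber $I=(\G_z,\G_{z+1})$, with its continuous extension to $\bar I\times Y$ --- yield, via the usual mapping-cylinder deformation retractions, that $\gamma^{-1}[a,b]\hookrightarrow\gamma^{-1}[a',b']$ is a homotopy equivalence whenever $(a,b)$ and $(a',b')$ lie in a common cell of the arrangement of lines $\{x=\G_z\}\cup\{y=\G_z\}$ in $\U$. Hence the internal maps of $H_i\FI\gamma$ are isomorphisms within each such cell, and $H_i\FI\gamma$ is the left Kan extension, along the inclusion of the discrete subposet $\U_\G:=\{(\G_a,\G_b)\in\U\}$, of its restriction $M_i$ to $\U_\G$. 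Over $\U_\G$ this Kan extension is just ``rounding to the grid'', hence exact, and it sends the (grid-constructible) restriction of a block module with critical-value endpoints back to that block module; so it suffices to decompose each $M_i$ into restrictions of such blocks. The same deformation retractions present $\gamma^{-1}[a,b]$ as a finite iterated homotopy pushout of level sets of $\gamma$, so condition (3) gives $\dim H_i\gamma^{-1}[a,b]<\infty$ and $M_i$ is \pfd.

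\emph{Step 2: levelset zigzag and pyramid.} Then, picking a regular value $r_z$ in each chamber, I would form the levelset zigzag $\mathbb{L}_j$ of $\gamma$ in degree $j$: the $\ZZCat$-indexed module with terms $H_j\gamma^{-1}(r_z)$ and $H_j\gamma^{-1}[r_z,r_{z+1}]$ and inclusion-induced maps (reindexed along $\G$). It is \pfd by Step 1, so \cref{Structure_Theorem} decomposes it into zigzag interval modules. For $a\le c\le b$ in $\U_\G$, the cover $\gamma^{-1}[a,b]=\gamma^{-1}[a,c]\cup\gamma^{-1}[c,b]$ with intersection $\gamma^{-1}(c)$ gives a Mayer--Vietoris long exact sequence, and these sequences, assembled over $\U_\G$, form the pyramid (its faces at $x=-\infty$ and $y=+\infty$ recovering the sub- and superlevel set persistence of $\gamma$). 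Tracing a zigzag interval summand through the pyramid shows it contributes exactly one block to $M_i$, of the type among $\oo,\co,\oc,\cc$ matching the zigzag-interval type --- this is \cref{lem:extendzz} read through the Mayer--Vietoris sequences --- so that the non-cofinite part of $M_i$ is $E(\mathbb{L}_i)|_{\U_\G}$. In addition, each open--open summand $I^{(b,d)_\ZZ}$ of $\mathbb{L}_i$ is visible one degree up: the connecting map $\partial\colon H_{i+1}\gamma^{-1}[a,b]\to H_i\gamma^{-1}(c)$ hits its generator as soon as $[a,b]$ contains the relevant critical values, and the resulting lift generates a cofinite block of $M_{i+1}$; conversely every cofinite summand of $M_{i+1}$ arises this way. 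Up to isomorphism, then, $M_i$ fits in a short exact sequence
\[ 0 \longrightarrow \Bigl(\textstyle\bigoplus_{(b,d)_\ZZ} I^{[d,b]_\blk}\Bigr)\big|_{\U_\G} \longrightarrow M_i \longrightarrow E(\mathbb{L}_i)|_{\U_\G} \longrightarrow 0, \]
the sum running over the open--open intervals $(b,d)_\ZZ$ in $\B(\mathbb{L}_{i-1})$, and both outer terms are direct sums of block modules.

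\emph{Step 3: the main obstacle, and conclusion.} The hardest step, I expect, is upgrading this to a genuine direct-sum decomposition: the extension above need not split a priori --- a cofinite block and an open--open block can have nonzero $\mathrm{Ext}^1$ over $\U$ --- so one must choose compatible splittings of all the pyramid's Mayer--Vietoris sequences at once. This is done by inducting on the finitely many critical values in a bounded window, extending a decomposition of the truncation of $M_\bullet$ below $\G_z$ across $\G_z$ and exploiting that, restricted to any interval summand of $\mathbb{L}_\bullet$, the maps involved are forced to be isomorphisms or zero; this bookkeeping is what is carried out in \cite{carlsson2009zigzag}. Granting it, $M_i\cong\bigoplus_\J I^\J$ with each $\J$ a block, and applying the exact functor $\Lan$ gives $H_i\FI\gamma\cong\bigoplus_\J I^\J$; as block modules have endomorphism ring $k$, the Azumaya--Krull--Remak--Schmidt theorem makes $\B_i(\gamma)$ well defined, proving (i). Finally, the bijection produced above --- sending each cofinite block $[b,a]_\blk\in\B_{i+1}(\gamma)$ (with $a<b$) to the block $(a,b)_\blk\in\B_i(\gamma)$ obtained by applying $E$ to the open--open summand of $\mathbb{L}_i$ that $\partial$ lifted --- is exactly the one-to-one correspondence asserted in (ii).
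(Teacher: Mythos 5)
This theorem is quoted from \cite{carlsson2009zigzag,bendich2013homology}; the paper itself gives no proof, only the remark that (i) follows from the structure theorem for zigzag modules together with the zigzag/block correspondence, and that (ii) is an application of Mayer--Vietoris. Your reconstruction elaborates exactly that strategy --- constructibility reduction to a grid, decomposition of the levelset zigzag, and the pyramid's connecting maps for the degree shift --- with the genuinely hard step (the simultaneous splitting of the Mayer--Vietoris extensions) correctly identified and deferred to the cited sources, so it matches the paper's approach.
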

\cref{Thm:Morse_Block_Decomposition}\,(i) is proven by appealing to the structure theorem for zigzag persistence modules and exploiting the connection between block decomposable and zigzag persistence modules.  \cref{Thm:Morse_Block_Decomposition}\,(ii) is an application of the Mayer-Vietoris theorem.

\begin{remarks}\mbox{}
\begin{enumerate}
\item In fact, \cref{Thm:Morse_Block_Decomposition} is proven in \cite{carlsson2009zigzag,bendich2013homology} under an additional finiteness assumption.  
In view of the structure theorem for modules over infinite zigzags given in \cite{botnan2015interval}, the finiteness assumption is not necessary.
\item \cref{Thm:Morse_Block_Decomposition} admits an extension to a \emph{relative interlevel set persistence}; see \cite{carlsson2009zigzag,bendich2013homology}.  We will consider only the absolute version of the theorem here.
\item $B_i(\gamma)$ can be computed in practice by doing an extended persistence or zigzag persistent homology computation, and appealing to the formulae in \cite{carlsson2009zigzag}.  
\end{enumerate}
\end{remarks}

\paragraph{Level Set Barcodes}
Recall from \cref{Sec:Persistence_Modules} that the barcode $\LSB_i(\gamma):=\diag \B_i(\gamma)$ is called the $i^{\mathrm{th}}$ \emph{level set (zigzag) barcode of $\gamma$}.

\begin{remark}
In view of \cref{Thm:Morse_Block_Decomposition}~(ii), the block barcodes $\{\B_i(\gamma)\}_{i\geq 0}$ and the level set barcodes $\{\LSB_i(\gamma)\}_{i\geq 0}$ determine each other, so there is no loss in passing from interlevel set (block)  barcodes to level set barcodes, as long as we consider homology in all degrees.
\end{remark}

\begin{example}\label{Ex:Immersed_Curve2} 
It can be shown that for $\gamma:T\to \R$ as in \cref{Ex:Immersed_Curve}, 
\[\B_0(\gamma)=\left\{[-2,2]_\BL, (-1, 1)_\BL, [-1,0)_\BL, (0, 1]_\BL\right\}.\]

Thus the $0^{\mathrm{th}}$ level set barcode of $\gamma$ is
\[\LSB_0(\gamma)=\left\{[-2,2], (-1, 1), [-1,0), (0, 1]\right\}.\]
\end{example}

\paragraph{Stability of Level Set Persistence}
The stability theorem for level set persistence first appeared in \cite{carlsson2009zigzag}.  The original proof is an application of the stability of extended persistence \cite{cohen2009extending}, and hence can be seen as an application of algebraic stability for 1-D persistence modules.  We now give a different proof based on the block stability theorem which avoids consideration of extended persistence and relative homology. 

\begin{theorem}[Stability of (Inter)level Set Persistence]\label{Thm:LZZ_Stabilty}
Let $\gamma,\kappa: \TopSpace\to \mathbb{R}$ be of Morse type and let $\epsilon=d_\infty(\gamma,\kappa)$.  Then for all $i\geq 0$, 
\begin{align*}
d_b(\B_i(\gamma), \B_i(\kappa)) &\leq \epsilon,\\
d_b(\LSB_i(\gamma),\LSB_i(\kappa)) &\leq \epsilon.
\end{align*}
\end{theorem}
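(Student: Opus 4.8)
The plan is to derive \cref{Thm:LZZ_Stabilty} as a direct consequence of the block stability theorem (\cref{teo:IMTinterleaving}) together with \cref{prop:olddb}, once we verify that an $\epsilon$-matching produced by the block stability theorem for these particular modules is in fact ``diagonal-friendly'' enough to pass to level set barcodes. The starting point is the standard observation that if $d_\infty(\gamma,\kappa)\leq \epsilon$, then the interlevel set filtrations satisfy $\FI{\gamma}_{(a,b)}\subseteq \FI{\kappa}_{(a-\epsilon,b+\epsilon)}$ and vice versa, since $\gamma^{-1}([a,b])\subseteq \kappa^{-1}([a-\epsilon,b+\epsilon])$ when $|\gamma-\kappa|\leq\epsilon$ pointwise. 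Applying $H_i$ and recalling how $\epsilon$-shifts work on $\U$-indexed (equivalently $\RCat^\op\times\RCat$-indexed) modules, these inclusions of filtrations, together with the functoriality of the inclusions, yield an $\epsilon$-interleaving between $H_i\FI{\gamma}$ and $H_i\FI{\kappa}$; the two required triangle identities hold because both composites are induced by genuine inclusions of sublevel sets and hence equal the appropriate structure maps.

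First I would make the interleaving step precise: spell out that $\EMB(H_i\FI{\gamma})$ and $\EMB(H_i\FI{\kappa})$ are $\epsilon$-interleaved as $\RCat^\op\times\RCat$-indexed modules, using \cref{Thm:Morse_Block_Decomposition}\,(i) to know they are block decomposable and \pfd. Then I would invoke \cref{teo:IMTinterleaving} to obtain a matching $\chi:\B_i(\gamma)\nrightarrow\B_i(\kappa)$ which is a $\tfrac{5}{2}\epsilon$-matching; but since $d_b$ here is a pseudometric and $\epsilon=d_\infty(\gamma,\kappa)$ is the infimum over which the interleaving holds, taking the infimum over all valid $\epsilon$ already gives $d_b(\B_i(\gamma),\B_i(\kappa))\leq \tfrac{5}{2}d_\infty(\gamma,\kappa)$ — but the claimed bound is the sharper $\epsilon$. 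To recover the constant $1$ rather than $\tfrac52$, I would use the finer structure in \cref{teo:IMTinterleaving}: it guarantees that $\chi$ matches blocks of the same type, with matched blocks $\epsilon$-interleaved, and it matches \emph{all} of $\B(M)^\oo_{5\epsilon}$, etc. For blocks that are not of type $\oo$, \cref{lem:Erosion_And_Blocks}\,(ii) says $\epsilon$-interleaved blocks of the same type have endpoints within $\epsilon$, so these matched pairs already witness an $\epsilon$-matching. The delicate case is type $\oo$: here \cref{teo:IMTinterleaving} only matches blocks outside $\B^\oo_{5\epsilon}$, and the unmatched ones are merely $5\epsilon$-trivial, not $2\epsilon$-trivial — so on its face $\chi$ is only a $\tfrac52\epsilon$-matching of \emph{block} barcodes, not an $\epsilon$-matching.

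The resolution, and the main point of the proof, is that we only need an $\epsilon$-matching of the \emph{diagonal} (level set) barcodes, not of the block barcodes, and for that we may exploit \cref{Thm:Morse_Block_Decomposition}\,(ii): every type-$\oo$ block $(a,b)_\BL\in\B_i(\gamma)$ corresponds bijectively to a type-$\cc$ block $[b,a]_\BL\in\B_{i+1}(\gamma)$, which the block stability theorem \emph{does} match (all of $\B^\cc$ is matched), and the matched partner is $\epsilon$-interleaved and of type $\cc$, hence has endpoints within $\epsilon$. Tracing this correspondence back, I would argue that the matching $\chi$ on $\B_{i+1}$ forces a compatible $\epsilon$-close matching of the type-$\oo$ blocks of $\B_i$, so that the hypothesis of \cref{prop:olddb}\,(ii) is met; this then yields that $\diag\chi$ is an honest $\epsilon$-matching, giving $d_b(\LSB_i(\gamma),\LSB_i(\kappa))\leq\epsilon$. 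Since $\diag\J_\blk=D(\I)$ collapses type-$\oo$ blocks $(a,b)_\blk$ and type-$\cc$ blocks $[a,b]_\blk$ to the same interval $(a,b)$ on the diagonal (up to endpoint conventions tracked in \cref{Sec:Blocks}), the pathology of large $\oo$-blocks that are only $5\epsilon$-trivial does not survive on the diagonal: a long $\oo$-block corresponds to a long $\cc$-block, which is matched with constant $1$. Finally, for the first inequality $d_b(\B_i(\gamma),\B_i(\kappa))\leq\epsilon$ one should, strictly speaking, note that we actually want the \emph{block} barcode version with constant $1$ as well — I would obtain this the same way, observing that using the pairing in \cref{Thm:Morse_Block_Decomposition}\,(ii) across all degrees simultaneously lets us replace each type-$\oo$ block by its type-$\cc$ partner in an adjacent degree, match those with constant $1$ via the $\cc$-clause of \cref{teo:IMTinterleaving}, and push the matching back. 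The main obstacle, then, is precisely this bookkeeping: carefully verifying that the matchings in degrees $i$ and $i+1$ can be chosen coherently with respect to the bijection of \cref{Thm:Morse_Block_Decomposition}\,(ii), so that the improvement from $\tfrac52\epsilon$ to $\epsilon$ genuinely goes through on the diagonal (and, via the adjacent-degree trick, on the block barcodes themselves).
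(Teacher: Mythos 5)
Your proposal is correct and follows essentially the same route as the paper: build the $\epsilon$-interleaving from the pointwise inclusions, apply \cref{teo:IMTinterleaving} for types $\cc$, $\co$, $\oc$, and repair the type-$\oo$ case by transporting it through the bijection of \cref{Thm:Morse_Block_Decomposition}\,(ii) to the $\cc$ blocks in degree $i+1$, then conclude via \cref{prop:olddb}\,(ii). The ``bookkeeping'' you flag is resolved in the paper exactly as you suggest, by composing the injections with the degree-$(i+1)$ matching and checking (using $b-a>2\epsilon$) that the matched $\cc$ block lies in the image of the second injection.
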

\begin{proof}
For all $x\leq y$, we have inclusions
\begin{align*}
\FI{\gamma}_{(x,y)} \subseteq \FI{\kappa}_{(x-\epsilon, y+\epsilon)} \subseteq \FI{\gamma}_{(x-2\epsilon, y+2\epsilon)},\\\FI{\kappa}_{(x,y)} \subseteq \FI{\gamma}_{(x-\epsilon, y+\epsilon)} \subseteq \FI{\kappa}_{(x-2\epsilon, y+2\epsilon)},
\end{align*}
By the functoriality of $H_i$, these induce an $\epsilon$-interleaving between $H_i\FI{\gamma}$ and $H_i\FI{\kappa}$. 
Applying \cref{teo:IMTinterleaving}, we obtain $\epsilon$-matchings between $\B_i(\gamma)^\star$ and $\B_i(\kappa)^\star$ for $\star\in\{\cc, \oc, \co\}$ and a $\frac{5}{2}\epsilon$-matching between $\B_i(\gamma)^\oo$ and $\B_i(\kappa)^\oo$.  To establish the theorem, we will in fact need an $\epsilon$-matching between $\B_i(\gamma)^\oo$ and $\B_i(\kappa)^\oo$ which matches each interval in $(a,b)_\blk\in \B_i(\gamma)^\oo_{\epsilon}\cup \B_i(\kappa)^\oo_{\epsilon}$ to an interval $(a',b')_\blk$ with \[|a-a'|\leq \epsilon\quad\textup{and}\quad|b-b'|\leq \epsilon,\] as in the statement of \cref{prop:olddb}\,(ii).  We obtain this as follows: Let $\chi: \B_{i+1}(\gamma)^\cc \to \B_{i+1}(\kappa)^\cc$ denote the $\epsilon$-matching provided by \cref{teo:IMTinterleaving}, and note that $\chi$ is  bijective.  \cref{Thm:Morse_Block_Decomposition}\,(ii) gives us injections
\[ i_1: \B_i(\gamma)^\oo\hookrightarrow \B_{i+1}(\gamma)^\cc\qquad \text{and}\qquad i_2: \B_i(\kappa)^\oo \hookrightarrow \B_{i+1}(\kappa)^\cc.\]
 
By composition, we get a matching
\[ i_2^{-1}\comp \chi \comp i_1 : \B_i(\gamma)^\oo \nrightarrow \B_i(\kappa)^\oo,\]
where $i_2^{-1}$ denotes the reverse of the matching $i_2$.  

$\chi\comp i_1$ matches each block $(a,b)_\BL\in \B_i(\gamma)^\oo_{\epsilon}$ to a block $\chi[b,a]_\BL = [b^\prime, a^\prime]_\BL\in \B_{i+1}(\kappa)^\cc$ with 
\[|a-a'|\leq \epsilon\quad\textup{and}\quad|b-b'|\leq \epsilon.\]
Since $b-a> 2\epsilon$, we have in particular that 
\[a^\prime \leq a+\epsilon < b - \epsilon \leq b^\prime.\] Thus, $[b^\prime, a^\prime]_\BL\in \im i_2$, and \[i_2^{-1}\comp \chi \comp i_1(a,b)_\BL=(a^\prime, b^\prime)_\BL\in \B_i(\kappa)^\oo.\]  This shows that \[\B_i(\gamma)^\oo_{\epsilon}\in \coim i_2^{-1}\comp \chi \comp i_1.\]  Applying the same argument in the opposite direction, we obtain that \[\B_i(\kappa)^\oo_{\epsilon}\in \im i_2^{-1}\comp \chi \comp i_1,\] and that $i_2^{-1}\comp \chi \comp i_1$ is an $\epsilon$-matching as in the statement of \cref{prop:olddb}\,(ii).  Applying \cref{prop:olddb}\,(ii), the result now follows. 
\end{proof}

In \cref{Sec:Almost_Block_Stability}, we discuss the stability problem for interlevel set and level set persistent homology in the case that our functions are not of Morse type.
\subsection{Interleaving Stability of Reeb Graphs}\label{Sec:Reeb_Main}
This section applies the Block Stability Theorem to strengthen the result of \cite{bauer2014strong} on the interleaving stability of Reeb graphs.  To begin, we review Reeb graphs and their interleavings.  Our discussion loosely follows \cite{de2015categorified}, which gives an in-depth treatment of the categorical interpretation of Reeb graphs; see that paper for more details.

\paragraph{Reeb Graphs}
Recall from \cref{sec:introreeb} that we define a Reeb graph to be a continuous function $\gamma:G\to \R$ of Morse type, where $G$ is a topological graph and the level sets of $\gamma$ are discrete.

We associate a Reeb graph, $\Reeb(\kappa)$, to any function $\kappa: \TopSpace\to \R$ of Morse type, in the following way: Define an equivalence relation on $\TopSpace$ by taking $x\sim y$ if and only if $x$ and $y$ lie in the same connected component of $\kappa^{-1}(s)$ for some $s\in \R$, and let $\TopSpace/{\sim}$ denote the resulting quotient space.  $\kappa$ descends to a continuous function \[\Reeb(\kappa) : \TopSpace/{\sim} \to \R.\]
It is easy to check that $\Reeb(\kappa)$ is indeed a Reeb graph as defined above.

\paragraph{Interleavings of Reeb Graphs}
In essentially the same way that we defined the functor $\EMB:\Vect^{\U}\to \Vect^{\RCat^{\op}\times \RCat}$ in \cref{Sec:Cld_To_2D}, we can define a functor \[\EMB:\Set^{\U}\to \Set^{\RCat^{\op}\times \RCat}.\]  Namely, for $M:\U\to \Set$, we take $\EMB(M)|_\U=M$, and we take $\EMB(M)_{(a,b)}=\emptyset$ whenever $b<a$.  We define an $\epsilon$-interleaving of Reeb graphs $\gamma$ and $\kappa$ to be an $\epsilon$-interleaving between $\EMB\circ \pi_0\circ \FI{\gamma}$ and $\EMB\circ \pi_0\circ \FI{\kappa}$, where $\pi_0:\Top\to \Set$ denotes the path components functor.

\begin{remark}
The definition of interleaving of Reeb graphs introduced in \cite{de2015categorified} is slightly different from ours, in that the definition of \cite{de2015categorified} is given in terms of the inverse images under $\gamma$ of bounded open intervals, rather than bounded closed intervals.   
It is easy to see, however, that the interleaving distances associated with the two definitions are equal.    
\end{remark}

\paragraph{Interlevel Persistence of Reeb Graphs}
As noted in \cref{sec:introreeb}, the following stability result for the persistent homology of Reeb graphs strengthens the result of Bauer, Munch, and Wang \cite{bauer2014strong}.

\begin{theorem}\label{Thm:Reeb_Corollary}
For $\epsilon$-interleaved Reeb graphs $\gamma$ and $\kappa$ of Morse type,
\[d_b(\LSB_0(\gamma),\LSB_0(\kappa))\leq 5\,d_I(\gamma,\kappa).\]
\end{theorem}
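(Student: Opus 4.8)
The plan is to transport the given Reeb-graph interleaving to an $\epsilon$-interleaving of the $0$-th interlevel set persistence modules, apply the Block Stability Theorem, and then pass to the diagonal via \cref{prop:olddb}.

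First I would fix $\epsilon > d_I(\gamma,\kappa)$, so that there is an $\epsilon$-interleaving between $\EMB\circ\pi_0\circ\FI{\gamma}$ and $\EMB\circ\pi_0\circ\FI{\kappa}$ in $\Set^{\RCat^{\op}\times\RCat}$. Post-composition with the free vector space functor $k[-]\colon\Set\to\Vect$ is a functor $\Set^{\RCat^{\op}\times\RCat}\to\Vect^{\RCat^{\op}\times\RCat}$; like any such post-composition functor it commutes strictly with the shift functors $(-)(u)$ and sends each structure map $\phi^u$ to $\phi^u$, so it carries $\epsilon$-interleavings to $\epsilon$-interleavings. Since $k[\emptyset]=0$, applying $k[-]$ pointwise intertwines the set- and vector-space versions of $\EMB$, and since $H_0(-;k)\cong k[\pi_0(-)]$ naturally, I get $k[-]\circ\EMB\circ\pi_0\circ\FI{\gamma}\cong\EMB(H_0\FI{\gamma})$ and similarly for $\kappa$. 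Hence $\EMB(H_0\FI{\gamma})$ and $\EMB(H_0\FI{\kappa})$ are $\epsilon$-interleaved.

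Next I would invoke \cref{Thm:Morse_Block_Decomposition}: since $\gamma$ and $\kappa$ are of Morse type, $H_0\FI{\gamma}$ and $H_0\FI{\kappa}$ are block decomposable (and one checks from the Morse type conditions that they are \pfd), hence so are their images under $\EMB$. The Block Stability Theorem \cref{teo:IMTinterleaving} then provides a matching $\chi\colon\B_0(\gamma)\nrightarrow\B_0(\kappa)$ which in particular is a $\tfrac{5}{2}\epsilon$-matching, and \cref{prop:olddb}\,(i) turns this into a $5\epsilon$-matching $\diag\chi\colon\LSB_0(\gamma)\nrightarrow\LSB_0(\kappa)$. Thus $d_b(\LSB_0(\gamma),\LSB_0(\kappa))\le 5\epsilon$, and letting $\epsilon\downarrow d_I(\gamma,\kappa)$ would complete the argument.

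There is no serious obstacle here; the only points needing care are verifying that post-composition with $k[-]$ really transports the interleaving and commutes with $\EMB$ and $\pi_0$ as claimed, and bookkeeping of constants. It is worth noting why the constant is $5$ rather than the $1$ appearing in \cref{Thm:LZZ_Stabilty}: a Reeb-graph interleaving only controls $\pi_0$, hence $H_0$, so unlike in the proof of \cref{Thm:LZZ_Stabilty} we have no interleaving between the $H_1$ modules and cannot use the Mayer--Vietoris correspondence of \cref{Thm:Morse_Block_Decomposition}\,(ii) to sharpen the treatment of type $\oo$ blocks. We are therefore forced to use the $\tfrac{5}{2}\epsilon$-matching on $\B_0^\oo$ coming directly from \cref{teo:IMTinterleaving}, which becomes a $5\epsilon$-matching after intersecting with the diagonal; a tight type-$\oo$ stability result would bring this down to $2$, but not to $1$.
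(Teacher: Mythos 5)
Your proposal is correct and follows essentially the same route as the paper: the paper's proof is exactly the chain of isomorphisms $H_0 \circ \EMB \circ \pi_0 \circ \FI{\gamma} \cong \EMB \circ H_0 \circ \FI{\gamma}$ (your $k[-]$/$\pi_0$ intertwining, stated more tersely), followed by \cref{teo:IMTinterleaving} and \cref{prop:olddb}. Your closing remark on why the constant is $5$ here versus $1$ in \cref{Thm:LZZ_Stabilty} is also accurate.
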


\begin{proof}
Note that we have isomorphisms
\[
H_0 \circ \EMB \circ \pi_0 \circ S(\gamma) 
\cong\EMB \circ H_0 \circ \pi_0 \circ S(\gamma) 
\cong\EMB \circ H_0 \circ S(\gamma),
\]
and similarly for $\kappa$.  Thus, by functoriality of $H_0$, an $\epsilon$-interleaving between $\gamma$ and $\kappa$ induces an $\epsilon$-interleaving between $H_0\FI{\gamma}$ and $H_0\FI{\kappa}$.  Applying \cref{teo:IMTinterleaving} and \cref{prop:olddb} to this interleaving gives the desired result.
\end{proof}

\section{Decomposition of Monomorphisms with Small Cokernel}\label{Sec:Decomposition_Top}\label{sec:cont2disc}
We now begin developing the technical machinery needed to prove the block stability theorem 
 and our induced matching theorem for free 2-D persistence modules.

A morphism $f:M\to N$ of persistence modules is a \emph{monomorphism} (respectively, \emph{epimorphism}) if each map of vector spaces $f_a:M_a\to N_a$ is an injection (respectively, surjection).
This section concerns the decomposition of a monomorphism of 2-D persistence modules with $\epsilon$-trivial cokernel into a pair of simpler monomorphisms whose cokernels are each short-lived in one of the two coordinate directions.

To give the reader a sense of the role that these decompositions play in our arguments, let us recall that in the induced matching approach to proving algebraic stability in the 1-D case, one associates a matching 
$\chi(f):\B(M)\to \B(N)$ to a morphism $f:M\to N$ of \pfd 1-D persistence modules.  To do so, one considers the epi-mono decomposition of $f$
\begin{equation}\label{eq:canonical_decompition}
M\twoheadrightarrow \im f\hookrightarrow N;
\end{equation}
$\chi(f)$ is defined as the composition of canonical matchings
\[\B(M)\nrightarrow \B(\im f)\nrightarrow \B(N).\] 
In the present paper, we use the decomposition introduced in this section in an analogous way, to define matchings between the barcodes of free or block decomposable modules.  
 
\subsection{Definition and First Properties of Our Decomposition}\label{Sec:Decomposition}
For $1\leq i\leq 2$, let $\eb_i$ denote the $i^{\mathrm{th}}$ standard basis vector in $\R^2$.

For $f: M\to N$ a morphism of $\RCat^2$-indexed modules, we define a factorization 
\begin{equation}\label{eq:Li}
\im f \xhookrightarrow{f_1} L^\epsilon(f) \xhookrightarrow{f_2} N
\end{equation}
of the inclusion $\im f\hookrightarrow N$ by taking 
\[L^\epsilon(f)_a:= \{n\in N_a | \phi_N(a, a+\epsilon \eb_{1})(n) \in \im f\},\]
for $a\in \R^n$, with $f_1$ and $f_2$ the respective inclusions.  We call the module $L^\epsilon(f)$ the \emph{interpolant}.  
The following lemma is immediate:
\begin{lemma}\label{prop:Lprops}
\mbox{}
\begin{enumerate}[(i)]
\item $f_1$ has $\epsilon \eb_1$-trivial cokernel.
\item If $f$ has $\epsilon$-trivial cokernel, then $f_2$ has $\epsilon\eb_2$-trivial cokernel. 
\end{enumerate}
\end{lemma}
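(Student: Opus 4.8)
The plan is to unwind the definitions: once $L^\epsilon(f)$ is recognized as a genuine submodule of $N$ sitting between $\im f$ and $N$, both assertions become formal bookkeeping with shifts.

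First I would check that $L^\epsilon(f)$ is well-defined, i.e. that it is a submodule of $N$ containing $\im f$, so that the factorization \eqref{eq:Li} makes sense. For a submodule one needs: if $a\le b$ and $n\in L^\epsilon(f)_a$, then $\phi_N(a,b)(n)\in L^\epsilon(f)_b$. Writing $\phi_N(b,b+\epsilon\eb_1)\,\phi_N(a,b)=\phi_N(a+\epsilon\eb_1,b+\epsilon\eb_1)\,\phi_N(a,a+\epsilon\eb_1)$ and using that $\im f$ is a submodule of $N$, the element $\phi_N(a,a+\epsilon\eb_1)(n)\in\im f$ is sent back into $\im f$ by $\phi_N(a+\epsilon\eb_1,b+\epsilon\eb_1)$, as required. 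The inclusion $\im f\subseteq L^\epsilon(f)$ is immediate, again because $\im f$ is a submodule of $N$. Hence $f_1$ and $f_2$ are monomorphisms, with $\coker f_1=L^\epsilon(f)/\im f$ and $\coker f_2=N/L^\epsilon(f)$ computed pointwise.

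For (i): fix $a$ and $n\in L^\epsilon(f)_a$. The map $\phi_{\coker f_1}(a,a+\epsilon\eb_1)$ sends the class of $n$ to the class of $\phi_N(a,a+\epsilon\eb_1)(n)$; but by the very definition of $L^\epsilon(f)_a$ this element lies in $\im f$, so its class in $\coker f_1$ is zero. Thus $\phi^{\epsilon\eb_1}_{\coker f_1}=0$. For (ii): assume $\coker f$ is $\epsilon$-trivial, i.e. $\phi_N(a,a+\epsilon\eb_1+\epsilon\eb_2)(n)\in\im f$ for all $a$ and $n\in N_a$, using that $\epsilon(1,1)=\epsilon\eb_1+\epsilon\eb_2$. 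Fix $a$ and $n\in N_a$; I must show $\phi_N(a,a+\epsilon\eb_2)(n)\in L^\epsilon(f)_{a+\epsilon\eb_2}$, i.e. that $\phi_N(a+\epsilon\eb_2,a+\epsilon\eb_2+\epsilon\eb_1)\big(\phi_N(a,a+\epsilon\eb_2)(n)\big)\in\im f$. By functoriality this composite equals $\phi_N(a,a+\epsilon\eb_1+\epsilon\eb_2)(n)$, which lies in $\im f$ by hypothesis; hence the class of $\phi_N(a,a+\epsilon\eb_2)(n)$ in $\coker f_2=N/L^\epsilon(f)$ vanishes, so $\phi^{\epsilon\eb_2}_{\coker f_2}=0$.

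There is no real obstacle here — the lemma is, as the text says, immediate. The only point requiring attention is the shift bookkeeping, in particular the identity $\epsilon\eb_1+\epsilon\eb_2=\epsilon(1,1)$, which is exactly what makes the cokernels of $f_1$ (short-lived in the $\eb_1$-direction) and $f_2$ (short-lived in the $\eb_2$-direction) together account for the diagonal $\epsilon$-triviality of $\coker f$.
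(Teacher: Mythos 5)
Your argument is correct and is exactly the definition-unwinding the authors have in mind; the paper itself states the lemma as ``immediate'' and omits the proof. Your preliminary check that $L^\epsilon(f)$ is indeed a submodule of $N$ containing $\im f$ is a worthwhile addition, since the paper leaves that implicit.
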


\begin{remark}
If $f$ has $\delta$-trivial kernel, then dualizing the above construction, we obtain a factorization of the epimorphism $M\twoheadrightarrow \im f$ associated to $f$.  This factorization is of the form 
\begin{equation}\label{eq:LiDual}
M \twoheadrightarrow L\twoheadrightarrow\im f
\end{equation}
for some module $L$, where the morphisms have $\delta \eb_2$-trivial and $\delta\eb_1$-trivial kernels, respectively. 
In this paper, we will exploit duality in a way that allows us to work explicitly only with the decomposition \eqref{eq:Li} of a monomorphism, avoiding explicit consideration of the decomposition \eqref{eq:LiDual} of an epimorphism.
\end{remark}

\paragraph{Interpolants Between Free and $\RE$-Free Modules}
The remainder of this section is devoted to the proof of two results describing the structure of the interpolant $L^\epsilon(f)$ in special cases.  The first of these, \cref{prop:L_i_is_free}, tells us that when $f$ is a monomorphism of \pfd free $\RCat^2$-indexed modules, then $L^\epsilon(f)$ is also free.  This result is a main step in our proof of the induced matching theorem for free modules (\cref{teo:freeIMT}).  The second result, \cref{prop:openrestfree}, is a more technical variant of \cref{prop:L_i_is_free} concerning monomorphisms of \emph{$\RE$-free} modules.  An $\RE$-free module is one obtained from a \pfd free $\RCat\times \RCat^{\op}$-indexed module by setting to 0 all vector spaces below the diagonal line $y=x+2\epsilon$; see \cref{def:RE-Free}.   \cref{prop:openrestfree} plays a role in part of our proof of the block stability theorem analogous that of \cref{prop:L_i_is_free} in the proof of \cref{teo:freeIMT}.

Our strategy for proving Propositions \ref{prop:L_i_is_free} and \ref{prop:openrestfree} centers around the computation of \emph{(multigraded) Betti numbers}, standard invariants of $\ZCat^2$-indexed  modules in commutative algebra.  The starting point for our approach is the simple observation that the first Betti number of a finitely generated $\ZCat^2$-indexed module $M$ is 0 if and only if $M$ is free.  

Because we work with $\RCat^2$-indexed modules and do not assume our modules to be finitely generated, our arguments in this section are necessarily somewhat technical.  The reader may find it helpful to consider how these arguments simplify in the finitely generated, $\ZCat^2$-indexed setting.   

\subsection{Free 2-D Persistence Modules and Betti Numbers}\label{Sec:Free_And_Betti}
To prepare for the main results of this section, we review some standard definitions and facts about 2-D persistence modules.  Though we restrict attention to the 2-D setting, everything we say here in \cref{Sec:Free_And_Betti} extends immediately to $n$-D persistence modules.

\paragraph{Free Modules}
For $a\in \R^2$, define the interval \[\npGen{a} := \{b\in \R^2 \mid a\leq b\}.\]  We say an $\RCat^2$-indexed module $F$ is \emph{free} if there is a multiset $\xi(F)$ in $\R^2$ such that \[F\cong \bigoplus_{a\in \xi(F)} I^{\npGen{a}}.\]  Note that since the barcode $\B(F)$ is uniquely defined, the multiset $\xi(F)$ is unique.   

Free $\ZCat^2$-indexed modules are defined in the analogous way; for $F$ a free $\ZCat^2$-indexed module, the invariant $\xi(F)$ is defined as a multiset in $\Z^2$.
 
\begin{remark}\label{rem:genfree}
Later we shall consider free $\RCat^\op\times \RCat$-indexed and $\RCat\times\RCat^\op$-indexed modules. These are the interval indecomposable modules with barcodes consisting, respectively, of intervals of the form
\begin{align*}
\GenR{a_1, a_2}&:= \{ (b_1, b_2)\in \R^2 \mid  a_1 \geq b_1 \mbox{ and } a_2\leq b_2\}\textup{ and }\\
\GenRR{a_1, a_2}&:= \{ (b_1, b_2)\in \R^2 \mid a_1\leq b_1 \mbox{ and } a_2\geq b_2\}.
\end{align*}
\end{remark}
A \emph{basis} for a free $\RCat^2$-indexed module $F$ is a set $\W\subseteq \bigcup_{a\in \R^2} F_a$ such that any element $m\in F_d$ can be uniquely expressed as a finite sum
\begin{equation} m = c_1\phi_F(d_1, d)(w_1) + \ldots + c_l\phi_F(d_l, d)(w_l)\label{eq:freeSum} \end{equation}
for $w_i\in \W\cap F_{d_i}$ and scalars $c_i\in k$.  For $w\in \W\cap F_a$, we write $\deg(w) = a$.  Clearly, a basis exists for any free $\RCat^n$-indexed module.  

\paragraph{Bigraded Modules}
We define a \emph{bigraded} module to be a $k[x_1,x_2]$-module $M$ equipped with a direct sum decomposition as a $k$-vector space $M \cong \bigoplus_{a \in {\Z}^2} M_a$ such that the action of $k[x_1,x_2]$ on $M$ satisfies $x_i(M_a) \subseteq M_{a+\eb_i}$ for all $a\in \Z^2$ and $i\in \{1,2\}$.  The bigraded modules form a category, where the morphisms $f:M\to N$ are module homomorphisms such that $f(M_a)\subseteq N_a$ for all $a\in \Z^2$.  There is an obvious isomorphism between $\Vect^{\ZCat^2}$ and the category of bigraded modules.  Thus, we may regard $\ZCat^2$-indexed modules as modules, in the usual sense.   

\paragraph{Minimal Resolutions}
We next give a brief introduction to minimal free resolutions of finitely generated $\ZCat^2$-indexed persistence modules. For more details, consult \cite{eisenbud2, opac-b1101705}.

A \idf{free resolution} of a $\ZCat^2$-indexed module $M$ is an exact sequence
\[\mathbf{F}=\   \cdots \xrightarrow{d_3} F^2 \xrightarrow{d_2} F^1 \xrightarrow{d_1} F^0\]
of free $\ZCat^2$-indexed modules with $M\cong \coker d_1$.  We say $\mathbf{F}$ is \idf{minimal} if $\im d_i \subseteq IF_{i-1}$ for every $i$, where $I=\langle x_1, x_2\rangle$ is the maximal graded ideal of $k[x_1,x_2]$.  

\begin{theorem}[{\cite[Theorems 19.4 and 20.2]{eisenbud2}}]\label{Prop:FreeResolutionExists}\label{Thm:Uniqueness_Of_Min_Res}
For any finitely generated $\ZCat^2$-indexed module $M$,
\begin{enumerate}[(i)]
\item there exists a minimal free resolution $\mathbf{F}$ of $M$ with each $F^i$ finitely generated,
\item if $\mathbf{F}$ and $\mathbf{G}$ are minimal free resolutions of $M$, then there is an isomorphism $\mathbf{F}\to \mathbf{G}$ inducing the identity map on $M$.  
\end{enumerate}
\end{theorem}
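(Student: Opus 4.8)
The plan is to deduce both parts from two standard facts about the Noetherian $\Z^2$-graded ring $R = k[x_1,x_2]$ with irrelevant ideal $I = (x_1,x_2)$: the graded Nakayama lemma (a set of homogeneous elements generates a graded $R$-module $M$ if and only if its image spans the graded $k$-vector space $M\otimes_R k$), and the comparison theorem for projective resolutions (free modules are projective, so any two free resolutions of $M$ are chain homotopy equivalent over $\id_M$). Throughout I would identify $\Z^2$-indexed modules with bigraded $R$-modules via the equivalence recorded above.

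For (i), I would build the resolution one step at a time, always choosing a \emph{minimal} homogeneous generating set. Since $M$ is finitely generated, $M \otimes_R k$ is a finite-dimensional bigraded vector space; lifting a homogeneous basis of it yields a finite homogeneous generating set of $M$ and hence a surjection $F^0 \to M$ from a free module with $F^0 \otimes_R k \to M \otimes_R k$ an isomorphism (such $F^0$ is finitely generated, being free with finite-dimensional reduction mod $I$). The kernel is finitely generated because $R$ is Noetherian, so the construction iterates: with $K^{-1} := M$ and $K^{i-1} := \ker(F^{i-1} \to K^{i-2})$, pick a minimal generating set of $K^{i-1}$ to get a surjection $F^i \twoheadrightarrow K^{i-1}$, and let $d_i$ be the composite $F^i \twoheadrightarrow K^{i-1} \hookrightarrow F^{i-1}$. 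Exactness and $M \cong \coker d_1$ are then immediate, and all terms are finitely generated. For minimality I would apply $- \otimes_R k$ to the short exact sequence $0 \to K^{i-1} \to F^{i-1} \to K^{i-2} \to 0$: right-exactness gives a three-term exact sequence in which the second map is an isomorphism by construction, forcing the first map to vanish, i.e. $\im d_i = K^{i-1} \subseteq I F^{i-1}$.

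For (ii), given two minimal free resolutions $\mathbf F, \mathbf G$ of $M$ (with finitely generated terms, by the above), the comparison theorem supplies chain maps $\phi \colon \mathbf F \to \mathbf G$ and $\psi \colon \mathbf G \to \mathbf F$ over $\id_M$, and since both $\psi\phi$ and $\id_{\mathbf F}$ lift $\id_M$ we get $\psi\phi \simeq \id_{\mathbf F}$, and symmetrically $\phi\psi \simeq \id_{\mathbf G}$. I would then apply $- \otimes_R k$: by minimality all differentials of $\mathbf F$ and $\mathbf G$ become zero, so $\mathbf F \otimes_R k$ and $\mathbf G \otimes_R k$ are complexes with vanishing differential, and between such complexes a chain homotopy forces equality of its two endpoint chain maps; hence $(\psi \otimes_R k)(\phi \otimes_R k) = \id$ and $(\phi \otimes_R k)(\psi \otimes_R k) = \id$, so each $\phi_i \otimes_R k$ is an isomorphism. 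Graded Nakayama then makes $\phi_i$ and $\psi_i$ surjective, so $\phi_i\psi_i$ is a surjective graded endomorphism of the finitely generated --- hence Noetherian --- module $G^i$ and therefore an isomorphism; symmetrically for $\psi_i\phi_i$, so each $\phi_i$ is an isomorphism and $\phi$ is the desired isomorphism of resolutions over $\id_M$.

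The steps are all routine; the one place where minimality does real work, and thus the conceptual heart of the argument, is the observation in (ii) that $- \otimes_R k$ kills every differential, so the homotopy witnessing $\psi\phi \simeq \id$ collapses to the identity on each $F^i \otimes_R k$ --- this is what upgrades ``chain homotopy equivalence'' to ``isomorphism.'' The only bookkeeping requiring care is the minimality check in (i): since $- \otimes_R k$ is only right exact, the inclusion $\im d_i \subseteq I F^{i-1}$ must be extracted from right-exactness applied to the correct short exact sequence rather than from a naive diagram chase.
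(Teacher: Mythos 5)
Your argument is correct. Note, however, that the paper does not prove this statement: it is cited directly from Eisenbud (Theorems 19.4 and 20.2), so there is no internal proof to compare against. Your proposal reconstructs the standard graded-Nakayama argument that one finds in that reference (and in most commutative algebra texts): build $F^i$ by lifting a $k$-basis of $K^{i-1}\otimes_R k$, use Noetherianity for finite generation of kernels, verify minimality by applying $-\otimes_R k$ to the short exact sequence $0\to K^{i-1}\to F^{i-1}\to K^{i-2}\to 0$, and for uniqueness use the comparison theorem plus the observation that minimality kills every differential under $-\otimes_R k$, so a chain homotopy between $\psi\phi$ and the identity collapses to an equality, whence each $\phi_i\otimes_R k$ is an isomorphism and graded Nakayama (together with the fact that a surjective endomorphism of a Noetherian module is injective) promotes this to an isomorphism of $\phi_i$ itself. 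One small point worth being explicit about, since the grading group here is $\Z^2$ rather than $\Z$: the graded Nakayama lemma does hold in the bigraded setting for finitely generated modules because a finitely generated $\Z^2$-graded module has support bounded below (contained in a finite union of cones $\{a\mid a\geq d_j\}$), so a minimal degree in the support exists and the usual argument goes through. You invoke this silently; it is true, but a referee might ask for it. Otherwise the proof is complete and matches the cited source in spirit.
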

For the remainder of \cref{Sec:Free_And_Betti}, let $M$ be a finitely generated $\ZCat^2$-indexed module.  

\paragraph{Betti Numbers}
For $i\geq 0$ and $a\in \Z^2$, we define a non-negative integer $\xi_i(M)_a$, the \emph{$i^{\mathrm{th}}$ Betti number of $M$ at degree $a$}, by choosing a minimal free resolution $\mathbf{F}$ for $M$ and letting $\xi_i(M)_a$ be the number of copies of $a$ in $\xi(F^i)$.  It follows from \cref{Thm:Uniqueness_Of_Min_Res}\,(ii) that this definition of $\xi_i(M)_a$ is independent of the choice of $\mathbf{F}$, and is thus well formed.

Observe that $\xi_1(M)_a = 0$ for all $a\in \Z^2$ if and only if $M$ is free. 

\paragraph{A Koszul Homology Formula}
For $z\in \Z^2$, we define the $\ZCat^2$-indexed module $M(z)$ to be the shift of $M$ by $z$, exactly as we did for $\RCat^2$-indexed modules in \cref{Sec:Multi_D_And_Interleavings_Intro}.  For any $a=(a_1,a_2)\in \N^2$, we have a short chain complex
\begin{equation}
M(-a_1\eb_1 -a_2\eb_2) \xrightarrow{\kappa^a} M(-a_1\eb_1)\oplus M(-a_2\eb_2) \xrightarrow{\gamma^a} M
\label{eq:koszul}
\end{equation}
where 
\begin{align*}
\kappa^a|_{M(-a_1\eb_1-a_2\eb_2)}(m) = (-{x_2^{a_2} m}, {x_1^{a_1} m}),\qquad \gamma^a|_{M(-a_i\eb_i)}(q) = x_i^{a_i} q.
\end{align*}
We will sometimes write $\kappa^a$ and $\gamma^a$ as $\kappa^a_M$ and $\gamma^a_M$, respectively.  In addition, we abbreviate $\kappa^{(1,1)}$ and $\gamma^{(1,1)}$ by $\kappa$ and $\gamma$.

The following commutative algebra result tells us that the first Betti number can be computed locally in terms of $\gamma$ and $\kappa$:
\begin{theorem}[{\cite[Proposition 2.7]{eisenbud}}]\label{Thm:Kozul}\label{teo:free}
For any $z\in \mathbb{Z}^2$,
\[
\xi_1(M)_z = \dim \ker \gamma_z/\im \kappa_z.
\]
\end{theorem}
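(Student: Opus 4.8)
The final statement is \cref{Thm:Kozul}, which the paper attributes to \cite{eisenbud} and states as a black box, so a "proof proposal" here is really a plan to \emph{reconstruct} the argument from first principles using the Koszul complex language set up in the excerpt. The plan is to identify the chain complex \eqref{eq:koszul} (for $a=(1,1)$) as the degree-$z$ piece of a larger complex computing $\Tor^{k[x_1,x_2]}_*(M,k)$, whose graded dimensions are exactly the Betti numbers $\xi_i(M)_z$. Concretely, I would first write down the Koszul resolution of the residue field $k=k[x_1,x_2]/I$ as a $k[x_1,x_2]$-module:
\[
0\to k[x_1,x_2](-\eb_1-\eb_2)\xrightarrow{\ \binom{-x_2}{x_1}\ } k[x_1,x_2](-\eb_1)\oplus k[x_1,x_2](-\eb_2)\xrightarrow{\ (x_1\ x_2)\ } k[x_1,x_2]\to k\to 0.
\]
This is a minimal free resolution because the differentials have entries in $I$. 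Tensoring over $k[x_1,x_2]$ with $M$ and taking homology yields $\Tor_i(M,k)$; but tensoring a free module $k[x_1,x_2](-b)$ with $M$ gives $M(-b)$, so the tensored complex is precisely \eqref{eq:koszul} with $a=(1,1)$, i.e.
\[
M(-\eb_1-\eb_2)\xrightarrow{\kappa} M(-\eb_1)\oplus M(-\eb_2)\xrightarrow{\gamma} M.
\]
Hence $\ker\gamma/\im\kappa \cong \Tor_1(M,k)$ as bigraded modules, and in particular $\dim(\ker\gamma_z/\im\kappa_z)=\dim \Tor_1(M,k)_z$.

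The second half of the plan is to identify $\dim\Tor_1(M,k)_z$ with the Betti number $\xi_1(M)_z$. For this I would take a minimal free resolution $\mathbf{F}=(\cdots\to F^2\to F^1\to F^0)$ of $M$, which exists with each $F^i$ finitely generated by \cref{Prop:FreeResolutionExists}\,(i). Tensoring $\mathbf{F}$ with $k$ over $k[x_1,x_2]$ computes $\Tor_i(M,k)$ as the homology at $F^i\otimes k$. The key point is that minimality ($\im d_i\subseteq I F^{i-1}$) forces \emph{all} differentials in $\mathbf{F}\otimes k$ to vanish: since $d_i(F^i)\subseteq I F^{i-1}$, the induced map $F^i\otimes k\to F^{i-1}\otimes k$ is zero, because $I F^{i-1}\otimes k = (I/I^2)\cdot(F^{i-1}\otimes k)$ maps to zero in $F^{i-1}\otimes k = F^{i-1}/I F^{i-1}$. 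Therefore $\Tor_i(M,k)\cong F^i\otimes k \cong F^i/IF^i$, whose graded dimension in degree $z$ is exactly the number of generators of $F^i$ in degree $z$, i.e.\ the multiplicity of $z$ in $\xi(F^i)$, which is by definition $\xi_i(M)_z$. Combining the two halves gives $\xi_1(M)_z = \dim\Tor_1(M,k)_z = \dim(\ker\gamma_z/\im\kappa_z)$, as desired.

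The main obstacle I expect is bookkeeping the gradings carefully and being precise about which category one is tensoring in: everything must be done in the category of bigraded $k[x_1,x_2]$-modules (equivalently $\Vect^{\ZCat^2}$ via the isomorphism recalled in the excerpt), so that "$\Tor$" means bigraded $\Tor$ and the degree-$z$ pieces behave as claimed. One must also check that $\Tor$ is independent of whether one resolves the first or second argument — so that the Koszul-complex computation (resolving $k$) and the minimal-resolution computation (resolving $M$) give the same answer — which is the standard balancing theorem for $\Tor$ but should be invoked explicitly. A minor additional point: the Koszul complex is only a \emph{length-two} resolution of $k$ here (projective dimension $2$), so $\Tor_i(M,k)=0$ for $i\geq 3$ automatically; we only need $i=1$, where the relevant homology is visibly $\ker\gamma/\im\kappa$. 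Everything else — finite generation ensuring the Betti numbers are finite, uniqueness of the minimal resolution from \cref{Thm:Uniqueness_Of_Min_Res}\,(ii) ensuring $\xi_i(M)_z$ is well-defined — is already recorded in the excerpt and can be cited directly.
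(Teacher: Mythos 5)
Your plan is correct and is exactly the standard argument behind the cited result: the paper itself does not reprove the statement but defers to \cite{eisenbud} and, in the remark immediately following, records precisely your identification $\xi_i(M)_a=\dim H_i(M\otimes K_\bullet)_a$ with $K_\bullet$ the Koszul resolution of $k$. Both halves of your reconstruction (the tensored Koszul complex computing $\Tor_1(M,k)$ in each bidegree, and minimality of a free resolution of $M$ killing the differentials in $\mathbf{F}\otimes k$ so that $\Tor_i(M,k)\cong F^i/IF^i$) are the intended proof, carried out in the bigraded rather than $\Z$-graded category as the paper notes.
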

\cite{eisenbud} establishes \cref{Thm:Kozul} in the slightly different setting of $\Z$-graded $k[x_1,x_2]$-modules, i.e., where $k[x_1, x_2]$ is given the standard grading \[\deg(x_1^{r_1}x_2^{r_2}) = r_1 + r_2.\] However, the proof in our case is essentially the same.
\begin{remark}
One can extend the short chain complex \eqref{eq:koszul} to a chain complex whose $i^{\mathrm{th}}$ homology gives the $i^{\mathrm{th}}$ Betti number of $M$ for all $i\geq 0$.  Namely, \[\xi_i(M)_a=\dim H_i(M\otimes K_\bullet)_a,\] where $K_\bullet$, the \idf{Koszul complex}, is a minimal free resolution of $k$ as a $k[x_1,x_2]$-module. For more on this see \cite{eisenbud}.
\end{remark}

We conclude this subsection with a technical result which will be useful to us later, leaving the easy proof to the reader:

\begin{lemma}\label{Lem:Freeness_And_Exactness}
If $M$ is free, then for any $a\in \N^2$, \[\ker \gamma^a_M=\im \kappa^a_M.\]%
\end{lemma}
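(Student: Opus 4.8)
The plan is to reduce, via direct sums and shifts, to the single free generator $k[x_1,x_2]$, and then invoke unique factorization in this ring.

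First note that the inclusion $\im\kappa^a_M\subseteq\ker\gamma^a_M$ is automatic: with $\gamma^a(q_1,q_2)=x_1^{a_1}q_1+x_2^{a_2}q_2$ and $\kappa^a(m)=(-x_2^{a_2}m,\,x_1^{a_1}m)$ we compute $\gamma^a\kappa^a(m)=x_1^{a_1}(-x_2^{a_2}m)+x_2^{a_2}(x_1^{a_1}m)=0$. So only the reverse inclusion requires argument. Next, observe that the formation of the complex \eqref{eq:koszul} is compatible with shifts, since $M(z)(-a_i\eb_i)=M(-a_i\eb_i)(z)$, and with direct sums, since $\kappa^a$ and $\gamma^a$ are defined summandwise; as kernels and images of a direct sum of maps are the direct sums of the kernels and images, it suffices to prove $\ker\gamma^a_M=\im\kappa^a_M$ when $M$ is a single shifted free module, and since shifting is harmless, when $M=k[x_1,x_2]$. (Here I use that a finitely generated free $\ZCat^2$-indexed module is a finite direct sum of shifted copies of $k[x_1,x_2]$.)

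It then remains to show: if $P,Q\in R:=k[x_1,x_2]$ satisfy $x_1^{a_1}P+x_2^{a_2}Q=0$, then $(P,Q)\in\im\kappa^a$. If $a_1=0$ this is immediate, since $P=-x_2^{a_2}Q=\kappa^a(-Q)_1$ and $Q=\kappa^a(-Q)_2$; symmetrically if $a_2=0$. If $a_1,a_2>0$, then $x_1^{a_1}$ and $x_2^{a_2}$ are coprime in the UFD $R$, so from $x_1^{a_1}P=-x_2^{a_2}Q$ we get $x_2^{a_2}\mid P$, say $P=x_2^{a_2}S$; substituting and cancelling $x_2^{a_2}$ yields $Q=-x_1^{a_1}S$, whence $(P,Q)=(x_2^{a_2}S,-x_1^{a_1}S)=\kappa^a(-S)$. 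This proves $\ker\gamma^a_M=\im\kappa^a_M$ for $M=k[x_1,x_2]$, and hence in general.

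There is no genuine obstacle here; the only points needing (routine) care are the compatibility of \eqref{eq:koszul} with direct sums and shifts, and the bookkeeping for degenerate exponents $a_i=0$. Alternatively, one could phrase the single-generator step by noting that $(x_1^{a_1},x_2^{a_2})$ is a regular sequence in the Cohen--Macaulay ring $R$, so the truncated Koszul complex on these two elements is exact at the middle term; but the coprimality argument above is more elementary and self-contained.
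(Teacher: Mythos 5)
Your proof is correct. The paper itself omits the argument ("leaving the easy proof to the reader"), so there is no official proof to compare against, but the reduction you use — exploiting compatibility of the Koszul complex \eqref{eq:koszul} with direct sums and shifts to reduce to $M=k[x_1,x_2]$, then invoking coprimality of $x_1^{a_1},x_2^{a_2}$ in the UFD (equivalently, that they form a regular sequence) — is the standard and almost certainly intended route. The degree bookkeeping also checks out: if $(P,Q)\in\ker\gamma^a$ lies in degree $z$, then $P\in M_{z-a_1\eb_1}$, so $S$ with $P=x_2^{a_2}S$ lies in degree $z-a_1\eb_1-a_2\eb_2$, which is exactly the degree of $M(-a_1\eb_1-a_2\eb_2)_z$, as needed for $(P,Q)=\kappa^a(-S)$ to hold as a bigraded identity.
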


\subsection{Continuous Extensions of Discrete Persistence Modules}
We wish to use \cref{Thm:Kozul} to study the first Betti number of the interpolant $L^\epsilon(f)$ in the decomposition \eqref{eq:Li}.  However, \cref{Thm:Kozul} applies to finitely generated $\ZCat^2$-indexed  modules, whereas the module $L^\epsilon(f)$ is $\RCat^2$-indexed and, in the settings of interest to us, need not be finitely generated.  To bridge this gap, we introduce formalism for extentending a $\ZCat^2$-indexed module to an $\RCat^2$-indexed one.  

\paragraph{Grid Functions}
We define a \emph{(injective) 2-D grid} to be a function $\G:\Z^2\to \R^2$ given by \[\G(z_1,z_2)=(\G_1(z_1), \G_2(z_2))\] for strictly increasing functions $\G_i:\Z\to \R$ with $\lim_{i\to -\infty}=-\infty$ and $\lim_{i\to \infty}=\infty$.  

Define $\fl_\G:\R^2\to \im(\G)$ by \[\fl_\G(t)=\max \{s\in \im(\G)\mid s\leq t\}.\]

\paragraph{Continuous Extensions} 
For $\G$ a $2$-D grid, we let $\CoEx_\G$ denote the functor \[\Lan_\G(-):\Vect^{\ZCat^2}\to \Vect^{\RCat^2};\] equivalently, but more concretely, we may specify $\CoEx_\G$ as follows:
\begin{enumerate}
\item For $M$ a $\ZCat^2$-indexed persistence module and $a,b\in \R^2$, \[\CoEx_\G(M)_a=M_y,\qquad  \varphi_{\CoEx_\G(M)}(a,b)=\varphi_M(y,z),\] where $y,z\in \Z^2$ are given by $\G(y)=\fl_\G(a)$ and $\G(z)=\fl_\G(b)$.
\item The action of $\CoEx_\G$ on morphisms is the obvious one.
\end{enumerate}

\paragraph{Interpolants of a Morphism Between Free Modules as Continuous Extensions}
\begin{lemma}\label{Lem:Grids_Isos}
If $F$ is a free $\RCat^2$-indexed module and $\G:\Z^2\to \R^2$ is a $2$-D grid such that  $d\in \im \G$ whenever $d\in \xi(F)$, then for all $a\in \R^2$, $\varphi_F(\fl_\G(a),a)$ is an isomorphism.
\end{lemma}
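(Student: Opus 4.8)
The plan is to reduce to the case of a single interval module and then simply unwind the definitions of $\npGen{\cdot}$ and $\fl_\G$. First I would write $F\cong\bigoplus_{d\in\xi(F)}I^{\npGen d}$ and note that the internal morphisms of a direct sum in a functor category act componentwise, so $\varphi_F(\fl_\G(a),a)=\bigoplus_{d\in\xi(F)}\varphi_{I^{\npGen d}}(\fl_\G(a),a)$; a direct sum of maps is an isomorphism precisely when each summand is. Hence it suffices to show that for every $d\in\xi(F)$ and every $a\in\RCat^2$ the map $\varphi_{I^{\npGen d}}(\fl_\G(a),a)$ is an isomorphism.

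By the definition of the interval module $I^{\npGen d}$ and since $\fl_\G(a)\leq a$, this map is $\id_k$ when $d\leq\fl_\G(a)$, the zero map $0\to 0$ when $d\not\leq a$ (and hence $d\not\leq\fl_\G(a)$), and the zero map $0\to k$ in the remaining case where $d\leq a$ but $d\not\leq\fl_\G(a)$. In the first two cases the map is already an isomorphism, so the whole lemma reduces to the claim: if $d\in\im\G$ and $d\leq a$, then $d\leq\fl_\G(a)$. To prove this I would use that $\G$ is a product grid, $\G(z_1,z_2)=(\G_1(z_1),\G_2(z_2))$, so $\im\G=\im\G_1\times\im\G_2$ and $\fl_\G$ acts coordinatewise, $\fl_\G(a_1,a_2)=(\fl_{\G_1}(a_1),\fl_{\G_2}(a_2))$ where $\fl_{\G_i}(t):=\max\{s\in\im\G_i\mid s\leq t\}$. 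This maximum exists because each $\G_i$ is strictly increasing with $\lim_{z\to-\infty}\G_i=-\infty$ and $\lim_{z\to\infty}\G_i=\infty$, so the set is nonempty and has a largest element. Writing $d=(d_1,d_2)$ and $a=(a_1,a_2)$: $d\in\im\G$ gives $d_i\in\im\G_i$, and $d\leq a$ gives $d_i\leq a_i$, so $d_i$ lies in $\{s\in\im\G_i\mid s\leq a_i\}$ and therefore $d_i\leq\fl_{\G_i}(a_i)$ by maximality; thus $d\leq\fl_\G(a)$, which finishes the proof.

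There is no substantial obstacle here: the entire argument is bookkeeping against the definitions, and the decomposition into interval summands is exactly the tool that makes it trivial. The only point that deserves a sentence of care is the verification that $\fl_\G$ factors as a product of the one-dimensional floor maps and that those maps are well defined (nonempty set with a maximum), which I would dispatch using the strict monotonicity and limit conditions built into the definition of a $2$-D grid.
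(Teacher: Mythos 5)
Your proof is correct and rests on the same two ingredients as the paper's: freeness reduces everything to the generators in $\xi(F)$, and the key point is that any $d\in\xi(F)$ with $d\leq a$ already satisfies $d\leq\fl_\G(a)$ by the product structure of $\im\G$ and maximality of the floor. The paper packages this as a contradiction argument for surjectivity (injectivity being automatic for free modules) rather than a summand-by-summand case check, but the substance is identical.
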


\begin{proof}
Let $b=\fl_\G(a)$.  $\varphi_F(b,a)$ is an injection since $F$ is free, so it suffices to show that $\phi_F(b,a)$ is a surjection.  Assume that $n\in F_a$ and $n\not\in \im \phi_F(b,a)$. Then there must exist $d  \in \xi(F)$ such that $d\leq a$ and $d_l>b_l$ for at least one $l\in \{1,2\}$. Assuming $l=1$, then the point \[b' = (d_1, b_2)\] is in $\im \G$ and $b<b'\leq a$, contradicting the maximality of $b$. Similarly if $l=2$. 
\end{proof}

Let $f: M\to N$ be a morphism of finitely generated free $\RCat^2$-indexed modules. We define finite subsets $W_1$ and $W_2$ of $\R$ by taking 
\begin{align*}
W_1&:=\{a_1\mid a\in \xi(M)\cup \xi(N)\}\cup \{ a_1- \epsilon \mid a\in \xi(M)\},\\  
W_2&:=\{a_2\mid a\in \xi(M)\cup \xi(N)\}.
\end{align*}
Let $W = W_1\times W_2$ and choose a $2$-D grid $\G:\Z^2\to \R^2$ whose image contains $W$.   Let \[(-)|_\G:\Vect^{\RCat^2}\to \Vect^{\ZCat^2}\] denote the restriction along $\G$.
\begin{proposition}\label{prop:gprime}
For $f$ and $\G$ as immediately above,
\[L^\epsilon(f)\cong\CoEx_\G(L^\epsilon(f)|_\G).\]
\end{proposition}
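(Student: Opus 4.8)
The plan is to show that the $\ZCat^2$-indexed module $L^\epsilon(f)|_\G$ carries all the information in $L^\epsilon(f)$, in the sense that $L^\epsilon(f)$ is recovered by the ``step-function'' extension $\CoEx_\G$. The key point is that the internal maps $\varphi_{L^\epsilon(f)}(\fl_\G(a), a)$ are isomorphisms for every $a \in \R^2$; granting this, the concrete description of $\CoEx_\G$ (item 1 in its definition) gives exactly $\CoEx_\G(L^\epsilon(f)|_\G)_a = L^\epsilon(f)_{\fl_\G(a)} \cong L^\epsilon(f)_a$ compatibly with internal maps, which is the desired isomorphism. So the whole proof reduces to establishing this local constancy of $L^\epsilon(f)$ between grid points, using that $\G$ was chosen with image containing $W = W_1 \times W_2$.

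First I would recall from the definition that $L^\epsilon(f)_a = \{n \in N_a \mid \varphi_N(a, a + \epsilon\eb_1)(n) \in \im f\}$, so $L^\epsilon(f)$ sits inside $N$ as a submodule, and $\im f$ sits inside $L^\epsilon(f)$. Fix $a \in \R^2$ and let $b = \fl_\G(a)$, so $b \leq a$. I want to show $\varphi_{L^\epsilon(f)}(b,a)$ is an isomorphism. Injectivity is free since $L^\epsilon(f) \hookrightarrow N$ and $N$ is free (hence $\varphi_N(b,a)$ is injective), so the real content is surjectivity. The strategy mirrors \cref{Lem:Grids_Isos}: suppose $n \in L^\epsilon(f)_a$; since $N$ is free and $b = \fl_\G(a)$ with $\im\G \supseteq W_1 \times W_2 \supseteq \{a_1 \mid a \in \xi(N)\} \times \{a_2\mid a\in\xi(N)\}$, \cref{Lem:Grids_Isos} (applied with a grid whose image contains $\xi(N)$ — note our $\G$ qualifies) tells us $\varphi_N(b,a)$ is an isomorphism, so there is $n' \in N_b$ with $\varphi_N(b,a)(n') = n$. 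It remains to check $n' \in L^\epsilon(f)_b$, i.e. that $\varphi_N(b, b+\epsilon\eb_1)(n') \in \im f$.

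The hard part will be this last verification: transporting membership in $\im f$ from degree $a + \epsilon\eb_1$ back to degree $b + \epsilon\eb_1$. Here is where the shifted coordinates $\{a_1 - \epsilon \mid a \in \xi(M)\}$ in $W_1$ enter. Consider the element $m := \varphi_N(a, a+\epsilon\eb_1)(n) \in \im f$, say $m = f_{a+\epsilon\eb_1}(p)$ for some $p \in M_{a+\epsilon\eb_1}$. Because $\im\G \supseteq W_1$ contains both $\{a_1 \mid a\in\xi(M)\cup\xi(N)\}$ and $\{a_1 - \epsilon\mid a\in \xi(M)\}$, the grid point $b' := \fl_\G(a+\epsilon\eb_1)$ satisfies $b' = b + \epsilon\eb_1$ in the first coordinate and $b' = b$ in the second — one checks, exactly as in the proof of \cref{Lem:Grids_Isos}, that no generator of $M$ or $N$ lies strictly between the relevant grid points, using that the generators of $M$ shifted by $\epsilon$ have been included in $W_1$. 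Consequently $\varphi_M(b+\epsilon\eb_1, a+\epsilon\eb_1)$ and $\varphi_N(b+\epsilon\eb_1, a+\epsilon\eb_1)$ are isomorphisms (again \cref{Lem:Grids_Isos}), so $p = \varphi_M(b+\epsilon\eb_1, a+\epsilon\eb_1)(p')$ and $f_{b+\epsilon\eb_1}(p') \in \im f$; naturality of $f$ and injectivity of $\varphi_N(b+\epsilon\eb_1, a+\epsilon\eb_1)$ then force $\varphi_N(b, b+\epsilon\eb_1)(n') = f_{b+\epsilon\eb_1}(p') \in \im f$, so $n' \in L^\epsilon(f)_b$ and $\varphi_{L^\epsilon(f)}(b,a)(n') = n$. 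This gives surjectivity. With $\varphi_{L^\epsilon(f)}(\fl_\G(a), a)$ an isomorphism for all $a$, the concrete formula for $\CoEx_\G$ yields a natural isomorphism $\CoEx_\G(L^\epsilon(f)|_\G) \xrightarrow{\sim} L^\epsilon(f)$ whose component at $a$ is $\varphi_{L^\epsilon(f)}(\fl_\G(a), a)$, completing the proof. The one piece of bookkeeping to be careful about is the exact matching of grid points under the $\epsilon$-shift in the first coordinate, which is precisely why $W_1$ was defined to include the shifted generator degrees and $W_2$ was not.
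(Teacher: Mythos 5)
Your overall strategy is the same as the paper's: reduce the claim to showing that $\varphi_{L^\epsilon(f)}(\fl_\G(a),a)$ is an isomorphism, note injectivity is free since $L^\epsilon(f)\subseteq N$ with $N$ free, use \cref{Lem:Grids_Isos} to lift $n\in L^\epsilon(f)_a$ to $n'\in N_b$, and then verify the membership condition at $b$ via the generators of $M$ shifted by $-\epsilon$ in $W_1$. That reduction is exactly right, and so is identifying the crux.

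There is, however, a genuine error in the step meant to certify that $\varphi_M(b+\epsilon \eb_1,a+\epsilon \eb_1)$ is an isomorphism. You assert that $\fl_\G(a+\epsilon\eb_1)=b+\epsilon\eb_1$ and then invoke \cref{Lem:Grids_Isos} directly. That equality is not true in general: $b_1+\epsilon$ need not lie in $\im\G_1$ at all (the set $W_1$ contains $\{a_1-\epsilon\mid a\in\xi(M)\}$, i.e.\ generator degrees shifted \emph{down}, not up, and $\G$ is only required to \emph{contain} $W$, not to equal it), and even when $b_1+\epsilon\in\im\G_1$ there may be further grid points strictly between $b_1+\epsilon$ and $a_1+\epsilon$. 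So \cref{Lem:Grids_Isos} applied with the grid $\G$ gives you information about $\varphi_M(\fl_\G(a+\epsilon\eb_1),a+\epsilon\eb_1)$, which is not the map you need. The paper instead argues directly: if some $m\in M_{a+\epsilon\eb_1}$ fails to lift to $M_{b+\epsilon\eb_1}$, freeness of $M$ forces a generator $d\in\xi(M)$ with $d\leq a+\epsilon\eb_1$ and $d_l>(b+\epsilon\eb_1)_l$ for some $l$; in the case $l=1$ the point $(d_1-\epsilon,b_2)$ is a grid point (this is precisely where $\{a_1-\epsilon\mid a\in\xi(M)\}\subseteq W_1$ is used) lying $\leq a$ and strictly above $b$, contradicting maximality of $b$ (and similarly for $l=2$). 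A clean way to repair your version is to apply \cref{Lem:Grids_Isos} not with $\G$ but with the shifted grid $\G+\epsilon\eb_1$, whose image contains $\xi(M)$ exactly because $W_1\supseteq\{a_1-\epsilon\mid a\in\xi(M)\}$; then $\fl_{\G+\epsilon\eb_1}(a+\epsilon\eb_1)=b+\epsilon\eb_1$ holds as you want.

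Two smaller points. You also claim that $\varphi_N(b+\epsilon\eb_1,a+\epsilon\eb_1)$ is an isomorphism; this is not justified by the hypotheses (the definition of $W_1$ does not include $\{a_1-\epsilon\mid a\in\xi(N)\}$) and is generally false, but it is also not needed: the last step of your argument only uses the injectivity of that map, which follows from freeness of $N$ alone, as in the paper. Finally, you write ``generators of $M$ shifted by $\epsilon$'' where you mean shifted by $-\epsilon$; this matches the sign error in the floor claim above.
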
 
\begin{proof}
It suffices to show that for all $a\in \mathbb{R}^2$,
\[\phi_{L^\epsilon(f)}(\fl_\G(a),a): L^\epsilon(f)_{\fl_\G(a)}\to L^\epsilon(f)_a\] is an isomorphism.

Let $b=\fl_\G(a)$.  By \cref{Lem:Grids_Isos}, $\varphi_N(b,a)$ is an isomorphism.  Moreover, an argument similar to the proof of \cref{Lem:Grids_Isos} shows that $\varphi_M(b+\epsilon \eb_{1},a+\epsilon \eb_{1})$ is an isomorphism: Let $m\in M_{a+\epsilon \eb_{1}}$ and assume that $m\not\in \im \phi_M(b+\epsilon \eb_{1}, a+\epsilon \eb_{1})$. Then, as above, there must exist $d \in \xi(M)$ such that $d\leq a+\epsilon \eb_{1}$ and $d_l>(b+\epsilon \eb_{1})_l$ for at least one $l\in \{1,2\}$, contradicting the maximality of $b$. 

\begin{sloppypar}
Since $L^\epsilon(f)$ is a submodule of $N$, $\varphi_{L^\epsilon(f)}(b,a)$ is injective.  Let $n\in L^\epsilon(f)_a$ with ${\phi_N(a,a+\epsilon \eb_{1})}(n) = f(m)$. Since $\phi_N(b,a)$ and $\phi_M(b+\epsilon \eb_{1}, a+\epsilon \eb_{1})$ are isomorphisms, there exist $n'\in N_b$ and $m'\in M_{b+\epsilon \eb_{1}}$ with $n = \phi_N(b,a)(n^\prime)$ and $m = \phi_M(b+\epsilon \eb_{1}, a+\epsilon \eb_{1})(m^\prime)$.  The commutativity of $f$ and injectivity of $\varphi_N(b+\epsilon \eb_{1},a+\epsilon \eb_{1})$ imply that $\varphi_N(b,b+\epsilon \eb_{1})(n^\prime) = f(m^\prime)$,  and thus $n' \in L^\epsilon(f)_b$.  This shows that $\varphi_{L^\epsilon(f)}(b,a)$ is surjective, and hence an isomorphism.\qedhere
\end{sloppypar}
\end{proof}

\subsection{Trivial First Betti Numbers and Freeness}\label{Sec:Last_Sec_Of_Sec_5}

\begin{lemma}\label{teo:localformula}
For $f:M\to N$ and $\G$ as in \cref{prop:gprime},  
\begin{enumerate}[(i)]
\item $L^\epsilon(f)|_\G$ is finitely generated,
\item $\xi_1(L^\epsilon(f)|_\G)_z=0$ whenever $\G(z)\leq a-\epsilon \eb_{1}$ for some $a\in \R^2$ with $f_a$ an injection.
\end{enumerate}
\end{lemma}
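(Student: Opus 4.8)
The plan is to work entirely with the $\ZCat^2$-indexed module $P := L^\epsilon(f)|_\G$ and reduce everything to the Koszul homology formula of \cref{Thm:Kozul}. For part (i), I would first observe that by \cref{prop:gprime} we have $L^\epsilon(f) \cong \CoEx_\G(P)$, so $P$ records all the information of $L^\epsilon(f)$; since $L^\epsilon(f)$ is a submodule of the finitely generated free module $N$, and $N|_\G$ is finitely generated, it suffices to show $P$ is finitely generated. The key point is that $P$ ``stabilizes'' outside a bounded region: using \cref{Lem:Grids_Isos} (applied to $N$) and the analogous statement for $M$ shifted by $\epsilon\eb_1$ established inside the proof of \cref{prop:gprime}, for $z$ large in either coordinate the internal maps $\varphi_P(z, z+\eb_i)$ are isomorphisms, so no new generators appear past the finitely many grid lines coming from $W_1\times W_2$. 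Thus $P$ is generated in the finitely many degrees $z$ with $\G(z)$ in the bounded box spanned by $W$, and each $P_z = L^\epsilon(f)_{\G(z)}$ is finite-dimensional (being a subspace of $N_{\G(z)}$, which is finite-dimensional as $N$ is p.f.d.\ free and finitely generated). Hence $P$ is finitely generated.

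For part (ii), fix $z\in\Z^2$ with $\G(z)\leq a - \epsilon\eb_1$ for some $a$ at which $f_a$ is injective. By \cref{Thm:Kozul}, $\xi_1(P)_z = \dim\bigl(\ker\gamma^{(1,1)}_{P,z}/\im\kappa^{(1,1)}_{P,z}\bigr)$, so I need to show this quotient vanishes, i.e.\ that $\ker\gamma_z = \im\kappa_z$ for the Koszul maps of $P$ at degree $z$. The strategy here is to compare $P$ near degree $z$ with the free module $M$ (after the $\epsilon\eb_1$-shift). The defining property $L^\epsilon(f)_b = \{n\in N_b \mid \varphi_N(b,b+\epsilon\eb_1)(n)\in\im f\}$ means that, locally near a degree where everything is controlled, $\varphi_N(\,\cdot\,,\,\cdot+\epsilon\eb_1)$ restricted to $L^\epsilon(f)$ followed by $f^{-1}$ gives an isomorphism onto (a piece of) $M$. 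More precisely, I want to show that on the sub-grid $\{z' : \G(z')\leq a\}$ — or at least in the $2\times 2$ block $\{z, z+\eb_1, z+\eb_2, z+\eb_1+\eb_2\}$ of degrees involved in the Koszul complex at $z$ — the composite $n\mapsto f_a^{-1}\bigl(\varphi_N(\G(z'),a)(n)\bigr)$, or some variant using the map into $\im f$, gives an isomorphism of $P$ restricted to that block with $M$ restricted to a corresponding block. Injectivity of $f_a$ is exactly what makes $f_a^{-1}$ well-defined on $\im f_a$; and the grid conditions ($W$ contains the generator degrees of $M$, $N$ and the $\epsilon$-shifted generator degrees of $M$) guarantee that the relevant internal maps of $M$ and $N$ across this block are isomorphisms, so the block of $P$ really is just a block of a free module. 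Since \cref{Lem:Freeness_And_Exactness} gives $\ker\gamma^a_M = \im\kappa^a_M$ for free $M$, transporting this isomorphism through identifies $\ker\gamma_{P,z} = \im\kappa_{P,z}$, giving $\xi_1(P)_z = 0$.

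The main obstacle I anticipate is making the local identification in part (ii) fully rigorous: one must check that the natural candidate isomorphism between the relevant $2\times 2$ block of $P$ and a block of $M$ (via $f_a^{-1}\circ\varphi_N$) is genuinely compatible with the module structure — i.e.\ that it commutes with multiplication by $x_1$ and $x_2$ — and that the four degrees $z, z+\eb_1, z+\eb_2, z+\eb_1+\eb_2$ all lie in the region where the grid makes $\varphi_N$ and the $\epsilon\eb_1$-shifted $\varphi_M$ isomorphisms, using $\G(z)\leq a-\epsilon\eb_1$ together with the maximality/stabilization arguments from the proof of \cref{prop:gprime}. Care is also needed about whether $z+\eb_1+\eb_2$ still satisfies $\G(z+\eb_1+\eb_2)\leq a$; if not, one argues directly that the Koszul quotient still vanishes because the relevant maps remain isomorphisms past that point, or one enlarges the grid argument. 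Once this bookkeeping is set up correctly, the vanishing is immediate from \cref{Lem:Freeness_And_Exactness} and \cref{Thm:Kozul}.
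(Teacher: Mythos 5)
Your treatment of (i) takes a different and much longer route than the paper, which simply observes that $L^\epsilon(f)|_\G$ is a submodule of the finitely generated bigraded module $N|_\G$ over the Noetherian ring $k[x_1,x_2]$, hence finitely generated. Your stabilization argument can probably be completed (one needs a graded Nakayama step, legitimate here because the module is bounded below, plus a check in each of the four directions to infinity), but it buys nothing.

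Part (ii) has a genuine gap. Write $P=L^\epsilon(f)|_\G$ and $b=\G(z')$. The map you propose, $n\mapsto f^{-1}\bigl(\varphi_N(b,b+\epsilon\eb_1)(n)\bigr)$, is indeed well defined and injective on $P_{z'}$ when $b+\epsilon\eb_1\leq a$ --- this is exactly where injectivity of $f_a$ and freeness of $M$ enter in the paper as well --- but it is \emph{not} surjective onto $M_{b+\epsilon\eb_1}$: its image is $\{m\mid f(m)\in\im\varphi_N(b,b+\epsilon\eb_1)\}$, a proper subspace whenever $N$ has a generator $d$ with $d\leq b+\epsilon\eb_1$ but $d\not\leq b$ (already for $f=\id_N$). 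So your local identification lands only in a \emph{submodule} of $M(\epsilon\eb_1)$ restricted to the block, and exactness of the Koszul complex is not inherited by submodules --- vanishing of $\xi_1$ for that submodule is precisely what is being proved, so the ``transport'' step is circular. The auxiliary claim that the grid conditions make the internal maps of $M$ and $N$ across the block isomorphisms is also backwards: the grid is chosen to \emph{contain} the generator degrees, so adjacent grid points are exactly where those maps fail to be surjective. (A smaller slip: the Koszul complex at $z$ involves the degrees $z-\eb_1-\eb_2$, $z-\eb_1$, $z-\eb_2$, $z$, i.e.\ the block \emph{below} $z$, which is why the hypothesis $\G(z)\leq a-\epsilon\eb_1$ suffices and your worry about $z+\eb_1+\eb_2$ does not arise.) The paper's proof necessarily uses the freeness of \emph{both} modules in two distinct steps: first, exactness of the Koszul complex of $N|_\G$ at $z$ (\cref{teo:free}) lifts a cycle $l\in\ker(\gamma_{P})_z$ to an element $l'$ of $(N|_\G)_{z-\eb_1-\eb_2}$; second, pushing forward by $\epsilon\eb_1$, using injectivity of $f$ at $\G(z)+\epsilon\eb_1\leq a$, exactness of the rescaled Koszul complex of $M|_\G$ (\cref{Lem:Freeness_And_Exactness}), and injectivity of $\kappa$ on the free module $N|_\G$, one shows that $l'$ in fact lies in $P$. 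Your sketch omits the first step entirely and replaces the second with an identification that does not exist.
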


\begin{proof}
(i) holds because $L^\epsilon(f)|_\G$ is a submodule of the finitely generated persistence module $N|_\G$; the standard result that a submodule of a finitely generated module over a Noetherian ring is itself finitely generated \cite{eisenbud2} also holds in the bigraded case.  

To prove (ii), let us simplify notation by writing \[\Lc=L^\epsilon(f)|_\G,\quad \Nc=N|_\G,\quad  \M=M|_\G,\quad\Fc=f|_\G.\]  Assume without loss of generality that $z=0$.  We will prove that $\xi_1(\Lc)_{0} = 0$ by showing that the quotient $\ker \gamma_{\Lc}/\im \kappa_{\Lc}$ of Theorem \ref{teo:free} vanishes at $0$. 

For $y\in \Z^2$, let $y^+$ denote the maximum element of $\Z^2$ with $\G(y^+)\leq \G(y)+\epsilon \eb_{1}$.  Note that by \cref{Lem:Grids_Isos}, for $y\in \Z^2$ and $v\in \Nc_y$, $v\in \Lc_y$ if and only if $\varphi_\Nc(y,y^+)(v)\in \im \Fc_{y^+}$.  

Note that, in view of the way we define grid functions, the $y$-coordinates of $0^+$ and $(-\eb_1)^+$ are equal, as are the $y$-coordinates of $(-\eb_2)^+$ and $(-\eb_1-\eb_2)^+$.   Symmetrically, the $x$-coordinates of $0^+$ and $(-\eb_2)^+$ are equal, as are the $x$-coordinates of $(-\eb_1)^+$ and $(-\eb_1-\eb_2)^+$.

Let $b=(0^+_1 - (-\eb_1)^+_1, 0^+_2 - (-\eb_2)^+_2)$, and let
\begin{align*}
\gamma^+_{\Nc}&=(\gamma^b_{\Nc})_{0^+}:\bigoplus_{j\in\{1,2\}} \Nc_{(-\eb_j)^+}\rightarrow \Nc_{0^+},\\
\kappa^+_{\Nc}&=(\kappa^b_{\Nc})_{0^+}: \Nc_{(-\eb_1-\eb_2)^+}\rightarrow \bigoplus_{j\in \{1,2\}} \Nc_{(-\eb_j)^+}.
\end{align*} 
Define $\gamma^+_{\M}$ and $\kappa^+_{\M}$ analogously.

In addition, let
\begin{align*}
\varphi_{\bullet}&=\bigoplus_{j\in \{1,2\}} \varphi_\Nc(-\eb_j,(-\eb_j)^+):\bigoplus_{j\in \{1,2\}} \Nc_{-\eb_j} \rightarrow  \bigoplus_{j\in \{1,2\}} \Nc_{(-\eb_j)^+},\\
\varphi_{\bullet\bullet}&=\varphi_\Nc(-\eb_1-\eb_2,(-\eb_1-\eb_2)^+):\Nc_{-\eb_1-\eb_2} \rightarrow  \Nc_{(-\eb_1-\eb_2)^+},\\
\Fc_\bullet&=\bigoplus_{j\in \{1,2\}} \Fc_{(-\eb_j)^+}:\bigoplus_{j\in \{1,2\}} \M_{(-\eb_j)^+}\to \bigoplus_{j\in \{1,2\}} \Nc_{(-\eb_j)^+},\\
\Fc_{\bullet\bullet}&= \Fc_{(-\eb_1-\eb_2)^+}: \M_{(-\eb_1 -\eb_2)^+}\to \Nc_{(-\eb_1 -\eb_2)^+}.
\end{align*}

Consider the following commutative diagram of vector spaces:
\[ 
\xymatrixcolsep{3.2pc}
\xymatrixrowsep{3.7pc}
\xymatrix{
&\Lc_{-\eb_1 -\eb_2}\ar[r]^-{(\kappa_{\Lc})_0}\ar@{^{(}->}[d] & \bigoplus_{j\in \{1,2\}} \Lc_{-\eb_j}\ar[r]^-{(\gamma_{\Lc})_0} \ar@{^{(}->}[d] & \Lc_{0}\ar@{^{(}->}[d] \\
&\Nc_{-\eb_1 -\eb_2}\ar[r]^-{(\kappa_{\Nc})_0}\ar[d]^-{\varphi_{\bullet\bullet}} & \bigoplus_{j\in \{1,2\}} \Nc_{-\eb_j}\ar[r]^-{(\gamma_{\Nc})_0} \ar[d]^-{\varphi_{\bullet}} & \Nc_{0}\ar@{^{(}->}[d]^-{\varphi_\Nc(0,0^+)}   \\
&\Nc_{(-\eb_1 -\eb_2)^+}\ar[r]^-{\kappa^+_{\Nc}} &\bigoplus_{j\in \{1,2\}} \Nc_{(-\eb_j)^+}\ar[r]^-{\gamma^+_{\Nc}} & \Nc_{0^+}\\
&\M_{(-\eb_1 -\eb_2)^+}\ar[r]^-{\kappa^+_{\M}}\ar[u]_-{\Fc_{\bullet\bullet}} &\bigoplus_{j\in \{1,2\}} \M_{(-\eb_j)^+}\ar[r]^-{\gamma^+_{\M}}\ar[u]_-{\Fc_\bullet} & \M_{0^+}.\ar[u]_-{\Fc_{0^+}}}
\]

$\G(0^+)\leq \G(0)+\epsilon \eb_{1}\leq a$, where the second inequality holds by assumption, so since $f_a$ is an injection and $M$ is free, $\Fc_{0^+}$ is an injection as well.

Let $l\in \ker{}(\gamma_{\Lc})_0$ and observe that $l\in \ker \gamma_{\Nc}$ by commutativity of the top-right square.  
Thus, since the second row of the diagram is exact by \cref{teo:free},  there exists \[l'\in \Nc_{-\eb_1 -\eb_2}\]  such that $\kappa_{\Nc}(l') = l$.  To establish the result, it suffices to show that \[l'\in \Lc_{-\eb_1 -\eb_2},\] or equivalently, that $\varphi_{\bullet\bullet}(l^\prime) \in \im  \Fc_{\bullet\bullet}.$

There exists $m^\prime\in \osum_{j\in \{1,2\}} \M_{(-\eb_j)^+}$ such that $\Fc_{\bullet}(m^\prime) = \varphi_{\bullet}(l)$.  By the injectivity of  $\Fc_{0^+}$ and the commutativity of the middle-right and bottom-right squares in the diagram above, $m^\prime \in \ker \gamma_\M^+$.  It follows from \cref{Lem:Freeness_And_Exactness} that the bottom row of the diagram is exact, so there exists $m$ such that $\kappa^+_\M(m) = m^\prime$. Moreover, commutativity of the bottom-left square yields \[\kappa^+_\Nc\comp \Fc_{\bullet\bullet}(m) = \Fc_{\bullet}\comp \kappa^+_\M(m) = \varphi_\bullet(l).\]  On the other hand, from the definition of $l'$, we have that \[\varphi_{\bullet}(l)=\varphi_{\bullet}\comp \kappa_{\Nc}(l')=\kappa^+_{\Nc}\comp \varphi_{\bullet\bullet}(l^\prime).\]  
The injectivity of $\kappa^+_{\Nc}$ implies that $\Fc_{\bullet\bullet}(m) = \varphi_{\bullet\bullet}(l^\prime)$, and (ii) follows. 
\end{proof}

For $M$ a $\ZCat^2$-indexed or $\RCat^2$-indexed persistence module, we define a \emph{presentation} of $M$ to be a morphism $\Phi:F^1\to F^0$ of free persistence modules with $M\cong \coker \Phi$.  When $M$ is $\ZCat^2$-indexed, we'll say $\Phi$ is \emph{minimal} if $\im \Phi\subseteq IF_0$.

From \cref{teo:localformula}, we obtain the following:

\begin{lemma}\label{Lem:Pres_Of_L}
For $f:M\to N$ a morphism of finitely generated free $\RCat^2$-indexed modules, there exists a presentation $\Phi:F^1\to F^0$ of $L^\epsilon(f)$ with $F^0$ and $F^1$ finitely generated, such that $F^1_v\cong 0$ whenever $v\leq a-\epsilon \eb_1$ for some $a\in\R^2$ with $f_a$ an injection.
\end{lemma}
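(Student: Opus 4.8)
The plan is to obtain $\Phi$ by descending to a suitable presentation of the discretization $L^\epsilon(f)|_\G$ over $\ZCat^2$, where $\G$ is the grid supplied by \cref{prop:gprime}, and then transporting that presentation back up to $\RCat^2$ along the continuous extension functor $\CoEx_\G = \Lan_\G(-)$.

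First, by \cref{teo:localformula}\,(i) the module $L^\epsilon(f)|_\G$ is finitely generated, so by \cref{Prop:FreeResolutionExists}\,(i) it admits a minimal free resolution with finitely generated terms. Let $G^1 \xrightarrow{d_1} G^0$ be the degree $\le 1$ part of such a resolution; then $d_1$ is a presentation of $L^\epsilon(f)|_\G$ with $G^0, G^1$ finitely generated free, and, by the very definition of Betti numbers, $\xi(G^1)$ is the multiset assigning multiplicity $\xi_1(L^\epsilon(f)|_\G)_z$ to each $z \in \ZCat^2$. In particular, \cref{teo:localformula}\,(ii) tells us that $z \notin \xi(G^1)$ whenever $\G(z) \le a - \epsilon\eb_1$ for some $a \in \RCat^2$ with $f_a$ injective.

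Next, apply $\CoEx_\G$ to $d_1$ and set $F^0 := \CoEx_\G(G^0)$, $F^1 := \CoEx_\G(G^1)$, $\Phi := \CoEx_\G(d_1)$. Since $\CoEx_\G$ is a left adjoint (to restriction along $\G$), it preserves cokernels and direct sums (\cref{Prop:Preservation_of_Coproducts}); combined with the easy computation $\CoEx_\G(I^{\npGen z}) \cong I^{\npGen{\G(z)}}$, which is immediate from the explicit description of $\CoEx_\G$, this shows that $F^0$ and $F^1$ are finitely generated free $\RCat^2$-indexed modules with $\xi(F^i)$ the pushforward multiset $\G(\xi(G^i))$, and that $\coker \Phi \cong \CoEx_\G(\coker d_1) = \CoEx_\G(L^\epsilon(f)|_\G) \cong L^\epsilon(f)$, the last step by \cref{prop:gprime}. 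Thus $\Phi$ is a presentation of $L^\epsilon(f)$ with finitely generated free terms.

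It remains to check the vanishing condition on $F^1$. For $v \in \RCat^2$ the description of $\CoEx_\G$ gives $F^1_v = G^1_y$, where $\G(y) = \fl_\G(v)$. Suppose $v \le a - \epsilon\eb_1$ for some $a$ with $f_a$ injective, and suppose toward a contradiction that $F^1_v \ne 0$. Freeness of $G^1$ then yields a generator degree $z \in \xi(G^1)$ with $z \le y$, whence $\G(z) \le \G(y) = \fl_\G(v) \le v \le a - \epsilon\eb_1$; but we saw above that no such $z$ lies in $\xi(G^1)$, a contradiction. Hence $F^1_v \cong 0$, as required. The only point in this plan requiring care is this last step: one must convert the pointwise vanishing of first Betti numbers, which is a condition on generator \emph{degrees}, into the vanishing of the graded piece $F^1_v$, which is sensitive to the order rather than to a single degree, using $\fl_\G(v) \le v$ together with monotonicity of $\G$. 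Everything else is formal, once one observes that a left Kan extension, being a left adjoint, automatically preserves cokernels.
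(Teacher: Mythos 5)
Your proposal is correct and follows essentially the same route as the paper: discretize via \cref{teo:localformula}\,(i) and \cref{Prop:FreeResolutionExists}\,(i) to get a minimal presentation of $L^\epsilon(f)|_\G$, push it back up with $\CoEx_\G$ using \cref{prop:gprime}, and convert the vanishing of $\xi_1$ at degrees $z$ with $\G(z)\leq a-\epsilon\eb_1$ into vanishing of $F^1_v$ via $F^1_v\cong G^1_{\G^{-1}(\fl_\G(v))}$ and freeness. The only cosmetic difference is that the paper invokes exactness of $\CoEx_\G$ where you invoke preservation of cokernels by a left adjoint; both suffice.
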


\begin{proof}
For $\G$ a $2$-D grid as above, \cref{teo:localformula}\,(i) tells us that $L^\epsilon(f)|_\G$ is finitely generated.  Thus, by \cref{Prop:FreeResolutionExists}\,(i) there exists a minimal presentation 
\[\Phi': G^1 \to G^0\]
 for $L^\epsilon(f)|_\G$.  The functor $\CoEx_\G$ is easily seen to be exact, so by \cref{prop:gprime}, applying this functor to $\Phi'$ yields a presentation \[\CoEx_\G(\Phi'): \CoEx(G^1) \to \CoEx(G^0)\] for $L^\epsilon(f)$.  We take $\Phi=\CoEx_\G(\Phi')$ and $F^i=\CoEx_\G(G^i)$ for $i=0,1$.  Since $G_0$ and $G_1$ are finitely generated, the same is true for $F_0$ and $F_1$.  

If $f_a$ is an injection, then in view of \cref{teo:localformula}\,(ii), $G^1_z\cong 0$ for all $z\in \Z^2$ with $\G(z)\leq a-\epsilon \eb_{1}$.  If $v\leq a-\epsilon\eb_{1}$, then clearly $\fl_\G(v)\leq a-\epsilon\eb_{1}$, and we thus have \[F^1_v\cong F^1_{\fl_\G(v)}\cong G^1_{\G^{-1}(\fl_\G(v))}\cong 0.\qedhere\]
\end{proof}

\paragraph{Persistence Modules Free Below $a$}
For $a\in \R^2$, let $\RCv$ denote the sub-poset of $\RCat^2$ with objects $\{v\in \R^2\mid v\leq a\}$. We say that an $\RCat^2$-indexed module $M$ is \emph{free below $a$} if there exists a free $\RCat^2$-indexed module $F$ such that the restrictions of $M$ and $F$ to $\RCv$ are isomorphic.

Let $M^a$ denote the $\RCat^2$-indexed module for which $M^a_v=M_{\min(a,v)}$, where \[\min(a,v)=(\min(a_1,v_1),\min(a_2,v_2)),\] with the internal morphisms in $M^a$ induced by those of $M$.  A morphism $f: M\to N$ induces a morphism $f^a: M^a \to N^a$ in an obvious way. 

We omit the following lemma's easy proof:
\begin{lemma}\label{Lem:a-free}
If $M$ is free below $a$, then $M^a$ is free.
\end{lemma}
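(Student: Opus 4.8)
The plan is to recognize the operation $(-)^{a}$ as precomposition with an order-preserving self-map of $\R^{2}$, and then to read off freeness of $M^{a}$ directly from the structure of $F|_{\RCv}$.

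First I would observe that the map $r_{a}\colon\R^{2}\to\R^{2}$ defined by $r_{a}(v)=\min(a,v)$ (the coordinatewise minimum, as in the statement) is order-preserving, hence a functor between the corresponding poset categories, and that $r_{a}(v)\le a$ for every $v$, so $r_{a}$ factors as $\iota\circ\bar r_{a}$ with $\bar r_{a}\colon\R^{2}\to\RCv$ and $\iota\colon\RCv\hookrightarrow\R^{2}$ the inclusion. By the definition of $M^{a}$ (on objects and on internal morphisms), for any $\RCat^{2}$-indexed module $P$ we have $P^{a}=P\circ r_{a}=(P|_{\RCv})\circ\bar r_{a}$. In particular $(-)^{a}$ is exact, preserves arbitrary direct sums, and $P^{a}$ depends, up to isomorphism, only on the restriction $P|_{\RCv}$.

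Next, since $M$ is free below $a$ there is a free module $F$ with $F|_{\RCv}\cong M|_{\RCv}$, so the previous step gives $M^{a}\cong F^{a}$, and it remains only to check that $F^{a}$ is free. Writing $F\cong\bigoplus_{b\in\xi(F)}I^{\npGen{b}}$ and using that $(-)^{a}$ preserves direct sums, this reduces to computing $(I^{\npGen{b}})^{a}$ for a single generator $b$. Directly from the definitions, $(I^{\npGen{b}})^{a}_{v}=I^{\npGen{b}}_{\min(a,v)}$ equals $k$ exactly when $b\le a$ and $b\le v$, and the internal maps behave accordingly, so $(I^{\npGen{b}})^{a}\cong I^{\npGen{b}}$ if $b\le a$ and $(I^{\npGen{b}})^{a}\cong 0$ otherwise. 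Hence $F^{a}\cong\bigoplus_{b\in\xi(F),\,b\le a}I^{\npGen{b}}$ is free, and therefore so is $M^{a}$.

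I do not expect any genuine obstacle here; the only points requiring a little care are the verification that $r_{a}$ is order-preserving (which is exactly what guarantees that $(-)^{a}$ is a functor and so preserves direct sums) and the bookkeeping of internal maps in the computation of $(I^{\npGen{b}})^{a}$. Both are routine, which is consistent with the proof being omitted in the text.
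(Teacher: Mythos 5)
Your proof is correct. Since the paper explicitly omits the proof of \cref{Lem:a-free} as easy, there is no reference argument to compare against, but your argument is the natural one: recognizing $(-)^a$ as precomposition with the order-preserving map $r_a(v)=\min(a,v)$, noting $r_a$ factors through $\RCv$ so that $M^a$ depends only on $M|_{\RCv}$, reducing to a free $F$, and then computing $(I^{\npGen{b}})^a$ generator by generator. The computation $(I^{\npGen{b}})^a\cong I^{\npGen{b}}$ when $b\le a$ and $0$ otherwise is right, and preservation of direct sums by precomposition handles arbitrary (possibly infinite) $\xi(F)$. This is exactly the sort of routine verification the authors had in mind when they omitted the proof.
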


\begin{lemma}\label{lem:L_is_free_restriction}
For $f$ a morphism of finitely generated free $\RCat^2$-indexed modules  and $a\in \R^2$ with $f_a$ an injection, $L^\epsilon(f)^{a-\epsilon \eb_{1}}$ is free. 
\end{lemma}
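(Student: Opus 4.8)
The plan is to read the result off from \cref{Lem:Pres_Of_L} together with \cref{Lem:a-free}, with only a short pointwise computation in between.

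First I would apply \cref{Lem:Pres_Of_L} to the morphism $f$ and to the point $a$ (which by hypothesis has $f_a$ an injection), obtaining a presentation $\Phi\colon F^1\to F^0$ of $L^\epsilon(f)$ with $F^0$ and $F^1$ finitely generated and with $F^1_v\cong 0$ for every $v\leq a-\epsilon\eb_1$. Since kernels, images, and cokernels of morphisms of $\RCat^2$-indexed modules are formed pointwise, $(\im\Phi)_v=\Phi_v(F^1_v)=0$ for every such $v$, so $\coker\Phi$ agrees with $F^0$ at each index $v\leq a-\epsilon\eb_1$, and the internal maps of $\coker\Phi$ between such indices are exactly those of $F^0$. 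In other words, the restriction of $\coker\Phi$ to the sub-poset $\{v\in\R^2\mid v\leq a-\epsilon\eb_1\}$ equals the restriction of $F^0$ to that sub-poset. Composing with the isomorphism $L^\epsilon(f)\cong\coker\Phi$ and restricting, I get $L^\epsilon(f)|_{\{v\leq a-\epsilon\eb_1\}}\cong F^0|_{\{v\leq a-\epsilon\eb_1\}}$; as $F^0$ is free, this is precisely the statement that $L^\epsilon(f)$ is free below $a-\epsilon\eb_1$ in the sense of the definition above.

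Finally I would invoke \cref{Lem:a-free} with its ``$a$'' taken to be $a-\epsilon\eb_1$: since $L^\epsilon(f)$ is free below $a-\epsilon\eb_1$, its truncation $L^\epsilon(f)^{a-\epsilon\eb_1}$ is free, which is the assertion. I do not anticipate any genuine obstacle: the substantive work has already been packaged into \cref{Lem:Pres_Of_L} (which in turn rests on the Koszul-homology computation of \cref{teo:localformula}) and into \cref{Lem:a-free}, so the only point needing a moment's care is the elementary observation that vanishing of $F^1$ on a down-set forces $\coker\Phi$ to coincide with $F^0$ there, a consequence of cokernels being computed pointwise.
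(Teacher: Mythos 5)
Your proposal is correct and follows exactly the paper's argument: invoke \cref{Lem:Pres_Of_L} to get a presentation with $F^1$ vanishing on the down-set below $a-\epsilon\eb_1$, conclude that $L^\epsilon(f)$ is free below $a-\epsilon\eb_1$, and finish with \cref{Lem:a-free}. The only difference is that you spell out the pointwise-cokernel justification that the paper leaves implicit.
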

\begin{proof}
For $\Phi:F^1\to F^0$ a presentation for $L^\epsilon(f)$ as in the statement of \cref{Lem:Pres_Of_L}, the restrictions of $L^\epsilon(f)$ and $F^0$ to $\RCat^2_{\leq a-\epsilon \eb_{1}}$  are isomorphic.  Thus, $L^\epsilon(f)$ is free below $(a-\epsilon \eb_{1})$.  The result now follows from \cref{Lem:a-free}.
\end{proof}

Here is the first main result of this section:

\begin{proposition}\label{prop:L_i_is_free}
If $f: M\to N$ is a monomorphism of \pfd free $\RCat^2$-indexed modules, then $L^\epsilon(f)$ is free. 
\end{proposition}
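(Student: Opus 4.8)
The plan is to reduce the statement to the finitely generated case handled by \cref{lem:L_is_free_restriction}, and then to globalize it using the clipped modules $M^a$ and \cref{Lem:a-free}. Write $L:=L^\epsilon(f)$; it is a submodule of $N$, hence \pfd.

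\emph{Reduction to the finitely generated case, and freeness below each point.} Fix an arbitrary point $a\in\R^2$ and put $a':=a+\epsilon\eb_1$. Since $M$ and $N$ are \pfd and free, only finitely many of their basis elements have degree $\leq a'$ (for a \pfd free module the number of basis elements of degree $\leq a'$ equals the finite number $\dim M_{a'}$, resp.\ $\dim N_{a'}$). Let $M'\subseteq M$ and $N'\subseteq N$ be the submodules generated by these finitely many basis elements; these are finitely generated free modules, $M'$ is generated in degrees $\leq a'$, and since $f$ is a module morphism we have $f(M')\subseteq N'$. Hence $g:=f|_{M'}\colon M'\to N'$ is a monomorphism of finitely generated free $\R^2$-indexed modules, so $g_{a'}$ is an injection. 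For $v\leq a$ we have $v+\epsilon\eb_1\leq a'$, and below $a'$ the inclusions $M'\hookrightarrow M$, $N'\hookrightarrow N$ are isomorphisms on the relevant vector spaces, structure maps, and images; comparing the defining conditions for $L^\epsilon(f)_v$ and $L^\epsilon(g)_v$ then shows that $L$ and $L^\epsilon(g)$ have the same restriction to $\R^2_{\leq a}$. By \cref{lem:L_is_free_restriction} applied to $g$ with the point $a'$, the module $L^\epsilon(g)^{a'-\epsilon\eb_1}=L^\epsilon(g)^{a}$ is free; restricting it to $\R^2_{\leq a}$ and using the identification just made, $L$ agrees on $\R^2_{\leq a}$ with the restriction of a free module, i.e.\ $L$ is free below $a$. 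As $a\in\R^2$ was arbitrary, $L$ is free below every point.

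\emph{Globalization.} It remains to promote ``free below every $a$'' to ``free''. By \cref{Lem:a-free} each clipped module $L^a$ is \pfd and free, so $L^a\cong\bigoplus_{b\in\xi(L^a)}I^{\npGen{b}}$ with $\xi(L^a)$ a well-defined multiset, all of whose elements satisfy $b\leq a$. Since $L^a|_{\R^2_{\leq a}}=L|_{\R^2_{\leq a}}$ decomposes as the direct sum over $b\in\xi(L^a)$ of the ``box'' modules supported on $\{v\in\R^2\mid b\leq v\leq a\}$, uniqueness of interval decompositions shows that $\xi(L^a)$ depends only on $L|_{\R^2_{\leq a}}$: concretely, the multiplicity of $b$ in $\xi(L^a)$ equals $\dim L_b-\dim B_b$, where $B_b:=\im\phi_L(b-\delta\eb_1,b)+\im\phi_L(b-\delta\eb_2,b)$ for $\delta>0$ small enough that $B_b$ no longer depends on $\delta$ (possible since $L$ is \pfd). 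Hence the multisets $\xi(L^a)$ are mutually consistent and assemble into a single multiset $\Xi\subseteq\R^2$ with $\xi(L^a)=\{b\in\Xi\mid b\leq a\}$ for every $a$. Choosing for each $b\in\Xi$ a set of elements of $L_b$ whose images span a complement of $B_b$ in $L_b$, one checks that the union $\W$ of all these sets is a basis of $L$ — using that each decomposition above is a box decomposition (so nothing cancels) and that these decompositions are compatible as $a$ grows. Thus $L^\epsilon(f)$ is free, with $\xi(L^\epsilon(f))=\Xi$. I expect this globalization step to be the main obstacle: ``free below $a$ for every $a$'' does not follow by a formal colimit argument, since a directed colimit of free modules along injections need not be free (e.g.\ $\varinjlim_n I^{[1/n,\infty)}\cong I^{(0,\infty)}$ already in one variable), so one genuinely needs \pfd-ness and the box structure of $L^\epsilon(f)|_{\R^2_{\leq a}}$ to make the local generators cohere into a global basis; by contrast the reduction to finitely generated modules is routine once one notes that \pfd-ness bounds the number of generators below a given point.
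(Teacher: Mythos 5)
Your proof follows the same two-stage strategy as the paper — first establish that $L^\epsilon(f)$ is free below every point by reducing to the finitely generated case handled by \cref{lem:L_is_free_restriction}, then globalize — and the reduction step is correct (using submodules generated by bounded-degree basis elements rather than the clipped modules $M^a$, $N^a$ as the paper does, but this is an inessential difference). The globalization, however, is handled quite differently, and that is where your write-up has a gap.

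The paper's globalization avoids any explicit Betti-number computation: it takes the countable cofinal chain $L^0\subseteq L^1\subseteq\cdots$ of clipped (hence free) modules with $\varinjlim L^j\cong L^\epsilon(f)$ and inductively constructs a \emph{nested} sequence of bases $\W_0\subseteq\W_1\subseteq\cdots$, using the observation that any basis $\W'$ of $L^{j+1}$ contains a sub-basis $\W''$ of $L^j$ (the elements of degree $\leq a_j-\epsilon\eb_1$), and one may swap $\W''$ for the previously constructed $\W_j$. The union of a nested chain of bases is then a basis of the colimit. Your argument instead defines a candidate global multiset $\Xi$ of generating degrees via the Koszul-type formula $\dim L_b - \dim B_b$, picks arbitrary complements, and asserts the union is a basis. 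Two points need more care here. First, the stabilization of $B_b$ as $\delta\to 0$ does not follow from \pfd-ness alone — you need to invoke ``free below every $a$'' (which you already have) to conclude that generating degrees are locally discrete. Second, and more importantly, ``one checks that the union $\W$ is a basis ... nothing cancels ... compatible as $a$ grows'' is exactly the nontrivial content, and is precisely what the paper's inductive basis-swapping establishes. You correctly identify that a naive colimit argument fails because a directed colimit of free modules along monomorphisms need not be free; but the resolution requires showing not merely that each clipped module is free, but that compatible bases can be chosen across the chain — the swapping lemma is what delivers that, and your write-up asserts rather than proves the analogous coherence. Filling this in would essentially recreate the paper's argument (or a graded Nakayama-style argument applied level-by-level along the cofinal chain).
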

\begin{proof}
For $j\in \{0,1,2,\ldots\}$, let $a_j = (j,j)$.  Note that $f^{a_j}: M^{a_j}\to N^{a_j}$ is a monomorphism of finitely generated free persistence modules.  In particular, $f^{a_j}_{a_j}$ is an injection.  
By \cref{lem:L_is_free_restriction} then, $L^\epsilon_i(f)^{a_j -\epsilon \eb_{1}} = L^\epsilon(f^{a_j})^{a_j-\epsilon \eb_{1}}$ is free.  

Letting $L^j=L^\epsilon(f)^{a_j - \epsilon \eb_{1}}$, note that there is a canonical monomorphism $L^j \hookrightarrow L^{j+1}$, so that we may identify $L^j$ with a submodule of $L^{j+1}$, and that $\varinjlim L^j\cong L^\epsilon(f)$.  
We inductively define a basis $\W_{j}$ for each $L^{j}$ such that $\W_j \subseteq \W_{j+1}$: Take $\W_0$ to be any basis for $L^0$.  Now assume that we have defined $\W_j$.   If $\W'$ is any basis for $L^{j+1}$ then \[\W'' = \{w'\in \W'\mid \deg(w')\leq a_j-\epsilon \eb_{1}\}\] is a basis for $L^j$.  Hence, $\W_{j+1} = \W_j\cup (\W'-\W'')$ is a basis for $L^{j+1}$ with $\W_j \subseteq \W_{j+1}$.
Clearly,
\[\W_0\cup (\W_1-\W_0) \cup (\W_2-\W_1)\cup \cdots\]
is a basis for $\varinjlim L_j$, so $\varinjlim L_j\cong L^\epsilon(f)$ is free.  
\end{proof}

\paragraph{Interpolants of $\RE$-free Modules}
For $\epsilon\geq 0$, define an endofunctor $R_\epsilon$ on $\Vect^{\RCat\times \RCat^\op}$ by
\begin{align*}
R_\epsilon(M)_{(s,t)}=
\begin{cases}
 M_{(s,t)}&\textup{ for all }t-s>2\epsilon,\\ 
 0&\textup{ otherwise,}
 \end{cases}
\end{align*}
with the internal maps $\varphi_{R_\epsilon(M)}(-,-)$ and the action of $R_\epsilon$ on morphisms defined in the obvious way.  Note that we have a canonical epimorphism $M\twoheadrightarrow R_\epsilon(M)$.  

\begin{definition}\label{def:RE-Free}
We say that an $\RCat\times \RCat^\op$-indexed module $M$ is \emph{$R_\epsilon$-free} if $M \cong R_\epsilon(F_M)$ for $F_M$ a \pfd free $\RCat\times \RCat^\op$-indexed module.
\end{definition}
Observe that an $\RE$-free module $M$ is interval decomposable, with \[\B(M) = \{\TopRR{a,b}{\epsilon}\mid \GenRR{a,b}\in \B(F_M), \TopRR{a,b}{\epsilon}\neq \emptyset\},\] where
\[ \TopRR{a,b}{\epsilon} = \{(s,t)\in \GenRR{a,b} \mid t-s> 2\epsilon\};\] see \cref{fig:interval_restricted}.
\begin{figure}
\centering
\begin{tikzpicture}[line cap=round,line join=round,>=triangle 45,x=.7cm,y=0.7cm, scale=0.4]
\fill[color=aqaqaq,fill=aqaqaq,fill opacity=0.30] (-5.,-4.) -- (-5,5) -- (4,5) -- cycle;
\draw (-5.,-4.) -- (-5,5) -- (4,5);
\draw[dashed](-5,-4)--(4,5); 
\draw  (-5, -6) -- (6, 5);

\begin{scriptsize}
\draw[color=black] (-5,5) node[above left] {$(a,b)$};
\draw[color=black] (6,5) node[above left] {$(a-2\epsilon,b)$};
\draw[color=black] (-5,-4) node[left] {$(a,b+2\epsilon )$};
\end{scriptsize}
\end{tikzpicture}
\caption{An interval $\protect\TopRR{a,b}{\epsilon}$}
\label{fig:interval_restricted}
\end{figure}    

We omit the easy proof of the following:
\begin{lemma}\label{lem:Retbasis}
$M$ is $R_\epsilon$-free if and only if there exists a set \[\W\subseteq \bigcup_{t-s>2\epsilon} M_{(s,t)}\] such that for any $(t,s)\in \R^2$ with $t-s>2\epsilon$ and $m\in M_{(s,t)}$,  $m$ can be uniquely expressed as a linear combination of elements of $\W$, as in \eqref{eq:freeSum}.
\end{lemma}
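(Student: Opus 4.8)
The plan is to prove both implications by transporting a basis across the canonical epimorphism $\pi_F\colon F\twoheadrightarrow R_\epsilon(F)$ attached to a free module $F$, in exactly the way one shows that a free persistence module ``has a basis.'' I would first record two elementary facts. First, $R_\epsilon(N)$ is the quotient of $N$ by the submodule supported on $\{(s,t):t-s\le 2\epsilon\}$, so $\pi_F$ is injective at every degree and restricts to an isomorphism $F_{(s,t)}\xrightarrow{\sim}R_\epsilon(F)_{(s,t)}$ whenever $t-s>2\epsilon$. Second, $R_\epsilon$ is additive and, for a free generator interval $\GenRR{d}$, $R_\epsilon(I^{\GenRR{d}})\cong I^{\TopRR{d}{\epsilon}}$, which is nonzero precisely when $d=(d_1,d_2)$ satisfies $d_2-d_1>2\epsilon$; in that case $d$ is the minimum of $\TopRR{d}{\epsilon}$ in $\RCat\times\RCat^\op$. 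Throughout I would take $M$ to be \pfd and supported on $\{(s,t):t-s>2\epsilon\}$, as is automatic for $R_\epsilon$-free modules and as holds in the settings where the lemma is applied.

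For the forward direction, assume $M\cong R_\epsilon(F_M)$ with $F_M$ a \pfd free module; write $F_M\cong\bigoplus_{a\in\xi(F_M)}I^{\GenRR{a}}$ and let $\W'$ be the corresponding basis, with one generator of degree $a$ in each summand. A generator of degree $a=(a_1,a_2)$ with $a_2-a_1\le 2\epsilon$ spans a summand killed by $R_\epsilon$; I would discard these and let $\W$ be the image of the remaining generators under $F_M\twoheadrightarrow R_\epsilon(F_M)\cong M$. Since $\pi_{F_M}$ is an isomorphism above the line, $\W\subseteq\bigcup_{t-s>2\epsilon}M_{(s,t)}$ with degrees preserved. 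Given $(s,t)$ with $t-s>2\epsilon$ and $m\in M_{(s,t)}$, lift $m$ to $F_M$ through $(F_M)_{(s,t)}\xrightarrow{\sim}M_{(s,t)}$ and expand in $\W'$: every generator occurring has degree $d'\le(s,t)$ in $\RCat\times\RCat^\op$, hence $d'=(d_1',d_2')$ with $d_2'-d_1'\ge t-s>2\epsilon$, so $d'$ survives and the generator maps into $\W$. Pushing the expansion forward through $\pi_{F_M}$, which commutes with the structure maps, writes $m$ as a finite combination of elements of $\W$ as in \eqref{eq:freeSum}; uniqueness follows from uniqueness in $\W'$ together with the injectivity of $\pi_{F_M}$.

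For the converse, given $\W$ as in the statement, I would set $F_M:=\bigoplus_{w\in\W}I^{\GenRR{\deg(w)}}$, a free $\RCat\times\RCat^\op$-indexed module, \pfd in the relevant setting. Each $\deg(w)=(d_1,d_2)$ has $d_2-d_1>2\epsilon$, so additivity of $R_\epsilon$ gives $R_\epsilon(F_M)\cong\bigoplus_{w\in\W}I^{\TopRR{\deg(w)}{\epsilon}}$, with every summand nonzero. Then I would define $\Psi\colon R_\epsilon(F_M)\to M$ componentwise, sending on the summand $I^{\TopRR{\deg(w)}{\epsilon}}$ the generator at degree $\deg(w)$ to $w\in M_{\deg(w)}$; this extends uniquely to a morphism of interval modules since that element generates the summand. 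At a degree $(s,t)$ with $t-s>2\epsilon$, $\Psi_{(s,t)}$ sends the basis vector indexed by $w$ to $\phi_M(\deg(w),(s,t))(w)$ for the $w$ with $\deg(w)\le(s,t)$, so it is bijective exactly by the existence-and-uniqueness hypothesis on $\W$; at degrees below the line both source and target vanish. Hence $\Psi$ is an isomorphism, $M\cong R_\epsilon(F_M)$, and $M$ is $R_\epsilon$-free.

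I do not expect a genuine obstacle here; the paper rightly calls this an easy exercise. The only points that need a moment of care are the order-theoretic observation that in $\RCat\times\RCat^\op$ any degree $\le(s,t)$ with $t-s>2\epsilon$ again has coordinate gap $>2\epsilon$ (so that the surviving generators already form a basis above the line), and keeping track of which free generators of $F_M$ are annihilated by $R_\epsilon$.
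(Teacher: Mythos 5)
The paper itself omits the proof of \cref{lem:Retbasis}, so there is no argument of the authors' to compare against; what you give is the natural argument, and both directions are essentially right. In the forward direction the key points are exactly the ones you isolate: the quotient $\pi_{F_M}$ is an isomorphism at every degree with $t-s>2\epsilon$ (your parenthetical claim that it is ``injective at every degree'' is false for degrees on or below the line, but you never use it there), and any degree $\leq(s,t)$ in $\RCat\times\RCat^\op$ with $t-s>2\epsilon$ again has coordinate gap $>2\epsilon$, so the expansion of a lift only involves surviving generators. The converse correctly builds $F_M=\bigoplus_{w\in\W}I^{\GenRR{\deg(w)}}$ and checks that the comparison map $R_\epsilon(F_M)\to M$ is a degreewise isomorphism; as you note, one must additionally assume $M$ vanishes at degrees with $t-s\leq 2\epsilon$, an hypothesis the lemma leaves implicit.

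The one step you assert rather than prove is that $F_M$ is \pfd, and this is the only nontrivial content of the converse: by \cref{def:RE-Free}, $R_\epsilon$-freeness requires a \emph{\pfd} free $F_M$, and \pfd-ness of $M$ does not imply \pfd-ness of $F_M$. The degrees $\deg(w)$ can accumulate toward the line $t-s=2\epsilon$ so that only finitely many of them lie below any point \emph{above} the line while infinitely many lie below a point \emph{on} it. Concretely, taking $\W$ with $\deg(w_n)=(-1/n,\,2\epsilon+1/n)$ yields a \pfd module $M=\bigoplus_n I^{\TopRR{\deg(w_n)}{\epsilon}}$ admitting a set $\W$ as in the statement, yet no \pfd free $F$ satisfies $R_\epsilon(F)\cong M$: uniqueness of barcode decompositions forces $\xi(F)$ to contain every $\deg(w_n)$, so $F$ is infinite dimensional at $(0,2\epsilon)$. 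Thus ``\pfd in the relevant setting'' is doing real work. To close the gap one should either strengthen the hypothesis on $\W$ to local finiteness of the multiset $\{\deg(w)\}_{w\in\W}$ at every point of $\R^2$ (not just at points above the line), or verify this in each application of the lemma --- which does hold where the paper uses it, since there the basis elements come from finitely generated presentations of the truncations $L^j$. This is as much a defect of the lemma as stated as of your proof, but it deserves an explicit argument rather than an appeal to the relevant setting.
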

In analogy with the free case, we call the set $\W$ above an $\RE$-\emph{basis}.
Finally, we come to the second main result of this section:
\begin{proposition}\label{prop:openrestfree}
Let $f: M\to N$ be a monomorphism of $\RE$-free persistence modules. Then $\RET(L^\epsilon(f))$ is $\RET$-free. 
\end{proposition}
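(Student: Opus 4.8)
The plan is to reduce \cref{prop:openrestfree} to the free case, \cref{prop:L_i_is_free}, by replacing $f$ with a morphism of genuinely free modules that sees the same data above the diagonal. Write $M\cong R_\epsilon(F_M)$ and $N\cong R_\epsilon(F_N)$ with $F_M,F_N$ \pfd free, and set $D:=\{(s,t):t-s>2\epsilon\}$, the region on which $R_\epsilon$ is the identity, and $D':=\{(s,t):t-s>3\epsilon\}$, the region on which $\RET$ is the identity; both $D$ and $D'$ are lower sets, and $a+\epsilon\eb_1\in D$ whenever $a\in D'$. The interval summands of $F_M$ and $F_N$ whose generator lies outside $D$ are annihilated by $R_\epsilon$, so we may assume every generator of $F_M$ and $F_N$ lies in $D$; then $M$ and $N$ agree with $F_M$ and $F_N$ on $D$. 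I would define $\tilde f\colon F_M\to F_N$ by sending each generator $w$ of $F_M$, which has degree in $D$, to $f(\bar w)\in(F_N)_{\deg w}$, where $\bar w\in M_{\deg w}$ is the corresponding $\RE$-basis element; a short computation using \eqref{eq:freeSum} shows $R_\epsilon(\tilde f)=f$, and in particular $\tilde f_a$ is injective for every $a\in D$ since $f$ is a monomorphism. Because the defining condition of $L^\epsilon(-)_a$ for $a\in D'$ only involves its module and image at the degrees $a$ and $a+\epsilon\eb_1$, both of which lie in $D$ where $(M,N,f)$ and $(F_M,F_N,\tilde f)$ coincide, the modules $L^\epsilon(f)$ and $L^\epsilon(\tilde f)$ agree on $D'$, so $\RET(L^\epsilon(f))=\RET(L^\epsilon(\tilde f))$ and it suffices to prove $\RET(L^\epsilon(\tilde f))$ is $\RET$-free.

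For this I would run the argument of \cref{prop:L_i_is_free}, with $D'$ in the role of the principal lower sets used there. Since $F_M,F_N$ need not be finitely generated, first truncate along a cofinal increasing sequence $a_j$: each $\tilde f^{a_j}\colon F_M^{a_j}\to F_N^{a_j}$ is a morphism of finitely generated free modules, with $(\tilde f^{a_j})_b=\tilde f_b$ injective whenever $b\in D$ and $b\le a_j$. Applying \cref{Lem:Pres_Of_L} to $\tilde f^{a_j}$ yields a presentation $\Phi_j\colon F^1_j\to F^0_j$ of $L^\epsilon(\tilde f^{a_j})$ with $(F^1_j)_v\cong 0$ for every $v$ that is $\le a-\epsilon\eb_1$ for some $a$ with $(\tilde f^{a_j})_a$ injective---and one checks directly that the union of these sets is exactly $U_j:=D'\cap\{v\le a_j-\epsilon\eb_1\}$, the passage from the $2\epsilon$ defining $D$ to the $3\epsilon$ defining $D'$ being where the factor $3/2$ is incurred. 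Hence $\Phi_j$ vanishes on $U_j$, so $L^\epsilon(\tilde f^{a_j})=\coker\Phi_j$ agrees on $U_j$ with the free module $F^0_j$; and since $v\le a_j-\epsilon\eb_1$ forces $v+\epsilon\eb_1\le a_j$, one has $L^\epsilon(\tilde f^{a_j})|_{U_j}=L^\epsilon(\tilde f)|_{U_j}$. Thus $L^\epsilon(\tilde f)$ agrees, on each $U_j$, with a restriction of a free module.

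The sets $U_j$ are lower sets, $U_j\subseteq U_{j+1}$, and $\bigcup_j U_j=D'$. I would therefore splice compatible bases exactly as in the last paragraph of the proof of \cref{prop:L_i_is_free}: choose a basis of $L^\epsilon(\tilde f)|_{U_{j+1}}$ extending the basis already fixed on $U_j$ (restricting a basis of a free module to a lower set gives a basis), and let $\W$ be the union over $j$. Then every element of $L^\epsilon(\tilde f)$ sitting at a degree $(s,t)$ with $t-s>3\epsilon$ factors uniquely through $\W$ as in \eqref{eq:freeSum}, so \cref{lem:Retbasis}, applied with $3\epsilon/2$ in place of $\epsilon$, shows that $\RET(L^\epsilon(\tilde f))=\RET(L^\epsilon(f))$ is $\RET$-free.

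The main obstacle, compared with \cref{prop:L_i_is_free}, is geometric rather than algebraic: $D'$ is bounded by a line of slope $-1$, and it cannot be exhausted by an increasing---or even directed---family of principal lower sets $\{v\le a\}$, so the inductive gluing of \cref{prop:L_i_is_free} does not apply verbatim. The remedy is to exhaust $D'$ by the sets $U_j=D'\cap\{v\le a_j-\epsilon\eb_1\}$, and the two points that need care are that each $U_j$ is still a lower set---so that the vanishing of first Betti numbers propagates downward and a restricted basis remains a basis---and that $\bigcup_j U_j=D'$. Pinning down exactly the degrees at which $\tilde f^{a_j}$ forces a trivial first Betti number is the step that loses the factor $3/2$ with this method.
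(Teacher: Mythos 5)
Your proof is correct and follows essentially the same route as the paper's: lift $f$ to a morphism $\tilde f\colon F_M\to F_N$ of genuine free modules, observe $\RET(L^\epsilon(f))=\RET(L^\epsilon(\tilde f))$ because the two interpolants agree on $D'$, apply \cref{Lem:Pres_Of_L} to the truncations $\tilde f^{a_j}$ to get freeness above the relevant diagonal, and splice compatible bases over an exhausting chain of lower sets. The only cosmetic differences are that you work directly with the restrictions $L^\epsilon(\tilde f)|_{U_j}$ whereas the paper phrases the same step via the modules $\RET(L^\epsilon(\tilde f)^{a_j-\epsilon\eb_1})$, and your ``exactly $U_j$'' slightly understates the vanishing locus of $F^1_j$ (the paper's version of the injectivity argument gives all of $D'$), which is harmless since only $\supseteq U_j$ is used.
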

\begin{proof}
Let $\alpha_M:M\to \RE(F_M)$ and $\alpha_N:N\to \RE(F_N)$ be isomorphisms.  The map $\alpha_N \comp f\comp \alpha_M^{-1}: \RE(F_M)\to  \RE(F_N)$ lifts to a map $\tilde f:F_M\to F_N$ such that the following diagram commutes:  
\[
\xymatrix{
M\ar[r]^-{\alpha_M}_-{\cong} \ar[d]^{f} & \RE(F_M)\ar@{.>}[d]^{\RE(\tilde f)} & \ar@{->>}[l] \ar@{.>}[d]^{\tilde f}  F_M\\
N \ar[r]^-{\alpha_N}_-{\cong}& \RE(F_N) & \ar@{->>}[l]F_N.}
\]

Observe that \[\RET(L^\epsilon(f)) \cong \RET(L^\epsilon(\RE(\tilde f))) = \RET(L^\epsilon(\tilde f)),\] where the isomorphism on the left follows from commutativity of the left square in the diagram. Hence, it suffices to show that $\RET(L^\epsilon(\tilde f))$ is $\RET$-free.  Our argument is similar to the proof of \cref{prop:L_i_is_free}.

Let $a^j = (j, -j)$ for $j\in \{0, 1,2,\ldots \}$.  We first show that $\RET(L^\epsilon(\tilde f^{a^j}))$ is $\RET$-free. Note that since $F_M$ and $F_N$ are \pfd, $\tilde f^{a^j}: F_M^{a^j}\to F_N^{a^j}$ is a morphism of finitely generated free persistence modules.  By commutativity of the above diagram, $\RE(\tilde{f})$ is a monomorphism, i.e., $\tilde{f}_{(s,t)}$ is an injection for $t-s>2\epsilon$.  Further, $\tilde f^{a^j}_{(s,t)}$ is also an injection for $t-s>2\epsilon$.  To see this, 
note that there exist $u\leq s$ and $v\geq t$ with $\tilde f^{a_j}_{(s,t)}=\tilde f_{(u,v)}$.  We have $v-u\geq t-s> 2\epsilon$, so $\tilde f^{a_j}_{(s,t)}=\tilde f_{(u,v)}$ is injective.

By \cref{Lem:Pres_Of_L} then, there exists a presentation $\Phi:F^1\to F^0$ for $L^\epsilon(\tilde f^{a^j})$ with $F^0$ finitely generated, such that $F^1_{(s,t)}=0$ whenever $t-s>3\epsilon$.  Thus, $\RET(L^\epsilon(\tilde f^{a^j}))$ is $\RET$-free, as claimed.  

For any $\RCat\times \R^{\op}$-indexed module $Q$ such that $\RET(Q)$ is $\RET$-free, $\RET(Q^a)$ is also $\RET$-free for all $a\in \R^2$: If $\RET(Q)\cong\RET(F)$ for $F$ \pfd and free, then $\RET(Q^a)\cong\RET(F^a)$.  Thus, since $\RET(L^\epsilon(\tilde f^{a^j}))$ is $\RET$-free, $\RET(L^\epsilon(\tilde f^{a^j})^{a^j-\epsilon\eb_1})$ is $\RET$-free as well.  Moreover, \[L^j:=\RET(L^\epsilon(\tilde f)^{a^j-\epsilon\eb_1})=\RET(L^\epsilon(\tilde f^{a^j})^{a^j-\epsilon\eb_1}),\] 
so $L^j$ is also $\RET$-free.

\begin{sloppypar}
Note that we have a canonical monomorphism $L^j \hookrightarrow L^{j+1}$, and that $\varinjlim L^j \cong \RET(L^\epsilon(\tilde f))$.  By choosing an $\RET$-basis for each $L^j$, we may inductively construct an $\RET$-basis for $\RET(L^\epsilon(\tilde f))$ precisely as in the proof of \cref{prop:L_i_is_free}.  By \cref{lem:Retbasis} then, $L^\epsilon(\tilde f)$ is $\RET$-free. \qedhere
\end{sloppypar}
\end{proof}
\begin{figure}
\centering
\begin{subfigure}{.4\textwidth}
\centering
\definecolor{aqaqaq}{rgb}{0.6274509803921569,0.6274509803921569,0.6274509803921569}
\begin{tikzpicture}[line cap=round,line join=round,>=triangle 45,x=.7cm,y=0.7cm, scale=0.32,baseline=0pt]
\clip(-13,-10.) rectangle (12,13.);
\fill[color=aqaqaq,fill=aqaqaq,fill opacity=0.20] (-10,-6) -- (-10,10) -- (6,10) --   cycle;
\draw (-10,-6) -- (-10,10) -- (6,10);
\draw[dashed] (-10,-6) -- (6,10);

\draw (-10,-10) -- (10,10);

\fill[color=aqaqaq,fill=green,fill opacity=0.30] (-8,-4) -- (-8,8) -- (4,8) -- cycle;
\draw (-8,-4) -- (-8,8) -- (4,8);
\draw[dashed] (-8, -4) -- (4,8);

\begin{scriptsize}
\draw[color=black] (-10, 10) node[above] {$(-10, 0)$};
\draw[color=black] (-8,7) node[right] {$(-9, -1)$};
\end{scriptsize}
\end{tikzpicture}
\caption*{The support of $M$ (green) and $N$ (gray)}
\end{subfigure}
\begin{subfigure}{.1\textwidth}
~\end{subfigure}
\begin{subfigure}{.4\textwidth}
\centering
\definecolor{aqaqaq}{rgb}{0.6274509803921569,0.6274509803921569,0.6274509803921569}
\begin{tikzpicture}[line cap=round,line join=round,>=triangle 45,x=.7cm,y=0.7cm, scale=0.32,baseline=0pt]
\clip(-13,-10.) rectangle (12,13.);
\fill[color=aqaqaq,fill=blue,fill opacity=0.20] (-10,-6) -- (-10, 8) -- (2,8) -- (4,10) -- (6,10) --  cycle; 
\draw (-10,-6) -- (-10, 8) -- (2,8) -- (4,10) -- (6,10);
\draw[dashed] (-10, -6) -- (6,10); 
\draw (-10,-10) -- (10,10);
\begin{scriptsize}
\draw[color=black] (2.4, 9) node[left] {$(-4, -1)$};
\draw[color=black] (-10, 8) node[above] {$(-10, -1)$};
\draw[color=white] (-10, 10) node[above] {$(-10, 0)$};
\end{scriptsize}
\end{tikzpicture}
\caption*{The support of $L^\epsilon(f)$}
\end{subfigure}
\caption{Illustration of \cref{ex:tight} in the case $\epsilon=1$}
\label{fig:tight}
\end{figure}
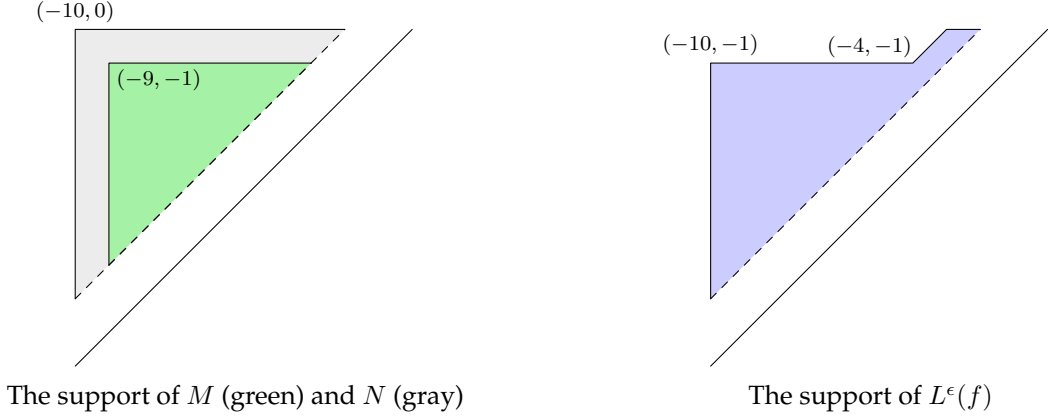

\begin{example}
The previous lemma is tight: let $M=I^{\TopRR{-9\epsilon, -\epsilon}{\epsilon}}$, $N=I^{\TopRR{-10\epsilon, 0}{\epsilon}}$, and $f:M\to N$ be any non-zero morphism. Then $f$ is a monomorphism of $\RE$-free persistence modules, but the persistence module $R_{3\epsilon/2-\delta}(L^\epsilon(f))$ is not $R_{3\epsilon/2-\delta}$-free for any $\delta>0$; see \cref{fig:tight}.
\label{ex:tight}
\end{example}

\section{Induced Matching Theorem for Free Multidimensional Persistence Modules}\label{Sec:Free}
Let $f:M\to N$ be a morphism of $\RCat^2$-indexed modules.  For $i\in \{1,2\}$, let $\Hbb_i\subset\R^2$ denote the line $x_i=0$, and for fixed $a\in\Hbb_i$, define the line \[\Q_a := \{a+t\eb_i\mid t\in \mathbb{R}\}.\] 

In \cref{Sec:Derived_1_D_Morphisms}, we associate to each line $\Q_a$ a morphism $\h$ of 1-D persistence modules derived from $f$.  When $M$ and $N$ are free, intervals in the barcodes of the domain and codomain of $\h$ correspond, respectively, to the intervals in $\B(M)$ and $\B(N)$ with an edge lying on $\Q_a$.  We prove that when $f$ is a monomorphsm with $\epsilon$-trivial cokernel, then so is $\h$.  

In \cref{sec:isometryFree}, we use the morphisms $\h$, together with the decomposition \eqref{eq:Li} and the 1-D induced matchings of \cite{bauer2015induced}, to define the matching \[\chi(f):\B(M)\to \B(N)\] induced by a monomorphism $f:M\to N$ of \pfd free modules.  We use this matching to formulate our induced matching theorem for free modules.

In \cref{Sec:Technical_IMT}, we establish a similar induced matching result for monomorphisms of $\RE$-free modules.  

\subsection{Induced Morphisms of 1-D Persistence Modules}\label{Sec:Derived_1_D_Morphisms}

For $c\in \R^2$, we say $c<\Q_a$ if $c=\hat c_i+c_i\eb_i$ for $a>\hat c_i\in \Hbb_i$.  Thus for $i=1$, $c<\Q_a$ if and only if $c$ lies below the horizontal line $\Q_a$; similarly, 
for $i=2$, $c<\Q_a$ if and only if $c$ lies to the left of the vertical line $\Q_a$.

For $M$ an $\RCat^2$-indexed module, define the submodule $M^{\prime\prime}\subseteq M$ by
\begin{equation}
M^{\prime\prime}_b = \{m\in M_b \mid m\in\im \phi_M(c,b)~\text{for some } c<\Q_a\},
\label{mprime}
\end{equation}
and let  $M^\prime := M/M^{\prime\prime}$.  Note that if $M$ is free, then $M^\prime$ and $M^{\prime\prime}$ are both free, and
\begin{align*}
\B(M^{\prime}) &= \{\npGen{c}\in \B(M) \mid c\not <\Q_a\},\\
\B(M^{\prime\prime}) &= \{\npGen{c}\in \B(M) \mid c<\Q_a\}.
\end{align*}

Given a morphism $f: M\to N$, we have that $f(M^{\prime\prime})\subseteq N^{\prime\prime}$, so $f$ induces a morphism $f^\prime: M^\prime\to N^\prime$.  Restricting $f'$ to the line $\Q_a$, we obtain a morphism of 1-D persistence modules  
\begin{equation}
\h:= f^\prime|_{\Q_a} : M^\prime|_{\Q_a} \to N^\prime|_{\Q_a}. 
\label{eq:h}
\end{equation}

\begin{lemma}
If $f$ has $\epsilon \eb_i$-trivial cokernel then $\h$ has $\epsilon$-trivial cokernel.
\label{lem:eitrivialcok}
\end{lemma}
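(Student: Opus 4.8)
The plan is to show that passing from $f$ to $\h$ can only shrink the cokernel, so that the $\epsilon\eb_i$-triviality of $\coker f$ is inherited; once this is done, restricting to the line $\Q_a$ turns an $\epsilon\eb_i$-shift in $\R^2$ into an $\epsilon$-shift in $\R$ and finishes the proof.

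First I would observe that $\coker f'$ is a quotient of $\coker f$ as an $\RCat^2$-indexed module. Since $f(M'')\subseteq N''$ (with $M''$, $N''$ as in \eqref{mprime}), for each $b\in\R^2$ we have $\im f'_b=(\im f_b+N''_b)/N''_b$, hence
\[(\coker f')_b = N'_b/\im f'_b = (N_b/N''_b)\big/\bigl((\im f_b+N''_b)/N''_b\bigr)\cong N_b/(\im f_b+N''_b),\]
which is a quotient of $(\coker f)_b=N_b/\im f_b$; these quotient maps are compatible with the internal morphisms, so they assemble into an epimorphism $\coker f\twoheadrightarrow\coker f'$. By hypothesis $\phi^{\epsilon\eb_i}_{\coker f}=0$, and an epimorphism $q$ with $\phi^{\epsilon\eb_i}\circ q = q(\epsilon\eb_i)\circ 0 = 0$ forces $\phi^{\epsilon\eb_i}$ to vanish on the target; thus $\phi^{\epsilon\eb_i}_{\coker f'}=0$, i.e. $\coker f'$ is $\epsilon\eb_i$-trivial.

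Next I would use that restriction along the inclusion $\Q_a\hookrightarrow\R^2$ is exact, since kernels and cokernels of morphisms of functors are computed pointwise; hence, using \eqref{eq:h}, $\coker\h = \coker(f'|_{\Q_a})\cong(\coker f')|_{\Q_a}$. Parametrizing $\Q_a$ by $t\mapsto a+t\eb_i$, a shift of the parameter by $+\epsilon$ corresponds exactly to a shift by $\epsilon\eb_i$ in $\R^2$, so the internal map $\phi_{\coker\h}(t,t+\epsilon)$ is precisely $\phi_{\coker f'}(a+t\eb_i,\,a+t\eb_i+\epsilon\eb_i)$ restricted to $\Q_a$, which vanishes by the previous step. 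Therefore $\phi^\epsilon_{\coker\h}=0$, i.e. $\h$ has $\epsilon$-trivial cokernel.

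I do not expect any genuine obstacle here: the statement is essentially immediate once the definitions of $M'=M/M''$, $N'=N/N''$, and $f'$ are unwound. (If one prefers to avoid invoking exactness of restriction, the same conclusion follows from a one-line element chase: given $\bar n\in N'_{a+t\eb_i}$ lifted to $n\in N_{a+t\eb_i}$, the $\epsilon\eb_i$-trivial cokernel hypothesis gives $m\in M_{a+t\eb_i+\epsilon\eb_i}$ with $\phi_N(a+t\eb_i,a+t\eb_i+\epsilon\eb_i)(n)=f_{a+t\eb_i+\epsilon\eb_i}(m)$, whence $\phi_{N'}(a+t\eb_i,a+t\eb_i+\epsilon\eb_i)(\bar n)=f'_{a+t\eb_i+\epsilon\eb_i}(\bar m)\in\im\h_{t+\epsilon}$.) The only points requiring care are bookkeeping the two quotients and the identification of the parameter shift on $\Q_a$ with the $\eb_i$-shift in $\R^2$.
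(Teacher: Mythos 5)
Your proof is correct and follows essentially the same route as the paper: the paper also first shows that $f'$ has $\epsilon\eb_i$-trivial cokernel (via the element chase you give in your parenthetical, lifting $[n]$ to $n$ and using the hypothesis on $f$) and then restricts to $\Q_a$. Your structural packaging of the first step as an epimorphism $\coker f\twoheadrightarrow\coker f'$ is a harmless variant of the same observation.
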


\begin{proof}
We show that $f^\prime$ has $\epsilon \eb_i$-trivial cokernel; the result then follows by restricting the indexing category to $\Q_a$.  For any $b\in \R^2$ and $n\in N_b$, let $[n]\in N'_b$ denote the corresponding coset.  Suppose that $m\in M_{b+\eb_i}$ satisfies $f(m) = \phi_N(b, b+\epsilon \eb_i)(n)$. Then 
\[ f^\prime[m] = [f(m)] = [\phi_N(b, b+\epsilon \eb_i)(n)] = \phi_{N^\prime}(b, b+\epsilon \eb_i)[n].\qedhere
\] 
\end{proof}

\begin{lemma}
Assume that $M$ is free below $a+t\eb_i$ (see end of \cref{sec:cont2disc}) and that $f$ has $\epsilon \eb_i$-trivial cokernel. If $f_{a+t\eb_i}$ is injective, then $\h_{t-\epsilon}$ is injective.  In particular, if $f$ is a monomorphism of free modules, then $\h$ is a monomorphism.
\label{lem:inj}
\end{lemma}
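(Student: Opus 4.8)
The plan is to reduce injectivity of $\h_{t-\epsilon}$ to an elementary containment between the submodules $M''\subseteq M$ and $N''\subseteq N$, and then to make that containment go through using freeness of $M$ below $a+t\eb_i$. Write $b:=a+t\eb_i$ and $p:=a+(t-\epsilon)\eb_i$; both lie on $\Q_a$ and $p\leq b$. Since $\h=f'|_{\Q_a}$ with $M'=M/M''$ and $N'=N/N''$, the map $\h_{t-\epsilon}\colon M'_p\to N'_p$ is injective precisely when, for every $m\in M_p$ with $f(m)\in N''_p$, one has $m\in M''_p$. So assume $f(m)=\phi_N(c,p)(n)$ for some $c<\Q_a$ and $n\in N_c$ (in particular $c\leq p$).

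First I would normalize the witness $c$: replacing it by the point $c^*$ with the same coordinate orthogonal to $\eb_i$ as $c$ but with $i$-th coordinate equal to that of $p$, one still has $c^*<\Q_a$ and now $c\leq c^*\leq p$, so $f(m)=\phi_N(c^*,p)(n^*)$ where $n^*:=\phi_N(c,c^*)(n)$. Next, push the whole picture up by $\epsilon\eb_i$: put $m^+:=\phi_M(p,b)(m)\in M_b$, so commutativity of $f$ gives $f(m^+)=\phi_N(c^*,b)(n^*)$. Set $c^{**}:=c^*+\epsilon\eb_i$; then still $c^{**}<\Q_a$ and now $c^{**}\leq b$. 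Since $f$ has $\epsilon\eb_i$-trivial cokernel, $\phi_N(c^*,c^{**})(N_{c^*})\subseteq\im f_{c^{**}}$, so $\phi_N(c^*,c^{**})(n^*)=f(\mu)$ for some $\mu\in M_{c^{**}}$; hence $f(m^+)=\phi_N(c^{**},b)(f(\mu))=f(\phi_M(c^{**},b)(\mu))$. Now injectivity of $f_b=f_{a+t\eb_i}$ forces $m^+=\phi_M(c^{**},b)(\mu)$, and since $c^{**}<\Q_a$ this exhibits $m^+\in M''_b$.

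It remains to descend from $m^+\in M''_b$ back to $m\in M''_p$; this is the only step that is not pure formalism, and it genuinely requires freeness, since $\phi_M(p,b)$ does not reflect membership in $M''$ for arbitrary $M$. Here I would use that $M$ is free below $b$: transporting a basis of a free module agreeing with $M$ on $\{v\leq b\}$, one obtains a set $\W$ such that every element of $M_d$ with $d\leq b$ has a unique expansion of the form \eqref{eq:freeSum}. Expanding $m=\sum_j d_j\,\phi_M(b_j,p)(w_j)$ with $b_j\leq p$ and $d_j\neq 0$ gives $m^+=\sum_j d_j\,\phi_M(b_j,b)(w_j)$; on the other hand, expanding $\mu$ and using $c^{**}<\Q_a$ writes $m^+=\phi_M(c^{**},b)(\mu)$ as a $\W$-combination all of whose basis vectors have degree $<\Q_a$. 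Uniqueness of $\W$-expansions then forces $b_j<\Q_a$ for every $j$ with $d_j\neq 0$, so each term $\phi_M(b_j,p)(w_j)$, hence $m$ itself, lies in $M''_p$. This proves $\h_{t-\epsilon}$ injective; the ``in particular'' clause is then immediate, since a monomorphism of free modules is injective at every index $a+t\eb_i$ and a free module is free below every point, so the first assertion applies for all $t$ and gives injectivity of $\h_s$ for every $s\in\R$.

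I expect the descent step to be the main obstacle: it is where the freeness hypothesis is essential (it fails for general $M$, because a generator on the line $\Q_a$ need not survive the quotient once it is ``absorbed'' by generators below the line), and it forces one to reason with an explicit basis and uniqueness of expansions rather than via a clean diagram chase. A secondary, purely bookkeeping point is verifying that passing from $c$ to $c^*$ and then to $c^{**}$ simultaneously preserves the relation $<\Q_a$ and the required $\leq$-comparisons with $p$ and $b$; this is exactly where the convention $a\in\Hbb_i$ (so that $p$ and $b$ lie directly above points $<\Q_a$ in the $\eb_i$-direction) enters.
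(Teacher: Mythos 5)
Your argument is correct and follows essentially the same route as the paper's: normalize the witness $c$, use $\epsilon\eb_i$-triviality of the cokernel to lift at one index, push forward to $a+t\eb_i$, invoke injectivity of $f_{a+t\eb_i}$, and then use freeness below $a+t\eb_i$ to descend the conclusion back to the index $p=a+(t-\epsilon)\eb_i$. The paper performs the descent by identifying the meet $d$ of $p$ and $c+\epsilon\eb_i$ and concluding $m\in\im\phi_M(d,p)$, which is the same uniqueness-of-basis-expansion argument you spell out explicitly.
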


\begin{proof}
Let $b=a+(t-\epsilon)\eb_i$.  $\h_{t-\epsilon}=f^\prime_{b}$, so we need to show that $f^\prime_{b}$ is injective, i.e., that for any $m\in M_{b}$ with $f_{b}(m)\in N''_b$, we have $m\in M^{\prime\prime}_b$. 

Since $f_{b}(m)\in N^{\prime\prime}_b$, $f_{b}(m) = \phi_N(c, b)(n)$ for some $c=\hat c_i+c_i\eb_i$ with $a>\hat c_i\in \Hbb_i$, $c_i\leq (t-\epsilon)$, and $n\in N_c$.  
Since $f$ has $\epsilon \eb_i$-trivial cokernel, there exists $m^\prime\in M_{c+\epsilon \eb_i}^{\prime\prime}$ such that $f_{c+\epsilon \eb_i}(m^\prime) = \phi_N(c, c+\epsilon \eb_i)(n)$. This, together with the following commutative diagram
\begin{center}
\includegraphics{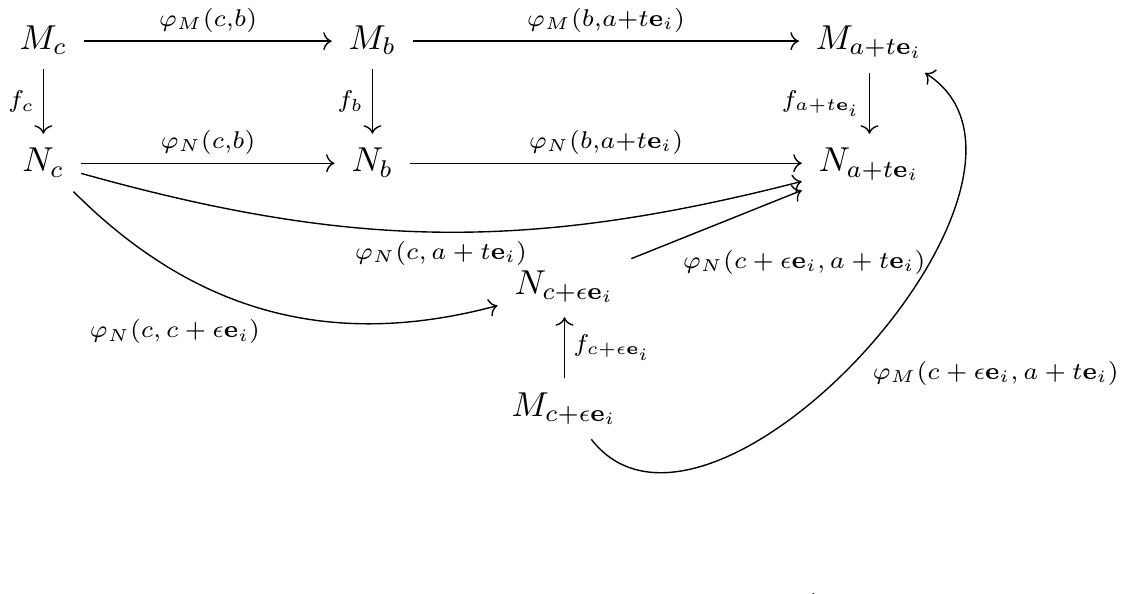} 
\end{center}
yields the chain of equalities
\begin{align*} 
f_{a+t\eb_i}\circ \phi_M(b, a+t\eb_i)(m) &=\phi_N(b, a+t\eb_i) \circ f_{b}(m)\\
 & = \phi_N(b, a+t\eb_i)\circ \phi_N(c, b)(n)\\
 &= \phi_N(c, a+t\eb_i)(n) \\ 
 &= \phi_N(c+\epsilon \eb_i, a+t\eb_i) \circ \phi_N(c, c+\epsilon \eb_i)(n)\\
 &= \phi_N(c+\epsilon \eb_i, a+t\eb_i) \circ f_{c+\epsilon \eb_i}(m^\prime)\\
 &= f_{a+t\eb_i}\circ \phi_M(c+\epsilon \eb_i, a+t\eb_i)(m^\prime). 
\end{align*}
 The injectivity of $f_{a+t\eb_i}$ implies 
\[\phi_M(b, a+t\eb_i)(m) = \phi_M(c+\epsilon \eb_i, a+t\eb_i)(m^\prime).\]  Since $M$ is free below $a+t\eb_i$, it follows that $m\in \im \phi_M(d, b)$, where $d = \hat c_i+\min(c_i+\epsilon,(t-\epsilon))\eb_i$.  Since $d<\Q_a$, we thus have $m\in M^{\prime\prime}_b$, as desired.
\end{proof}

\subsection{Induced Matchings of Free 2-D Persistence Modules}\label{sec:isometryFree}
For $M$ a \pfd $\RCat^2$-indexed module and $a\in \Hbb_i$, let $\B(M;{i,a}):=\B(M^\prime|_{\Q_a}).$  For $f:M\to N$ a morphism of \pfd $\RCat^2$-indexed modules, let \[\chi(f;{i,a}):= \chi(\h) : \B(M;{i,a})\nrightarrow \B(N;{i,a}),\] where $\chi(\bar f)$ is the matching induced by $\h:M^\prime|_{\Q_a} \to N^\prime|_{\Q_a}$; see \cref{Sec:Isometry}.  The matchings $\chi(f;{i,a})$ assemble into a matching
\begin{equation}\label{Eq:Matchings_Along_All_Lines}
\bigsqcup_{a\in\Hbb_i}\chi(f;{i,a}) : \bigsqcup_{a\in\Hbb_i} \B(M;{i,a})\nrightarrow \bigsqcup_{a\in\Hbb_i} \B(N;{i,a}).
\end{equation}
\begin{definition}[Direction-$i$ Induced Matchings]\label{Def:Direction_i_Matchings}
Assume that $M$ and $N$ are free.  We then have a bijection $\sqcup_{a\in \Hbb_i} \B(M;{i,a})\rightarrow \B(M)$ matching $[t, \infty)\in \B(M;{i,a})$ to $\Gen{a+t\eb_i}\in \B(M)$, and similarly for $N$.  By way of these bijections, the matching \eqref{Eq:Matchings_Along_All_Lines} induces a matching \[\chi(f;i): \B(M)\nrightarrow \B(N).\]  We call this \idf{the direction-$i$ matching induced by $f$}.
\label{def:idirmatch}\end{definition}

Now assume that $f: M\hookrightarrow N$ is a monomorphism of \pfd free persistence modules with $\epsilon$-trivial cokernel.  We decompose $f$ using \eqref{eq:Li}:
\begin{equation}\label{Eq:Free2DDecomposition}
M\cong \im f \overset{f_1}{\hookrightarrow} L^\epsilon(f) \overset{f_{2}}{\hookrightarrow} N.
\end{equation}
$L^\epsilon(f)$ is free by \cref{prop:L_i_is_free}.

\begin{theorem}[Induced Matchings of Free Modules]\label{teo:freeIMT}
The composition 
\[
\chi(f):= \chi(f_{2};2)\comp \chi(f_1;1): \B(M)\nrightarrow \B(N)\]is a bijective matching such that for each $\npGen{b}\in \B(M)$, we have $\chi(f)(\npGen{b})  = \npGen{b'}$  where $b_i-\epsilon \leq b'_i \leq b_i$ for $i\in\{1,2\}$. 
\end{theorem}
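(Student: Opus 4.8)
The plan is to reduce the theorem to a pair of one-dimensional induced matching statements, one in each coordinate direction, applied to the two halves $f_1$ and $f_2$ of the decomposition \eqref{Eq:Free2DDecomposition}. First I would record the structural facts: by \cref{prop:Lprops}, $f_1$ has $\epsilon\eb_1$-trivial cokernel and $f_2$ has $\epsilon\eb_2$-trivial cokernel; by \cref{prop:L_i_is_free}, $L^\epsilon(f)$ is free; and $f_1$, $f_2$ are monomorphisms since $f$ is. Thus $\chi(f_1;1)$ and $\chi(f_2;2)$ are both defined via \cref{def:idirmatch}, and $\chi(f)=\chi(f_2;2)\comp\chi(f_1;1)$ makes sense as a matching $\B(M)\nrightarrow\B(N)$. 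It therefore suffices to prove the following claim for a single direction $i$: if $g:P\hookrightarrow Q$ is a monomorphism of \pfd free $\RCat^2$-indexed modules with $\epsilon\eb_i$-trivial cokernel, then $\chi(g;i)$ is a \emph{bijective} matching, and if it matches $\npGen{b}\in\B(P)$ to $\npGen{b'}\in\B(Q)$, then $b'_j=b_j$ for $j\neq i$ and $b_i-\epsilon\le b'_i\le b_i$. Granting this claim, applying it to $g=f_1$, $i=1$ gives that $\chi(f_1;1)$ is bijective and shifts only the first coordinate, downward by at most $\epsilon$; applying it to $g=f_2$, $i=2$ does the same in the second coordinate; composing two bijective matchings gives a bijective matching, and the coordinate bounds add up to exactly $b_i-\epsilon\le b'_i\le b_i$ for each $i\in\{1,2\}$, which is the assertion of the theorem.

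To prove the one-direction claim, fix $i$ and a line $\Q_a$ for $a\in\Hbb_i$. The restricted module $\bar g = g'|_{\Q_a}: P'|_{\Q_a}\to Q'|_{\Q_a}$ is a morphism of \pfd 1-D persistence modules, and by the description of $\B(P')$, $\B(Q')$ given before \eqref{eq:h}, its domain has barcode $\{[t,\infty)\mid \npGen{a+t\eb_i}\in\B(P)\}$ and similarly for $Q$ — these are precisely the generators of $P$ and $Q$ lying on $\Q_a$, each giving a half-infinite ray in the 1-D barcode. By \cref{lem:eitrivialcok}, $\bar g$ has $\epsilon$-trivial cokernel; by \cref{lem:inj} (noting that a free module is free below every point, and that $g_{a+t\eb_i}$ is injective for all $t$ since $g$ is a monomorphism), $\bar g$ is a monomorphism, so its kernel is $0$-trivial. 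Now I apply the 1-D induced matching theorem, \cref{teo:IMT}, with "$\epsilon$" $=0$ and "$\delta$" $=\epsilon$: parts (i) and (ii) give $\B(P;i,a)_0\subset\coim\chi(\bar g)$ and $\B(Q;i,a)_\epsilon\subset\im\chi(\bar g)$. Since every interval in these 1-D barcodes is a half-infinite ray $[t,\infty)$, no interval is $0$-trivial or $\epsilon$-trivial, so $\B(P;i,a)_0=\B(P;i,a)$ and $\B(Q;i,a)_\epsilon=\B(Q;i,a)$; hence $\chi(\bar g)$ is bijective on this line. Part (iii) of \cref{teo:IMT}, applied to a matched pair $[t,\infty)\mapsto[t',\infty)$, gives $t'\le t\le t'+\epsilon$ (the bounds on right endpoints and the $a\le b'$ condition are vacuous at $\infty$), i.e. $t-\epsilon\le t'\le t$. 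Translating back through the bijection $\sqcup_{a\in\Hbb_i}\B(\cdot;i,a)\leftrightarrow\B(\cdot)$ of \cref{def:idirmatch}, which sends $[t,\infty)\in\B(\cdot;i,a)$ to $\npGen{a+t\eb_i}$, a matched pair becomes $\npGen{a+t\eb_i}\mapsto\npGen{a+t'\eb_i}$: the $\Hbb_i$-component $a$ is unchanged (so all coordinates $\ne i$ agree), and the $i$-th coordinate drops from $t$ to $t'$ with $t-\epsilon\le t'\le t$. Assembling over all $a\in\Hbb_i$ gives the claim.

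The main obstacle is not any single deep step — all the heavy lifting (freeness of $L^\epsilon(f)$, the behavior of $\bar f$ under the cokernel and monomorphism hypotheses) has already been done in \cref{prop:L_i_is_free,lem:eitrivialcok,lem:inj} — but rather the bookkeeping of \emph{which} $\epsilon$-$\delta$ pair to feed into \cref{teo:IMT} and making sure the triviality subsets collapse. The key realization that makes it clean is that the 1-D modules appearing here are always free (direct sums of interval modules on rays $[t,\infty)$), so no interval is ever $\epsilon$-trivial for any finite $\epsilon$, which forces the matchings to be genuinely bijective and lets us ignore the triviality clauses of \cref{teo:IMT} entirely. A secondary point requiring a little care is confirming that composing the two direction-matchings is legitimate: since $L^\epsilon(f)$ is free, $\chi(f_1;1):\B(M)\nrightarrow\B(L^\epsilon(f))$ and $\chi(f_2;2):\B(L^\epsilon(f))\nrightarrow\B(N)$ have matching middle barcode, and both being bijective the composite is a well-defined bijective matching, so the coordinate estimates simply compose additively.
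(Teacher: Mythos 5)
Your proposal is correct and follows essentially the same route as the paper: decompose $f$ via \cref{prop:Lprops,prop:L_i_is_free} into two monomorphisms with $\epsilon\eb_1$- and $\epsilon\eb_2$-trivial cokernel, apply \cref{lem:eitrivialcok,lem:inj} on each line $\Q_a$ to reduce to the 1-D induced matching theorem, and compose the resulting direction-$1$ and direction-$2$ matchings. You have simply spelled out the bookkeeping that the paper leaves implicit (in particular, the observation that the 1-D barcodes consist only of rays $[t,\infty)$, so no interval is $\epsilon$-trivial and \cref{teo:IMT} forces a genuine bijection), but the argument is the same.
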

\begin{proof}
By \cref{prop:Lprops}, for $i\in \{1,2\}$ the inclusion $f_i$ has $\epsilon \eb_{i}$-trivial cokernel. For convenience, we introduce the notation \[L_0 := \im f,\quad L_1 := L^\epsilon(f),\quad \textup{and}\quad L_2 := N.\] For $a\in\Hbb_{i}$, 
\cref{lem:eitrivialcok} and \cref{lem:inj} imply that 
\[\h=f'_i|_{\Q_a}: L_{i-1}^\prime|_{\Q_a} \to L_{i}^\prime|_{\Q_a}\] is a monomorphism with $\epsilon$-trivial cokernel. From \cref{teo:IMT} it follows that 
\[\chi(\h): \B(L_{i-1};i, a) \nrightarrow \B(L_{i};i,a)\] is a bijective matching such that $\chi(\h)[b, \infty) = [b', \infty)$ where $b-\epsilon \leq b' \leq b$. 
Thus, the direction-$i$ matching \[\chi(f_i;{i}): \B(L_{i-1})\nrightarrow \B(L_{i})\] is a bijective and matches $\Gen{a+b\eb_i}$ to $\Gen{a+b'\eb_i}$. 

Hence, $\chi(f): \B(M)\nrightarrow \B(N)$ is a bijective matching with the desired properties. 
\end{proof}

We omit the easy proof of the following:
\begin{proposition}\label{Prop:FreeEpsMatching}
For free $\RCat^2$-indexed modules $M$ and $N$, a matching $\sigma:\B(M)\nrightarrow \B(N)$ is an $\epsilon$-matching if and only if it is bijective and for all $\npGen{b}\in \B(M)$, $\sigma(\npGen{b})=\npGen{b'}$ with $\|b-b'\|_\infty<\epsilon$.  
\end{proposition}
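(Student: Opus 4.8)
The plan is to read off both conditions in the definition of an $\epsilon$-matching (\cref{Sec:Isometry}) in the special case where the barcodes consist only of intervals $\npGen{b}$. The first point, used to dispatch condition~(1), is that no such interval is $2\epsilon$-trivial: we have $b\in\npGen b$ and $b+(\epsilon,\epsilon)\geq b$, so $b+(\epsilon,\epsilon)\in\npGen b$. Hence $\B(M)_{2\epsilon}=\B(M)$ and $\B(N)_{2\epsilon}=\B(N)$, and condition~(1) of the definition ($\B(M)_{2\epsilon}\subseteq\coim\sigma$, $\B(N)_{2\epsilon}\subseteq\im\sigma$) is equivalent to $\sigma$ being bijective. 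So everything reduces to rephrasing condition~(2) — that each matched pair is $\epsilon$-interleaved — as a bound on the generators.

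For that I would prove the auxiliary fact that $I^{\npGen b}$ and $I^{\npGen{b'}}$ are $\epsilon$-interleaved if and only if $\|b-b'\|_\infty\leq\epsilon$. Two elementary observations drive this. First, directly from the shift functor, $I^{\npGen b}(\epsilon)=I^{\npGen{b-(\epsilon,\epsilon)}}$. Second, $\Hom(I^{\npGen b},I^{\npGen c})$ is one-dimensional when $c\leq b$ and $0$ otherwise; when $c\leq b$ the nonzero morphism is the \emph{canonical} one, equal to $\id_k$ on $\npGen b$ and $0$ elsewhere, so in particular $\phi^{2\epsilon}_{I^{\npGen b}}: I^{\npGen b}\to I^{\npGen{b-(2\epsilon,2\epsilon)}}$ is canonical, hence nonzero. (The vanishing when $c\not\leq b$ is the standard argument: choose a coordinate with $c_j>b_j$, evaluate at $b$ and at $\max(b,c)$, and use naturality.) Now if $f: I^{\npGen b}\to I^{\npGen{b'}}(\epsilon)=I^{\npGen{b'-(\epsilon,\epsilon)}}$ and $g: I^{\npGen{b'}}\to I^{\npGen{b}}(\epsilon)=I^{\npGen{b-(\epsilon,\epsilon)}}$ are $\epsilon$-interleaving morphisms, then since their composites equal the nonzero maps $\phi^{2\epsilon}_{I^{\npGen b}}$ and $\phi^{2\epsilon}_{I^{\npGen{b'}}}$, both $f$ and $g$ are nonzero; by the $\Hom$ computation this forces $b'-(\epsilon,\epsilon)\leq b$ and $b-(\epsilon,\epsilon)\leq b'$, i.e. $|b_i-b_i'|\leq\epsilon$ for $i\in\{1,2\}$. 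Conversely, when $\|b-b'\|_\infty\leq\epsilon$ these inequalities hold, so canonical nonzero $f$ and $g$ exist; composites of canonical maps are canonical, so $g(\epsilon)\circ f$ and $f(\epsilon)\circ g$ are scalar multiples of $\phi^{2\epsilon}_{I^{\npGen b}}$ and $\phi^{2\epsilon}_{I^{\npGen{b'}}}$, and taking $f,g$ to be the canonical maps makes both scalars $1$, giving the $\epsilon$-interleaving. The crux of the whole argument is this lemma, and within it the one nonroutine point is that an $\epsilon$-interleaving of two free interval modules forces \emph{both} interleaving morphisms to be nonzero (because $\phi^{2\epsilon}$ of a free interval module never vanishes); everything else is the standard ``morphisms of free interval modules are inclusions up to scalar''.

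Assembling: $\sigma$ is an $\epsilon$-matching precisely when it is bijective and every matched pair $\npGen b\leftrightarrow\npGen{b'}$ satisfies $\|b-b'\|_\infty\leq\epsilon$. Regarding the discrepancy with the displayed statement: the ``if'' direction holds exactly as worded, since $\|b-b'\|_\infty<\epsilon$ implies $\|b-b'\|_\infty\leq\epsilon$, hence $\epsilon$-interleaving; but the ``only if'' direction is sharp only with the \emph{non-strict} bound, and the strict bound genuinely fails. For instance, at $\epsilon=0$ the identity matching $\B(M)\nrightarrow\B(M)$ is a $0$-matching although $\|b-b\|_\infty=0\not<0$; and more generally $I^{\npGen 0}$ and $I^{\npGen{(\epsilon,\epsilon)}}$ are $\epsilon$-interleaved while $\|0-(\epsilon,\epsilon)\|_\infty=\epsilon$. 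Thus the correct (and the needed) form of the proposition reads ``$\sigma$ is an $\epsilon$-matching iff it is bijective and $\sigma(\npGen b)=\npGen{b'}$ with $\|b-b'\|_\infty\leq\epsilon$''; this is what the argument above establishes, it is exactly what turns the matching $\chi(f)$ produced in \cref{teo:freeIMT} (which satisfies $|b_i-b_i'|\leq\epsilon$) into an $\epsilon$-matching, and it is consistent with the characterization of $\epsilon$-interleaved blocks in \cref{Prop:Block_Barcodes}.
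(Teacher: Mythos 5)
Your argument is correct and is evidently the ``easy proof'' the paper omits: since every interval $\npGen{b}$ contains $b+c(1,1)$ for all $c\geq 0$, no such interval is $2\epsilon$-trivial, so $\B(M)_{2\epsilon}=\B(M)$ and condition~(1) of the definition of an $\epsilon$-matching collapses to bijectivity; condition~(2) then reduces to your lemma that $I^{\npGen{b}}$ and $I^{\npGen{b'}}$ are $\epsilon$-interleaved iff $\|b-b'\|_\infty\leq\epsilon$, which you prove correctly via $I^{\npGen{b}}(\epsilon)=I^{\npGen{b-(\epsilon,\epsilon)}}$, the computation of $\Hom$ between free interval modules, and the observation that $\phi^{2\epsilon}_{I^{\npGen{b}}}\neq 0$ forces both interleaving morphisms to be nonzero.

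You are also right that the inequality in the statement should be non-strict: with ``$<\epsilon$'' the ``only if'' direction fails (e.g.\ $I^{\npGen{0}}$ and $I^{\npGen{(\epsilon,\epsilon)}}$ are $\epsilon$-interleaved), and the correct characterization is ``$\|b-b'\|_\infty\leq\epsilon$''. The same slip propagates into the proof of \cref{Cor: Free2DIso}, where the bound $b_i-2\epsilon\leq b_i'\leq b_i$ from \cref{teo:freeIMT} combined with the shift $r_\epsilon$ yields $\|b-b'\|_\infty\leq\epsilon$ rather than $<\epsilon$; since only the ``if'' direction of the proposition is invoked there, the corrected $\leq$ form is exactly what is needed and nothing downstream is affected. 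This is a genuine (if minor) erratum in the paper's statement rather than a gap in your proof.
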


Define a bijection $r_\epsilon: \B(N(\epsilon)) \to \B(N)$ by $r_\epsilon(\npGen{b}) = \Gen{b+(\epsilon,\epsilon)}$.

\begin{corollary}[Isometry Theorem for Free $\RCat^2$-Indexed Modules]\label{Cor: Free2DIso}
P.f.d. free $\RCat^2$-indexed modules $M$ and $N$ are $\epsilon$-interleaved if and only if there exists an $\epsilon$-matching between $\B(M)$ and $\B(N)$.  
\end{corollary}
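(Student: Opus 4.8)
The plan is to treat the two directions separately, with essentially all of the work in the ``only if'' direction. The ``if'' direction is immediate: a free module is interval decomposable, so \cref{lem:converseAST} (Converse Algebraic Stability) applies directly and turns any $\epsilon$-matching $\sigma:\B(M)\nrightarrow\B(N)$ into an $\epsilon$-interleaving of $M$ and $N$. For ``only if'' the strategy is to feed an interleaving morphism into the induced matching machinery of \cref{teo:freeIMT}, after first observing that freeness of the domain forces the interleaving morphism to be a monomorphism.

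So suppose $M$ and $N$ are $\epsilon$-interleaved via $f:M\to N(\epsilon)$ and $g:N\to M(\epsilon)$. First I would record two preliminary facts. \textbf{(1)} $N(\epsilon)$ is again \pfd and free: the shift functor $(-)(\epsilon)$ is additive and $I^{\npGen{a}}(\epsilon)\cong I^{\npGen{a-\epsilon(1,1)}}$, so $N(\epsilon)\cong\bigoplus_{a\in\xi(N)}I^{\npGen{a-\epsilon(1,1)}}$ is free, with $\xi(N(\epsilon))=\{a-\epsilon(1,1)\mid a\in\xi(N)\}$, and it is \pfd because $N(\epsilon)_a=N_{a+\epsilon(1,1)}$. \textbf{(2)} $f$ is a monomorphism. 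This is the one step that uses freeness of $M$ in an essential way: the internal maps of a free module are injective, so $\phi_M^{2\epsilon}$ is a monomorphism, and since $g(\epsilon)\circ f=\phi_M^{2\epsilon}$ it follows that $f_a$ is injective for every $a$. Finally, by \cref{Rem:Interleaving_And_Small_(Co)Kernel}, $\coker f$ is $2\epsilon$-trivial.

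Now I would apply \cref{teo:freeIMT} to $f:M\to N(\epsilon)$ with the parameter $2\epsilon$ in place of $\epsilon$: all hypotheses hold, since $f$ is a monomorphism of \pfd free modules with $2\epsilon$-trivial cokernel (and \cref{prop:L_i_is_free} guarantees that the interpolant $L^{2\epsilon}(f)$ in the decomposition \eqref{eq:Li} is free). This yields a \emph{bijective} matching $\chi(f):\B(M)\nrightarrow\B(N(\epsilon))$ with $\chi(f)(\npGen{b})=\npGen{b'}$ satisfying $b_i-2\epsilon\le b'_i\le b_i$ for $i\in\{1,2\}$. Post-composing with the bijection $r_\epsilon:\B(N(\epsilon))\to\B(N)$, $r_\epsilon(\npGen{c})=\npGen{c+(\epsilon,\epsilon)}$, produces a bijective matching $\sigma:=r_\epsilon\circ\chi(f):\B(M)\nrightarrow\B(N)$ sending $\npGen{b}$ to $\npGen{b'+(\epsilon,\epsilon)}$; from $b_i-2\epsilon\le b'_i\le b_i$ we get $-\epsilon\le (b'_i+\epsilon)-b_i\le\epsilon$, i.e. $\|b-(b'+(\epsilon,\epsilon))\|_\infty\le\epsilon$. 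By \cref{Prop:FreeEpsMatching}, $\sigma$ is an $\epsilon$-matching, which completes the proof.

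I do not expect a genuine obstacle here: the hard content --- the freeness of the interpolant (\cref{prop:L_i_is_free}) and the construction of $\chi(f)$ with tightly controlled endpoint shifts --- is already packaged in \cref{Sec:Free}. The only points needing care are bookkeeping: running \cref{teo:freeIMT} ``at scale $2\epsilon$'' because $\coker f$ is only $2\epsilon$-trivial and then correcting the extra shift with $r_\epsilon$, and the observation in \textbf{(2)} that freeness of $M$ makes $f$ injective, so that --- unlike in the general $1$-D isometry theorem --- there is no kernel to deal with separately.
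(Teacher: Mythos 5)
Your proposal is correct and follows essentially the same route as the paper: the forward direction applies \cref{teo:freeIMT} to an interleaving morphism $f:M\to N(\epsilon)$ (a monomorphism with $2\epsilon$-trivial cokernel), corrects the shift with $r_\epsilon$, and invokes \cref{Prop:FreeEpsMatching}, while the converse is a special case of \cref{lem:converseAST}. The only difference is that you spell out details the paper leaves implicit, namely that $N(\epsilon)$ is free and that freeness of $M$ forces $f$ to be injective via $g(\epsilon)\circ f=\phi_M^{2\epsilon}$.
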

\begin{proof}
An $\epsilon$-interleaving morphism $f: M\to N(\epsilon)$ is a monomorphism with $2\epsilon$-trivial cokernel.  By \cref{teo:freeIMT}, 
 $\chi(f): \B(M) \nrightarrow \B(N(\epsilon))$ is a bijective matching such that $\chi(f)(\npGen{b})  = \npGen{b'}$ where $b_i-2\epsilon \leq b_i' \leq b_i$ for $i\in\{1,2\}$. The composition \[r_\epsilon\circ \chi(f): \B(M)\nrightarrow \B(N)\] is a bijective matching such that for all $\npGen{b}\in \B(M)$, $\sigma(\npGen{b})=\npGen{b'}$ with $\|b-b'\|_\infty<\epsilon$.  Thus, by \cref{Prop:FreeEpsMatching}, $r_\epsilon\circ \chi(f)$ is an $\epsilon$-matching.  

The converse is a special case of \cref{lem:converseAST}.
\end{proof}

\paragraph{The Difficulty of Defining Induced Matchings for Free $\RCat^3$-Indexed Modules}
We expect that \cref{teo:freeIMT} can be generalized to an induced matching theorem for free $\RCat^n$-indexed modules for any $n$.  
However, the construction of induced matchings given here does not generalize directly to $n\geq 3$.  To explain, the decomposition \eqref{Eq:Free2DDecomposition} does generalize to a decomposition 
\[
M\xhookrightarrow{f_1}  L^\epsilon_1(f)\xhookrightarrow{f_2} \cdots\xhookrightarrow{f_{n-1}}  L^\epsilon_{n}(f) \xhookrightarrow{f_n}  N 
\]
of a monomorphism $f:M\hookrightarrow N$ of free $\RCat^n$-indexed modules with $\epsilon$-trivial cokernel, where for $a\in \R^n$ and $\eb_{[i]}:=\eb_1+\cdots+\eb_i$,  \[L^\epsilon_i(f)_a:= \{n\in N_a | \phi_N(a, a+\epsilon \eb_{[i]})(n) \in \im f\},\] 
and each $f_i$ is the inclusion, so that $f_i$ has $\epsilon \eb_i$-trivial cokernel. 
However, the next example shows that in contrast to the $n=2$ case, $L_i^\epsilon(f)$ needn't be free for $n\geq 3$.
\begin{example}\label{ex:freefail}
Take $N$ to be the free $\RCat^3$-indexed module with generators $a$, $b$, $c$ at respective grades $(1,0,0),$ $(0,1,0)$, $(0,0,1)$, and let $M\subset N$ be the free submodule generated by 
\[\left\{a-c\in N_{(1,0,1)},\ a-b\in N_{(1,1,0)},\ a\in N_{(1,1,1)}\right\},\] 
where by slight abuse of notation, we use the same label for a generator and its image under an internal map in $N$.  Let $f:M\hookrightarrow N$ be the inclusion.  Then \[\left\{a-c\in N_{(1,0,1)},\ a-b\in N_{(1,1,0)},\ b\in N_{(0,1,1)},\ c\in N_{(0,1,1)}\right\} \] is a minimal set of generators for $L^1_1(f)$; clearly, $L^1_1(f)$ is not free.  
\end{example}
When each $L_i^\epsilon(f)$ is free, the construction of this section does extend to give an induced matching $\chi(f):\B(M)\to \B(N)$ with the desired properties.  However, when one or more of the $L_i^\epsilon(f)$ is not free, the construction breaks down.  Thus, a new idea is needed to extend our definition of induced matchings to free $\RCat^n$-indexed modules for $n\geq 3$.  

\subsection{Matchings Induced by Monomorphisms of $\RE$-Free Modules}\label{Sec:Technical_IMT}
Suppose $f:M\to N$ is a morphism of $\RE$-free $\RCat\times\RCat^\op$-indexed modules.  Then for $i\in \{1,2\}$, we can define the direction-$i$ matching \[\chi(f;i):\B(M)\to \B(N)\] in essentially the same way we did for free modules in \cref{Def:Direction_i_Matchings}.  To see this, note that as illustrated in \cref{fig:idirrestricted}, for an $\RE$-free module $M$, we may define bijective matchings 
\begin{equation}\label{Eq:RE_Matchings}
\bigsqcup_{b\eb_2\in \Hbb_1} \B(M; 1,b\eb_2)\nrightarrow \B(M),\qquad  \bigsqcup_{a\eb_1\in \Hbb_2} \B(M; 2,a\eb_1)\nrightarrow \B(M)
\end{equation}
 by matching both $[a, b-2\epsilon)\in \B(M; 1,b\eb_2)$ and $[b, a+2\epsilon)\in \B(M; 2,a\eb_1)$ to $\TopRR{a,b}{\epsilon}$.  The construction of \cref{Def:Direction_i_Matchings} now carries over.
\begin{figure}
\centering
\begin{tikzpicture}[line cap=round,line join=round,>=triangle 45,x=.7cm,y=0.7cm, scale=0.3]
\fill[color=aqaqaq,fill=aqaqaq,fill opacity=0.30] (-5.,-4.) -- (-5,5) -- (4,5) -- cycle;
\draw (-5.,-4.) -- (-5,5) -- (4,5);
\draw[dashed](-5,-4)--(4,5);
\draw  (-5, -5) -- (5, 5);
\draw[dashed] (-5, -4) -- (-10, -4);
\draw[dashed] (-5, 5) -- (-10, 5);
\draw[dashed] (-5, 5) -- (-5, 10);
\draw[dashed] (4, 5) -- (4,10);

\draw[[-), line width=0.5mm] (-10, 5) -- (-10, -4);
\draw[[-), line width=0.5mm] (-5, 10) -- (4, 10);

\begin{scriptsize}
\draw[color=black] (-4.5, -5) node[below] {$(a,a)$};
\draw[color=black] (5,5) node[right] {$(b,b)$};
\draw[color=black] (-5,3.5) node[right] {$(a,b)$};
\draw[color=black] (-7.5, 0) node[thick, right, anchor=north, rotate=-90] {$[b, a+2\epsilon)$};
\draw[color=black] (-0.5, 9.5) node[thick, below] {$[a, b-2\epsilon)$};
\end{scriptsize}
\end{tikzpicture}
\caption{An illustration of the matchings \eqref{Eq:RE_Matchings}, for a single choice of interval in $\B(M)$}
\label{fig:idirrestricted}
\end{figure}
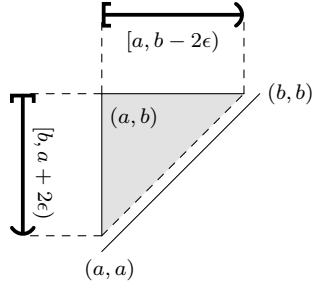

Now let $f: M\to N$ be a monomorphism of $\RE$-free modules with $\epsilon$-trivial cokernel. 
Consider the decomposition of $f$ given by \eqref{eq:Li}:
\[ M\cong \im f \overset{f_1}{\hookrightarrow} L \overset{f_2}{\hookrightarrow} N,\]
where $L=L^\epsilon(f)$.  

For the remainder of this section, we write the functor $\RET$ simply as $R$.  Note that $RL:=R(L)$ is $\RET$-free by \cref{prop:openrestfree}. Hence, we have the following sequence of $\RET$-free modules
\[
\xymatrix{
RM \ar[r]^-{Rf}_-{\cong} & R(\im f) \ar@{^{(}->}[r]^-{Rf_1} & RL\ar@{^{(}->}[r]^-{Rf_2}  & RN,}
\]

where $Rf_1$ and $Rf_2$ have $\epsilon \eb_1$ and $\epsilon (-\eb_2)$-trivial cokernel, respectively. For simplicity, we let $\go := Rf_1\comp Rf$ and $\gt := Rf_2$.  Consider \[\chi(\go;1) : \B(RM)\nrightarrow \B(RL),\] the direction-1 matching associated to $\go$, and \[\chi(\gt;2): \B(RL) \nrightarrow  \B(RN),\] the direction-2 matching associated to $\gt$. 

\begin{proposition}\label{prop:open}
The composite matching
\[\chi:=\chi(\gt;2)\circ \chi(\go;1):  \B(RM) \nrightarrow \B(RN)\]
satisfies 
\begin{enumerate}
\item $\chi(\TopRR{a,b}{3\epsilon/2}) = \TopRR{a',b'}{3\epsilon/2}$ where $a' \leq a\leq a'+\epsilon$ and $b'-\epsilon \leq b\leq b'$, 
\item $\B(RM)_{\epsilon/2} \subseteq \coim \chi$ and $\B(RN)_\epsilon\subseteq \im \chi$.
\end{enumerate}
\end{proposition}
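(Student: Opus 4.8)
The plan is to mimic the proof of \cref{teo:freeIMT}, now applied to the sequence of $R$-free modules
\[
RM \xrightarrow[\ \cong\ ]{Rf} R(\im f) \xhookrightarrow{\,Rf_1\,} RL \xhookrightarrow{\,Rf_2\,} RN ,
\]
where $RL$ is $R$-free by \cref{prop:openrestfree}, and $RM$, $RN$ are $R$-free since $\RET$ applied to an $R_\epsilon$-free module is $R$-free with the same underlying free module. Since $RM$, $RL$, $RN$ are $R$-free, their barcodes consist of intervals $\TopRR{a,b}{3\epsilon/2}$ with $b-a>3\epsilon$, and by \eqref{Eq:RE_Matchings} each such interval restricts, on the horizontal line $\Q_{b\eb_2}$, to the $1$-D bar $[a,b-3\epsilon)$ and, on the vertical line $\Q_{a\eb_1}$, to the $1$-D bar $[b,a+3\epsilon)$ (read in the reverse order on the second coordinate). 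The matchings $\chi(\go;1)$ and $\chi(\gt;2)$, with $\go=Rf_1\comp Rf$ and $\gt=Rf_2$, are assembled over all lines from the $1$-D induced matchings of the restricted morphisms exactly as in \cref{Def:Direction_i_Matchings}; since on a given horizontal line the right endpoint of every bar is the fixed value $b-3\epsilon$ (and on a given vertical line the fixed value $a+3\epsilon$), $\chi(\go;1)$ moves only the first coordinate of each block and $\chi(\gt;2)$ only the second.

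The new ingredient relative to \cref{teo:freeIMT} is that the relevant induced $1$-D morphisms are no longer monomorphisms — an $R$-free module fails to be free on the strip $\{t-s\le 3\epsilon\}$ — but I claim they still have $\epsilon$-trivial kernel and cokernel. Fix a horizontal line $\Q_{b\eb_2}$ and let $\bar{\go}:=\go'|_{\Q_{b\eb_2}}$ be the induced $1$-D morphism of \eqref{eq:h}. Since $\go$ has $\epsilon\eb_1$-trivial cokernel, \cref{lem:eitrivialcok} gives that $\bar{\go}$ has $\epsilon$-trivial cokernel. For the kernel: $RM$ is free below every point whose closed down-set is contained in $\{t-s>3\epsilon\}$, with $F_M$ itself the witnessing free module, and this holds for the point $b\eb_2+t\eb_1$ precisely when $t<b-3\epsilon$; as $\go$ is a monomorphism, $\go_{b\eb_2+t\eb_1}$ is injective for every such $t$, so \cref{lem:inj} yields that $\bar{\go}_s$ is injective for all $s<b-4\epsilon$. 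Since $RM'|_{\Q_{b\eb_2}}$ and $RL'|_{\Q_{b\eb_2}}$ are supported on $\{s<b-3\epsilon\}$, the kernel of $\bar{\go}$ is supported on the length-$\epsilon$ interval $[b-4\epsilon,b-3\epsilon)$, hence is $\epsilon$-trivial. The same argument applied to $\gt$ along vertical lines (using that $RL$ is $R$-free and $\gt$ has $\epsilon(-\eb_2)$-trivial cokernel) shows each $\gt'|_{\Q_{a\eb_1}}$ has $\epsilon$-trivial kernel and cokernel. I expect this kernel-triviality step — confining the failure of injectivity to a width-$\epsilon$ strip adjacent to the cut line, rather than getting an honest monomorphism as in the free case — to be the main obstacle.

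Given this, \cref{teo:IMT} applies to every such $1$-D morphism with both kernel- and cokernel-triviality equal to $\epsilon$. Because the right endpoint of each bar on a horizontal line is fixed, \cref{teo:IMT}(iii) forces $\chi(\bar{\go})$ to keep it and move the left endpoint down by at most $\epsilon$; assembling, $\chi(\go;1)$ sends $\TopRR{a,b}{3\epsilon/2}$ to $\TopRR{a',b}{3\epsilon/2}$ with $a-\epsilon\le a'\le a$, and symmetrically $\chi(\gt;2)$ sends $\TopRR{a,b}{3\epsilon/2}$ to $\TopRR{a,b'}{3\epsilon/2}$ with $b\le b'\le b+\epsilon$; composing gives (1). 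For (2), note that the $1$-D bar $[a,b-3\epsilon)$ is not $\epsilon$-trivial iff $b-a>4\epsilon$, which is exactly the condition that $\TopRR{a,b}{3\epsilon/2}$ is not $\tfrac{\epsilon}{2}$-trivial; so \cref{teo:IMT}(i)--(ii) give $\B(RM)_{\epsilon/2}\subseteq\coim\chi(\go;1)$, $\B(RL)_{\epsilon/2}\subseteq\im\chi(\go;1)$, and likewise for $\chi(\gt;2)$. A forward chase then gives $\B(RM)_{\epsilon/2}\subseteq\coim\chi$: if $b-a>4\epsilon$ then the $\chi(\go;1)$-image $\TopRR{a',b}{3\epsilon/2}$ satisfies $b-a'\ge b-a>4\epsilon$, hence is not $\tfrac{\epsilon}{2}$-trivial and lies in $\coim\chi(\gt;2)$. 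For the image, if $\TopRR{a,b}{3\epsilon/2}\in\B(RN)$ is not $\epsilon$-trivial, i.e. $b-a>5\epsilon$, then it is the $\chi(\gt;2)$-image of some $\TopRR{a,b''}{3\epsilon/2}$ with $b''\ge b-\epsilon$, so $b''-a>4\epsilon$ and $\TopRR{a,b''}{3\epsilon/2}\in\B(RL)_{\epsilon/2}\subseteq\im\chi(\go;1)$, whence $\TopRR{a,b}{3\epsilon/2}\in\im\chi$; the extra $\epsilon$ of slack needed here is the source of the asymmetry between the two containments in (2).
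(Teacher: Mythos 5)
Your proof is correct and follows essentially the same route as the paper's: decompose through $RL$, show each induced $1$-D morphism has $\epsilon$-trivial cokernel via \cref{lem:eitrivialcok} and $\epsilon$-trivial kernel by combining \cref{lem:inj} (injectivity for $s<b-4\epsilon$, using freeness of the $R$-free module below points whose down-set avoids the cut strip) with the fact that the bars are supported in $\{s<b-3\epsilon\}$, then apply \cref{teo:IMT} and reassemble. The endpoint bookkeeping and the coimage/image chase in part (2), including the source of the $\epsilon$ versus $\epsilon/2$ asymmetry, match the paper's argument.
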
 
\begin{proof}
First, note that for any $\RET$-free module $Q$ and $b\eb_2\in \Hbb_1$, each interval in $\B(Q;1,b\eb_2)$ is of the form $[a,b-3\epsilon)$.

Let $\gob$ be the morphism of 1-D persistence modules associated to $\go$ for the point $b\eb_2\in \Hbb_1$. Then $\gob$ has $\epsilon$-trivial cokernel by  \cref{lem:eitrivialcok}.  Further, $\gob_t$ is an injection for all $t<b-4\epsilon$ by \cref{lem:inj}, so in particular $\gob$  has $\epsilon$-trivial kernel.  By \cref{teo:IMT} then, the matching \[\chi(\gob): \B(RM;1,b\eb_2) \nrightarrow \B(RL;1,b\eb_2)\] satisfies
\begin{enumerate}
\item $\left\{ [a, b-3\epsilon)\in \B(RM;1,b\eb_2) \mid a<b-4\epsilon\right\}\subseteq \coim \chi(\gob)$
\item $\left\{ [a, b-3\epsilon)\in \B(RL;1,b\eb_2) \mid a<b-4\epsilon\right\}\subseteq \im \chi(\gob)$
\item $\chi(\gob)[a_1, b-3\epsilon) = [a_2, b-3\epsilon)$ where $a_2 \leq a_1 \leq a_2+\epsilon$. 
\end{enumerate}
For $Q$ any $\RET$-free module and $\TopRR{a,b}{3\epsilon/2}\in \B(Q)$,  $a<b-4\epsilon$ if and only if $\TopRR{a,b}{3\epsilon/2}\in \B(Q)_{\epsilon/2}$. Thus, the direction-1 matching $\chi(\go;1)$ satisfies:
\begin{enumerate}
\item $\chi(\go;1)(\TopRR{a_1,b}{3\epsilon/2}) = \TopRR{a_2,b}{3\epsilon/2}$ where $a_2 \leq a_1 \leq a_2+\epsilon$
\item $\B(RM)_{\epsilon/2} \subseteq \coim \chi(\go;1)$ and $\B(RL)_{\epsilon/2}\subseteq \im \chi(\go;1)$. 
\end{enumerate}
By the symmetric argument, 
the direction-2 matching $\chi(\gt; 2)$ satisfies:
\begin{enumerate}
\item $\chi(\gt;2)(\TopRR{a,b_1}{3\epsilon/2}) = \TopRR{a,b_2}{3\epsilon/2}$ where $b_2-\epsilon \leq b_1\leq b_2$,
\item $\B(RL)_{\epsilon/2}\subseteq \coim \chi(\gt;2)$ and $\B(RN)_{\epsilon/2} \subseteq \im \chi(\gt;2)$. 
\end{enumerate}
It follows that $\chi(\TopRR{a,b}{3\epsilon/2}) = \TopRR{a',b'}{3\epsilon/2}$ where $a'\leq a\leq a'+\epsilon$ and $b'-\epsilon \leq b \leq b'$, as desired. 
Moreover, \[\B(RN)_{\epsilon} \subseteq  \chi(\gt;2)(\B(RL)_{\epsilon/2}) \subseteq  \chi(\gt;2)(\im \chi(\go;1)\cap \coim \chi(\gt;2))= \im \chi,\]
and
\[
\chi(\go;1)(\B(RM)_{\epsilon/2}) \subseteq \B(RL)_{\epsilon/2} \subseteq \coim \chi(\gt;2).
\]
The latter shows that $\B(RM)_{\epsilon/2}\subseteq \coim \chi$. 
\end{proof}

\begin{corollary}[Induced Matchings of $\RE$-Free Modules]\label{cor:IMTRFree}
Let $f: M\to N$ be a monomorphism of $\RE$-free modules with $\epsilon$-trivial cokernel. Then we have a matching
\[\chi(f): \B(M) \nrightarrow \B(N)\]
satisfying
\begin{enumerate}
\item $\chi(\TopRR{a,b}{\epsilon}) = \TopRR{a',b'}{\epsilon}$ where $a' \leq a\leq a'+\epsilon$ and $b'-\epsilon \leq b\leq b'$, 
\item $\B(M)_{\epsilon} \subseteq \coim \chi$ and $\B(N)_{\frac{3}{2}\epsilon}\subseteq \im \chi$.
\end{enumerate}
\end{corollary}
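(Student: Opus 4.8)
The plan is to obtain \cref{cor:IMTRFree} from \cref{prop:open} by transporting the composite matching $\chi$ constructed there along canonical inclusions of barcodes. Writing $R=\RET$ as in \cref{Sec:Technical_IMT} and fixing isomorphisms $M\cong\RE(F_M)$, $N\cong\RE(F_N)$ with $F_M,F_N$ \pfd free, I would first record the identity $R(\RE(F))=R_{3\epsilon/2}(F)$, valid for any \pfd free $F$ because $R_{3\epsilon/2}$ truncates more aggressively than $\RE$. Consequently $RM$ and $RN$ are $\RET$-free, and $\B(M)$ (resp.\ $\B(RM)$) is the multiset of intervals $\TopRR{a,b}{\epsilon}$ (resp.\ $\TopRR{a,b}{3\epsilon/2}$) as $\GenRR{a,b}$ ranges over the generators in $\B(F_M)$ with $b-a>2\epsilon$ (resp.\ $b-a>3\epsilon$), with the analogous descriptions for $N$ and $RN$. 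Since $b-a>3\epsilon$ implies $b-a>2\epsilon$, there is a canonical matching $\iota_M: \B(RM)\nrightarrow\B(M)$ with $\coim\iota_M=\B(RM)$ sending $\TopRR{a,b}{3\epsilon/2}$ to $\TopRR{a,b}{\epsilon}$ generator-by-generator, and similarly $\iota_N: \B(RN)\nrightarrow\B(N)$. I then set $\chi(f):=\iota_N\comp\chi\comp\iota_M^{-1}$, where $\iota_M^{-1}$ denotes the reverse matching and $\chi: \B(RM)\nrightarrow\B(RN)$ is the matching of \cref{prop:open}; being a composite of matchings, $\chi(f)$ is a matching $\B(M)\nrightarrow\B(N)$.

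Property~(1) is then immediate: if $\chi(f)(\TopRR{a,b}{\epsilon})=\TopRR{a',b'}{\epsilon}$, then by construction $\chi(\TopRR{a,b}{3\epsilon/2})=\TopRR{a',b'}{3\epsilon/2}$, and \cref{prop:open}(1) gives exactly $a'\leq a\leq a'+\epsilon$ and $b'-\epsilon\leq b\leq b'$. For~(2) I would record the triviality thresholds, computed just as in the proof of \cref{prop:open} from the fact that $I^{\TopRR{a,b}{c}}$ is $\delta$-trivial if and only if $b-a\leq 2c+2\delta$: thus $\TopRR{a,b}{\epsilon}\in\B(M)_{\epsilon}$ iff $b-a>4\epsilon$, $\TopRR{a,b}{\epsilon}\in\B(N)_{3\epsilon/2}$ iff $b-a>5\epsilon$, $\TopRR{a,b}{3\epsilon/2}\in\B(RM)_{\epsilon/2}$ iff $b-a>4\epsilon$, and $\TopRR{a,b}{3\epsilon/2}\in\B(RN)_{\epsilon}$ iff $b-a>5\epsilon$. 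Given these, a short chase finishes the proof: every interval in $\B(M)_{\epsilon}$ has $b-a>4\epsilon>3\epsilon$, hence lies in $\im\iota_M$, and its $\iota_M$-preimage lies in $\B(RM)_{\epsilon/2}\subseteq\coim\chi$ by \cref{prop:open}(2), so $\chi(f)$ is defined there, giving $\B(M)_{\epsilon}\subseteq\coim\chi(f)$; symmetrically, every interval $J\in\B(N)_{3\epsilon/2}$ has $b-a>5\epsilon>3\epsilon$, so $\iota_N^{-1}(J)$ lies in $\B(RN)_{\epsilon}\subseteq\im\chi$, say $\iota_N^{-1}(J)=\chi(K)$, and then $\chi(f)(\iota_M(K))=\iota_N(\chi(K))=J$, giving $\B(N)_{3\epsilon/2}\subseteq\im\chi(f)$.

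This corollary is essentially a bookkeeping repackaging of \cref{prop:open}, so no step is genuinely hard; the only point requiring care is the alignment of the triviality thresholds. The asymmetric constants $\epsilon$ and $\tfrac{3}{2}\epsilon$ in part~(2) arise precisely because replacing the $\RE$-truncation by the $\RET$-truncation shifts the ``$b-a$'' threshold for $\delta$-triviality upward by $\epsilon$, exactly offsetting the change from $\tfrac{\epsilon}{2}$ to $\epsilon$ in the trivial-length parameter appearing in \cref{prop:open}(2). One should also check that $\iota_M$ and $\iota_N$ are honest matchings of multisets (respecting multiplicities of repeated points in $\B(F_M)$ and $\B(F_N)$), which is clear from the generator-by-generator description.
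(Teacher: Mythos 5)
Your proposal is correct and is exactly the argument the paper intends: the corollary is stated without proof as an immediate consequence of \cref{prop:open}, obtained by transporting the matching on $\B(RM)\nrightarrow\B(RN)$ back to $\B(M)\nrightarrow\B(N)$ along the generator-by-generator inclusions induced by $R(\RE(F))=\RET(F)$. Your triviality-threshold computations ($b-a>4\epsilon$ and $b-a>5\epsilon$ on both sides) are the right bookkeeping and correctly explain the asymmetric constants $\epsilon$ and $\tfrac{3}{2}\epsilon$ in part (2).
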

\section{Proof of the Block Stability Theorem}\label{Sec:Block_Stability_Theorem}
In this section, we complete the proof of our main stability result for block decomposable modules.  Throughout, we regard block-decomposable modules as $\RCat^{\op}\times \RCat$-indexed modules.

\subsection{Decomposition of Interleavings}
\begin{definition}\label{def:Mdecomp}
For a block decomposable module $M$, we choose summands
\begin{align*}
M^\oo &\cong \bigoplus_{(a,b)_\bd \in \B(M)^\oo} I^{(a,b)_\bd}&  M^\co &\cong \bigoplus_{\langle a,b\rangle_\bd \in \B(M)^\co} I^{\langle a,b\rangle_\bd }\\
~M^\oc &\cong \bigoplus_{\langle a,b\rangle_\bd \in \B(M)^\oc} I^{\langle a,b\rangle_\bd}& M^\cc &\cong \bigoplus_{\langle a,b\rangle_\bd\in \B(M)^\cc} I^{\langle a,b\rangle_\bd},
\end{align*}
such that $M= M^\oo\oplus M^\co\oplus M^\oc\oplus M^\cc$.  For $\star\in \{\co,\oc,\cc,\oo\}$, we say \emph{$M$ is of type $\star$} if $M$ is \pfd and $M=M^\star$.
\end{definition}
For $f: M\to N$ a morphism of block decomposable modules and $\star,\dag\in \{\co,\oc,\cc,\oo\}$, let $f^{\star, \dag}: M^{\star}\to N^{\dag}$ denote the morphism obtained by pre-composing $f$ with the inclusion $M^\star \hookrightarrow M$ and post-composing with the projection $N\twoheadrightarrow N^{\dag}$.
\begin{lemma}\label{lem:homI}
For block-decomposable modules $M$ and $N$, $\Hom(M^\star,N^\dag)=0$ whenever 
\[(\star,\dag)\in \{(\oo,\co),(\oo,\oc),(\oo,\cc),(\co,\oc),(\co,\cc),(\oc,\co),(\oc,\cc)\}.\]
\end{lemma}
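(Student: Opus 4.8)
The plan is to reduce the statement to a computation for individual block modules and then dispatch the seven cases by an elementary analysis of the shapes of blocks. So first I would reduce to showing $\Hom(I^\I,I^\J)=0$ whenever $\I\in\BL^\star$, $\J\in\BL^\dag$ and $(\star,\dag)$ is one of the seven listed pairs. This reduction requires no finiteness hypothesis: by \cref{def:Mdecomp}, $M^\star=\bigoplus_{\I\in\B(M)^\star} I^\I$ is a coproduct and $N^\dag=\bigoplus_{\J\in\B(N)^\dag} I^\J$ is a pointwise direct sum of vector spaces, so a morphism $f\colon M^\star\to N^\dag$ is zero as soon as every composite $I^\I\hookrightarrow M^\star\xrightarrow{f} N^\dag\to I^\J$ is zero, where the first arrow is the canonical inclusion of a summand and the last is the canonical projection onto a summand (which is defined for a direct sum of vector spaces regardless of the cardinality of the index set). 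Indeed, if all these composites vanish, then at every index $a$ the restriction of $f_a$ to each summand $I^\I_a$ of $M^\star_a$ takes values in $\bigoplus_\J I^\J_a$ with all coordinates zero, hence is zero; so $f_a=0$ for all $a$.

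Next I would record the one consequence of naturality that the argument needs: if $g\colon I^\I\to I^\J$ satisfies $g_c\neq 0$ for some $c$, then $c\in\I\cap\J$ and every $b\in\J$ with $b\geq c$ lies in $\I$. The first assertion holds since $g_c\neq 0$ forces $I^\I_c\neq 0$ and $I^\J_c\neq 0$. For the second, naturality of $g$ on $c\leq b$ gives $\varphi_{I^\J}(c,b)\circ g_c=g_b\circ\varphi_{I^\I}(c,b)$; since $c,b\in\J$ we have $\varphi_{I^\J}(c,b)=\id$, so $g_b\circ\varphi_{I^\I}(c,b)=g_c\neq 0$, which forces $\varphi_{I^\I}(c,b)\neq 0$ and hence $b\in\I$. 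So, writing $U_\J(c):=\{b\in\J\mid b\geq c\}$, a nonzero morphism $I^\I\to I^\J$ can exist only if $U_\J(c)\subseteq\I$ for some $c\in\I\cap\J$.

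It remains to verify, for each of the seven pairs, that $U_\J(c)\not\subseteq\I$ for all $\I\in\BL^\star$, $\J\in\BL^\dag$ and $c\in\I\cap\J$; this forces $g=0$, i.e.\ $\Hom(I^\I,I^\J)=0$. Writing $c=(c_1,c_2)$ for a point of $\U\subseteq\RCat^\op\times\RCat$, I would use two facts read directly off the block definitions (recalling that type $\cc$ also comprises the half-infinite, doubly-infinite, and inverted blocks $[b,a]_\bd$): (a) a block of type $\co$ or $\cc$ is closed under decreasing the first coordinate, so $U_\J(c)$ contains the ray $\{(x,c_2)\mid x\leq c_1\}$ and is unbounded in the first coordinate, whereas every block of type $\oo$ or $\oc$ lies in some half-plane $\{x>a\}$; and (b) dually, a block of type $\oc$ or $\cc$ is closed under increasing the second coordinate, so $U_\J(c)$ contains the ray $\{(c_1,y)\mid y\geq c_2\}$ and is unbounded in the second coordinate, whereas every block of type $\oo$ or $\co$ lies in some half-plane $\{y<b\}$. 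Observation (a) excludes the pairs $(\oo,\co)$, $(\oc,\co)$, $(\oc,\cc)$ and $(\oo,\cc)$; observation (b) excludes $(\oo,\oc)$, $(\co,\oc)$, $(\co,\cc)$ and again $(\oo,\cc)$. Together these cover all seven pairs.

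I do not expect a genuine obstacle here. The points that call for a little care are phrasing the reduction through projections onto summands, so as not to invoke any statement about $\Hom$ commuting with infinite coproducts (which can fail without a \pfd hypothesis), and the bookkeeping in (a)--(b): checking, for each concrete block shape, the stated closure property and the stated containment in a half-plane, both of which are immediate from the defining inequalities.
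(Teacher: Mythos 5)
Your proposal is correct and follows essentially the same route as the paper's proof: reduce to a single pair of block modules, then use naturality of a nonzero component $g_c$ along $c\leq b$ to force $b\in\I$ for every $b\geq c$ in $\J$, and derive a contradiction from the fact that the target block extends (leftward or upward) beyond where the source block is supported. The only difference is presentational: the paper works out the case $(\oo,\co)$ with one explicit commutative square and asserts the rest are similar, whereas you package the same mechanism into a uniform criterion that dispatches all seven cases at once.
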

\begin{proof}
We show that $\Hom(M^\oo,N^\co)=0.$  Similar arguments apply to the remaining cases.  It suffices to consider the case that $M$ and $N$ are indecomposables.  Assume to the contrary that we have $f\in \Hom(I^{(a,b)_\bd},I^{[c,d)_\bd})$ with $f_{(x,y)}\neq 0$. Then \[a < x \leq y < b,\qquad y<d,\] and by choosing $x^\prime < a$ we obtain the following commutative diagram
\[
\xymatrix{
k = I^{(a,b)_\bd}_{(x, y)}\ar[r]\ar[d]_-{f_{(x,y)}}   &  I^{(a,b)_\bd}_{(x^\prime, y)}=0\ar[d]^-{f_{(x^\prime, y)}} \\
k = I^{[c,d)_\bd}_{(x, y)}\ar[r]^-{\id}  &  I^{[c,d)_\bd}_{(x^\prime, y)} =k,}
\]

contradicting that $f_{(x,y)}\ne 0$.  This shows that $\Hom(I^{(a,b)_\bd},I^{[c,d)_\bd})=0.$  The same argument shows that $\Hom(I^{(a,b)_\bd},I^{(-\infty,d)_\bd})=0.$
\end{proof}

\begin{proposition}\label{Interleavings_On_Four_Types}
If $(f,g)$ is an $\epsilon$-interleaving pair between $M$ and $N$, then so is $(f^{\star,\star}, g^{\star,\star})$ for any $\star\in  \{\oo,\co,\oc,\cc\}.$  In particular, $f^{\star,\star}$ and $g^{\star,\star}$ have $2\epsilon$-trivial kernel and cokernel.
\label{lem:interleavingsplit}
\end{proposition}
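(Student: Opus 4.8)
The plan is to expand $f$ and $g$ as matrices of morphisms with respect to the four-fold type decompositions $M=M^\oo\oplus M^\co\oplus M^\oc\oplus M^\cc$ and $N=N^\oo\oplus N^\co\oplus N^\oc\oplus N^\cc$ of \cref{def:Mdecomp}, to argue via \cref{lem:homI} that the off-diagonal entries vanish in a ``one-directional'' pattern, and then to read off, for each fixed type $\star$, the required interleaving identities from $g(\epsilon)\circ f=\phi_M^{2\epsilon}$ and $f(\epsilon)\circ g=\phi_N^{2\epsilon}$.

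First I would record two preliminary facts. One: the shift $(-)(\epsilon)$ sends a block module of type $\star$ to a block module of type $\star$ (or to $0$), as one checks directly from the definitions of the five block types; hence $M(\epsilon)$ and $N(\epsilon)$ are again block decomposable, with $(M(\epsilon))^\star=M^\star(\epsilon)$ and $(N(\epsilon))^\star=N^\star(\epsilon)$. Two: since each $M^\star$ is a direct summand of $M$ on which the internal maps of $M$ restrict, $\phi_M^{2\epsilon}\colon M\to M(2\epsilon)$ is block-diagonal for these decompositions, with $(\star,\star)$-entry $\phi_{M^\star}^{2\epsilon}$; similarly for $N$.

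Writing $f^{\star,\dag}\colon M^\star\to N^\dag(\epsilon)$ and $g^{\star,\dag}\colon N^\star\to M^\dag(\epsilon)$ for the matrix entries, I would apply \cref{lem:homI} to the pairs $(M,N(\epsilon))$ and $(N,M(\epsilon))$ to conclude that $f^{\star,\dag}=0$ and $g^{\star,\dag}=0$ whenever $(\star,\dag)$ occurs in the seven-element list of that lemma. Concretely, a nonzero off-diagonal entry of $f$ or $g$ can only be of ``flow type'' $\cc\to\co$, $\cc\to\oc$, $\cc\to\oo$, $\co\to\oo$, or $\oc\to\oo$, and never the reverse of any of these. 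The crucial, purely combinatorial, consequence is that for every unordered pair of distinct types $\{\star,\dag\}$, at least one of $(\star,\dag)$ and $(\dag,\star)$ lies in the forbidden list; hence for every $\dag\ne\star$, at least one of $f^{\star,\dag}$ and $g^{\dag,\star}$ is the zero morphism.

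With this in hand, I would expand the $(\star,\star)$-entry of $g(\epsilon)\circ f=\phi_M^{2\epsilon}$ as $\sum_\dag g(\epsilon)^{\dag,\star}\circ f^{\star,\dag}=\phi_{M^\star}^{2\epsilon}$; by the previous paragraph every term with $\dag\ne\star$ vanishes, leaving $(g^{\star,\star})(\epsilon)\circ f^{\star,\star}=\phi_{M^\star}^{2\epsilon}$, where I use that $g(\epsilon)^{\star,\star}=(g^{\star,\star})(\epsilon)$ since $(-)(\epsilon)$ is additive and carries summand inclusions to summand inclusions. The symmetric computation applied to $f(\epsilon)\circ g=\phi_N^{2\epsilon}$ gives $(f^{\star,\star})(\epsilon)\circ g^{\star,\star}=\phi_{N^\star}^{2\epsilon}$. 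Thus $(f^{\star,\star},g^{\star,\star})$ is an $\epsilon$-interleaving pair between $M^\star$ and $N^\star$, and the ``in particular'' assertion then follows at once from \cref{Rem:Interleaving_And_Small_(Co)Kernel}. I expect the only delicate point to be the bookkeeping of the vanishing pattern — checking that \cref{lem:homI} really does forbid at least one flow direction between each pair of distinct types, so that no off-diagonal composite $g(\epsilon)^{\dag,\star}\circ f^{\star,\dag}$ can survive — while everything else is routine manipulation of finite matrices of morphisms, direct sums, and shift functors.
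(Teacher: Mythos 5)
Your proposal is correct and follows essentially the same route as the paper: decompose $f$ and $g$ into matrices of morphisms via \cref{def:Mdecomp}, invoke \cref{lem:homI} to get the triangular vanishing pattern, and observe that the diagonal entries of the composites $g(\epsilon)\circ f$ and $f(\epsilon)\circ g$ reduce to $(g^{\star,\star})(\epsilon)\circ f^{\star,\star}$ and $(f^{\star,\star})(\epsilon)\circ g^{\star,\star}$ because for each pair of distinct types at least one flow direction is forbidden. The paper presents this more tersely by writing out the upper-triangular matrices explicitly, but the combinatorial point you isolate — that no cross term $g(\epsilon)^{\dag,\star}\circ f^{\star,\dag}$ with $\dag\neq\star$ can survive — is exactly the content of its argument.
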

\setlength\BAextrarowheight{5pt}
\begin{proof}
By decomposing $M$ and $N$ as in \cref{def:Mdecomp} and  applying Lemma \ref{lem:homI}, we can express $f$ in matrix form as
\[
f = \begin{blockarray}{ccccl}
\begin{block}{[cccc|l]}
M^\oo & M^\co & M^\oc & M^\cc & \\
\BAhline
f^{\oo, \oo} & f^{\co, \oo} & f^{\oc, \oo} & f^{\cc, \oo} & N^\oo(\epsilon) \\
0 & f^{\co, \co} & 0 & f^{\cc, \co}&N^\co(\epsilon)   \\
0 & 0 & f^{\oc, \oc} & f^{\cc, \oc}&N^\oc(\epsilon) \\
0 & 0 & 0 & f^{\cc, \cc}&N^\cc(\epsilon) \\       
\end{block}
\end{blockarray},
\] and similarly for $g(\epsilon)$. Since $g(\epsilon)\circ f = \phi_{M^{\oo}}^{2\epsilon}\oplus \phi_{M^{\co}}^{2\epsilon}\oplus \phi_{M^{\oc}}^{2\epsilon}\oplus\phi_{M^{\cc}}^{2\epsilon}$, 
we may write $g(\epsilon)\circ f$ in matrix form as
\[
g(\epsilon)\circ f = \begin{blockarray}{ccccl}
\begin{block}{[cccc|l]}
M^\oo & M^\co & M^\oc & M^\cc & \\
\BAhline
g^{\oo, \oo}(\epsilon)\circ f^{\oo, \oo} & 0 & 0 & 0 & M^\oo(2\epsilon) \\
0 & g^{\co, \co}(\epsilon)\circ f^{\co, \co} & 0 & 0 &M^\co(2\epsilon)   \\
0 & 0 & g^{\oc, \oc}(\epsilon)\circ f^{\oc, \oc} & 0 &M^\oc(2\epsilon) \\
0 & 0 & 0 & g^{\cc, \cc}(\epsilon)\circ f^{\cc, \cc}&M^\cc(2\epsilon) \\       
\end{block}
\end{blockarray},
\]
and the following equality is immediate:
\begin{align*}
&\quad\left(g^{\oo, \oo}(\epsilon)\circ f^{\oo, \oo}\right)\oplus \left(g^{\co, \co}(\epsilon)\circ f^{\co, \co}\right) \oplus \left( g^{\oc, \oc}(\epsilon)\circ f^{\oc, \oc}\right) \oplus \left(g^{\cc, \cc}(\epsilon)\circ f^{\cc, \cc}\right) \\
&= 
\phi_{M^{\oo}}^{2\epsilon}\oplus \phi_{M^{\co}}^{2\epsilon}\oplus \phi_{M^{\oc}}^{2\epsilon}\oplus\phi_{M^{\cc}}^{2\epsilon}.
\end{align*}
The result follows by applying the symmetric argument to the composition $f(\epsilon)\circ g$. 
\end{proof}

Thus we can study algebraic stability for block decomposables by considering an interleaving morphism on each of four subtypes individually.

\begin{remark}\label{Rem:Induced_Matchings_Four_Types}
In view of \cref{Interleavings_On_Four_Types}, one might wonder whether $\epsilon$-triviality of the (co-)kernel of a morphism $f:M\to N$ is inherited by $f^{\star,\star}$, for $\star\in \{\oo,\co,\oc,\cc\}$.  In fact, the answer is no:  It can be shown that if $f: M\to N$ has $\epsilon$-trivial kernel and cokernel, then so have the three morphisms $f^{\cc, \cc}, f^{\co, \co}$ and $f^{\oc, \oc}$, and the morphism $f^{\oo, \oo}$ has $\epsilon$-trivial kernel and $2\epsilon$-trivial cokernel.  This result is tight, as demonstrated by the following example.
\end{remark}

\begin{example}
Let $M=I^{(0, \epsilon]_\bd}\oplus I^{[3\epsilon, 4\epsilon)_\bd}$ and $N = I^{(0, 4\epsilon)_\bd}$. Let $f_1: I^{(0, \epsilon]_\bd}\to N$ and $f_2: I^{[3\epsilon, 4\epsilon)_\bd}\to N$ be any two non-zero morphisms and define $f(m_1, m_2) = f_1(m_1) + f_2(m_2)$. Then $f$ has $\epsilon$-trivial kernel and cokernel, but the cokernel of $0=M^\oo  \to N^\oo=N$ is $2\epsilon$-trivial and not $\delta$-trivial for any $\delta<2\epsilon$. 
\end{example}

\subsection{An Induced Matching Theorem}
\label{sec:generalIMT}

We establish the block stability theorem (\cref{teo:IMTinterleaving}) by separating the interleaving morphism $f$ into its four components via \cref{Interleavings_On_Four_Types}, and studying each of them independently.  In fact, \cref{teo:IMTinterleaving} is an easy corollary of \cref{Interleavings_On_Four_Types} and the following result:

\begin{theorem}[Induced Matchings of Block Decomposables]\label{Thm:BLK_IMT}
For fixed $\star\in \{\cc,\oo,\co,\oc\}$, let $M$ and $N$ be block decomposable modules of type $\star$, and let $f:M\to N$ be a morphism with $\epsilon$-trivial kernel and cokernel.  Then we can define an explicit matching \[\chi(f):\B(M)\nrightarrow \B(N)\] such that for $\chi(f)\langle a, b\rangle_{\BL} = \langle a', b'\rangle_{\BL}$, 
\begin{enumerate}[(i)]
\item if $\star=\co$, then $\B(M)_\epsilon \subseteq \coim \chi(f)$, $\B(N)_\epsilon \subseteq \im \chi(f)$, and
\[a-\epsilon\leq a' \leq a \qquad\qquad b-\epsilon \leq b' \leq b,\]
\item if $\star=\oc$, then $\B(M)_\epsilon \subseteq \coim \chi(f)$, $\B(N)_\epsilon \subseteq \im \chi(f)$, and
\[a\leq a' \leq a+\epsilon \qquad\qquad b \leq b' \leq b+\epsilon,\]
\item if $\star=\cc$, then $\B(M)=\coim \chi(f)$, $\B(N)=\im \chi(f)$, and 
\[a-\epsilon\leq a' \leq a\qquad\qquad b\leq b' \leq b+\epsilon,\]
\item if $\star=\oo$, then $\B(M)_{\frac{5}{2}\epsilon}\subseteq \coim \chi(f)$, $\B(N)_{2\epsilon}\subseteq \im\chi(f)$, and
\[a\leq a' \leq a+\epsilon\qquad\qquad b-\epsilon \leq b' \leq b.\]
\end{enumerate}
\end{theorem}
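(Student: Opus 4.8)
The plan is to handle each of the four types $\star$ by reducing to an already-established induced matching result and then post-composing with a fixed "shift" bijection on barcodes to account for the asymmetry of the $\RCat^\op\times\RCat$ indexing. For the type-$\cc$ case, observe that a module of type $\cc$ is, up to the identifications of \cref{Rmk:RopVsR2}, free: each block $\langle a,b\rangle_\blk$ with $a\le b\in\R$ (or with $\pm\infty$ endpoints) is exactly an interval module of the form $\Gen{\,\cdot\,}$ after the coordinate flip $\RCat\to\RCat^\op$, $a\mapsto -a$. So $M$ and $N$ are \pfd free $\RCat^2$-indexed modules, $f$ has $\epsilon$-trivial kernel and cokernel, and hence (by the argument used in \cref{teo:freeIMT}, or directly by passing to $\im f$ and using \cref{teo:freeIMT} on the mono $\im f\hookrightarrow N$ together with the dual of that argument on the epi $M\twoheadrightarrow \im f$) we get a bijective matching with the stated endpoint bounds. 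Here the key point is that $f$ need not be a monomorphism, so I would factor $f=M\twoheadrightarrow\im f\hookrightarrow N$: the monomorphism part is covered by \cref{teo:freeIMT}, and the epimorphism part is covered by the dual statement (dualize via $(-)^*$, apply \cref{Prop:Kernel_Dualization} and \cref{teo:freeIMT}, dualize back); composing the two matchings and translating the endpoint inequalities through the coordinate flip yields exactly $a-\epsilon\le a'\le a$ and $b\le b'\le b+\epsilon$.

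For types $\co$ and $\oc$, the idea is that a module of type $\co$ becomes $\RE$-free after shifting. Concretely, a block $[a,b)_\blk$ (or $(-\infty,b)_\blk$) is, under the flip $\RCat\to\RCat^\op$, an interval of the form $\TopRR{\,\cdot\,,\,\cdot\,}{0}$ — the restriction of a free $\RCat\times\RCat^\op$-indexed module to the region above the diagonal. More precisely, to land in the hypotheses of \cref{cor:IMTRFree} I would pass from $M$ to the shifted module $M(-\tfrac{\epsilon}{2})$-style reindexing that realizes $M$ as $R_{\epsilon'}$-free for a suitable $\epsilon'$; then $f$ (again, after factoring through its image to reduce to a monomorphism, or by invoking the dual of \cref{cor:IMTRFree}) induces a matching with $a'\le a\le a'+\epsilon$, $b'-\epsilon\le b\le b'$, which after undoing the flip gives $a-\epsilon\le a'\le a$, $b-\epsilon\le b'\le b$; the coimage/image containments $\B(M)_\epsilon\subseteq\coim$, $\B(N)_\epsilon\subseteq\im$ come from part (2) of \cref{cor:IMTRFree} after checking that the $\tfrac{3}{2}\epsilon$-triviality threshold there translates to the $\epsilon$-triviality threshold here (using \cref{Prop:Block_Barcodes}, where a type-$\co$ block is $2\epsilon$-trivial iff $b-a\le 2\epsilon$). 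The $\oc$ case is identical after swapping the roles of the two coordinates, which flips the direction of all the endpoint inequalities to $a\le a'\le a+\epsilon$, $b\le b'\le b+\epsilon$.

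The type-$\oo$ case is the one I expect to be the main obstacle, and it is handled differently: a block $(a,b)_\blk$ is an "open" region that does not touch the diagonal in a closed way, and the natural move is to relate type-$\oo$ modules to type-$\cc$ modules one homological degree up, exploiting \cref{Thm:Morse_Block_Decomposition}(ii) — but here we do not have a Morse function, only abstract modules, so that correspondence is not available. Instead I would work directly: realize an $\oo$-type module as (a shift of) an $\RE$-free module and run the $\RET$-free induced matching machinery of \cref{cor:IMTRFree}, but now the $R_\epsilon$-to-$R_{3\epsilon/2}$ loss built into \cref{prop:openrestfree} (illustrated as tight in \cref{ex:tight}) costs us: the coimage containment degrades from $\B(M)_\epsilon$ to $\B(M)_{\frac{5}{2}\epsilon}$ and the image containment to $\B(N)_{2\epsilon}$, while the endpoint bounds $a\le a'\le a+\epsilon$, $b-\epsilon\le b'\le b$ survive. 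I would assemble this by factoring $f$ through $\im f$, applying $\RET$, invoking \cref{prop:openrestfree} to see that the interpolant $\RET(L^\epsilon(f))$ is $\RET$-free, and then chaining the direction-$1$ and direction-$2$ matchings exactly as in the proof of \cref{prop:open}; the bookkeeping on which $\epsilon$-triviality threshold survives each step (and the factor-$2$ vs.\ factor-$4$ discrepancy for $\oo$-blocks in \cref{Prop:Block_Barcodes}(i)) is where all the care is needed and where the non-tight constant $\tfrac{5}{2}$ enters. Finally, \cref{teo:IMTinterleaving} follows by applying this theorem to each of $f^{\star,\star}$, which have $2\epsilon$-trivial kernel and cokernel by \cref{Interleavings_On_Four_Types}, and combining the four matchings.
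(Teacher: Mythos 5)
Your high-level strategy of splitting into the four types and reducing each to an existing induced matching theorem matches the paper's organization, but two of the four reductions as you describe them would fail. For $\star=\cc$, a module of type $\cc$ is \emph{not} free up to the coordinate flip of \cref{Rmk:RopVsR2}: under $\EMB$ the summand $I^{[a,b]_\blk}$ has support $\{(x,y)\mid x\le y,\ x\le b,\ y\ge a\}$, which is the free quadrant $\{x\le b,\ y\ge a\}$ with the region below the diagonal removed — its value at $(b,a)$ for $a<b$ is $0$, not $k$ — so \cref{teo:freeIMT} does not apply directly. This is exactly where the paper brings in the right Kan extension $\overleftarrow{(-)}$, which fills in below the diagonal and sends $I^{[a,b]_\blk}$ to the genuine free module $I^{\GenR{b,a}}$ (\cref{Lem:OverleftarrowForIntervals}); the technical core of the $\cc$ case is showing that $\overleftarrow{f}$ remains a monomorphism with $\epsilon$-trivial cokernel (\cref{prop:exfreecok}). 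A further subtlety you skip is that $\overleftarrow{M}$ is \pfd only for the $\cff$ subtype of $\cc$, so the paper first decomposes $M^\cc=M^\cii\oplus M^\cif\oplus M^\cfi\oplus M^\cff$ (\cref{prop:closedSplit}) and treats $\cii,\cif,\cfi$ separately. (Your worry about factoring $f$ through $\im f$ is also moot for $\cc$: $\epsilon$-trivial kernel plus the $\cc$-type structure already forces $\ker f=0$, since the internal maps of $M$ shifting away from the diagonal are injective.)

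For $\star=\co,\oc$, a type-$\co$ module is not a shift of an $R_{\epsilon'}$-free module: the support of $I^{[a,b)_\blk}$ is unbounded in the $x\to-\infty$ direction, while the summands of an $\RE$-free module are bounded triangles, and no shift changes boundedness. Even granting some such reduction, the $\tfrac{3}{2}\epsilon$ threshold in \cref{cor:IMTRFree} would not yield the tight $\epsilon$ threshold claimed in parts (i)--(ii); you would import the $\oo$-type looseness, which the theorem specifically does not have for $\co,\oc$. The paper's actual route is much simpler: the shift maps $\phi_M((x,y),(x',y))$ of a type-$\co$ module are isomorphisms for all $x'\le x$, so $M$ is a 1-D module $M^\ORD$ in disguise, the $\epsilon$-triviality of $\ker f$ and $\coker f$ descends to $f^\ORD$ (\cref{Lem:COKerAndCoker}), and one applies the 1-D \cref{teo:IMT} with no loss. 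Your $\oo$ case is the one closest to the paper's, and your intuition that the $\tfrac{5}{2}$ enters via the $\RE\!\to\!\RET$ degradation in \cref{prop:openrestfree} is accurate; the one discrepancy is that the paper does not factor $f$ through $\im f$ — it applies $\REComp{}{\epsilon}=R_\epsilon\circ(-)^*\circ\overrightarrow{(-)}$ and shows directly (\cref{prop:openexmorph}, via \cref{lem:opensurj} and \cref{Prop:Kernel_Dualization}) that $\REComp{f}{\epsilon}$ is already a monomorphism with $\epsilon$-trivial cokernel.
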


\begin{proof}[Proof of \cref{teo:IMTinterleaving} from \cref{Thm:BLK_IMT}]
For $Q$ an $\RCat^{\op}\times \RCat$-indexed module, let $\bar R(Q)$ denote the $\RCat^{\op}\times \RCat$-indexed module given by
\begin{align*}
\bar R(Q)_{(s,t)}=
\begin{cases}
 Q_{(s,t)}&\textup{ for all }t-s\geq 0,\\ 
 0&\textup{ otherwise,}
 \end{cases}
 \end{align*}
with the internal maps $\varphi_{\bar R(Q)}(-,-)$ inherited from $Q$.  We have an obvious morphism $\pi_Q:  Q\to \bar R(Q)$.
 
If $M$ and $N$ are block decomposable modules, then $\bar R(N(\epsilon))$ is block decomposable.  If $f: M\to N(\epsilon)$ is an $\epsilon$-interleaving morphism, then as mentioned in  \cref{Rem:Interleaving_And_Small_(Co)Kernel}, $f$ has $2\epsilon$-trivial kernel and cokernel, and the same is true for $\pi_{N(\epsilon)}\circ f: M\to \bar R(N(\epsilon))$.  By \cref{lem:interleavingsplit} then, for $\star\in \{\co, \oc, \cc, \oo\}$, $f^{\star, \star}: M^\star\to \bar R(N(\epsilon))^\star$ has $2\epsilon$-trivial kernel and cokernel as well. 

Let $r_\epsilon^\star: \B(\bar R(N(\epsilon)))^\star \nrightarrow \B(N)^\star$ be the matching given by
\[ r_\epsilon^\star\langle b,d\rangle_\blk = \left\{ 
\begin{array}{c l}
\langle a+\epsilon, b+\epsilon\rangle_\blk  &\text{if $\star = \co$,}
\\
\langle a-\epsilon, b-\epsilon\rangle_\blk  &\text{if $\star = \oc$,}\\
\langle a+\epsilon, b-\epsilon\rangle_\blk  &\text{if $\star = \cc$,}\\
\langle a-\epsilon, b+\epsilon\rangle_\blk  &\text{if $\star = \oo$,}
 \end{array}
\right.\]
If $\star\in \{\co,\oc,\cc\}$, then $r_\epsilon^\star$ is bijective; in the case that $\star=\oo$, $r_\epsilon^\oo$ matches all blocks of $\B(\bar R(N(\epsilon)))^\oo$ and all blocks $(a,b)_\blk \in \B(N)^\oo$ with $b-a> 2\epsilon$.

Let $\chi(\pi_{N(\epsilon)}\circ f^\star): \B(M)^\star\nrightarrow \B(\bar R(N(\epsilon)))^\star$ be the matching given by \cref{Thm:BLK_IMT}. We define the matching $\chi:\B(M)\nrightarrow \B(N)$ in the statement of \cref{teo:IMTinterleaving} as the (disjoint) union of the four matchings 
\[\big\{r^\star_\epsilon\circ \chi(\pi_{N(\epsilon)}\circ f^\star): \B(M)^\star\nrightarrow \B(N)^\star \mid \star\in \{\co, \oc, \cc, \oo\}\,\big\}.\]  It follows from \cref{Thm:BLK_IMT}, the definitions of the matchings $r_\epsilon^\star$, and \cref{lem:Erosion_And_Blocks}\,(i) that $\chi$ has the desired properties.  
\end{proof}

The remainder of this section is devoted to the proof of \cref{Thm:BLK_IMT}.  
The cases $\star\in \{\co,\oc\}$ can be understood in terms of an equivalence with $\RCat$-indexed persistence, whereas our proofs for the cases $\star\in \{\cc,\oo\}$  build on our results for free and $\RE$-free modules from  \cref{Sec:Free}.

\subsubsection{The cases $\star=\co$ and $\star=\oc$}\label{sec:fcoco}
As the arguments for \cref{Thm:BLK_IMT}\,(i) and (ii) are essentially identical, we will only prove (i). We shall see that the result follows easily from \cref{teo:IMT}. 

Note that if $M$ is of type $\co$, the shift map $\phi_M((x,y), (x^\prime, y))$ is an isomorphism for all $x^\prime \leq x$. Hence, there is a functorial way to identify $M$ with an $\R$-indexed module $M^\ORD$: Define 
\begin{align*}
M^\ORD_t&:= M_{(t,t)},\\
\phi_{M^\ORD}(t, t^\prime)&:= \phi_M((t^\prime, t^\prime), (t, t^\prime))^{-1}\circ\phi_M((t,t), (t, t^\prime)),
\end{align*}
 and for $f:M\to N$ a morphism of modules of type $\co$, define $f^\ORD:M^\ORD\to N^\ORD$ by \[f^\ORD_t:= f_{(t,t)}.\] 
\begin{lemma}\label{Lem:COKerAndCoker}
Let $M$ and $N$ be of type $\co$, and let $f: M \to N$ have $\epsilon$-trivial kernel and cokernel. Then $f^\ORD$ has $\epsilon$-trivial kernel and cokernel. 
\end{lemma}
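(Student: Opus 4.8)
The plan is to leverage the single special feature of type-$\co$ modules noted just above the statement: for $M$ of type $\co$ and any $x'\le x\le y$, the internal map $\phi_M((x,y),(x',y))$ is an isomorphism. I will call such maps \emph{horizontal}. Recalling from \cref{Rmk:RopVsR2} that the $\epsilon$-shift on an $\RCat^{\op}\times\RCat$-indexed module sends $(s,t)$ to $(s-\epsilon,t+\epsilon)$, everything reduces to the factorization
\[
\phi_M\big((t,t),(t-\epsilon,t+\epsilon)\big)=\phi_M\big((t,t+\epsilon),(t-\epsilon,t+\epsilon)\big)\circ\phi_M\big((t,t),(t,t+\epsilon)\big),
\]
whose left factor is horizontal, together with the defining identity $\phi_{M^\ORD}(t,t+\epsilon)=\phi_M\big((t+\epsilon,t+\epsilon),(t,t+\epsilon)\big)^{-1}\circ\phi_M\big((t,t),(t,t+\epsilon)\big)$, whose right factor is the same ``vertical'' map $\phi_M((t,t),(t,t+\epsilon))$. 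Thus $\phi^\epsilon_M$ at $(t,t)$ and $\phi_{M^\ORD}(t,t+\epsilon)$ differ only by pre- and post-composition with injective horizontal maps.

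I would first record a short lemma: for $M,N$ of type $\co$ and any $f\colon M\to N$, a horizontal isomorphism $\phi_N((x,y),(x',y))$ (with $x'\le x\le y$) carries $\im f_{(x,y)}$ \emph{onto} $\im f_{(x',y)}$. Here $\subseteq$ is naturality of $f$, while $\supseteq$ uses surjectivity of $\phi_M((x,y),(x',y))$: every element of $\im f_{(x',y)}$ equals $f_{(x',y)}\phi_M((x,y),(x',y))(m)=\phi_N((x,y),(x',y))f_{(x,y)}(m)$ for some $m$.

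Given these, the kernel statement runs as follows. Let $m\in\ker f^\ORD_t=\ker f_{(t,t)}$. Since $\ker f$ is $\epsilon$-trivial, $\phi_M((t,t),(t-\epsilon,t+\epsilon))(m)=0$; by the factorization and injectivity of the horizontal factor, $\phi_M((t,t),(t,t+\epsilon))(m)=0$, hence $\phi_{M^\ORD}(t,t+\epsilon)(m)=0$, so $\ker f^\ORD$ is $\epsilon$-trivial. For the cokernel, let $n\in N_{(t,t)}$. Since $\coker f$ is $\epsilon$-trivial, $\phi_N((t,t),(t-\epsilon,t+\epsilon))(n)\in\im f_{(t-\epsilon,t+\epsilon)}$; by the factorization, the image-transport lemma, and injectivity of the horizontal factor, $\phi_N((t,t),(t,t+\epsilon))(n)\in\im f_{(t,t+\epsilon)}$; applying $\phi_N((t+\epsilon,t+\epsilon),(t,t+\epsilon))^{-1}$ and the image-transport lemma once more yields $\phi_{N^\ORD}(t,t+\epsilon)(n)\in\im f_{(t+\epsilon,t+\epsilon)}=\im f^\ORD_{t+\epsilon}$, so $\coker f^\ORD$ is $\epsilon$-trivial.

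None of this is deep; the part requiring genuine care is the order-theoretic bookkeeping in $\RCat^{\op}\times\RCat$ — in particular remembering that the $\epsilon$-shift \emph{decreases} the first coordinate (so $\phi_M((t,t),(t-\epsilon,t+\epsilon))$, not $\phi_M((t,t),(t+\epsilon,t+\epsilon))$, is the map that must vanish on $\ker f$), and checking that each horizontal map invoked runs in a direction for which type $\co$ forces invertibility and that the composites above are all defined. The one step I expect to be slightly fiddly is pinning down the image-transport lemma in exactly the generality the cokernel argument needs; beyond that the argument is a routine diagram chase.
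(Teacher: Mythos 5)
Your proof is correct and uses essentially the same mechanism as the paper's: the paper likewise factors $\phi_M((t,t),(t-\epsilon,t+\epsilon))$ through $(t,t+\epsilon)$, invokes the isomorphism of the horizontal leg to get $\varphi_{\ker f}((t,t),(t,t+\epsilon))=0$ and $\varphi_{\coker f}((t,t),(t,t+\epsilon))=0$, and then transports these along the horizontal isomorphism $\ker f_{(t,t+\epsilon)}\cong\ker f_{(t+\epsilon,t+\epsilon)}$ (resp.\ cokernels) via a short commutative diagram; your element-level ``image-transport'' lemma is the same step phrased pointwise rather than at the level of the kernel/cokernel modules.
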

\begin{proof}
Since $f$ has $\epsilon$-trivial kernel, $\varphi_{\ker f}((t,t),(t-\epsilon,t+\epsilon))=0$, so since $\varphi_M((t,t+\epsilon),(t-\epsilon,t+\epsilon))$ is an isomorphism, we also have  
\[\varphi_{\ker f}((t,t),(t,t+\epsilon))=0.\]  Similarly, \[\varphi_{\coker f}((t,t),(t,t+\epsilon))=0.\] 
Thus, the result follows from the following two commutative diagrams:
\[\xymatrix{
\ker f^\ORD_t\ar[dd]\ar[r]^-= & \ker f_{(t,t)}\ar[d]^-0 & & \coker f^\ORD_t\ar[dd]\ar[r]^-=& \coker f_{(t,t)}\ar[d]^-0\\
~ & \ker f_{(t, t+\epsilon)}\ar[d]^-\cong& & & \coker f_{(t, t+\epsilon)}\ar[d]^-\cong.\\
\ker f^\ORD_{t+\epsilon} & \ker f_{(t+\epsilon, t+\epsilon)}\ar[l]^-=& & \coker f^\ORD_{t+\epsilon} & \coker f_{(t+\epsilon, t+\epsilon)}\ar[l]^-=
}\]
\end{proof}

\begin{proof}[Proof of \cref{Thm:BLK_IMT}\, (i)]
It is easy to see that for $\langle a,b\rangle_\blk$ a block of type $\co$, $(I^{\langle a,b\rangle})^\ORD = I^{\langle a,b\rangle}$, and more generally, that for any module $Q$ of type $\co$, $Q^\ORD \cong \bigoplus_{\langle a,b\rangle_\BL\in \B(Q)} I^{\langle a,b\rangle}$.  We therefore have a bijection $\B(Q)\to \B(Q^\ORD)$ which matches $\langle a,b\rangle_\BL$ to $\langle a,b\rangle$.  For $f:M\to N$ a morphism of modules of type $\co$ with $\epsilon$-trivial kernel and cokernel, the matching $\chi(f^\ORD):\B(M^\ORD) \nrightarrow \B(N^\ORD)$ thus induces a matching $\chi(f):\B(M) \nrightarrow \B(N)$.  It follows from \cref{Lem:COKerAndCoker} and \cref{teo:IMT} that $\chi(f)$ has the desired properties.
\end{proof}

\subsection{Proof of \cref{Thm:BLK_IMT}\, (iii)}

\subsubsection{Further Decomposition of a Module of Type $\cc$}
To prove \cref{Thm:BLK_IMT}\,(iii), we shall separately match the four types of closed intervals $[a,b]_\BL$, $(-\infty, b]_\BL, [a, \infty)_\BL$ and $(-\infty, \infty)_\BL$.  First, much as we decomposed a block decomposable module into four summands in \cref{def:Mdecomp}, we choose a further decomposition of a module $M$ of type $\cc$ into four submodules \[ M=M^\cii\oplus M^\cif \oplus M^\cfi\oplus M^\cff,\] where 
\begin{align*}
M^\cii \cong &\bigoplus_{(-\infty, \infty)_\BL \in \B(M)} I^{(-\infty, \infty)_\BL} & M^\cif \cong &\bigoplus_{(-\infty, b]_\BL\in \B(M)} I^{(-\infty, b]_\BL}\\
M^\cfi \cong &\bigoplus_{[a, \infty)_\BL \in \B(M)} I^{[a, \infty)_\BL} & M^\cff \cong &\bigoplus_{[a,b]_\BL\in \B(M)} I^{[a,b]_\BL}. 
\end{align*}
For $N$ of type $\cc$ and $\dag\in \{\cii, \cif, \cfi, \cff\}$, we let $f^\dag: M^\dag \to N^\dag$ be the morphism obtained by the the composition $M^\dag \hookrightarrow M \xrightarrow{f} N \twoheadrightarrow N^\dag$, where the first morphism is inclusion and the last is projection.

\begin{proposition}\label{prop:closedSplit}
If $f: M \to N$ is a monomorphism with $\epsilon$-trivial cokernel, then so is $f^{\dag}: M^{\dag}\to N^{\dag}$ for $\dag\in \{\cii, \cif, \cfi, \cff\}$. 
\end{proposition}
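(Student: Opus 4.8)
The plan is to argue along the lines of the proof of \cref{Interleavings_On_Four_Types}: first use a $\Hom$-vanishing computation to put $f$ into block-triangular form with respect to the decomposition $M=M^\cii\oplus M^\cif\oplus M^\cfi\oplus M^\cff$, and then exploit the hypothesis that $f$ is a monomorphism with $\epsilon$-trivial cokernel to read off the diagonal blocks. Throughout, for $\star,\dag\in\{\cii,\cif,\cfi,\cff\}$ write $f^{\star,\dag}\colon M^\star\to N^\dag$ for the component of $f$, i.e.\ the composite $M^\star\hookrightarrow M\xrightarrow{f}N\twoheadrightarrow N^\dag$, so that $f^\dag=f^{\dag,\dag}$.

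\emph{Step 1: triangularity.} The key structural observation is that every $\cc$-block is an \emph{up-set} of $\U$, and that for $\cc$-blocks $\J$ and $\K$ one has $\Hom(I^\J,I^\K)\neq 0$ if and only if $\J\subseteq\K$. The ``if'' direction is immediate; the ``only if'' direction is proved by the same commutative-square argument as \cref{lem:homI} --- reduce to the indecomposable case; since $\J\cap\K$ is nonempty and connected for $\cc$-blocks, a nonzero morphism would have to act by the same nonzero scalar at every point of $\J\cap\K$, and if $\J\not\subseteq\K$ one can find $p\in\J\setminus\K$ lying below a point of $\J\cap\K$, whose naturality square forces that scalar to vanish. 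Writing a $\cc$-block as $\{(x,y)\in\U:x\le b,\ y\ge a\}$ with $a\in[-\infty,\infty)$ and $b\in(-\infty,\infty]$, containment $\J\subseteq\K$ amounts to $b_\J\le b_\K$ and $a_\J\ge a_\K$; reading off which sub-types admit such a containment gives the partial order $\cff\prec\cfi\prec\cii$, $\cff\prec\cif\prec\cii$, with $\cfi$ and $\cif$ incomparable. Consequently $f^{\star,\dag}=0$ unless $\star\preceq\dag$, so in a linear extension of $\preceq$ the morphism $f$ is block lower-triangular; and since shifting preserves the sub-type of a $\cc$-block, the same holds for any morphism between (shifts of) type-$\cc$ modules.

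\emph{Step 2: a partner morphism and diagonal extraction.} Since $f$ is a monomorphism with $\epsilon$-trivial cokernel, for each $a$ and each $n\in N_a$ there is a \emph{unique} $m\in M_{a+\epsilon}$ with $f_{a+\epsilon}(m)=\phi_N(a,a+\epsilon)(n)$; setting $g_a(n):=m$ defines a morphism $g\colon N\to M(\epsilon)$ with $g\circ f=\phi_M^\epsilon$ and $f(\epsilon)\circ g=\phi_N^\epsilon$ (this is the construction underlying the weak converse mentioned in \cref{Rem:Interleaving_And_Small_(Co)Kernel}). As each $M^\dag(\epsilon)$ is again a sum of $\cc$-blocks of sub-type $\dag$, Step 1 applies to $g$ as well, so $g$ is block lower-triangular for $\preceq$. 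Now $\phi_M^\epsilon$ and $\phi_N^\epsilon$ are block-diagonal with respect to the decompositions in play, and because $\preceq$ is antisymmetric the only surviving term in the $(\dag,\dag)$-block of $g\circ f$ (resp.\ of $f(\epsilon)\circ g$) is the product of the corresponding diagonal blocks, so
\[ g^{\dag,\dag}\circ f^\dag=\phi_{M^\dag}^\epsilon,\qquad f^\dag(\epsilon)\circ g^{\dag,\dag}=\phi_{N^\dag}^\epsilon. \]
Finally, since every $\cc$-block is an up-set of $\U$, the map $\phi_{M^\dag}^\epsilon$ is injective (it is the identity wherever $M^\dag$ is supported); the first identity then forces $f^\dag$ to be a monomorphism, while the second gives $\phi_{N^\dag}^\epsilon(N^\dag_a)\subseteq\im f^\dag$ for all $a$, i.e.\ $\coker f^\dag$ is $\epsilon$-trivial. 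This is exactly the assertion of the proposition.

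\emph{Main obstacle.} The real content is Step 1: one must identify precisely which of the $\Hom$-groups between the four sub-types vanish, and in particular verify that there are \emph{no} nonzero morphisms between type-$\cfi$ and type-$\cif$ modules in either direction, so that these two incomparable sub-types genuinely do not interact. This is where the ``up-set of $\U$'' description of $\cc$-blocks is essential (it is also what makes $\phi_{M^\dag}^\epsilon$ injective, in contrast to the $\oo$ case). Once the two-sided triangularity of $f$ and $g$ is established, Step 2 is routine bookkeeping.
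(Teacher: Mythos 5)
Your proof is correct, but it takes a genuinely different route from the paper's. The paper proves only the case $\dag=\cii$ (asserting the other three are similar): it uses the $\Hom$-vanishing $\Hom(M^\cii,N^\dag)=0$ for $\dag\neq\cii$ to get that $f^\cii$ is a monomorphism, and then for the cokernel it takes an arbitrary $n\in N^\cii$ in the image of $f$, writes a preimage $m$ as $m^\cii+m^\cif+m^\cfi+m^\cff$, and kills the latter three components by a direct ``shift sufficiently far along the anti-diagonal'' argument exploiting that $\cff$ (resp.\ $\cfi$, $\cif$) summands eventually leave the image of the shift maps while the $\cii$ part survives. Your argument instead establishes the full $\Hom$-vanishing partial order $\cff\prec\cfi,\cif\prec\cii$ among the sub-types, constructs the canonical partner $g\colon N\to M(\epsilon)$ (which exists precisely because $f$ is mono with $\epsilon$-trivial cokernel), observes that $g$ enjoys the same block-triangularity, and then reads off $g^{\dag,\dag}\circ f^\dag=\phi^\epsilon_{M^\dag}$ and $f^\dag(\epsilon)\circ g^{\dag,\dag}=\phi^\epsilon_{N^\dag}$ simultaneously for all four $\dag$. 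This is structurally parallel to the proof of \cref{Interleavings_On_Four_Types}, avoids the ad hoc push-far-out computation, and buys uniformity across the four cases at the modest cost of the extra observation that $\phi^\epsilon_{M^\dag}$ is injective on type-$\cc$ modules (which, as you note, fails for type $\oo$ --- so this cleanliness genuinely hinges on the up-set structure of closed blocks).

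One small point you compress too much: $M^\dag(\epsilon)$ is not literally a direct sum of $\cc$-blocks, since the shift pushes the support up to $2\epsilon$ below the diagonal $y=x$. Your phrase ``since shifting preserves the sub-type of a $\cc$-block'' is morally right but elides the check. The argument survives because the shifted intervals remain up-sets of $\RCat^\op\times\RCat$ and retain the same finite/infinite endpoint pattern, so the $\Hom$-vanishing criterion $\J\subseteq\K'$ (where $\K'$ is the shifted interval) still reduces to $\star\preceq\dag$; a sentence to this effect would close the gap. This is expository rather than mathematical, and the overall proof stands.
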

\begin{proof}
We shall prove the result for $\dag=\cii$.  The proofs of the three remaining cases are similar.  Using an argument similar to the proof of \cref{lem:homI}, it is easy to see that $\Hom(M^{\cii}, N^{\dag}) = 0$ for $\dag\in \{\cif, \cfi, \cff \}.$ Hence, $f^{\cii}$ is a monomorphism.

Since $\coker f$ is $\epsilon$-trivial, for any $y-x\geq 2\epsilon$ and $n\in N^{()}_{(x,y)}$, there exists $m \in M_{(x,y)}$ with $f(m) = n$. Write \[m=m^\cii+m^\cif+m^\cfi+m^\cff\] for $m^\dag \in M^\dag$.  We shall argue that $m^\cif = m^\cfi = m^\cff = 0$, so that $f^\cii(m^\cii) = n$.  It follows that $\coker f^{()}$ is $\epsilon$-trivial.

To arrive at a contradiction, assume that $m^\cff \neq 0$.  By the structure of $M^\cff_{(x,y)}$, we may choose sufficiently large $x'>y$ such that for $y' = x'+2\epsilon$, we have
\begin{equation}\label{eq:shiftMinf}
\phi_M((x,y), (x, y'))(m^\cff) \not\in \im \phi_M((x', y'), (x, y')).
\end{equation}
Consider the unique element $n'\in N^{()}_{(x', y')}$ such that
\[ \phi_N((x', y'), (x, y'))(n') = \phi_N((x,y), (x, y'))(n).\]
Since $\coker f$ is $\epsilon$-trivial, $n'\in \im f$.  That is, there exists $m' \in M_{(x', y')}$ such that $n'= f_{(x', y')}(m')$. Hence, 
\begin{align*}
\left(f\circ \phi_M((x', y'), (x, y'))\right)(m') &= \phi_N((x', y'), (x, y'))(n')\\
&=\phi_N((x,y), (x, y'))(n)\\
&= \left(f\circ \phi_M((x,y), (x, y'))\right)(m).
\end{align*}
This, together with the injectivity of  $f$, implies 
\[\phi_M((x,y), (x, y'))(m) = \phi_M((x', y'), (x, y'))(m').\]
Letting $m'^{\cff}$ denote the component of $m'$ in $M^\cff_{(x',y')}$, it follows that
\[\phi_M((x,y), (x, y'))(m^\cff) = \phi_M((x', y'), (x, y'))(m'^\cff),\]
contradicting that $\phi_M((x,y), (x, y'))(m^\cff)\not\in \im \phi_M((x', y'), (x, y'))$.  Thus, $m^\cff = 0$.  

Similarly, one can show that $m^\cif = m^\cfi = 0$. 
\end{proof}

\subsubsection{The Matching $\chi(f)$}
If $M$ and $N$ are of type $\cc$ and $f:M\to N$ has $\epsilon$-trivial kernel and cokernel, then in fact $f$ is a monomorphism.  By \cref{prop:closedSplit} we may split $f$ into four monomorphisms $f^\dag:M^\dag\to N^\dag$ with $\epsilon$-trivial cokernel.  We take the matching $\chi(f):\B(M)\nrightarrow \B(N)$ to be the disjoint union of four matchings \[\big\{\chi(f^\dag):\B(M^\dag)\nrightarrow \B(N^\dag) \mid \dag\in \{\,\cii, \cif, \cfi, \cff\,\}\,\big\}.\]
For $\dag\in \{\,\cii, \cif, \cfi\,\}$ we define the matching $\chi(f^\dag)$ as follows:
\noindent \begin{itemize}
\item[$\cii$: ]  A morphism between $M^{\cii}$ and $N^{\cii}$ is a monomorphism with $\epsilon$-trivial cokernel if and only if it is an isomorphism. Thus, $\B(M^\cii) = \B(N^\cii)$; we take $\chi(f^\cii)$ to be the identity.  

\item[$\cfi$: ]  $\phi_{M^\cfi}((x,y), (x^\prime, y))$ is an isomorphism for all $x^\prime\leq x$, and similarly for $N^\cfi$, so we may define the matching $\chi(f^\cfi)$ in essentially the same way we defined the induced matching of \cref{Thm:BLK_IMT}\, (i).  The same argument used to prove \cref{Thm:BLK_IMT}\, (i) shows that  $\chi(f^\cfi)$ is bijective, and that if $\chi(f^\cfi)[a,\infty)_\BL = [a', \infty)_\BL$, then $a-\epsilon \leq a'\leq a$.

\item[$\cif$: ] \begin{sloppypar}We define $\chi(f^\cif)$ in essentially the same way as for $\chi(f^\cfi)$.  $\chi(f^\cif)$ is bijective, and if $\chi(f^\cfi)(\infty,b]_\BL = (\infty,b']_\BL$, then $b-\epsilon \leq b'\leq b$. \end{sloppypar}
\end{itemize}
To finish the proof of \cref{Thm:BLK_IMT}\,(iii), it remains to define the matching $\chi(f^\cff)$ and verify that if 
$\chi(f^\cfi)[a,b]_\BL = [a',b']_\BL$, then
\begin{equation*}
a\leq a' \leq a+\epsilon\qquad \text{and}\qquad b-\epsilon\leq b'\leq b.\qedhere
\end{equation*} 
In what follows, we define $\chi(f^\cff)$ via the induced matching construction for free 2-D persistence modules of \cref{sec:isometryFree}. 

\subsubsection{The Matching $\chi(f^\cff)$}
Letting $\iem:\INTR\hookrightarrow \RCat^{\op}\times \RCat$ denote the inclusion, we define an endofunctor $\overleftarrow{(-)}$ on $\Vect^{\RCat^{\op}\times \RCat}$ by

\[\overleftarrow{(-)}:=\Ran_\iem(-) \comp\, (-)|_{\INTR}.\]
Thus, for $(s,t)\in \R^2$ and $\overleftarrow{(s,t)}\subset \U$ given by
\[
\overleftarrow{(s,t)}:=\{(x,y)\in \U\mid x\leq s, y\geq t \},
\]
we have $\overleftarrow{M}_{(s,t)}=\varprojlim M|_{\overleftarrow{(s,t)}}$ for any $\RCat^{\op}\times \RCat$-indexed module $M$.

\paragraph{Properties of $\protect\overleftarrow{(-)}$ on Modules of Type $\ccc$}
The following lemma is illustrated by \cref{fig:freeext}; we omit the proof.  
\begin{lemma}\label{Lem:OverleftarrowForIntervals}
For any $a,b\in \R$, we have 
\[\overleftarrow{I^{[a,b]_\blk}}\cong I^{\GenR{b,a}}.\]
\end{lemma}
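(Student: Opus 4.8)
The plan is to realize the claimed isomorphism as the unit of the adjunction between restriction and $\Ran_\iem$, and to verify it is an isomorphism one index at a time. The key elementary observation is that $[a,b]_\blk=\GenR{b,a}\cap\U$, since both sets equal $\{(x,y)\in\R^2\mid x\le b,\ y\ge a,\ x\le y\}$. Hence the $\U$-indexed interval module $I^{[a,b]_\blk}$ equals the restriction $(I^{\GenR{b,a}})|_{\U}$, and since $\overleftarrow{(-)}=\Ran_\iem(-)\comp(-)|_{\U}$ we obtain $\overleftarrow{I^{[a,b]_\blk}}=\Ran_\iem\!\big((I^{\GenR{b,a}})|_{\U}\big)$. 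As restriction $(-)|_{\U}$ is left adjoint to $\Ran_\iem$, the unit supplies a natural transformation $\eta:I^{\GenR{b,a}}\to\overleftarrow{I^{[a,b]_\blk}}$, whose component at $(s,t)$ is the map out of $I^{\GenR{b,a}}_{(s,t)}$ determined by the cone with legs $\varphi_{I^{\GenR{b,a}}}((s,t),(x,y))$, $(x,y)\in\overleftarrow{(s,t)}$ (this makes sense because $(x,y)\ge(s,t)$ for each such $(x,y)$). It then remains to show $\eta_{(s,t)}$ is an isomorphism for every $(s,t)$.

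I would split on whether $(s,t)\in\GenR{b,a}$, i.e.\ whether $s\le b$ and $a\le t$. In that case $[a,b]_\blk\cap\overleftarrow{(s,t)}=\overleftarrow{(s,t)}$, so $I^{[a,b]_\blk}$ restricts to the constant module $\underline k$ on $\overleftarrow{(s,t)}$; as $\overleftarrow{(s,t)}$ is upward directed — any $(x_1,y_1),(x_2,y_2)$ in it lie below $(\min(x_1,x_2),\max(y_1,y_2))$ — and therefore connected, its limit is $k$, and $\eta_{(s,t)}$ sends the generator $1\in I^{\GenR{b,a}}_{(s,t)}$ to the constant section $1$, which is nonzero; so $\eta_{(s,t)}$ is an isomorphism $k\to k$. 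If instead $s>b$ or $a>t$, then $I^{\GenR{b,a}}_{(s,t)}=0$, and I claim $\overleftarrow{I^{[a,b]_\blk}}_{(s,t)}=\varprojlim I^{[a,b]_\blk}|_{\overleftarrow{(s,t)}}=0$ as well: writing $J:=[a,b]_\blk\cap\overleftarrow{(s,t)}$, one checks that $J$ is connected, that it is a proper subposet of $\overleftarrow{(s,t)}$ (a point of $\overleftarrow{(s,t)}$ with $x$-coordinate in $(b,s]$, resp.\ $y$-coordinate in $[t,a)$, lies outside $J$), and that every $(x',y')\in\overleftarrow{(s,t)}$ lies below $(\min(x',b),\max(y',a))\in J$; hence there is a comparable pair $p\le q$ with $p\in\overleftarrow{(s,t)}\setminus J$ and $q\in J$, so any section $(m_r)$ of the limit satisfies $m_q=\varphi(p,q)(m_p)=\varphi(p,q)(0)=0$, which forces $m\equiv0$ by connectedness of $J$.

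Both cases make $\eta_{(s,t)}$ an isomorphism, so $\eta$ is an isomorphism of $\RCat^{\op}\times\RCat$-indexed modules, giving $\overleftarrow{I^{[a,b]_\blk}}\cong I^{\GenR{b,a}}$. The only step that needs a little care is the vanishing of the limit in the second case — i.e.\ the general fact that the limit of an interval module over a poset is $0$ whenever the interval is connected but not downward closed — but this is immediate once one writes a section of the limit as a compatible family of scalars (constant on the interval, forced to be $0$ there by any comparability with a point outside). Everything else is routine unwinding of the definitions of $\overleftarrow{(-)}$, $\overleftarrow{(s,t)}$, $[a,b]_\blk$, and $\GenR{b,a}$, plus the easy remark that $\GenR{b,a}$ is an interval of $\RCat^{\op}\times\RCat$, being the up-set generated by $(b,a)$.
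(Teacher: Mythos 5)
Your proof is correct. The paper omits this lemma's proof (``we omit the proof''), and your pointwise computation supplies exactly what is missing: you correctly observe that $[a,b]_\blk = \GenR{b,a}\cap\U$, use the unit of the $(-)|_{\U}\dashv\Ran_\iem$ adjunction to produce the comparison map, and verify it is a pointwise isomorphism by showing each $\overleftarrow{(s,t)}$ is nonempty and directed (hence connected) and that the limit of an interval module over a connected poset vanishes whenever the interval is not downward closed there. All the small checks — the upper bound $(\min(x_1,x_2),\max(y_1,y_2))$ lying in $\U$, the element $(\min(x',b),\max(y',a))\in J$ dominating $(x',y')$, and the propagation of $m_q=0$ across $J$ using the identity internal maps — are sound.
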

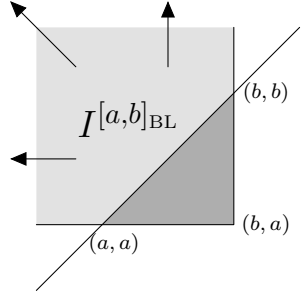
\begin{figure}
\centering
\definecolor{aqaqaq}{rgb}{0.6274509803921569,0.6274509803921569,0.6274509803921569}
\begin{tikzpicture}[line cap=round,line join=round,>=triangle 45,x=.7cm,y=0.7cm, scale=0.25]
\clip(-13.,-10.) rectangle (13.,13.);
\fill[color=aqaqaq,fill=aqaqaq,fill opacity=0.30] (-5.,-5.) -- (-10,-5) -- (-10, 10) -- (5,10) --  (5,5) -- cycle;

\fill[color=aqaqaq, fill=aqaqaq, fill opacity=0.85] (-5,-5) -- (5, -5) -- (5,5) -- cycle;

\draw (-10,-10) -- (10,10);
\draw (-5,-5) -- (-10, -5);
\draw (5,10) -- (5,5) -- (5,-5) -- (-5,-5);
\node[font=\fontsize{15}{144}\selectfont] at (-3,3) {$I^{[a,b]_\BL}$};
\draw[->] (-7,0) -- (-12, 0);
\draw[->] (0,7) -- (0,12);
\draw[->] (-7,7) -- (-12,12);
\begin{scriptsize}
\draw[color=black] (-4.1, -5.1) node[below] {$(a,a)$};
\draw[color=black] (5.1,5) node[right] {$(b,b)$};
\draw[color=black] (5,-5) node[right] {$(b,a)$};

\end{scriptsize}
\end{tikzpicture}
\caption{The image under $\protect\overleftarrow{(-)}$ of the block module $I^{[a,b]_\BL}$ (in light gray) is a free module with a single generator at $(b,a)$}
\label{fig:freeext}
\end{figure}
We say a module $M$ is \emph{of type $\ccc$} if $M$ is of type $\cc$ and $M=M^\cff$.  
\begin{lemma}\label{Lemma:OvLeftArFinite_Dim}
For each module $M$ of type $\ccc$, $\overleftarrow{M}$ is \pfd
\end{lemma}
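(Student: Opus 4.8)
The plan is to reduce to the case where $M$ is a single block module of type $\ccc$, i.e. $M = I^{[a,b]_\blk}$ for $a,b\in\R$, by exploiting the fact that $\overleftarrow{(-)} = \Ran_\iem(-)\comp(-)|_\U$ is built from a right Kan extension and a restriction, both of which interact well with direct sums in the \pfd setting. Concretely, since $M$ is of type $\ccc$ and \pfd, it decomposes as a (possibly infinite) direct sum $M\cong\bigoplus_{[a,b]_\blk\in\B(M)} I^{[a,b]_\blk}$. Restriction to $\U$ preserves direct sums on the nose, and by \cref{Prop:Preservation_of_Coproducts}(ii) together with \cref{Rem:Preservation_Of_Products_pfd}, $\Ran_\iem(-)$ preserves direct sums provided both $\bigoplus_i M_i$ and $\bigoplus_i\Ran_\iem(M_i)$ are \pfd; by \cref{Lem:OverleftarrowForIntervals}, $\overleftarrow{I^{[a,b]_\blk}}\cong I^{\GenR{b,a}}$, so each summand is an interval module and hence \pfd, and what remains is precisely to check that the total direct sum $\bigoplus_i\overleftarrow{I^{[a,b]_\blk}}$ is \pfd, which is exactly the content of the lemma. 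So the argument is slightly circular unless one is careful: I would instead argue \emph{directly} at each index $(s,t)\in\R^2$ that $\dim\overleftarrow{M}_{(s,t)}<\infty$, using the explicit limit formula $\overleftarrow{M}_{(s,t)}=\varprojlim M|_{\overleftarrow{(s,t)}}$.

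The key step is to identify this limit. Fix $(s,t)$. If $t>s$ then $\overleftarrow{(s,t)}\cap\U=\emptyset$, so the limit is over the empty diagram and $\overleftarrow{M}_{(s,t)}=0$, trivially finite-dimensional. So assume $t\le s$. The poset $\overleftarrow{(s,t)} = \{(x,y)\in\U\mid x\le s,\ y\ge t\}$ has a natural "initial-ish" structure: its elements with the smallest $x$ and largest $y$ dominate the diagram in the appropriate variance. I would argue that $\varprojlim M|_{\overleftarrow{(s,t)}}$ is computed by the limit over a cofinal subdiagram — in fact, using the $\RCat^\op\times\RCat$ variance, the limit is controlled by the values of $M$ at points $(x,y)$ with $x$ as small as possible and $y$ as large as possible, together with the diagonal constraint $x\le y$. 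For a single interval module $I^{[a,b]_\blk}$ one reads off directly from the definition of $[a,b]_\blk=\{(x,y)\in\U\mid x\le b,\ y\ge a\}$ that $\overleftarrow{(s,t)}\cap[a,b]_\blk$ is nonempty iff it contains a point, and the limit stabilizes to $k$ exactly when $(b,a)\le(s,t)$ in $\RCat^\op\times\RCat$ (i.e. $s\le b$, $t\ge a$, which together with $t\le s$ is the support of $I^{\GenR{b,a}}$), and is $0$ otherwise; this recovers \cref{Lem:OverleftarrowForIntervals}. For general \pfd $M$ of type $\ccc$, the crucial observation is that at a fixed $(s,t)$ with $t\le s$, only finitely many blocks $[a,b]_\blk\in\B(M)$ can have $\overleftarrow{I^{[a,b]_\blk}}_{(s,t)}\ne 0$ — equivalently only finitely many satisfy $s\le b$ and $t\ge a$ — because… and here is the main obstacle.

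The main obstacle is controlling \emph{local finiteness of the block barcode}: a priori a \pfd module of type $\ccc$ could have infinitely many summands $I^{[a_i,b_i]_\blk}$ with $a_i\le t\le s\le b_i$ for a single pair $(s,t)$, and then $\overleftarrow{M}_{(s,t)}$ would be infinite-dimensional. I would resolve this by showing that such a configuration contradicts pointwise finite-dimensionality of $M$ itself: if infinitely many blocks contain a common point $(x_0,y_0)\in\U$ with $x_0\le s$, $y_0\ge t$ — and every block $[a_i,b_i]_\blk$ with $a_i\le t$ and $b_i\ge s$ contains the point $(s,t)$ when $t\le s$, hence contains $(s,t)\in\U$ — then $\dim M_{(s,t)}=\infty$, contradicting that $M$ is \pfd. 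Thus for $t\le s$ the set of contributing blocks is finite, and since $\overleftarrow{(-)}$ commutes with finite direct sums (both $\Ran_\iem$ and restriction do, unconditionally, for finite sums), $\overleftarrow{M}_{(s,t)}$ is a finite direct sum of copies of $k$ hence finite-dimensional; splitting off the finitely many relevant summands from the rest (whose $\overleftarrow{(-)}$ vanishes at $(s,t)$) makes this precise. For $t>s$ we already saw $\overleftarrow{M}_{(s,t)}=0$. Therefore $\overleftarrow{M}$ is \pfd, as claimed. The only genuinely technical point to write carefully is the commutation of the limit $\varprojlim$ with the (finite) direct sum and the verification that the infinitely-many-irrelevant summands contribute nothing to the limit at $(s,t)$, which follows from the cofinality/initial-object analysis of the diagram $\overleftarrow{(s,t)}$ sketched above.
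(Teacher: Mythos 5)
Your core counting argument is the paper's, but your write-up has the orientation of $\U$ relative to the diagonal reversed in two places, and both resulting assertions are false as stated. First, $\overleftarrow{(s,t)}=\{(x,y)\in\U\mid x\le s,\ y\ge t\}$ is \emph{never} empty (take $x$ very small and $y$ very large), so the limit is never over an empty diagram; in fact when $s\le t$ the point $(s,t)$ itself lies in $\U$ and is the minimum of the diagram, so $\overleftarrow{M}_{(s,t)}\cong M_{(s,t)}$ --- finite-dimensional, but generally nonzero. The nontrivial case is $s>t$, i.e.\ below the diagonal. Second, in that case a block $[a,b]_\blk$ with $a\le t$ and $b\ge s$ does \emph{not} contain $(s,t)$, since $(s,t)\notin\U$; the common point all such blocks contain is the reflection $v=(t,s)=(\min(s,t),\max(s,t))\in\U$, and the bound you want is $\dim M_v<\infty$. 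This is exactly the paper's argument (which works uniformly in $(s,t)$ with no case split), so once the coordinates are corrected your local-finiteness step goes through.

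The remaining issue is the one you flag but do not carry out: passing from ``only finitely many summands contribute at $(s,t)$'' to ``$\overleftarrow{M}_{(s,t)}$ is finite-dimensional'' requires commuting $\varprojlim$ with an \emph{infinite} direct sum, and a limit of a direct sum is not a direct sum of limits in general. The paper sidesteps an ad hoc argument: it first uses the counting bound to show that $\bigoplus_{\J\in\B(M)}\overleftarrow{I^\J}$ is \pfd, and then invokes \cref{Rem:Preservation_Of_Products_pfd} (using that $M$ is also \pfd) to conclude $\overleftarrow{M}\cong\bigoplus_{\J}\overleftarrow{I^\J}$. Your alternative --- splitting off the finitely many relevant summands and showing the rest contributes nothing at $(s,t)$ --- can be completed, since the canonical map from the limit of a direct sum to the product of the limits of the summands is injective and each factor vanishes; but as written this is an unproved assertion at the crux of the lemma, and it should either be proved or be replaced by the appeal to \cref{Rem:Preservation_Of_Products_pfd}.
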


\begin{proof}
For $(s,t)\in \R^2$, let $v=(v_1,v_2)$, where $v_1=\min(s,t)$ and $v_2=\max(s,t)$.  Note that for $a,b\in \R$, if $(s,t)\in {\GenR{b,a}}$, then $v\in [a,b]_\blk$.  In view of \cref{Lem:OverleftarrowForIntervals} then, it follows that 
\[\dim (\oplus_{\J\in \B(M)}\overleftarrow{I^\J})_{(s,t)}\leq \dim\left(\oplus_{\J\in \B(M)}I^\J\right)_v=\dim M_v<\infty.\]
Thus, $\oplus_{\J\in \B(M)}\overleftarrow{I^\J}$ is \pfd\ \ Since $M$ is also \pfd, it follows from \cref{Rem:Preservation_Of_Products_pfd} that 
\begin{equation}\label{eq:overleftarrowAndSums}
\overleftarrow{M}\cong \overleftarrow{\left( \oplus_{\J\in \B(M)} I^\J\right)}\cong \oplus_{\J\in \B(M)}\overleftarrow{I^\J}.
\end{equation}
In particular, $\overleftarrow{M}$ is \pfd
\end{proof}

\label{sec:closedextend}
\begin{proposition}\label{prop:exfree}
If $M$ is of type $\ccc$, then $\overleftarrow{M}$ is a free $\RCat^\op\times \RCat$-indexed module and \[\B(\overleftarrow{M}) = \{\GenR{b,a} \mid [a,b]_\BL\in \B(M)\}.\]
\end{proposition}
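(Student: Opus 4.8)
The plan is to derive \cref{prop:exfree} directly from \cref{Lem:OverleftarrowForIntervals} and the work already done in \cref{Lemma:OvLeftArFinite_Dim}. The essential observation is that the functor $\overleftarrow{(-)}$ is built from a right Kan extension, which by \cref{Prop:Preservation_of_Coproducts}\,(ii) preserves products, and that under the \pfd hypothesis products and coproducts agree (\cref{Rem:Preservation_Of_Products_pfd}). So the argument splits into two pieces: first, reduce to the case of a single block module $I^{[a,b]_\blk}$; second, invoke \cref{Lem:OverleftarrowForIntervals} to identify the image as the free module $I^{\GenR{b,a}}$.

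Concretely, I would proceed as follows. Since $M$ is of type $\ccc$, by definition $M = M^\cff$, so $M \cong \bigoplus_{[a,b]_\blk \in \B(M)} I^{[a,b]_\blk}$, where the direct sum ranges over the block barcode $\B(M)$ (which consists entirely of blocks of the form $[a,b]_\blk$ with $a,b\in\R$). The proof of \cref{Lemma:OvLeftArFinite_Dim} already establishes the key intermediate fact, namely the isomorphism
\[
\overleftarrow{M} \;\cong\; \overleftarrow{\left(\bigoplus_{\J\in \B(M)} I^\J\right)} \;\cong\; \bigoplus_{\J\in \B(M)} \overleftarrow{I^\J},
\]
which combines the observation that $M$ and $\bigoplus_{\J}\overleftarrow{I^\J}$ are both \pfd{} with \cref{Rem:Preservation_Of_Products_pfd} (applied to the right Kan extension $\Ran_\iem(-)$, after noting that $(-)|_\INTR$ trivially commutes with direct sums, and that the restriction of each $I^{[a,b]_\blk}$ to $\U$ is again $I^{[a,b]_\blk}$ viewed as a $\U$-indexed module). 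Applying \cref{Lem:OverleftarrowForIntervals} to each summand gives $\overleftarrow{I^{[a,b]_\blk}} \cong I^{\GenR{b,a}}$, so
\[
\overleftarrow{M} \;\cong\; \bigoplus_{[a,b]_\blk \in \B(M)} I^{\GenR{b,a}}.
\]
By \cref{rem:genfree}, a module isomorphic to a direct sum of interval modules $I^{\GenR{b,a}}$ is precisely a free $\RCat^\op\times\RCat$-indexed module, and by uniqueness of the barcode (Azumaya--Krull--Remak--Schmidt) its barcode is $\{\GenR{b,a} \mid [a,b]_\blk\in \B(M)\}$, which is the claimed description of $\B(\overleftarrow{M})$.

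I do not anticipate a serious obstacle here; the real content was already unpacked in \cref{Lem:OverleftarrowForIntervals} and \cref{Lemma:OvLeftArFinite_Dim}. The one point requiring a small amount of care is making sure the passage through products and coproducts is legitimate: the right Kan extension only automatically commutes with \emph{products}, so one genuinely needs the \pfd{} conclusion of \cref{Lemma:OvLeftArFinite_Dim} (for both $M$ and $\bigoplus_\J \overleftarrow{I^\J}$) to convert this into a statement about direct sums, exactly as done in equation~\eqref{eq:overleftarrowAndSums}. Since \cref{Lemma:OvLeftArFinite_Dim} has already been proven, the present proposition is essentially a one-line corollary of it together with \cref{Lem:OverleftarrowForIntervals} and \cref{rem:genfree}.

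\begin{proof}
Since $M$ is of type $\ccc$, we have $M=M^\cff\cong \bigoplus_{[a,b]_\blk\in \B(M)} I^{[a,b]_\blk}$.  By \eqref{eq:overleftarrowAndSums} in the proof of \cref{Lemma:OvLeftArFinite_Dim},
\[
\overleftarrow{M}\cong \bigoplus_{\J\in \B(M)} \overleftarrow{I^\J}.
\]
Applying \cref{Lem:OverleftarrowForIntervals} to each summand gives
\[
\overleftarrow{M}\cong \bigoplus_{[a,b]_\blk\in \B(M)} I^{\GenR{b,a}}.
\]
By \cref{rem:genfree}, this exhibits $\overleftarrow{M}$ as a free $\RCat^\op\times \RCat$-indexed module.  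Moreover, since the barcode of an interval decomposable module is uniquely defined, we conclude
\[
\B(\overleftarrow{M})=\{\GenR{b,a}\mid [a,b]_\BL\in \B(M)\}.\qedhere
\]
\end{proof}
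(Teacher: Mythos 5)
Your proposal is correct and takes exactly the same route as the paper: the paper's own proof simply says the result follows immediately from \cref{Lem:OverleftarrowForIntervals} and \eqref{eq:overleftarrowAndSums}, which is precisely the combination you spell out. Your added care about why the right Kan extension's preservation of products converts into preservation of direct sums under the \pfd hypothesis is the same point already handled in the proof of \cref{Lemma:OvLeftArFinite_Dim}.
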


\begin{proof}
This follows immediately from \cref{Lem:OverleftarrowForIntervals} and \eqref{eq:overleftarrowAndSums}.
\end{proof}

\begin{proposition}
If  $M$ and $N$ are of type $\ccc$ and $\fn: M\to N$ is a monomorphism with $\epsilon$-trivial cokernel, then $\overleftarrow{\fn}: \overleftarrow{M}\to \overleftarrow{N}$ is a monomorphism with $\epsilon$-trivial cokernel.  \label{prop:exfreecok}\end{proposition}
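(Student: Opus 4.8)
The plan has two parts, corresponding to the two assertions.

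\emph{The monomorphism.} First note that $\overleftarrow{(-)}=\Ran_\iem(-)\circ(-)|_\INTR$ is left exact: restriction along $\iem$ is exact, while $\Ran_\iem(-)$ is right adjoint to restriction along $\iem$ (the standard Kan extension adjunction used in the proof of \cref{Prop:Preservation_of_Coproducts}) and hence preserves all limits, in particular kernels. Applying this left exact functor to the short exact sequence $0\to M\xrightarrow{\fn}N\to\coker\fn\to 0$ yields an exact sequence $0\to\overleftarrow{M}\xrightarrow{\overleftarrow{\fn}}\overleftarrow{N}\to\overleftarrow{\coker\fn}$, so $\overleftarrow{\fn}$ is a monomorphism. (It also shows $\coker\overleftarrow{\fn}$ embeds in $\overleftarrow{\coker\fn}$; but this embedding can be proper, so I do not expect it to suffice to prove $\overleftarrow{\coker\fn}$ is $\epsilon$-trivial, and I would argue the cokernel claim directly.)

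\emph{Reducing the cokernel claim to combinatorics.} By \cref{prop:exfree}, \cref{Lem:OverleftarrowForIntervals} and \cref{Rem:Preservation_Of_Products_pfd}, $\overleftarrow{M}\cong\bigoplus_{[a,b]_\blk\in\B(M)}I^{\GenR{b,a}}$ and similarly for $N$, so both are free; moreover $\overleftarrow{(-)}$ preserves these direct sums and is faithful on $\INTR$-supported modules (since $(\overleftarrow{(-)})|_\INTR$ is the identity there). Hence, choosing for each $[a,b]_\blk\in\B(M)$ the generator of $I^{\GenR{b,a}}$ at $(b,a)$ (and similarly for $N$), the morphism $\overleftarrow{\fn}$ is represented by the \emph{same} coefficient matrix $C=(c_{[a,b],[a',b']})$ that represents $\fn$, where $c_{[a,b],[a',b']}$ can be nonzero only when $[a,b]\subseteq[a',b']$ as real intervals (the only case where $\Hom(I^{[a,b]_\blk},I^{[a',b']_\blk})\neq 0$, which I would verify by a short direct argument in the spirit of \cref{lem:homI}). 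Writing $B_X[p,q]=\{[a,b]_\blk\in\B(X)\mid a\le q,\ b\ge p\}$, one checks that $\overleftarrow{X}_{(s,t)}$ has basis $B_X[s,t]$, the internal maps of $\overleftarrow{X}$ are the evident coordinate inclusions, and $\overleftarrow{\fn}_{(s,t)}$ is $C$ cut down to rows $B_N[s,t]$ and columns $B_M[s,t]$. In these terms, "$\coker\overleftarrow{\fn}$ is $\epsilon$-trivial" is equivalent to: for every $(s,t)\in\R^2$ and every $[a',b']_\blk\in B_N[s,t]$, the basis vector $e_{[a',b']}$ lies in the image of $\overleftarrow{\fn}_{(s-\epsilon,t+\epsilon)}$.

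\emph{Verifying the combinatorial condition.} The hypothesis that $\coker\fn$ is $\epsilon$-trivial translates, via $(\overleftarrow{\fn})|_\INTR\cong\fn$, into exactly the same assertion restricted to $(s,t)\in\INTR$; so when $s\le t$ the claim is immediate. When $s>t$ (so $[a',b']_\blk\in B_N[s,t]$ forces $a'\le t<s\le b'$), I would apply the hypothesis at the corner point $(a',b')\in\INTR$ of the block, obtaining $e_{[a',b']}=\overleftarrow{\fn}_{(a'-\epsilon,b'+\epsilon)}(v)$ for some $v$ supported on $B_M[a'-\epsilon,b'+\epsilon]$, which contains $B_M[s-\epsilon,t+\epsilon]$ (and likewise $B_N[s-\epsilon,t+\epsilon]\subseteq B_N[a'-\epsilon,b'+\epsilon]$, both inclusions holding because $a'\le t$ and $b'\ge s$). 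The remaining task — and the main obstacle — is to replace $v$ by a preimage of $e_{[a',b']}$ that is supported on $B_M[s-\epsilon,t+\epsilon]$ and produces no spurious coordinates outside $B_N[s-\epsilon,t+\epsilon]$. I would first discard the columns $[a,b]$ of $v$ with $[a,b]\not\subseteq[a',b']$ (which, since $c_{[a,b],[a',b']}\neq0$ forces $[a,b]\subseteq[a',b']$, does not affect the $[a',b']$-coordinate), then correct the resulting error term — which is concentrated on rows meeting only the "boundary" of $B_N[s-\epsilon,t+\epsilon]$ — using columns in $B_M[s-\epsilon,t+\epsilon]$. Making this correction step precise, using again the support constraint $c_{[a,b],[a'',b'']}\neq0\Rightarrow[a,b]\subseteq[a'',b'']$, is where the real work lies; everything else is bookkeeping.
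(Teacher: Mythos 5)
Your argument for the monomorphism half is correct and is in fact cleaner than the paper's: $\Ran_\iem(-)$ is a right adjoint and hence preserves kernels, restriction to $\INTR$ is exact, so $\ker\overleftarrow{\fn}\cong\overleftarrow{\ker \fn}=0$. (The paper instead checks injectivity pointwise, factoring $\overleftarrow{\fn}_{(s,t)}$ for $s>t$ through the diagonal point $(t,s)\in\INTR$ and using freeness of $\overleftarrow{M}$.) Your reduction of the cokernel claim to the coordinate statement about the matrix $C$ is also sound, and the case $s\leq t$ is indeed immediate.

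However, the cokernel claim for $s>t$ has a genuine gap, and it sits exactly where you say it does: you produce a preimage $v$ of $e_{[a',b']}$ only at the index $(a'-\epsilon,b'+\epsilon)$, which is \emph{above} the index $(s-\epsilon,t+\epsilon)$ where a preimage is needed, and the proposed "discard the bad columns and correct the error term" step is not carried out. That correction is not bookkeeping --- it is the entire content of the proposition, restated in coordinates: after discarding columns outside $B_M[s-\epsilon,t+\epsilon]$, the error $Cv_1$ can have nonzero components on rows that do lie in $B_N[s-\epsilon,t+\epsilon]$, and you give no mechanism for killing them. The paper closes this gap by a different maneuver that avoids single basis vectors altogether: given $n\in\overleftarrow{N}_{(s,t)}$, push $n$ to the two diagonal points $(t,t)$ and $(s,s)$, use the $\epsilon$-trivial cokernel on $\INTR$ to get preimages $m_1\in\overleftarrow{M}_{(t-\epsilon,t+\epsilon)}$ and $m_2\in\overleftarrow{M}_{(s-\epsilon,s+\epsilon)}$, observe that $m_1$ and $m_2$ have equal images at the join $(t-\epsilon,s+\epsilon)$ because their images under $\overleftarrow{\fn}$ agree there and $\overleftarrow{\fn}_{(t-\epsilon,s+\epsilon)}$ is injective, and then invoke freeness of $\overleftarrow{M}$ (\cref{prop:exfree}) to glue $m_1$ and $m_2$ to a single element $m$ at the meet $(s-\epsilon,t+\epsilon)$; injectivity of the internal maps of $\overleftarrow{N}$ then forces $\overleftarrow{\fn}(m)=\phi_{\overleftarrow{N}}((s,t),(s-\epsilon,t+\epsilon))(n)$. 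If you want to salvage your coordinate approach, you would need to prove precisely this gluing property for the matrix $C$, at which point you have reproduced the paper's argument in a basis.
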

\begin{proof} 
We need to show that for each $(s,t)\in \R^2$, $\overleftarrow{\fn}_{(s,t)}$ is an injection, and \[\im \phi_{\overleftarrow{N}}((s,t), (s-\epsilon, t+\epsilon))\subseteq \im \overleftarrow{\fn}_{(s-\epsilon,t+\epsilon)}.\]

First, assume that $s\leq t$.  The universality of limits yields canonical isomorphisms such that the following diagram commutes:
\begin{center}
\includegraphics[scale=1]{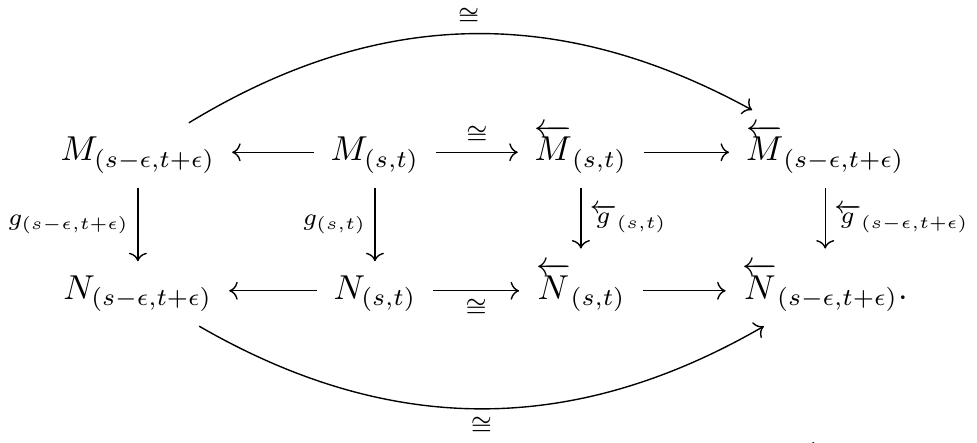}
\end{center}
It follows that $\overleftarrow{\fn}_{(s,t)}$ and $\phi_{\overleftarrow{N}}((s,t), (s-\epsilon, t+\epsilon))$ have the required properties. 

Next we consider the case $s>t$.  If $\overleftarrow{\fn}_{(s,t)}(m) = 0$ then $(\overleftarrow{\fn}_{(t,s)}\circ \phi_{\overleftarrow{M}}((s,t), (t,s)))(m) = 0$ by commutativity.  By the case $s\leq t$ considered above and \cref{prop:exfree}, the two morphisms in the latter composition are injective, so $m=0$.  Hence $\overleftarrow{\fn}_{(s,t)}$ is injective. 

Let $n\in \overleftarrow{N}_{(s,t)}$ and observe that there exist $m_1\in \overleftarrow{M}_{(t-\epsilon, t+\epsilon)}$ and $m_2\in \overleftarrow{M}_{(s-\epsilon, s+\epsilon)}$ 
such that 
\begin{align*}
\overleftarrow{\fn}(m_1) &= \left(\phi_{\overleftarrow{N}}\left((t,t), (t-\epsilon, t+\epsilon)\right)\circ \phi_{\overleftarrow{N}}((s,t), (t, t))\right)(n),\\
\overleftarrow{\fn}(m_2) &= \left( \phi_{\overleftarrow{N}}\left((s,s), (s-\epsilon, s+\epsilon)\right)\circ \phi_{\overleftarrow{N}}((s,t), (s, s))\right)(n).
\end{align*}
This is true because $\overleftarrow{\fn}$ has $\epsilon$-trivial cokernel when restricted to indices $(s',t')$ for which $s'\leq t'$. 

As $\overleftarrow{M}$ is free and $\overleftarrow{\fn}_{(t-\epsilon,s+\epsilon)}$ is an injection, there exists an element $m\in \overleftarrow{M}_{(s-\epsilon, t+\epsilon)}$ such that 
\begin{align*}
\phi_{\overleftarrow{M}}((s-\epsilon, t+\epsilon), (t-\epsilon, t+\epsilon))(m) &= m_1,\\
\quad\phi_{\overleftarrow{M}}((s-\epsilon, t+\epsilon), (s-\epsilon, s+\epsilon))(m) &= m_2.
\end{align*}
 Hence $\overleftarrow{\fn}(m) = \phi_{\overleftarrow{N}}((s,t), (s-\epsilon, t+\epsilon))(n)$ by commutativity and the injectivity of the internal maps in $\overleftarrow{N}$.
\end{proof}

\begin{proof}[Completion of the Proof of \cref{Thm:BLK_IMT}\,(iii)]
\cref{prop:exfree,prop:exfreecok} assure that $\overleftarrow{f^\cff}: \overleftarrow{M^\cff}\to \overleftarrow{N^\cff}$ is a monomorphism of free $\RCat^{\op}\times \RCat$-indexed modules with $\epsilon$-trivial cokernel.  By \cref{teo:freeIMT}, $\chi(\overleftarrow{f^\cff}): \B(\overleftarrow{M^\cff}) \nrightarrow \B(\overleftarrow{N^\cff})$ is a bijective matching such that if \[\chi(\overleftarrow{f^\cff})(\GenR{a, b}) = \GenR{a', b'},\] then
\begin{equation*}
a\leq a' \leq a+\epsilon\qquad \text{and}\qquad b-\epsilon\leq b'\leq b.
\end{equation*}
By \cref{prop:exfree}, $\chi(\overleftarrow{f^\cff})$ induces a bijective matching $\chi(f^\cff):\B(M^\cff)\nrightarrow \B(N^\cff)$ such that if \[\chi(f^\cff)[a, b]_\blk = [a', b']_
\blk,\] then
\begin{equation*}
a\leq a' \leq a+\epsilon\qquad \text{and}\qquad b-\epsilon\leq b'\leq b.\qedhere
\end{equation*} 
\end{proof}

\subsection{Proof of \cref{Thm:BLK_IMT}\,(iv)}
To prove \cref{Thm:BLK_IMT}\,(iv), we apply the induced matching theorem for $\RE$-free modules in a way analogous to the way we applied the induced matching theorem for free 2-D persistence modules in the proof of \cref{Thm:BLK_IMT}\,(iii).  First, we define a functor $\REComp{}{\epsilon}$ sending each module of type $\oo$ to an $\RE$-free module.  

\paragraph{Definition of $\protect\REComp{}{\epsilon}$}
Let us extend $(\RCat^{\op}\times \RCat)\times \{0,1\}$ to a poset $\CCat$ with the same underying set by adding an arrow $(v,0)\to (w,1)$ if and only if $v<w$.  For $i\in\{0,1\}$, let \[\iota_i:\RCat^{\op}\times \RCat\hookrightarrow \CCat\] denote the obvious map sending $\RCat\times\RCat^{\op}$ to  $\RCat\times\RCat^{\op}\times \{i\}$.  We define 
\[\overrightarrow{(-)}:=(-)|_{\iota_1} \comp \Lan_{\iota_0}(-).\] 
Thus, for $(s,t)\in \R^2$ and $\overrightarrow{(s,t)}\subset \RCat^{\op}\times \RCat$ given by
\[
\overrightarrow{(s,t)}:=\{(x,y) \mid s<x \mbox { and } y<t\},
\]
we have $\overrightarrow{M}_{(s,t)}=\varinjlim M|_{\overrightarrow{(s,t)}}$  for any $\RCat^{\op}\times \RCat$-indexed persistence module $M$; the internal maps of $\overrightarrow{M}$ are given by the universality of colimits.

Define \[\REComp{}{\epsilon}:=R_\epsilon \comp{(-)^*} \comp \overrightarrow{(-)},\]
where \[(-)^*:\Vect^{\RCat^{\op}\times \RCat}\to \Vect^{\RCat\times \RCat^{\op}}\] denotes the dualization functor of \cref{Sec:Cld_To_2D}.

\paragraph{Properties of  $\protect\REComp{}{\epsilon}$ on Modules of Type $\oo$}\label{sec:extopen}

\begin{lemma}\label{lem:openlocalfinite}
For $M$ of type $\oo$, $\delta > 0$, and $(s,t) \in \R^2$, there are a finite number of blocks $(a,b)_\BL \in \B(M)$
such that $a\leq s, b\geq t$, and $b-a\geq \delta$. 
\end{lemma}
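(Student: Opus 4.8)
The plan is to use the length condition $b-a\ge\delta$ together with $a\le s$ and $b\ge t$ to force every block in question to meet a fixed finite set of points on the diagonal, and then to bound the count at each such point by pointwise finite-dimensionality.

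First I would record the pointwise picture.  Since $M$ is of type $\oo$, we have $M\cong\bigoplus_{(a,b)_\BL\in\B(M)} I^{(a,b)_\BL}$ with every block of type $\oo$ (so $a<b$ in $\R$), and $M$ is \pfd.  For any $c\in\R$ the point $(c,c)$ lies in $\U$, and $I^{(a,b)_\BL}_{(c,c)}\neq 0$ exactly when $a<c<b$; hence the number of blocks $(a,b)_\BL\in\B(M)$ with $a<c<b$ equals $\dim M_{(c,c)}$, which is finite.

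Next I would build the finite net.  Put $c_j:=t-\tfrac{\delta}{2}+j\,\tfrac{\delta}{2}$ for $j=0,1,\dots,N$, where $N$ is the least nonnegative integer with $c_N>s$ (this exists since $\tfrac{\delta}{2}>0$).  Then $c_0<t$, $c_N>s$, and consecutive gaps equal $\tfrac{\delta}{2}<\delta$.  I then claim that any $(a,b)_\BL\in\B(M)$ with $a\le s$, $b\ge t$, and $b-a\ge\delta$ satisfies $a<c_j<b$ for some $j$.  Indeed, if not, then for each $j$ either $c_j\le a$ or $c_j\ge b$, so, since $c_0<\dots<c_N$, there is $k\in\{-1,0,\dots,N\}$ with $c_0,\dots,c_k\le a$ and $c_{k+1},\dots,c_N\ge b$.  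The case $k=N$ gives $s<c_N\le a\le s$, a contradiction; the case $k=-1$ gives $t>c_0\ge b\ge t$, a contradiction; and the case $0\le k\le N-1$ gives $b-a\le c_{k+1}-c_k=\tfrac{\delta}{2}<\delta$, contradicting $b-a\ge\delta$.

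Finally I would conclude: the set of blocks $(a,b)_\BL\in\B(M)$ with $a\le s$, $b\ge t$, $b-a\ge\delta$ is contained in the union over $j\in\{0,\dots,N\}$ of the multisets of blocks of $\B(M)$ through $(c_j,c_j)$, so its cardinality is at most $\sum_{j=0}^{N}\dim M_{(c_j,c_j)}<\infty$.  I expect no real obstacle; the only points needing care are that a single diagonal test point will not suffice (the left endpoints $a$ are a priori unbounded below), so one genuinely needs the net, and that the net points must be spaced strictly less than $\delta$ apart, so that the sandwiching case of the claim produces the strict inequality $b-a<\delta$.
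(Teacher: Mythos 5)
Your proof is correct and follows essentially the same strategy as the paper's: both reduce the count to finitely many diagonal points $(c,c)$ spaced by a fixed step over $[\min(s,t),\max(s,t)]$ and invoke pointwise finite dimensionality of $M$ there. Your choice of spacing $\tfrac{\delta}{2}$ neatly forces strict containment $a<c_j<b$ in one step, whereas the paper uses spacing $\delta$ with non-strict comparisons and leaves the resulting boundary cases as an ``easy to check''; this is a cosmetic difference only.
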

\begin{proof}
Let $\# (s,t)$ denote the number of blocks $(a,b)_\BL$ with the specified properties.  It is easy to check that since $M_{(s,s)}$ finite dimensional for all $s\in \R$, each $\# (s,s)$ is finite.    If $s<t$, then \[\#(s,t)\leq \#\left(\frac{s+t}{2},\frac{s+t}{2}\right)<\infty.\]  If $s>t$, then choosing a positive integer 
$l$ such that $\min(s,t) + l\delta > \max(s,t)$ we have
\[ \# (s,t) \leq \sum_{i=0}^l \#(\min(s,t) + i \delta, \min(s,t) + i\delta)<\infty.\qedhere\]
\end{proof}

\begin{proposition}\label{prop:openfreebarcode}
If is $M$ of type $\oo$ and $\delta>0$, then $\REComp{M}{\delta}$ is $R_\delta$-free and
\[\B(\REComp{M}{\delta}) = \{\TopRR{a,b}{\delta} \mid (a,b)_\BL\in \B(M)_\delta\}.\]
\end{proposition}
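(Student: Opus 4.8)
The plan is to reduce to a single block, compute $\REComp{}{\delta}$ on it explicitly, and then assemble the general case using that $\REComp{}{\delta}=R_\delta\circ(-)^{*}\circ\overrightarrow{(-)}$ interacts well with direct sums in the pointwise finite dimensional setting. For a single block $(a,b)_\blk$ with $a<b$, I would compute $\overrightarrow{I^{(a,b)_\blk}}$ directly from the colimit formula $\overrightarrow{M}_{(s,t)}=\varinjlim M|_{\overrightarrow{(s,t)}}$. The index poset $\overrightarrow{(s,t)}=\{(x,y)\mid x>s,\ y<t\}$ is directed and $I^{(a,b)_\blk}$ is an interval module, so this colimit is $0$ or $k$, and it is $k$ precisely when the slice $(a,b)_\blk\cap\overrightarrow{(s,t)}$ is cofinal in $\overrightarrow{(s,t)}$; a direct check (inspecting the points $(s+\eta,t-\eta)$ for small $\eta>0$) shows this holds exactly when $a\le s$, $t\le b$ and $s<t$, and the transition maps are routinely seen to be identities. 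Hence $\overrightarrow{I^{(a,b)_\blk}}\cong I^{\triangle_{a,b}}$ where $\triangle_{a,b}:=\{(s,t)\mid a\le s,\ t\le b,\ s<t\}$ (which is an interval). Dualizing does not change the underlying support, so $\bigl(\overrightarrow{I^{(a,b)_\blk}}\bigr)^{*}\cong I^{\triangle_{a,b}}$ as an $\RCat\times\RCat^{\op}$-indexed module, and applying $R_\delta$ intersects the support with $\{t-s>2\delta\}$, giving $\triangle_{a,b}\cap\{t-s>2\delta\}=\TopRR{a,b}{\delta}$. Thus $\REComp{I^{(a,b)_\blk}}{\delta}\cong I^{\TopRR{a,b}{\delta}}$, which is nonzero iff $\TopRR{a,b}{\delta}\ne\emptyset$ iff $b-a>2\delta$, i.e.\ (by \cref{lem:Erosion_And_Blocks}(i), applied with $2\epsilon=\delta$) iff $(a,b)_\blk$ is not $\delta$-trivial; and $I^{\TopRR{a,b}{\delta}}=R_\delta\bigl(I^{\GenRR{a,b}}\bigr)$, so it is $R_\delta$-free.

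For the general case, since $M$ is of type $\oo$, \cref{Structure_Theorem} gives a pointwise finite dimensional decomposition $M\cong\bigoplus_{(a,b)_\blk\in\B(M)}I^{(a,b)_\blk}$. The functor $\overrightarrow{(-)}=(-)|_{\iota_1}\circ\Lan_{\iota_0}(-)$ preserves direct sums by \cref{Prop:Preservation_of_Coproducts}(i), so $\overrightarrow{M}\cong\bigoplus I^{\triangle_{a,b}}$, and this is pointwise finite dimensional by \cref{lem:openlocalfinite} (at $(s,t)$ with $s<t$ apply the lemma with constant $t-s$, using that $a\le s$ and $b\ge t$ force $b-a\ge t-s$; at $(s,t)$ with $s\ge t$ every summand vanishes). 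Since $\overrightarrow{M}$ is p.f.d., dualization is computed pointwise on finite direct sums and hence $(\overrightarrow{M})^{*}\cong\bigoplus\bigl(\overrightarrow{I^{(a,b)_\blk}}\bigr)^{*}$, and $R_\delta$ plainly commutes with direct sums, so
\[
\REComp{M}{\delta}\ \cong\ \bigoplus_{(a,b)_\blk\in\B(M)}\REComp{I^{(a,b)_\blk}}{\delta}\ \cong\ \bigoplus_{(a,b)_\blk\in\B(M)}I^{\TopRR{a,b}{\delta}}\ \cong\ \bigoplus_{(a,b)_\blk\in\B(M)_\delta}I^{\TopRR{a,b}{\delta}},
\]
the last isomorphism discarding the zero summands coming from $\delta$-trivial blocks; this gives the claimed barcode. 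Finally, set $F_M:=\bigoplus_{(a,b)_\blk\in\B(M)_\delta}I^{\GenRR{a,b}}$. Then $R_\delta(F_M)\cong\REComp{M}{\delta}$ by the above, and $F_M$ is pointwise finite dimensional by \cref{lem:openlocalfinite} applied with the constant $2\delta$ (every block in $\B(M)_\delta$ has $b-a>2\delta$, so at a fixed $(s,t)$ only finitely many contribute). Hence $\REComp{M}{\delta}$ is $R_\delta$-free with the stated barcode.

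The delicate point is the exact support computation for $\overrightarrow{I^{(a,b)_\blk}}$: one must notice that the colimit \emph{collapses to $0$} unless the relevant slice is cofinal, and it is precisely this that forces the half-open shape of $\triangle_{a,b}$ and makes $R_\delta\circ(-)^{*}\circ\overrightarrow{(-)}$ land exactly on $\TopRR{a,b}{\delta}$ rather than on something larger. The other thing requiring care is the pointwise-finiteness bookkeeping, since $(-)^{*}$ does not commute with infinite direct sums in general: one cannot take the free module $F_M$ indexed by \emph{all} of $\B(M)$ — that module is typically not p.f.d.\ below the diagonal — so restricting to $\B(M)_\delta$ and invoking \cref{lem:openlocalfinite} is essential both for $\overrightarrow{M}$ and for $F_M$.
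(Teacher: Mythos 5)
Your proof is correct and follows essentially the same route as the paper: compute $\REComp{I^{(a,b)_\blk}}{\delta}=I^{\TopRR{a,b}{\delta}}$ for a single block, push the decomposition of $M$ through $\REComp{}{\delta}$, and obtain $R_\delta$-freeness by exhibiting $F_M=\bigoplus_{(a,b)_\blk\in\B(M)_\delta}I^{\GenRR{a,b}}$ and invoking \cref{lem:openlocalfinite} for pointwise finite dimensionality. If anything you are more careful than the paper at the two points you flag — the paper leaves the single-block colimit to a figure and asserts that $(-)^*$ preserves direct sums, whereas you correctly note that dualization only commutes with the (here p.f.d.) direct sum because finite-dimensional sums agree with products pointwise.
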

\begin{proof}
As illustrated in \cref{fig:extopen}, for all $a<b\in \R$, $\REComp{I^{(a,b)_\BL}}{\delta} = I^{\TopRR{a,b}{\delta}}$.  By \cref{Prop:Preservation_of_Coproducts}\,(i), $\Lan_{\iota_0}(-)$ preserves direct sums.  Clearly, $(-)|_{\iota_1}$, $(-)^*$, and $R_\delta$ also preserve direct sums, so the composition $\REComp{}{\delta}$ preserves direct sums as well.  Hence,
\[\REComp{M}{\delta} \cong \bigoplus_{(a,b)_\BL\in \B(M)_\delta} \REComp{I^{(a,b)_\BL}}{\delta} = \bigoplus_{(a,b)_\BL\in \B(M)_\delta} I^{\TopRR{a,b}{\delta}}.\]
Thus $\B(\REComp{M}{\delta})$ is as claimed.  

To see that $\REComp{M}{\delta}$ is $R_\delta$-free, let \[F:=\bigoplus_{(a,b)_\BL\in \B(M)_\delta}  I^{\GenRR{a,b}}.\]  $F$ is \pfd by \cref{lem:openlocalfinite} , so since $\REComp{M}{\delta}\cong R_\delta(F)$, the result follows.
\end{proof}

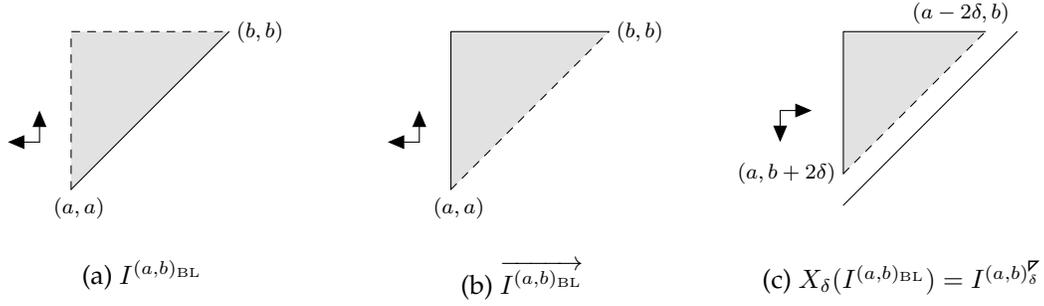
\begin{figure}
\centering
\begin{subfigure}[t]{0.3\textwidth}
\centering
\begin{tikzpicture}[line cap=round,line join=round,>=triangle 45,x=.7cm,y=0.7cm, scale=0.3,baseline=0]
\clip(-10.,-8.5) rectangle (9.,11.);
\fill[color=aqaqaq,fill=aqaqaq,fill opacity=0.30] (-5.,-5.) -- (-5,5) -- (5,5) -- cycle;
\draw (-5,-5) -- (5,5);
\draw[dashed] (-5.,-5.) -- (-5,5) -- (5,5);
\draw[->] (3-10, -2) -- (1-10, -2);
\draw[->] (3-10, -2) -- (3-10, -0); 
\begin{scriptsize}
\draw[color=black] (-4.5, -5) node[below] {$(a,a)$};
\draw[color=black] (5,5) node[right] {$(b,b)$};
\end{scriptsize}
\end{tikzpicture}
\subcaption{$I^{(a,b)_\blk}$}
\end{subfigure}
\begin{subfigure}[t]{0.3\textwidth}
\centering
\begin{tikzpicture}[line cap=round,line join=round,>=triangle 45,x=.7cm,y=0.7cm, scale=0.3,baseline=0]
\clip(-10.,-8.5) rectangle (9.,11.);
\fill[color=aqaqaq,fill=aqaqaq,fill opacity=0.30] (-5.,-5.) -- (-5,5) -- (5,5) -- cycle;
\draw[dashed] (-5,-5) -- (5,5);
\draw[] (-5.,-5.) -- (-5,5) -- (5,5);
\draw[->] (3-10, -2) -- (1-10, -2);
\draw[->] (3-10, -2) -- (3-10, -0); 
\begin{scriptsize}
\draw[color=black] (-4.5, -5) node[below] {$(a,a)$};
\draw[color=black] (5,5) node[right] {$(b,b)$};
\end{scriptsize}
\end{tikzpicture}
\subcaption{$\overrightarrow{I^{(a,b)_\blk}}$}
\end{subfigure}
\begin{subfigure}[t]{0.3\textwidth}
\centering
\begin{tikzpicture}[line cap=round,line join=round,>=triangle 45,x=.7cm,y=0.7cm, scale=0.3,baseline=0]
\clip(-11.6,-8.5) rectangle (9.,11.);
\fill[color=aqaqaq,fill=aqaqaq,fill opacity=0.30] (-5.,-4.) -- (-5,5) -- (4,5) -- cycle;
\draw (-5.,-4.) -- (-5,5) -- (4,5);
\draw[dashed](-5,-4)--(4,5); 
\draw  (-5, -6) -- (6, 5);
\draw[->] (1-10, 0) -- (3-10, 0);
\draw[->] (1-10, 0) -- (1-10, -2); 
\begin{scriptsize}
\draw[color=black] (6,5) node[above left] {$(a-2\delta,b)$};
\draw[color=black] (-5,-4) node[left] {$(a,b+2\delta )$};
\end{scriptsize}
\end{tikzpicture}

\subcaption{$\REComp{I^{(a,b)_\blk}}{\delta}=I^{\TopRR{a,b}{\delta}}$}
\end{subfigure}
\caption{Applying $\protect\REComp{-}{\delta}$ to $I^{(a,b)_\BL}$}
\label{fig:extopen} 
\end{figure}

\begin{lemma}\label{lem:opensurj}
Let $M$ and $N$ be of type $\oo$, and let $f: M\to N$ be a morphism with $\epsilon$-trivial cokernel. Then $f$ is surjective at all indices $(s,t)$ for which $t-s\geq 2\epsilon$. 
\end{lemma}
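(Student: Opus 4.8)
The plan is to observe that only the structure of the codomain $N$ matters here: since $N$ is of type $\oo$, it is isomorphic to a direct sum of interval modules $I^{(a,b)_{\blk}}$ with $a<b\in\R$, and this, together with the $\epsilon$-triviality of $\coker f$, will suffice (the hypothesis on $M$ is not actually used). Fix an index $(s,t)$ with $t-s\ge 2\epsilon$ and consider the index $(s+\epsilon,t-\epsilon)$. Since $t-s\ge 2\epsilon$ we have $s+\epsilon\le t-\epsilon$, so $(s+\epsilon,t-\epsilon)$ lies in $\U$, and $(s,t)$ is precisely its image under the $\epsilon$-shift (recall that in $\RCat^{\op}\times\RCat$ the $\epsilon$-shift sends $(x,y)$ to $(x-\epsilon,y+\epsilon)$).

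First I would show that $\phi_N((s+\epsilon,t-\epsilon),(s,t))$ is surjective. It suffices to check this on each interval summand $I^{(a,b)_{\blk}}$ of $N$: if $I^{(a,b)_{\blk}}_{(s,t)}=0$ there is nothing to prove, while if $I^{(a,b)_{\blk}}_{(s,t)}\ne 0$ then $a<s$ and $t<b$, so $a<s+\epsilon\le t-\epsilon<b$, which shows $(s+\epsilon,t-\epsilon)\in(a,b)_{\blk}$ and hence that the structure map on that summand is the identity. Passing to the direct sum (surjectivity is preserved by arbitrary direct sums) yields surjectivity of $\phi_N((s+\epsilon,t-\epsilon),(s,t))$.

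Next I would use that the $\epsilon$-triviality of $\coker f$ is exactly the statement that the map $(\coker f)_{(s+\epsilon,t-\epsilon)}\to(\coker f)_{(s,t)}$ induced by $\phi_N((s+\epsilon,t-\epsilon),(s,t))$ vanishes; since $(\coker f)_{(s,t)}=N_{(s,t)}/\im f_{(s,t)}$, this is equivalent to $\im\phi_N((s+\epsilon,t-\epsilon),(s,t))\subseteq\im f_{(s,t)}$. Combining the two steps gives $N_{(s,t)}=\im\phi_N((s+\epsilon,t-\epsilon),(s,t))\subseteq\im f_{(s,t)}$, so $f_{(s,t)}$ is surjective, which is the claim.

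I do not expect a genuine obstacle; the only delicate point is the bookkeeping with the opposite order on the first coordinate, and in particular noticing that the hypothesis $t-s\ge 2\epsilon$ enters exactly to guarantee that the ``pulled back'' index $(s+\epsilon,t-\epsilon)$ still lies in $\U$ — equivalently, that the relevant structure maps of the type-$\oo$ interval summands of $N$ are identities rather than zero maps, which is what makes $\phi_N((s+\epsilon,t-\epsilon),(s,t))$ surjective.
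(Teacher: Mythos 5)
Your proof is correct and follows essentially the same strategy as the paper's: pull back along a structure map of $N$ that is surjective because $N$ is of type $\oo$, then invoke $\epsilon$-triviality of the cokernel; the paper merely pulls back to the diagonal point $\left(\frac{s+t}{2},\frac{s+t}{2}\right)$ rather than to $(s+\epsilon,t-\epsilon)$, an inessential difference.
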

\begin{proof}
Observe that $\phi_N((s',t'), (s,t))$ is surjective whenever $t'-s'\geq 0$.  If $n\in N_{(s,t)}$ is not in the image of $f$, then neither is any element in $\phi_N((\frac{s+t}{2}, \frac{s+t}{2}), (s,t))^{-1}(n)\neq \emptyset$, contradicting that $f$ has $\epsilon$-trivial cokernel. 
\end{proof}

\begin{proposition}\label{prop:openexmorph}
If $M$ and $N$ are of type $\oo$ and $f: M\to N$ has $\epsilon$-trivial kernel and cokernel,
then $\REComp{f}{\epsilon}$ is a monomorphism with $\epsilon$-trivial cokernel. 
\end{proposition}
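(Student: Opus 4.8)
The plan is to propagate the hypotheses on $f$ through the three functors comprising $\REComp{}{\epsilon}=R_\epsilon\comp(-)^*\comp\overrightarrow{(-)}$, with essentially all of the technical work concentrated at the Kan extension stage $\overrightarrow{(-)}$. The role of this proposition is exactly the one played by \cref{prop:exfreecok} in the type $\ccc$ case, and the structure of the argument parallels it.

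First I would record three facts about the induced morphism $\overrightarrow{f}\colon\overrightarrow{M}\to\overrightarrow{N}$, using that $\overrightarrow{Q}_{(s,t)}=\varinjlim Q|_{\overrightarrow{(s,t)}}$ is a colimit over the directed down-set $\overrightarrow{(s,t)}$ (so every class has a single representative $\tilde q\in Q_{(x,y)}$), and that, since $M$ and $N$ vanish off $\{x\le y\}$, the colimits computing $\overrightarrow{M}_{(s,t)}$ and $\overrightarrow{N}_{(s,t)}$ may be taken over the cofinal subposet $\overrightarrow{(s,t)}\cap\{x\le y\}$. (i) $\overrightarrow{f}$ has $\epsilon$-trivial cokernel: colimits are right exact so $\coker\overrightarrow{f}\cong\overrightarrow{\coker f}$, and $\overrightarrow{(-)}$ carries $\epsilon$-trivial modules to $\epsilon$-trivial modules, since a representative $\tilde q\in Q_{(x,y)}$ is killed by $\phi_Q((x,y),(x-\epsilon,y+\epsilon))$ when $\phi_Q^\epsilon=0$, while $(x-\epsilon,y+\epsilon)\in\overrightarrow{(s-\epsilon,t+\epsilon)}$. (ii) $\overrightarrow{f}$ has $\epsilon$-trivial kernel: here $\ker$ does not commute with $\varinjlim$, so I would argue directly. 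A class $m\in(\ker\overrightarrow{f})_{(s,t)}$ is represented by $\tilde m\in M_{(x,y)}$ whose image dies at some stage, i.e.\ $f\bigl(\phi_M((x,y),(x',y'))(\tilde m)\bigr)=0$ for some $(x',y')\ge(x,y)$ in $\overrightarrow{(s,t)}$; $\epsilon$-triviality of $\ker f$ then forces $\phi_M((x,y),(x'-\epsilon,y'+\epsilon))(\tilde m)=0$, and since $x'>s$, $y'<t$ we have $(x'-\epsilon,y'+\epsilon)\in\overrightarrow{(s-\epsilon,t+\epsilon)}$, so $\tilde m$ already represents $0$ in $\overrightarrow{M}_{(s-\epsilon,t+\epsilon)}$. (iii) $\overrightarrow{f}_{(s,t)}$ is surjective whenever $t-s>2\epsilon$: by \cref{lem:opensurj}, $f_{(x,y)}$ is surjective on $S=\{(x,y)\mid s<x\le y<t,\ y-x\ge 2\epsilon\}$, and, using $t-s>2\epsilon$, $S$ is cofinal in $\overrightarrow{(s,t)}\cap\{x\le y\}$, so $\overrightarrow{f}_{(s,t)}$ is a colimit of surjections over a cofinal subposet, hence surjective.

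It then remains to run these through $(-)^*$ and $R_\epsilon$, writing $\REComp{f}{\epsilon}=R_\epsilon\bigl((\overrightarrow{f})^*\bigr)$. For the cokernel: by (ii) and \cref{Prop:Kernel_Dualization}\,(i) (applicable here via \cref{Rmk:RopVsR2}), $(\overrightarrow{f})^*$ has $\epsilon$-trivial cokernel; since $R_\epsilon$ is exact and $\coker R_\epsilon(g)\cong R_\epsilon(\coker g)$ is again $\epsilon$-trivial, $\REComp{f}{\epsilon}$ has $\epsilon$-trivial cokernel. For injectivity: $\bigl(\overrightarrow{f}_{(s,t)}\bigr)^*$ is injective exactly when $\overrightarrow{f}_{(s,t)}$ is surjective, which by (iii) holds on precisely the indices $t-s>2\epsilon$ retained by $R_\epsilon$; the remaining components of $\REComp{f}{\epsilon}$ are maps $0\to 0$. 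Hence $\REComp{f}{\epsilon}$ is a monomorphism with $\epsilon$-trivial cokernel, which is the claim.

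The main obstacle is steps (ii) and (iii). Unlike the cokernel, neither $\ker\overrightarrow{f}$ nor $\im\overrightarrow{f}$ is $\overrightarrow{(-)}$ applied to the corresponding subobject of $f$, so these two facts cannot be obtained by a formal functorial manipulation and instead require diagram chases at the level of colimit representatives. What makes them succeed is that the shifted index that one needs --- $(x'-\epsilon,y'+\epsilon)$ in the kernel argument, a sufficiently ``thick'' index in the surjectivity argument --- still lies inside the relevant down-set $\overrightarrow{(\,\cdot\,)}$ defining the colimit, so the colimit already ``sees'' the vanishing supplied by $\epsilon$-triviality of $\ker f$, respectively the surjectivity supplied by \cref{lem:opensurj}.
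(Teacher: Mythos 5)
Your proof is correct, and at the structural level it coincides with the paper's: both arguments reduce the proposition to the same two facts about $\overrightarrow{f}$ --- that it has $\epsilon$-trivial kernel, and that $\overrightarrow{f}_{(s,t)}$ is surjective whenever $t-s>2\epsilon$ (via \cref{lem:opensurj}) --- and then conclude by dualizing with \cref{Prop:Kernel_Dualization} and applying $R_\epsilon$. Where you genuinely diverge is in how those two facts are established. The paper first uses \cref{lem:openlocalfinite} to show that for $Q$ of type $\oo$ the colimit defining $\overrightarrow{Q}_{(s,t)}$ \emph{stabilizes}: the natural map $Q_{(s+\eta,t-\eta)}\to\overrightarrow{Q}_{(s,t)}$ is an isomorphism for all sufficiently small $\eta>0$. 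Applying this four times collapses everything to a finite commutative diagram of actual module values, from which the kernel and surjectivity claims are read off. You instead work directly with filtered colimits: the kernel claim by chasing a representative killed at a finite stage and observing that the $\epsilon$-shifted index still lies in the down-set $\overrightarrow{(s-\epsilon,t+\epsilon)}$, and the surjectivity claim by cofinality of the ``thick'' indices together with right-exactness of colimits. Your route buys independence from \cref{lem:openlocalfinite} (and hence, for this step, from the local finiteness of the barcode), at the cost of somewhat more care with representatives; the paper's stabilization trick buys a clean finite diagram. One cosmetic point: your fact (i), that $\overrightarrow{f}$ has $\epsilon$-trivial cokernel, is never used in your final assembly --- the $\epsilon$-trivial cokernel of $\REComp{f}{\epsilon}$ comes from the $\epsilon$-trivial \emph{kernel} of $\overrightarrow{f}$ via duality, exactly as you say --- so it can be deleted.
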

\begin{proof}
It follows from \cref{lem:openlocalfinite} that for any module $Q$ of type $\oo$ and $(s,t)\in \R^2$, there exists a $\eta>0$ such that $\phi_Q((s+\eta, t-\eta), (s+\eta', t-\eta'))$ is an isomorphism for all $0<\eta'\leq \eta$. In particular, the natural map $Q_{(s+\eta, t-\eta)} \to \REXD{Q}_{(s,t)}$ is an isomorphism.  Applying this observation four times, we find that there exists $\eta>0$ such that the leftmost and rightmost horizontal maps are isomorphisms in the following commutative diagram:
\begin{center}
\includegraphics[scale=1]{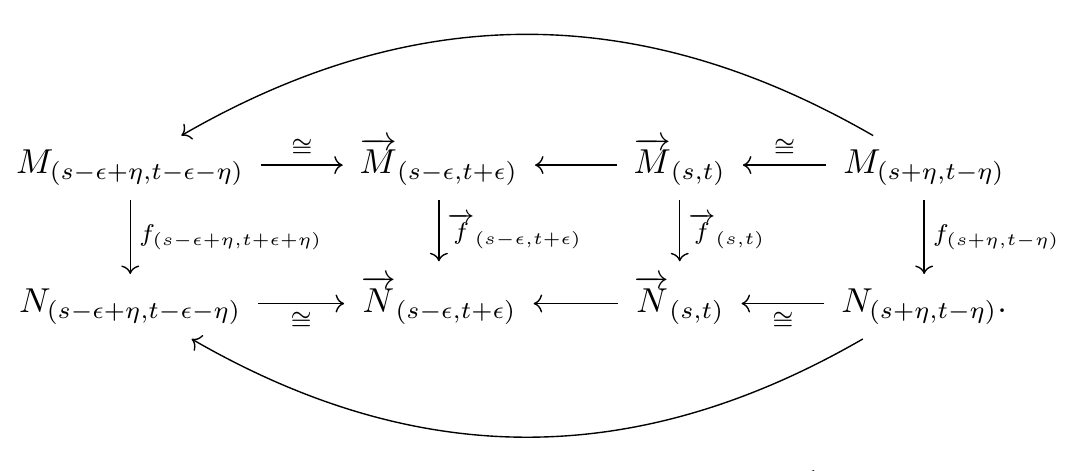}
\end{center}
This shows that $\overrightarrow{f}$ has $\epsilon$-trivial kernel, and by \ref{lem:opensurj}, that $\overrightarrow{f}_{(s,t)}$ is surjective at all indices satisfying $t-s>2\epsilon$.  The result now follows from \cref{Prop:Kernel_Dualization}. 
\end{proof}

\begin{proof}[Proof of \cref{Thm:BLK_IMT}\,(iv)]
Suppose $M$ and $N$ are of type $\oo$ and $f : M \to N$ has $\epsilon$-trivial kernel and cokernel. By \cref{prop:openfreebarcode,prop:openexmorph}, $\REComp{f}{\epsilon}: \REComp{N}{\epsilon} \to \REComp{M}{\epsilon}$ is a monomorphism of $\RE$-free persistence modules with $\epsilon$-trivial cokernel.  By \cref{cor:IMTRFree} and \cref{prop:openfreebarcode}, we obtain matchings
\[\B(M)\nrightarrow \B(\REComp{M}{\epsilon}) \nrightarrow \B(\REComp{N}{\epsilon})\nrightarrow \B(N).\]
The composition of these is our desired matching.
\end{proof}

\section{Stability of Almost-Block Decomposable Modules}\label{Sec:Almost_Block_Stability}
In this section, we present a simple extension of the block stability theorem to a slightly more general classes of modules, and discuss an application to the stability of (inter)level set persistent homology.
  
Recall our definition of a block from \cref{Sec:Block_Decomposable_Persistence_Modules}.   We define an \emph{almost-block} $\J$ to be an interval in $\INTR$ for which there exists a block $\J_{\rm BL}$ such that $d_I(I^\J, I^{\J_{\rm BL}})=0$.  Some almost-blocks which are not blocks are shown in \cref{fig:GenIntervals}.  We say $M$ is \emph{almost-block decomposable} if $M$ is interval decomposable, with each interval in $\B(M)$ an almost-block.
\begin{figure}
\centering
\begin{subfigure}{0.24\textwidth}

\begin{tikzpicture}[line cap=round,line join=round,>=triangle 45,x=.7cm,y=0.7cm, scale=0.2]
\clip(-13.,-10.) rectangle (13.,13.);
\fill[color=aqaqaq,fill=aqaqaq,fill opacity=0.30] (-5.,-5.) -- (-5,5) -- (5,5) -- cycle;
\draw (-10,-10) -- (10,10);
\draw (-5.,-5.) -- (-5,5) -- (5,5);

\begin{scriptsize}
\draw[color=black] (-4.5, -5) node[right] {$(a,a)$};
\draw[color=black] (5,5) node[right] {$(b,b)$};
\end{scriptsize}
\end{tikzpicture}
\end{subfigure}
\begin{subfigure}{0.24\textwidth}
\begin{tikzpicture}[line cap=round,line join=round,>=triangle 45,x=.7cm,y=0.7cm, scale=0.2]
\clip(-13.,-10.) rectangle (10.2,13.);
\fill[color=aqaqaq,fill=aqaqaq,fill opacity=0.30] (-5.,-5.) -- (-10,-5) -- (-10,5) -- (5,5) -- cycle;
\draw (-10,-10) -- (10,10);
\draw (-5,-5) -- (-10,-5);
\draw[->] (-7,0) -- (-12, 0); 
\draw (-10, 5) -- (5,5); 
\begin{scriptsize}
\draw[color=black] (-4.5, -5) node[right] {$(a,a)$};
\draw[color=black] (5,5) node[right] {$(b,b)$};
\end{scriptsize}
\end{tikzpicture}
\end{subfigure}
\begin{subfigure}{0.24\textwidth}
\begin{tikzpicture}[line cap=round,line join=round,>=triangle 45,x=.7cm,y=0.7cm, scale=0.2]
\clip(-10.,-10.) rectangle (13.,13.);
\fill[color=aqaqaq,fill=aqaqaq,fill opacity=0.30] (-5.,-5.) -- (-5,10) -- (5,10) -- (5,5) -- cycle;
\draw (-10,-10) -- (10,10);
\draw[dashed] (5, 10) -- (5,5);
\draw[->] (0,7) -- (0,12);
\draw (-5, -5) -- (-5, 10);
\begin{scriptsize}
\draw[color=black] (-4.5, -5) node[right] {$(a,a)$};
\draw[color=black] (5,5) node[right] {$(b,b)$};
\end{scriptsize}
\end{tikzpicture}
\end{subfigure}
\begin{subfigure}{0.24\textwidth}
\begin{tikzpicture}[line cap=round,line join=round,>=triangle 45,x=.7cm,y=0.7cm, scale=0.2]
\clip(-13.,-10.) rectangle (13.,13.);
\fill[color=aqaqaq,fill=aqaqaq,fill opacity=0.30] (-5.,-5.) -- (-10,-5) -- (-10, 10) -- (5,10) --  (5,5) -- cycle;
\draw (-10,-10) -- (10,10);
\draw[dashed] (-5, -5) -- (-10,-5);
\draw[dashed] (5,10) -- (5,5);

\draw[->] (-7,0) -- (-12, 0);
\draw[->] (0,7) -- (0,12);
\draw[->] (-7,7) -- (-12,12);

\begin{scriptsize}
\draw[color=black] (-4.5, -5) node[right] {$(a,a)$};
\draw[color=black] (5,5) node[right] {$(b,b)$};
\end{scriptsize}
\end{tikzpicture}
\end{subfigure}

\caption{Four almost-blocks that are not blocks.}
\label{fig:GenIntervals}
\end{figure} 

\begin{corollary}[Almost-Block Stability]\label{Cor:Almost_Block_Stability}
For \pfd almost-block decomposable modules $M$ and $N$,
\[d_I(M,N) \leq d_b(\B(M), \B(N))\leq \frac{5}{2} d_I(M,N).\]
\end{corollary}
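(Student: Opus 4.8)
The plan is to reduce to the block stability theorem by replacing $M$ and $N$ with genuinely block decomposable modules lying at zero interleaving, resp.\ zero bottleneck, distance from them. For a \pfd almost-block decomposable module $M$, the definition of almost-block lets us choose, for each $\J\in\B(M)$, a block $\J_{\rm BL}$ with $d_I(I^\J,I^{\J_{\rm BL}})=0$; put $M_{\rm BL}:=\bigoplus_{\J\in\B(M)}I^{\J_{\rm BL}}$, and define $N_{\rm BL}$ analogously. First I would check that $M_{\rm BL}$ is again \pfd, so that \cref{teo:IMTinterleaving} applies to it. This follows from $M$ being \pfd: since $I^\J$ and $I^{\J_{\rm BL}}$ are $\epsilon$-interleaved for every $\epsilon>0$, the interleaving morphisms force any point of $\J_{\rm BL}$ that is interior in the shift direction---i.e.\ that stays in $\J_{\rm BL}$ after sufficiently small positive and negative shifts along the order---to lie in $\J$ as well; hence an infinite family of blocks $\J_{\rm BL}$ through a common point $q$ would, after pushing $q$ slightly into the interiors along the shift direction, yield an infinite family of the corresponding $\J$'s through a single point, contradicting that $M$ is \pfd.

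Next I would record two vanishing-distance statements. Forming the direct sum of $\epsilon$-interleaving morphisms shows that $M$ and $M_{\rm BL}$ are $\epsilon$-interleaved for every $\epsilon>0$, so $d_I(M,M_{\rm BL})=0$, and likewise $d_I(N,N_{\rm BL})=0$. On the barcode side, the bijection $\B(M)\nrightarrow\B(M_{\rm BL})$ sending $\J$ to $\J_{\rm BL}$ matches every block and matches $\epsilon$-interleaved pairs for every $\epsilon>0$, hence is an $\epsilon$-matching for every $\epsilon>0$; so $d_b(\B(M),\B(M_{\rm BL}))=0$, and similarly $d_b(\B(N),\B(N_{\rm BL}))=0$. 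Here one uses only that $\epsilon$-interleavings of intervals, $\epsilon$-matchings, and the triangle inequality for $d_b$ make sense for barcodes of almost-block decomposable modules, which is immediate from \cref{Rmk:RopVsR2} and the definitions in \cref{Sec:Isometry}. Applying the triangle inequalities for $d_I$ and $d_b$ in both directions now gives
\[d_I(M,N)=d_I(M_{\rm BL},N_{\rm BL}),\qquad d_b(\B(M),\B(N))=d_b(\B(M_{\rm BL}),\B(N_{\rm BL})).\]

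Finally I would feed $M_{\rm BL}$ and $N_{\rm BL}$ into the machinery already built for genuine block decomposables. Since block decomposable modules are interval decomposable, \cref{lem:converseAST} gives $d_I(M_{\rm BL},N_{\rm BL})\le d_b(\B(M_{\rm BL}),\B(N_{\rm BL}))$, while the block stability theorem \cref{teo:IMTinterleaving} gives $d_b(\B(M_{\rm BL}),\B(N_{\rm BL}))\le \frac{5}{2}\,d_I(M_{\rm BL},N_{\rm BL})$. Chaining these two inequalities with the two equalities above yields
\[d_I(M,N)\ \le\ d_b(\B(M),\B(N))\ \le\ \frac{5}{2}\,d_I(M,N),\]
as desired. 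The only step that goes beyond triangle-inequality bookkeeping is the verification that $M_{\rm BL}$ is \pfd, together with the harmless check that $\epsilon$-matchings and $d_b$ for almost-block barcodes coincide with the general notions; that is where the real work lies, and given the block stability theorem and its converse the rest is immediate.
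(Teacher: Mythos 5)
Your overall strategy---replace each almost-block by a block at interleaving distance zero and reduce to \cref{teo:IMTinterleaving}---is the same as the paper's, but your specific reduction has a genuine gap: the module $M_{\rm BL}=\bigoplus_{\J\in\B(M)}I^{\J_{\rm BL}}$ need not be \pfd, and \cref{teo:IMTinterleaving} is stated (and proved) only for \pfd block decomposable modules. Your argument for pointwise finite dimensionality fails because a point $q$ lying in infinitely many of the blocks $\J_{\rm BL}$ need not admit a single perturbation that is ``interior in the shift direction'' for infinitely many of them at once: the amount by which $q$ can be pushed before leaving $\J_{\rm BL}$ depends on the block and can shrink to zero along the family. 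Concretely, take $\J_n=\{(x,y)\in\U\mid 0<y<1/n\}$ for $n\ge 1$. Each $\J_n$ is an almost-block whose associated block is $[0,1/n)_\blk$ (and this block is unique, by \cref{lem:Erosion_And_Blocks}, so no other choice is available), and $M=\bigoplus_n I^{\J_n}$ is \pfd, since $(x,y)\in\J_n$ forces $n<1/y$. But every point $(x,0)$ with $x\le 0$ lies in $[0,1/n)_\blk$ for all $n$, so $M_{\rm BL}$ is infinite dimensional there. Hence you cannot feed $M_{\rm BL}$ and $N_{\rm BL}$ into the block stability theorem as written.

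The rest of your bookkeeping is fine: the zero-distance claims $d_I(M,M_{\rm BL})=0$ and $d_b(\B(M),\B(M_{\rm BL}))=0$ and the triangle inequalities are all correct (and the first inequality $d_I(M,N)\le d_b(\B(M),\B(N))$ follows from \cref{lem:converseAST} applied directly to $M$ and $N$, which are interval decomposable, with no detour needed). The paper's proof sketch avoids the problem by settling for approximations at positive distance: for each $\delta>0$ it produces \pfd block decomposable modules $M'$, $N'$ within $\delta$ of $M$, $N$ in both $d_I$ and $d_b$---for instance, one may take $M'$ to be $M_{\rm BL}$ with all $2\delta$-trivial blocks discarded, which restores pointwise finite dimensionality while moving both distances by at most $\delta$---then applies \cref{teo:IMTinterleaving} to $M'$, $N'$ and lets $\delta\to 0$. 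If you insert such a $\delta$-truncation step before invoking the block stability theorem, your argument goes through.
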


\begin{proof}[Sketch of proof]
For any $\delta>0$, there exist \pfd block decomposable modules $M'$ and $N'$ with \[d_I(M,M'),d_I(N,N'),d_b(M,M'),d_b(N,N')\leq \delta.\]  Given this, the inequality $d_b(\B(M), \B(N))\leq \frac{5}{2} d_I(M,N)$ follows easily from \cref{teo:IMTinterleaving}, together with the triangle inequalities for $d_I$ and $d_b$.  

It follows from \cref{lem:converseAST} that $d_I(M,N) \leq d_b(\B(M), \B(N))$.
\end{proof}

\paragraph{Almost-Block Stability and Interlevel Set Persistent Homology}
Almost-block decomposable persistence modules can arise as the interlevel set persistent homology of non-Morse functions, as the following example illustrates:

\begin{example}\label{Ex:Not_Morse}
The function $\gamma: (0,1)\to \R$ given by $\gamma(t)=t$ is not of Morse type.  $H_0(\FI\gamma)$ is almost-block decomposable but not block decomposable; $\B(H_0(\FI\gamma))$ consists of a single interval $\J$ with $d_b(\J,[0,1]_\blk)=0$.
\end{example}
In fact, we hypothesize that  \cref{Thm:Morse_Block_Decomposition}\,(i) generalizes as follows:

\begin{conjecture}\label{Conj:Decomposition}
For any topological space $\TopSpace$ and continuous function $\gamma:\TopSpace\to \R$, if $H_i(\FI{\gamma})$ is \pfd then it is almost-block decomposable.
\end{conjecture}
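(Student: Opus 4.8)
}
The plan is to imitate the proof of \cref{Thm:Morse_Block_Decomposition}\,(i), but with the structure theorem for $\ZZCat$-indexed modules replaced by a structure theorem for a class of $\U$-indexed modules large enough to contain every \pfd interlevel set homology module. Write $M := H_i\FI{\gamma}$. Since $M$ is \pfd, $M$ decomposes into indecomposable summands with local endomorphism rings; the uniqueness of such a decomposition (Azumaya--Krull--Remak--Schmidt \cite{azumaya1950corrections}) then reduces the conjecture entirely to identifying the possible indecomposable summands of $M$, i.e., to proving that each such summand $S$ is an interval module $S\cong I^{\J}$ with $\J$ an almost-block. That one should expect almost-blocks rather than blocks --- and hence that the slack built into the condition $d_I(I^\J, I^{\J_{\rm BL}}) = 0$ is genuinely needed --- is already visible in \cref{Ex:Not_Morse}.

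\paragraph{The Mayer--Vietoris structure}
The extra structure $M$ enjoys, beyond being a \pfd $\U$-indexed module, is \emph{middle-exactness}: for $a \le b \le c$, the covering $\gamma^{-1}([a,c]) = \gamma^{-1}([a,b]) \cup \gamma^{-1}([b,c])$, with intersection $\gamma^{-1}(\{b\})$, yields Mayer--Vietoris long exact sequences linking $H_\ast\FI{\gamma}$ at the indices $(a,b)$, $(b,c)$, $(a,c)$ with the level set homology at $b$ in degrees $\ast$ and $\ast \pm 1$. Considering all homology degrees simultaneously, one would use these sequences to constrain an indecomposable \pfd summand $S$ of $H_i\FI{\gamma}$: its restrictions to vertical and horizontal lines in $\U$ are \pfd $\R$-indexed modules, hence interval decomposable by \cref{Structure_Theorem}; middle-exactness should pin down how these slices are glued across $\U$; and indecomposability together with the local-ring property should then leave no possibility other than a single interval module supported on an almost-block. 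This is the step that will carry the real difficulty, and it is where the \pfd hypothesis of \cref{Conj:Decomposition} must be exploited: a priori that hypothesis controls $M$ only on the diagonal, and one must bootstrap it into global control using middle-exactness.

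\paragraph{An alternative approximation route}
A second line of attack is to approximate. For $\delta > 0$, restrict the data of $\gamma$ to the ``level'' indices $\gamma^{-1}(t)$ and ``interlevel'' indices $\gamma^{-1}([t,t'])$ associated to a $\delta$-net of values of $\gamma$, obtaining a \pfd zigzag module $V_\delta$; by \cref{prop:zigzagext} the block extension $E(V_\delta)$ is block decomposable, and one checks that $d_I(M, E(V_\delta)) \le \delta$ and that the barcodes $\B(E(V_\delta))$ form a $d_b$-Cauchy family as $\delta \to 0$. The remaining work is to show that this family of block barcodes converges to a multiset of almost-blocks and that $M$ is isomorphic to the direct sum of the corresponding almost-block modules. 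One cannot shortcut this by restricting $M$ directly to a two-dimensional grid in $\U$ and Kan-extending, since grid-shaped posets are of wild representation type; passing through level set zigzags is exactly what keeps the approximants interval decomposable.

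\paragraph{The main obstacle}
In both approaches the crux is the same. For a general continuous (non-Morse) $\gamma$, the deformation-retraction and excision arguments that make Mayer--Vietoris and ``tameness'' conveniently available in the Morse setting are unavailable, and the critical values of $\gamma$ may well be dense. One must therefore either prove the structure theorem for middle-exact \pfd $\U$-indexed modules directly --- a statement whose $\ZZCat$-indexed analogue already required care \cite{botnan2015interval}, and whose $\U$-indexed form appears to be open --- or establish convergence of the barcodes $\B(E(V_\delta))$, which is delicate precisely because finite-dimensionality along the diagonal does not obviously rule out accumulation of bars off the diagonal. We expect that settling this will require a genuinely new idea, or will uncover a counterexample.
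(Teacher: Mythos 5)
The statement you are addressing is a \emph{conjecture}: the paper offers no proof of \cref{Conj:Decomposition}, only the remark that Cochoy and Oudot \cite{cochoy2016decomposition} later established a decomposition theorem yielding two close variants of it (for open interlevel sets, and for the module obtained by zeroing out the diagonal). Your proposal is likewise not a proof, and you say so yourself: both of your strategies terminate at an unresolved crux. In the first approach, the entire content of the conjecture is concentrated in the step ``middle-exactness should pin down how these slices are glued across $\U$ \ldots and should then leave no possibility other than a single interval module supported on an almost-block''---that is exactly the structure theorem one would need to prove, and nothing in the proposal supplies it. In the second approach, the convergence of the barcodes $\B(E(V_\delta))$ and the identification of $M$ with the limit is asserted as ``the remaining work'' but not carried out; as you note, finite-dimensionality on the diagonal does not by itself prevent bars from accumulating, and there is also no argument that a general continuous $\gamma$ admits the discretizations $V_\delta$ with $d_I(M,E(V_\delta))\leq\delta$ (this is where Morse-type hypotheses are normally used).

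That said, your diagnosis of where the difficulty lies is accurate, and your first strategy is essentially the route that succeeds in the literature: the Cochoy--Oudot theorem cited in the paper is precisely a decomposition theorem for \pfd $\U$-indexed (more precisely, exact $\RCat^2$-indexed) modules satisfying the Mayer--Vietoris exactness condition you identify, and it is what turns your sketch into the known variants of the conjecture. So the proposal should be read as a correct framing of the problem together with an honest admission that the key lemma is missing, not as a proof; the gap is the structure theorem for middle-exact \pfd modules over $\U$ (or, in the alternative route, the convergence and realization argument), and without it the conjecture remains open as stated.
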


If \cref{Conj:Decomposition} is true, then the definition of level set barcodes of \cref{Sec:Levelset_Persistence} extends to any $\R$-valued function with \pfd interlevel set  homology, and a stability result for the interlevel and level set barcodes of such functions follows immediately from \cref{Cor:Almost_Block_Stability}.

\begin{remark}
In \cite{carlsson201Xparameterized}, Carlsson, de Silva, Kali\v snik, and Morozov use the formalism of rectangle measures \cite{chazal2012structure} to define levelset barcodes of $\R$-valued functions in a general setting, and establish a stability result for these barcodes.  \cref{Conj:Decomposition} is inspired by discussions with de Silva and Kali\v snik about that work.
\end{remark}

\begin{remark}
Subsequent to the first iteration of this paper, Cochoy and Oudot \cite{cochoy2016decomposition} have established a structure theorem for a certain class of 2-D persistence modules which yields as corollaries two variants of \cref{Conj:Decomposition}: 
\begin{enumerate}[(i)]
\item Let $\FI{\gamma}^\circ$ be the $\U$-indexed module given by $\FI{\gamma}^\circ_{(a,b)} := \gamma^{-1}((a,b))$ if $a<b$, and $\FI{\gamma}^\circ_{(a,b)}=0$ otherwise.  Then $H_i({\FI{\gamma}}^\circ)$ is almost-block decomposable. 
\item Let $M$ be the $\U$-indexed module obtained from $H_i(\FI{\gamma})$ by setting to 0 each vector space on the diagonal line $y=x$.  Then $M$ is almost-block decomposable. 
\end{enumerate}
\end{remark}

\section{Discussion}\label{Sec:Discussion}

\paragraph{Towards a General Theory of Algebraic Stability}
In this paper, we have introduced an algebraic stability theorem for block decomposable modules which, as an easy corollary, yields a stability result for zigzag modules.  It is natural to ask whether our results generalize to an algebraic stability theorem for arbitrary interval decomposable $\RCat^n$-indexed modules.  In answer to this question, the following example shows that for interval decomposable $\RCat^2$-indexed modules $M$ and $N$, the ratio \[\frac{d_b(\B(M),\B(N))}{d_I(M,N)}\] can be arbitrarily large.  

\begin{example}\label{nD_Iso_Counterexample}
For fixed $a\geq 0$, let $\J_1\subset \R^2$ be the polygonal interval whose outer edge is specified by the following sequence of vertices: 
\[(5,-a),(9+a,-a),(9+a,4),(6,4),(6,6),(4,6),(4,9+a),(-a,9+a),(-a,5),(5,5);\]
Let $\J_2$ be the square interval with vertices $(6,1-a),(10+a,1-a),(10+a,5),(6,5)$; and let $\J_3$ be the square with vertices
$(1-a,6),(5,6),(5,10+a),(1-a,10+a)$; see \cref{fig:nD_Iso_Counterexample}.
For $M=I^{\J_1}$ and $N=I^{\J_2}\oplus I^{J_3}$, we have \[d_I(M,N)=1,\quad d_b(\B(M),\B(M))=2+a/2.\]
\end{example}
\begin{figure}
\centering
\begin{tikzpicture}[scale=0.6]
\draw[fill=black, opacity=0.3] (5,0) -- (9,0) -- (9,4) -- (6,4) -- (6,6) -- (4,6) -- (4,9) -- (0,9) -- (0,5) -- (5,5) -- cycle;
\draw[fill=blue, opacity=0.3] (6,1) -- (10,1) -- (10,5) -- (6,5) -- cycle; 
\draw[fill=red, opacity=0.3] (1,6) -- (5,6) -- (5,10) -- (1,10) -- cycle;
\draw[->] (6.5, 9.5) node[right]{$\J_3$} -- (4.5,9.5);
\draw[->] (9.5, 6.5)node[right]{$\J_2$} -- (9.5, 4.5);
\draw[->, black] (2, 3.5)node[left]{$\J_1$} -- (2, 5.5);
\end{tikzpicture}
\caption{An illustration of \cref{nD_Iso_Counterexample} in the case $a=0$.}
\label{fig:nD_Iso_Counterexample}
\end{figure}
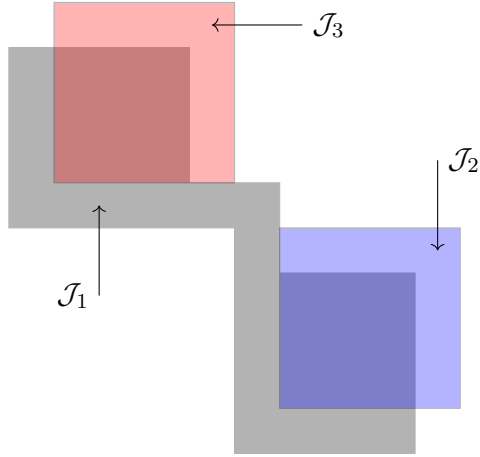

\cref{nD_Iso_Counterexample} makes clear that to formulate a general algebraic stability result for interval decomposable $\RCat^n$-indexed modules, we need either to constrain the shape of the intervals in our barcodes, or to work with a distance on barcodes other than the bottleneck distance.  

Let us say an $\R^2$-indexed module $M$ is \emph{rectangle decomposable} if $M$ is interval decomposable and $\B(M)$ is a collection of rectangles.  A preliminary version of this paper \cite{botnan} conjectured that the isometry theorem holds for interval decomposable $\RCat^n$-indexed modules whose barcodes consist of convex intervals.  However, Bjerkevik has subsequently given an example of rectangle decomposable $\R^2$-indexed modules $M$ and $N$ with \[d_b(\B(M),\B(N))=3\, d_I(M,N),\] disproving the conjecture \cite{bjerkevik2016stability}.  We thus weaken the conjecture as follows:

\begin{conjecture}[Generalized Algebraic Stability]\label{nD_Iso_Conjecture}
For each $n\in \{1,2,\ldots\}$, there is a constant $c_n$ such that for $M$ and $N$ interval decomposable $\RCat^n$-indexed modules with each interval in $\B(M)$ and $\B(N)$ convex, we have \[d_b(\B(M),\B(N))\leq c_n\,d_I(M,N).\]  
\end{conjecture}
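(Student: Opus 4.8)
The plan is to establish only the substantive inequality $d_b(\B(M),\B(N)) \le c_n\, d_I(M,N)$, since the reverse estimate $d_I(M,N)\le d_b(\B(M),\B(N))$ for interval decomposable $\RCat^n$-indexed modules is already \cref{lem:converseAST} and needs no convexity. So fix an $\epsilon$-interleaving $(f,g)$ of $M\cong\bigoplus_s I^{I_s}$ and $N\cong\bigoplus_t I^{J_t}$, with $f\colon M\to N(\epsilon)$, $g\colon N\to M(\epsilon)$, $g(\epsilon)\circ f=\phi_M^{2\epsilon}$ and $f(\epsilon)\circ g=\phi_N^{2\epsilon}$, and aim to produce a $(c_n\epsilon)$-matching $\sigma\colon\B(M)\nrightarrow\B(N)$. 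The first point to record is structural: because each $I_s\cap J_t$ is convex, hence connected, one has $\dim\Hom(I^{I_s},I^{J_t}(\epsilon))\le 1$, so $f$ and $g$ admit honest matrix descriptions with scalar entries. This is exactly the simplification that breaks down for the non-convex intervals behind \cref{nD_Iso_Counterexample}, and it is what makes the argument below possible in principle.

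I would then prove the matching via a Hall-type lemma, generalizing the approach of Bjerkevik \cite{bjerkevik2016stability} from boxes to convex intervals. Form the bipartite graph on $\B(M)\sqcup\B(N)$ with an edge between $I_s$ and $J_t$ exactly when $I^{I_s}$ and $I^{J_t}$ are $(c_n\epsilon)$-interleaved, and show it carries a matching saturating the non-$(2c_n\epsilon)$-trivial intervals on each side. For this one verifies the defect form of Hall's condition: for every set $S\subseteq\B(M)$ of non-$(2c_n\epsilon)$-trivial intervals, the neighbourhood $\Gamma(S)$ satisfies $|\Gamma(S)|\ge|S|$. The interleaving enters here. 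Writing $M_S:=\bigoplus_{I_s\in S}I^{I_s}$ and $N_{\Gamma(S)}$ for the sum of the $N$-summands in $\Gamma(S)$, one composes $f|_{M_S}$ with the projection onto $N_{\Gamma(S)}$, then with $g(\epsilon)$ and the projection back to $M_S(2\epsilon)$, and argues that the result still agrees with $\phi_{M_S}^{2\epsilon}$ on a large enough sub-interval to be injective after a bounded shift (using that the $I_s\in S$ are long). A rank count then gives $\dim(M_S)_a\le\dim(N_{\Gamma(S)})_{a+\epsilon}$ at suitable indices $a$, forcing $|\Gamma(S)|\ge|S|$. The delicate bookkeeping is choosing the indices $a$ uniformly in $S$ and absorbing the error contributed by morphism components between intervals that are close but not matched.

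The hard part — and the reason the conjecture is still open — is a purely geometric input that feeds the rank count: one needs a constant $c_n$, depending only on $n$, such that whenever there are nonzero $\varphi\colon I^{I}\to I^{J}(\epsilon)$ and $\psi\colon I^{J}\to I^{I}(\epsilon)$ whose relevant composites reproduce the structure maps $\phi^{2\epsilon}$ on a sufficiently large sub-interval, the intervals $I$ and $J$ are $(c_n\epsilon)$-interleaved. For axis-parallel boxes this is elementary and gives $c_n$ linear in $n$; for general convex $n$-dimensional intervals the shape of $I\cap J$ can be very far from a box (a long thin diagonal sliver is already convex, so one cannot simply replace intervals by bounding boxes), and I do not see how to bound the displacement of the endpoints by a dimension-only constant without a genuinely new idea. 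A secondary obstacle is that the interpolant and truncation constructions of \cref{Sec:Decomposition_Top,Sec:Free}, which drive the clean induced-matching proof in this paper, fail to preserve freeness for $n\ge 3$ (cf.\ \cref{ex:freefail}), so the induced-matching route cannot be used verbatim and the Hall argument above seems unavoidable.
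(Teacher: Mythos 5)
The statement you are trying to prove is \cref{nD_Iso_Conjecture}, which the paper states as an open \emph{conjecture}: there is no proof in the paper to compare your attempt against, only the partial results of \cite{bjerkevik2016stability} for free and rectangle decomposable modules cited in \cref{Sec:Discussion}. Your proposal is an outline of the natural Hall-type strategy, and to your credit you explicitly identify where it breaks down; but as written it is not a proof, and the gap you name is precisely the open problem. Concretely, the missing ingredient is the ``geometric input'': a dimension-only constant $c_n$ such that the existence of nonzero morphisms $I^{I}\to I^{J}(\epsilon)$ and $I^{J}\to I^{I}(\epsilon)$ whose composites agree with $\phi^{2\epsilon}$ on a large region forces $I$ and $J$ to be $(c_n\epsilon)$-interleaved. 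For rectangles this is an endpoint computation; for general convex intervals no such bound is known, and \cref{nD_Iso_Counterexample} shows that dropping convexity kills any bound at all, so convexity must enter the argument in an essential and currently unavailable way.

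Two further steps are asserted more confidently than they deserve. First, the claim that $\dim\Hom(I^{I_s},I^{J_t}(\epsilon))\leq 1$ because $I_s\cap J_t$ is convex conflates topological connectivity with connectivity in the poset sense (condition 3 of the definition of an interval in \cref{Sec:Barcodes}): a convex subset of $\RCat^n$, e.g.\ an antichain segment, need not admit chains of comparable elements between its points, so the scalar describing a morphism need not be constant on $I_s\cap J_t$ without further argument. Second, the verification of Hall's condition by a rank count ``at suitable indices $a$ chosen uniformly in $S$'' is exactly where Bjerkevik's proof for free and rectangle decomposable modules uses a carefully constructed preorder on summands and a signed sum over matched pairs; you give no indication of how to choose such indices, or such a preorder, for convex intervals whose ``endpoints'' are whole faces rather than single corners. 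Your observation that the interpolant machinery of \cref{Sec:Decomposition_Top,Sec:Free} fails for $n\geq 3$ (\cref{ex:freefail}) is correct and consistent with the paper's own discussion, but it only explains why one route is closed; it does not open the other. The reverse inequality $d_I\leq d_b$ via \cref{lem:converseAST} is the only part of your argument that is complete.
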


\cite{bjerkevik2016stability} provides positive answers to this conjecture in the case of free and rectangle decomposable modules, using arguments similar to the one used there to strengthen the block stability theorem. 

\paragraph{Single Morphism Algebraic Stability}
We have proven the block stability theorem by way of an induced matching result for block decomposable modules, \cref{Thm:BLK_IMT}.  While \cref{Thm:BLK_IMT}\,(i)-(iii) are tight,  \cref{Thm:BLK_IMT}\,(iv) (concerning modules of type $\oo$) is not tight: A simple application of the tight form of the block stability theorem appearing in \cite{bjerkevik2016stability} gives that under the assumptions of \cref{Thm:BLK_IMT}\,(iv), there exists a 2-matching between the barcodes in question; this improves on the constant of $\frac{5}{2}$ appearing in \cref{Thm:BLK_IMT}\,(iv), albeit with matchings that are not explicitly given.  On the other hand, for modules of typo $\oo$, the best lower bound we know for single morphism algebraic stability is $\frac{3}{2}$.  The problem of establishing a tight single morphism algebraic stability result for block decomposable modules thus remains open.   The same problem is also of interest for more general interval decomposable $\RCat^n$-indexed modules.

As with the proof of the induced matching theorem in 1-D given in \cite{bauer2015induced}, we have proven  \cref{Thm:BLK_IMT}\,(iv) by factoring a morphism of block decomposable persistence modules into morphisms with simpler structure, and then defining induced matchings for each of the factors.  We wonder whether this strategy could be pushed further to yield stronger, more general single morphism stability results.  The central difficulty is that the interpolating modules one obtains via our factorization are typically not interval decomposable.  In our study of block decomposable modules, we have circumvented this issue by working with certain truncations of the interpolating modules which are interval decomposable.  

A potential alternative strategy would be to avoid truncation, and instead perturb our morphism $f:M\to N$ to obtain another morphism $f':M\to N$ whose associated interpolants are interval decomposable, while controlling the persistence of $\ker f'$ and $\coker f'$.  It seems plausible that such an approach could yield stronger and more general results.  

\bibliography{zz_refs}

\end{document}